\def\Mg{M^{F\text{-}\mathrm{gal}}}
\def\Mgv{M^{F_v\text{-}\mathrm{gal}}}
\def\varrhobar{\overline{\varrho}}
\def\gal{\mathrm{gal}}
\def\vvv{\widetilde{v}}
\def\sss{\widetilde{s}}
\def\www{\widetilde{w}}
\def\ww{x}
\def\hh{h}
\def\Out{\mathrm{Out}}
\def\Pr{\mathbf{P}{r}}
\def\Ad{\mathrm{Ad}}
\def\kbar{\overline{k}}
\def\PG{\mathbb{P}G}
\newcommand{\tr}{\operatorname{tr}}
\renewcommand{\mathbb}{\mathbf}
\renewcommand{\ell}{l}
\newcommand{\psibar}{\overline{\psi}}
\newcommand{\Favoid}{F^{(\mathrm{avoid})}} 
\DeclareMathOperator{\SL}{SL}
\def\sl{\mathfrak{sl}}
\def\sp{\mathfrak{sp}}
\def\ad{\mathrm{ad}}
\def\h{\mathfrak{h}}
\def\fa{\mathfrak{a}}
\def\fb{\mathfrak{b}}
\def\fn{\mathfrak{n}}
\def\Lg{L^{\mathrm{gal}}}
\def\sp{\mathfrak{sp}}
\newcommand{\ttt}{\mathfrak{t}}
\newcommand\Gder{G^{\mathrm{der}}}
\newcommand\Adero{A^{\mathrm{der},\circ}}
\newcommand\Bdero{B^{\mathrm{der},\circ}}
\newcommand\Hdero{H^{\mathrm{der},\circ}}
\newcommand\Gdero{G^{\mathrm{der},\circ}}
\newcommand{\Qlbartimes}{\overline{\Q}_l^\times}
\newcommand{\tG}{\widetilde{G}}
\DeclareMathOperator{\Sp}{Sp}
\DeclareMathOperator{\PGSp}{PGSp}
\newcommand{\mubar}{\overline{\mu}}
\def\hotimes {\widehat{\otimes}}
\def\CC{\mathscr{C}}
\def\CSS{\mathscr{S}}
\def\chop{\psi}
\def\sl{\mathfrak{sl}}
\def\sp{\mathfrak{sp}}
\def\vv{\mathbf{v}}
\def\fg{\mathfrak{g}}
\def\fa{\mathfrak{a}}
\def\fb{\mathfrak{b}}
\def\fd{\mathfrak{d}}
\def\der{\mathrm{der}}
\def\OL{\mathcal{O}}
\def\g{\mathfrak{g}}
\newcommand{\univ}{{\operatorname{univ}}}
\newcommand{\To}{\longrightarrow}
\newcommand{\isoto}{\stackrel{\sim}{\To}}
\newtheorem{theorem}[subsubsection]{Theorem}
\newtheorem{thm}[subsubsection]{Theorem}
\newtheorem{lemma}[subsubsection]{Lemma}
 \newtheorem{ithm}{Theorem}
\newtheorem{lem}[subsubsection]{Lemma}
\newtheorem{cor}[subsubsection]{Corollary}
\newtheorem{prop}[subsubsection]{Proposition}
\theoremstyle{definition}
\newtheorem{df}[subsubsection]{Definition}
\newtheorem{defn}[subsubsection]{Definition}
\theoremstyle{remark}
\newtheorem{remark}[subsubsection]{Remark}
\newtheorem{rem}[subsubsection]{Remark}
\newtheorem{convention}[subsubsection]{Convention}
\def\numequation{\addtocounter{subsubsection}{1}\begin{equation}}
\def\nummultline{\addtocounter{subsubsection}{1}\begin{multline}}
\def\anumequation{\addtocounter{subsection}{1}\begin{equation}}
\def\anummultline{\addtocounter{subsection}{1}\begin{multline}}
\newif\iffinalrun
  \newcommand{\need}[1]{}
  \newcommand{\mar}[1]{}
  \newcommand{\need}[1]{{\tiny *** #1}}
  \newcommand{\mar}[1]{\marginpar{\raggedright\tiny WibblyWobbly  #1}}
\newcommand{\CM}{\cM}
\newcommand{\m}{\frakm}
\def\A{\mathbf{A}}
\def\C{\mathbf{C}}
\def\F{\mathbf{F}}
\def\Q{\mathbf{Q}}
\newcommand{\Z}{{\mathbb Z}}
\newcommand{\cA}{{\mathcal A}}
\newcommand{\cG}{{\mathcal G}}
\newcommand{\cM}{{\mathcal M}}
\newcommand{\cO}{{\mathcal O}}
\newcommand{\CR}{{\mathcal R}}
\newcommand{\frakm}{\mathfrak{m}}
\newcommand{\Fbar}{\overline{\F}}
\newcommand{\Qbar}{\overline{\Q}}
\newcommand{\Zbar}{\overline{\Z}}
\newcommand{\Fp}{\F_p}
\newcommand{\Fpbar}{\Fbar_p}
\newcommand{\Zp}{\Z_p}
\newcommand{\Zpbartimes}{\Zbar_p^\times}
\newcommand{\Ql}{\Q_{\ell}}
\newcommand{\Qp}{\Q_p}
\newcommand{\Qlbar}{\Qbar_{\ell}}
\newcommand{\Qpbar}{\Qbar_p}
\DeclareMathOperator{\Gal}{Gal}
\DeclareMathOperator{\GL}{GL}
\DeclareMathOperator{\GSp}{GSp}
\DeclareMathOperator{\Hom}{Hom}
\DeclareMathOperator{\im}{im}
\DeclareMathOperator{\Ind}{Ind}
\DeclareMathOperator{\PGL}{PGL}
\DeclareMathOperator{\PSL}{PSL}
\def\sl{\mathfrak{sl}}
\DeclareMathOperator{\Sym}{Sym}
\DeclareMathOperator{\WD}{WD}
\newcommand{\Frob}{\mathrm{Frob}}
\newcommand{\rhobar}{\overline{\rho}}
\newcommand{\into}{\hookrightarrow}
\newcommand{\onto}{\twoheadrightarrow}
\newcommand{\rbar}{\overline{r}}
\newcommand{\sbar}{\overline{s}}
\newcommand{\tbar}{\overline{t}}
\newcommand{\ubar}{\overline{u}}
\newcommand{\abar}{\overline{a}}
\newcommand{\bbar}{\overline{b}}
\newcommand{\cbar}{\overline{c}}
\newcommand{\dbar}{\overline{d}}
\newcommand{\ebar}{\overline{e}}
\newcommand{\Res}{\operatorname{Res}}
\begin{document}
\title[Globally realizable components of deformation rings]{Globally realizable components of local
  deformation rings}

\author[F. Calegari]{Frank Calegari}\email{fcale@math.uchicago.edu}
\address{Department of Mathematics, University of Chicago,
5734 S.\ University Ave., Chicago, IL 60637, USA}

\author[M. Emerton]{Matthew Emerton}\email{emerton@math.uchicago.edu}
\address{Department of Mathematics, University of Chicago,
5734 S.\ University Ave., Chicago, IL 60637, USA}

\author[T. Gee]{Toby Gee} \email{toby.gee@imperial.ac.uk} \address{Department of
  Mathematics, Imperial College London,
  London SW7 2AZ, UK}

\thanks{The first author was supported in part by NSF  Grants
  DMS-1404620 and DMS-1701703.  The second author was supported in part by the
  NSF grants DMS-1303450, DMS-1601871, and DMS-1902307.  The third author was
  supported in part by a Leverhulme Prize, EPSRC grant EP/L025485/1, Marie Curie Career
  Integration Grant 303605, 
  ERC Starting Grant 306326, and a Royal Society Wolfson Research
  Merit Award. }

\maketitle
\begin{abstract}
Let $n$ be either~$2$, or an odd integer greater than~$1$, and fix a
prime~$p>2(n+1)$. Under standard ``adequate image'' assumptions, we
show that the set of components of $n$-dimensional $p$-adic potentially semistable local
Galois deformation rings that are seen by potentially automorphic
compatible systems of  polarizable
 Galois representations over some CM field is
independent of the particular global situation. We also (under the
same assumption on~$n$) improve on the main potential automorphy
result of~\cite{BLGGT}, replacing ``potentially diagonalizable'' by
``potentially globally realizable.''
\end{abstract}
\setcounter{tocdepth}{2}
\tableofcontents
\section{Introduction}\label{sec: introduction}
\subsection{Our results}The results of this paper are motivated by the questions
of level-raising/lowering for automorphic forms, and the weight part of
Serre's conjecture, as well as a question about ``automorphic components'' related to the
Fontaine--Mazur conjecture and to global approaches to the $p$-adic
local Langlands program. However,
our main results are stated purely in terms of Galois representations. 

We need to introduce a few definitions before stating our main theorem.
Let~$F$ be a CM field with maximal totally real subfield~$F^+$, and
write~$G_F$ for the absolute Galois group~$\Gal(\overline{F}/F)$.
(In this paper, all CM fields will be imaginary and so~$F/F^{+}$ is a quadratic extension.)

 We say that a
representation~$\sbar:G_F\to\GL_n(\Fpbar)$ is \emph{reasonable} if it
satisfies the hypotheses needed to apply the automorphy lifting
theorems of~\cite{BLGGT}; that is, $p>2(n+1)$, $F$ does not
contain a primitive~$p$th root of unity, and ~$\sbar$ is
polarizable, odd, and is irreducible when restricted
to~$G_{F(\zeta_p)}$. We say that a
representation~$s:G_F\to\GL_n(\Qpbar)$ is reasonable if its reduction
mod~$p$ is reasonable.

We say that a compatible system of $l$-adic representations of~$G_F$
is \emph{weakly irreducible} if for a positive density set of
primes~$l$, its $l$-adic Galois representations are
irreducible. Conjecturally, this is equivalent to all of the
$l$-adic representations being irreducible, but this seems to be very
hard to prove; weak irreducibility is a well-behaved substitute for
that stronger condition. By the results of~\cite{BLGGT,MR3314824}, if
a compatible system is odd, regular, and polarizable, then it is
weakly irreducible if and only if it is potentially
automorphic (in the sense that it potentially corresponds to a
cuspidal automorphic representation). 

The condition that the representation~$\sbar$ is polarizable is 
best expressed in terms of the group~$\cG_n$ introduced in~\cite{CHT};
it is a reductive group with connected component~$\GL_n\times\GL_1$
and component group of order~$2$, and as explained in~\cite[\S
8.3]{MR3444225}, it is very closely related to the $C$-group of an
$n$-dimensional unitary group over~$F^+$ which splits over~$F$. Then
the polarizability of~$\sbar$ is equivalent to the existence of a
prolongation of~$\sbar$ to a
representation~$\rhobar:G_{F^+}\to\cG_n(\Fpbar)$. In particular, this
implies that~$\sbar^c\cong\sbar^{\vee}\mubar|_{G_{F^+}}$ for some
character~$\mubar$ of~$G_{F^+}$.

Suppose that~$\rhobar:G_{F^+}\to\cG_n(\Fpbar)$ is a prolongation 
of $\sbar$, and let~$v$ be a finite place of~$F^+$.
A \emph{polarized component}
for~$\rhobar|_{G_{F^+_v}}$ is, by definition, an irreducible component of a deformation
ring  for 
lifts of~$\rhobar|_{G_{F^+_v}}$ which are of
some fixed inertial type, and of a fixed regular Hodge type
if~$v|p$. If~$v|p$, then we say that such a component is \emph{globally
realizable} if it occurs
globally, in the sense there is some totally real field~$L^+$ with a
CM quadratic extension~$L$, a place~$w|p$
of~$L^+$ for which $L^+_w\cong F^+_v$, and a (polarizable, odd, reasonable) representation $r:G_L\to\GL_n(\Qpbar)$ which is part of
a (polarizable, odd, weakly irreducible) compatible system, which prolongs to a representation~$\rho':G_{L^+}\to\cG_n(\Qpbar)$ whose restriction~$\rho'|_{G_{L^+_w}}$ gives rise to a
point on this component (so that in particular,
$\rhobar'|_{G_{L^+_w}}\cong \rhobar|_{G_{F^+_v}}$). 
If the place~$v$ splits in~$F$ as~$ww^c$, then
the deformation ring for~$\rhobar|_{G_{F^+_v}}$ can be identified with
a deformation ring for~$\sbar|_{G_{F_w}}$, as in~\cite{CHT}.
Conjecturally, every component is expected to be globally
realizable (and the analogous statement for places~$v\nmid p$ is
known), but proving this seems to be a very hard problem. 

 Our main theorem is the following (see Theorem~\ref{thm: main theorem
  on existence of lifts} for a more precise statement, and see the
notation and conventions section at the end of this introduction for
any unfamiliar terminology).

\begin{ithm}\label{thm: main theorem on existence of lifts - intro version} Assume
  that either $n$ is odd, or that $n=2$. Let~$F$ be a CM field, and let
 $\sbar: G_{F} \rightarrow \GL_n(\Fbar_p)$ be a reasonable
 representation, with 
 prolongation~$\rhobar$.

Let $S$ be a finite set of finite places of~$F^+$, such that~$S$ contains
all of the places at which~$\rhobar$ is ramified and all of the
places lying over~$p$. 
 For each place $v\in S$, let~$C_v$ be a  component
for~$\rhobar|_{G_{F^+_v}}$, which is globally realizable if~$v|p$.

 Then there exists an odd, regular, polarized, weakly irreducible compatible
system~$(\{s_{\lambda}\},\{\mu_\lambda\})$ of~$G_F$-representations
with associated $p$-adic representation~$s$, and a prolongation~$\rho$
of~$s$, which satisfies:
\begin{enumerate}
\item $\rho$ lifts~$\rhobar$, and for each place $v\in S$, the
  representation~$\rho |_{G_{F^+_v}}$ lies on~$C_v$.
\item $\rho$ is unramified outside~$S$. 
\end{enumerate}
\end{ithm}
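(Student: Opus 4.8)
The plan is to produce, by a geometric lifting argument, a single $p$-adic prolongation $\rho$ of $\rhobar$ with the prescribed behaviour along $S$, and then to show that $\rho$ is potentially automorphic; it then lies in a compatible system with all of the asserted properties, which is the more precise Theorem~\ref{thm: main theorem on existence of lifts}. I would first record that $\sbar$, being reasonable with $p>2(n+1)$, has adequate image, and that this persists under any solvable CM base change in which $\sbar$ remains irreducible. For each $v\in S$ I would fix a lift of $\rhobar|_{G_{F^+_v}}$ lying on $C_v$: for $v\nmid p$, after a harmless base change making the relevant local lifting ring formally smooth, any lift will do; for $v\mid p$ I would choose a \emph{smooth} $\Qpbar$-point of $C_v$ of the prescribed inertial and Hodge type, which exists because every component of a framed local lifting ring over $\Qpbar$ contains one. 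From this point on, the only feature of $C_v$ ($v\mid p$) that matters is that it is globally realizable.

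\textbf{Second}, I would glue these local lifts into a global prolongation $\rho\colon G_{F^+}\to\cG_n(\Qpbar)$ of $\rhobar$ which is geometric, de Rham of the prescribed Hodge type at each place above $p$, lies on $C_v$ for each $v\in S$, and is unramified outside $S$ together with a finite auxiliary set $Q$. This is Ramakrishna's lifting method in its $\cG_n$-valued form: one adjoins a set $Q$ of tamely ramified (Taylor--Wiles) places at which one imposes the standard ramified unipotent local condition, and uses the adequacy of the image of $\sbar$ to annihilate the relevant obstruction and dual Selmer groups, the bookkeeping being controlled by the Greenberg--Wiles formula. At this stage $\rho$ is geometric but is not yet known to be automorphic.

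\textbf{Third}, I would prove $\rho$ is potentially automorphic. Over suitable solvable CM extensions, chosen linearly disjoint from all auxiliary fields: the point-counting argument of Moret-Bailly and Harris--Shepherd-Barron--Taylor, applied to a suitable Dwork family exactly as in \cite{BLGGT}, produces an automorphic representation congruent to $\rho$ modulo $p$, so that $\rhobar$ becomes automorphic; and then the automorphy lifting theorem for potentially globally realizable lifts --- the strengthening of \cite{BLGGT} promised in the abstract --- applies, since its hypothesis at the places above $p$ holds because $\rho$ restricted there lies on the globally realizable component $C_v$, and its residual automorphy hypothesis holds by the Dwork family input. Hence $\rho$ becomes automorphic over that base change, i.e.\ is potentially automorphic. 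Finally I would remove the auxiliary ramification at $Q$ by a level-lowering (Ihara avoidance) argument, obtaining a congruent potentially automorphic prolongation of $\rhobar$ that still lies on each $C_v$ and is unramified outside $S$; by \cite{BLGGT,MR3314824} it lies in an odd, regular, polarized, weakly irreducible compatible system of $G_F$-representations, which together with its associated $\GL_n$-valued representation $s=\rho|_{G_F}$ is the required object.

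\textbf{The main obstacle} is the automorphy lifting theorem for globally realizable components itself, equivalently the transfer of the property ``globally realizable'' between global situations: the definition only supplies an automorphic point on $C_v$ in \emph{some} global setting --- over a possibly unrelated CM field $L^+$ with $L^+_w\cong F^+_v$, and moreover via a merely potentially automorphic representation --- whereas the patching argument underpinning the lifting theorem builds its module from automorphic forms in the situation at hand, so one must show its support still meets $C_v$. This is essentially the independence-of-the-global-situation assertion of the abstract, and I expect it to be handled by passing to a common CM field in which the relevant places retain their completions and transporting the automorphic data across (combining base change, further Ramakrishna lifting, and the patched module); it is here --- together with the construction of the auxiliary automorphic input in the third step --- that the hypothesis that $n$ is odd or $n=2$ enters. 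The removal of the auxiliary ramification $Q$, needed to get ``unramified outside $S$'' rather than ``outside $S\cup Q$'', is a further technical point requiring level lowering at the Taylor--Wiles places.
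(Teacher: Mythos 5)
Your proposal is architecturally quite different from the paper's, and it contains a genuine circularity at exactly the point you yourself flag as the ``main obstacle.'' In Step~3 you invoke ``the automorphy lifting theorem for potentially globally realizable lifts --- the strengthening of \cite{BLGGT} promised in the abstract.'' But in the paper that result, Corollary~\ref{cor: potential automorphy for globally realizable}, is deduced \emph{from} Theorem~\ref{thm: main theorem on existence of lifts}, not used to prove it; you have made the output of the theorem an input to its own proof. The Dwork-family/Moret--Bailly step only produces a congruent automorphic form whose behaviour above $p$ is potentially diagonalizable (ordinary weight $0$), and the automorphy lifting theorems actually available from \cite{BLGGT} with that input only control potentially diagonalizable components; that is precisely the limitation this paper sets out to remove, so one cannot conclude that the patched module meets $C_v$. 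Your final paragraph correctly identifies this as the crux, but ``I expect it to be handled by passing to a common CM field\ldots'' is not an argument, and I do not see how to repair the outline without inventing something equivalent to the paper's mechanism.

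The paper's actual mechanism is different in kind. It does not use Ramakrishna-style Selmer-group annihilation at auxiliary Taylor--Wiles places to produce lifts. Instead, lifts come from the Khare--Wintenberger method: Proposition~\ref{prop: global deformation ring is positive dimensional} gives $\dim R^\univ\ge 1$, and $\Qpbar$-points follow once $R^\univ$ is shown to be finite over $\cO$ (Lemma~\ref{finiteness lemma}). The central new idea is a tensor-product ``component swapping'' argument (Lemmas~\ref{lem: local swapping} and~\ref{lem: local swapping II}, Corollary~\ref{cor: merging components}): one tensors $\sbar$ against an auxiliary $\rbar$, produced via Moret--Bailly so as to match $\sbar$ locally, have $\SL_n$-large image, and admit potentially diagonalizable lifts; the global deformation ring of $\sbar\otimes\rbar$ with the tensor-product local conditions is finite over $\cO$ because $\sbar\otimes\rbar$ has a potentially automorphic lift; an algebraic finiteness argument then shows $R_C\,\widehat{\otimes}\,R_D$ is finite over $\cO$, giving $\Qpbar$-points of $R_C$ and $R_D$ separately. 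The deepest step, of which your sketch has no analogue, is Theorem~\ref{thm: factoring compatible systems}: a Larsen--Pink analysis of Zariski closures of images, together with the lifting results of \cite{2012arXiv1207.6724P}, shows that the ambient compatible system of tensor products factors, so the $\Qpbar$-point of $R_C$ itself lies in a weakly irreducible compatible system. Iterating the swap, interleaved with base changes, further Moret--Bailly constructions, and Khare--Wintenberger descents (Lemma~\ref{lem: existence of compatible system from potential existence}), moves the globally realizable component from the witness field onto $F$ one place above $p$ at a time.
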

Note that by the very definition of global realizability, the
hypothesis that each~$C_v$ is globally realizable is a necessary
condition for the conclusion of the theorem to hold. 

 The hypothesis that~$\sbar$ is reasonable is needed in order to apply the
theorems of~\cite{BLGGT}, and some restriction on~$p$, $n$ and the
size of the image of~$\sbar$ is certainly necessary; for example,  the results
of~\cite{2015arXiv150101344L} show that the analogous result
fails for modular forms of weight~$2$ if~$p=2$ (in fact there are also dihedral
counterexamples due to Serre with~$p=3$, see~\cite[\S 4.4]{MR1046750}). More generally,
calculations in Galois cohomology suggest that if $p\le n+1$,
and~$\sbar|_{G_{F(\zeta_p)}}$ is reducible, then it unreasonable to
hope for a global lifting result with control of the local
representations at all places. Thus the only unnaturally restrictive
hypothesis in Theorem~\ref{thm: main theorem on existence of lifts -
  intro version} is the exclusion of even integers~$n>2$, which is a
byproduct of our methods; 
this is because given a
compatible systems of $n$-dimensional polarizable $l$-adic
representations, we cannot deduce the oddness of all the
representations in the compatible system from the oddness of a single representation.

An almost immediate corollary of our results is the following
potential automorphy theorem, which may be of independent
interest. Subject to the restriction that~$n=2$ or~$n$ is odd, it
improves on~\cite[Thm.\ 4.5.1]{BLGGT} by replacing ``potentially
diagonalizable'' by ``globally realizable''. Note that if~$v|p$ in~$F^{+}$ splits in~$F$, then the global realizability
of~$\rho |_{G_{F^+_v}}$ only depends on~$s|_{G_{F_w}}$ for~$w|v$
in~$F$.

\begin{ithm}[Corollary~\ref{cor: potential automorphy for globally
    realizable}]\label{thm: potential automorphy for globally
    realizable intro version}Assume that either $n$ is odd or $n=2$. Let~$F$ be a CM
  field, and let~$(s,\mu)$ be a polarized representation, where
$$s: G_{F} \rightarrow \GL_n(\Qpbar)$$
is odd and ramified at only finitely many primes. 
Suppose that~$\sbar$ is reasonable. Let~$\rho$ be the
corresponding prolongation of~$s$,  and assume that~$\rho |_{G_{F^+_v}}$ is globally realizable for
each~$v|p$. Then~$(s,\mu)$ is potentially automorphic.
\end{ithm}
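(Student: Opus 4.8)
The plan is to deduce Theorem~\ref{thm: potential automorphy for globally realizable intro version} from Theorem~\ref{thm: main theorem on existence of lifts - intro version} by a standard ``change of primes'' or ``pivot'' argument, where we realize $s$ itself as one member of a compatible system which also contains a potentially automorphic member. First I would set up the global situation: let $S$ be the (finite) set of places of $F^+$ consisting of those dividing $p$, together with all places at which $s$ or $\rhobar$ is ramified, together with the infinite places as needed; enlarge $S$ so that it contains all the data we need. For each $v \mid p$ we take $C_v$ to be the component on which $\rho|_{G_{F^+_v}}$ lies --- this is globally realizable by hypothesis. For each finite $v \in S$ with $v \nmid p$, we take $C_v$ to be the component on which $\rho|_{G_{F^+_v}}$ lies (and here global realizability at $v \nmid p$ is automatic, as remarked in the text just before the statement). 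Now apply Theorem~\ref{thm: main theorem on existence of lifts - intro version} to $\rhobar$, $S$, and these choices of $C_v$: this produces an odd, regular, polarized, weakly irreducible compatible system $(\{s_\lambda\},\{\mu_\lambda\})$ with $p$-adic member $s'$ and prolongation $\rho'$ such that $\rho'$ lifts $\rhobar$, is unramified outside $S$, and $\rho'|_{G_{F^+_v}}$ lies on $C_v$ for each $v \in S$.

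The heart of the deduction is then an automorphy lifting argument comparing $s$ with the $p$-adic member $s'$ of the constructed compatible system. Because the compatible system $(\{s_\lambda\},\{\mu_\lambda\})$ is weakly irreducible, odd, regular, and polarizable, the results of~\cite{BLGGT,MR3314824} quoted in the introduction tell us it is potentially automorphic; in particular $s'$ is potentially automorphic. Now $s$ and $s'$ have the same reduction $\rhobar$ (both lift $\rhobar$), which is reasonable, and for every $v \in S$ --- in particular for every $v \mid p$ --- the local representations $\rho|_{G_{F^+_v}}$ and $\rho'|_{G_{F^+_v}}$ lie on the \emph{same} irreducible component $C_v$ of the appropriate (fixed inertial type, fixed regular Hodge type) local deformation ring. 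This ``connectedness'' at the places above $p$ is exactly the input needed to run the automorphy lifting theorems of~\cite{BLGGT}: two $p$-adic representations which are congruent mod $p$ and whose restrictions to decomposition groups above $p$ lie on a common component of a potentially semistable deformation ring of fixed type are ``potentially diagonalizably equivalent'' in the relevant sense (connectedness of components is preserved under the operations used there), so one is automorphic over a CM extension if and only if the other is. Applying this with the automorphy of $s'$ as input yields the potential automorphy of $s$, and hence of $(s,\mu)$.

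One technical point to handle carefully is matching up the polarizations and the similitude characters: we must choose $\mu$ and the $\mu_\lambda$ compatibly, which we can arrange since a prolongation $\rho$ of $s$ determines $\mu$ up to the relevant twist, and Theorem~\ref{thm: main theorem on existence of lifts - intro version} is stated for a given prolongation $\rhobar$ of $\sbar$; we take $\rhobar = \overline{\rho}$. A second point is that the automorphy lifting theorems in~\cite{BLGGT} are ultimately applied after a solvable base change to a CM field over which everything becomes ``nice'' (large image, unramified outside a controlled set, local conditions of the right shape), so the conclusion is genuinely \emph{potential} automorphy; this is harmless since that is exactly what we are claiming. Finally, the restriction that $n = 2$ or $n$ is odd is inherited directly from Theorem~\ref{thm: main theorem on existence of lifts - intro version}, for the reason explained in the discussion after that theorem (oddness of a single member of a compatible system of even dimension $> 2$ does not propagate to the whole system).

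The main obstacle in this deduction is not conceptual but lies in correctly invoking the ``component-theoretic'' form of the automorphy lifting theorem: one needs that lying on a common irreducible component of the fixed-type potentially semistable local deformation ring at each place above $p$ is strong enough to transfer automorphy, i.e. that the relevant ``$R = \mathbb{T}$'' / Taylor--Wiles patching argument only sees the connected component of the local deformation space. This is precisely the content that the notion of ``globally realizable'' is designed to isolate (replacing ``potentially diagonalizable''), so once Theorem~\ref{thm: main theorem on existence of lifts - intro version} is in hand --- where all the genuinely new work resides --- the present corollary follows formally. If instead one wanted a self-contained proof not quoting the full strength of Theorem~\ref{thm: main theorem on existence of lifts - intro version}, the hard part would be constructing the auxiliary potentially automorphic compatible system with the prescribed local behavior at $S$, which is exactly what that theorem provides.
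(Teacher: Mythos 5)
Your proposal is correct and takes essentially the same route as the paper's proof: apply Theorem~\ref{thm: main theorem on existence of lifts} (plus a base change via Theorem~\ref{thm: BLGGT splitting at primes} and Lemma~\ref{lem:can base change reasonable to pleasant}) to produce an automorphic representation whose associated $p$-adic representation lies on the same components as $s$ at all places of $S$, then invoke the component-theoretic automorphy lifting theorem (Theorem~\ref{thm: FC's pst R=T}) to conclude that $s|_{G_L}$ is automorphic. One small imprecision: the mechanism is not that the two lifts become ``potentially diagonalizably equivalent,'' but simply that both give $\Qpbar$-points of the same fixed-component global deformation ring $R_C$, which Theorem~\ref{thm: FC's pst R=T} identifies with a Hecke algebra once one automorphic point is exhibited.
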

 In fact (as noted in the abstract),
we could  weaken the hypothesis that ~$\rho
|_{G_{F^+_v}}$ is globally realizable to requiring only that it is
potentially globally realizable, because this is equivalent to global
realizability by Corollary~\ref{cor: potentially globally realizable}.
Perhaps surprisingly,  if~$\sbar$
is automorphic, then we cannot deduce that ~$s$ is also automorphic;
this is because our methods make considerable use of potential
automorphy results for other representations in a
compatible system containing~$s$. On the other hand, if we
did know that weakly irreducible compatible systems are automorphic
(rather than just potentially automorphic), then a version of the
Breuil--M\'ezard conjecture for odd-dimensional globally realizable
representations and a version of the weight part of Serre's conjecture
for odd-dimensional unitary groups would both follow from combining
Theorem~\ref{thm: main theorem on existence of lifts - intro version}
with the methods of~\cite{MR3292675}.
\subsection{History and motivation}\label{subsec: history and motivation}

We now give a somewhat leisurely overview of our motivations and of
previous work on similar questions. 
Ultimately, the problems that we are working
on are motivated by congruences between modular forms; more specifically, we are concerned
with congruences between eigenforms. 
Such congruences can often best be understood in terms of the
corresponding Galois representations, and in particular in terms of
the restrictions of these (global) Galois representations to (local)
decomposition groups. It is therefore natural to wonder whether there is a
local to global principle for the existence of such congruences.

The first results in the literature that we know of that are
explicitly formulated in this way are those of~\cite{MR1272977}, which
we now recall. Let~$p>3$ be prime, and let $f$ be a newform of level
prime to~$p$ and weight~$2$. We can (after choosing an
embedding~$\Qbar\into\Qpbar$) associate a~$p$-adic Galois
representation $\rho_f:G_\Q\to\GL_2(\Qpbar)$ to~$f$, 
and thus a mod~$p$ representation
$\rhobar_f:G_\Q\to\GL_2(\Fpbar)$. We say that an irreducible
representation $\rhobar:G_\Q\to\GL_2(\Fpbar)$ or
$\rho:G_\Q\to\GL_2(\Qpbar)$ is \emph{modular of weight 2} if it is isomorphic to
some~$\rhobar_f$ (respectively~$\rho_f$).

Then the main result of~\cite{MR1272977} is as follows. Suppose we are
given a modular representation~$\rhobar:G_\Q\to\GL_2(\Fpbar)$ of weight~$2$, and
that for each prime~$l\ne p$, we are given a lifting
of~$\rhobar|_{G_{\Ql}}$ to a $p$-adic representation~$\rho_l:G_{\Ql}\to\GL_2(\Qpbar)$. Suppose
also that all but finitely many of the~$\rho_l$ are unramified. Then
there is a lift~$\rho:G_\Q\to\GL_2(\Qpbar)$ of~$\rhobar$ which is
modular of weight~$2$, with
the property that for all~$l\ne p$, we have an isomorphism of
restrictions to inertia $\rho|_{I_{\Ql}}\cong\rho_l|_{I_{\Ql}}$.

This result is best possible, in the sense that any modular
representation is necessarily only ramified at finitely many primes,
and that (for example, because spaces of modular forms are
finite-dimensional) it is unreasonable to pin down $\rho|_{G_{\Ql}}$
more than specifying it on inertia. In particular, by local-global
compatibility, the conductor of~$\rho|_{I_{\Ql}}$ determines the
$l$-power part of the level of the modular form associated to~$\rho$.

It is natural to wonder whether this result can be extended to cover
modular forms of higher weight or with $p$ dividing their level,
 and to allow some control
of~$\rho|_{G_{\Qp}}$.   Since the local
behavior at $p$ of $\rho_f$ is highly dependent on the weight of the
newform $f$ and the power of $p$ dividing its level,
these two questions are closely related,
and the most general local to global result of this
kind is best formulated in terms of components of local deformation
rings. If we fix a weight~$k\ge 2$, and a finite extension~$K/\Qp$,
then we can consider the deformation ring~$R^{k}_K$ for liftings
of~$\rhobar|_{G_{\Qp}}$ which become semistable over~$K$ with
Hodge--Tate weights~$0,k-1$. If~$\rho$ is modular of weight~$k$ and
some level, then for some sufficiently large~$K$, $\rho|_{G_{\Qp}}$
corresponds to a point of~$R_K^{k}$.
(For example, if the corresponding modular form has level prime to~$p$, then
we can take~$K=\Qp$.)

The spectrum of~$R_K^{k}$ has finitely many irreducible components, and
given such a component, we can ask whether there exists
a lift~$\rho$ with the property that~$\rho|_{G_{\Qp}}$ lies on this
component. This is the correct analogue of what we are
demanding at the places~$l\ne p$; indeed, it turns out that specifying
~$\rho|_{I_{\Ql}}$ is equivalent to demanding that~$\rho|_{G_{\Ql}}$
lies on a particular component of a deformation ring
for~$\rhobar|_{G_{\Ql}}$. Proving that the lifting problem still
admits a solution when we specify a component at~$p$ is much harder
then the case in which we only specify components away from $p$,
but it follows from the results of~\cite{MR2505297} that such a lift
exists under a mild (``Taylor--Wiles'') condition on~$\rhobar$. In
particular, 
this again gives a complete
understanding of the possibilities for the weight and 
level of the corresponding modular form.

When we restrict to the case that~$2\le k\le p+1$ and~$K=\Qp$, the
lifting problem is closely related to Serre's
conjectures~\cite{MR885783} on the weight and level of modular Galois
representations. For example, while not formulated in this way,
Serre's conjecture on the minimal weight and level is equivalent to
asking that~$\rhobar$ admits a modular lift whose ramification away
from~$p$ is as small as possible (as measured by the Swan
conductor), and whose weight is as small as possible, compatible with
the property of locally having a crystalline lift of the corresponding
Hodge--Tate weights.

It is relatively straightforward to formulate conjectural
generalizations of these results. For example, a detailed formulation
of a generalization of Serre's conjectures to Hilbert modular
forms was made in~\cite{bdj}, and the weights are described in terms of the
existence of local crystalline lifts in a similar fashion to that
described above. One can make similar conjectures for automorphic
representations on unitary groups over CM fields (or equivalently, for conjugate
self dual automorphic representations of~$\GL_n$ over CM fields), and
it is these generalizations that will concern us below. (However, we
don't expect any straightforward generalization of these results to
hold outside of settings which are discrete series at the infinite
places.)

While some of the arguments of~\cite{MR1272977} and of the related
papers on the weight and level parts of Serre's conjecture can be
generalized to Hilbert modular forms, it seems hard to adapt them to
prove the conjectures of~\cite{bdj} completely, and much harder still
to study congruences between forms on~$\GL_n$ in this way. 
However, in the mid 2000s a new approach to these problems was
discovered by Khare--Wintenberger~\cite{MR2480604} and the third
author~\cite{MR2286631}, which we will now describe in the setting of
modular forms.

The approach is via the deformation theory of global Galois
representations and automorphy lifting theorems. Suppose that as above
we are given components of deformation rings for~$\rhobar|_{G_{\Ql}}$
for each prime~$l$, which are unramified for all but finitely
many~$l$. Then there is a corresponding deformation ring~$R^\univ$ for
the global representation~$\rhobar$, and the~$\Qpbar$-points of its
spectrum precisely correspond to the Galois representations that
we are hoping
to construct. If we can show that the set of $\Qpbar$-points is
nonempty, then we can hope to show that the Galois representations are
modular using modularity lifting theorems (the Taylor--Wiles method). 

The tangent space to ~$R^\univ$ 
can be computed by Galois
cohomology, and it turns out 
that~$R^\univ$ always has dimension at
least~$1$. (This computation relies on the weight~$k$ being at
least~$2$, and more generally on us being in a discrete series
context.) Heuristic arguments lead us to expect that~$R^\univ$ is a
finite~$\Zp$-algebra, and if this is the case, the lower bound on the
dimension guarantees the existence of $\Qpbar$-points. 

There is no known purely Galois-theoretic argument guaranteeing this
finiteness in general (although it can sometimes be arranged at the
cost of allowing additional ramification away from~$p$ by an argument
of Ramakrishna~\cite{MR1935843}). However, modularity lifting theorems
are proved by identifying deformation rings such as~$R^\univ$ with
Hecke algebras, which are finite over~$\Zp$ by definition, so in
principle it is enough to prove an appropriate modularity lifting
theorem (which can then be used in the final step of the argument to
deduce that the Galois representations that we have constructed are
actually modular).

Unfortunately, this argument is circular as written, because what the
Taylor--Wiles method allows us to prove is that if some~$\Qpbar$-point
of $R^\univ$ is modular, then~$R^\univ$ may be identified with a Hecke
algebra; but it gives us no assistance with producing a $\Qpbar$-point
in the first place. A key insight of Khare--Wintenberger is that this
argument can be combined with base change and/or potential modularity to
avoid the circularity. Suppose that
$F$ is a totally real finite extension of $\Q$,
and that~$\rhobar|_{G_F}$ is irreducible. Then we
may consider the deformation problem for~$\rhobar|_{G_F}$ given by 
the restrictions to places of~$F$ of the conditions we imposed
over~$\Q$, and the corresponding deformation ring~$R^\univ_F$ is again
of dimension at least one.

Now, by definition~$R^\univ$ is an~$R^\univ_F$-algebra, and it is in
fact a finite $R^\univ_F$-algebra. It is therefore enough to prove a
modularity lifting theorem for~$R^\univ_F$. 
This allows us to reprove many cases of the theorem
of~\cite{MR1272977}, in the following way. Suppose for simplicity that
for each prime~$l\ne p$, both the original modular
representation~$\rho_f|_{G_{\Ql}}$ and the local
representation~$\rho_l$ are finitely ramified (that is, they become
unramified after restriction to a finite extension of~$\Ql$). Then we
may choose a finite solvable totally real extension~$F/\Q$ so that the
restrictions to the finite places of~$F$ of these representations are
actually unramified. In particular, the corresponding restrictions of
$\rho_f|_{G_{\Ql}}$ and the local representation~$\rho_l$ lie on the
same component of the corresponding local deformation ring (indeed,
the unramified local deformation ring is formally smooth).

By solvable base change, $\rho_f|_{G_F}$ is modular, and by
the choice of $F$, it gives a point of $R^\univ_F$.
A modularity lifting theorem then shows that $R^\univ_F$ is a finite
$\Zp$-algebra. Thus the same is true of $R^\univ$,
and so $R^\univ$ has $\Qpbar$-points, which give the
sought-after Galois representations; the modularity of these
representations again follows from solvable base change. The more
general case, in which the ramification can be potentially unipotent,
can be handled in the same way when given some level-raising and
level-lowering results over~$F$; by choosing~$F$ appropriately, one
reduces to a relatively straightforward case (see~\cite{MR1815248}). 

A variant on this argument makes it possible to state and prove
results about Galois representations that make no reference to
automorphic forms (although the proofs make heavy use of automorphic
techniques). To this end, rather than assuming that~$\rhobar$ is
modular, assume only that it is irreducible and odd, in the sense
that~$\rhobar(c)$ is non-scalar, where~$c$ is a complex
conjugation. (Of course, since Serre's conjecture is a theorem, this
implies that~$\rhobar$ is modular, but we can and will make an
analogous assumption in more general contexts where the analogue of
Serre's conjecture is open.) Then the same Galois cohomology
calculations go through, and if we want to produce lifts of~$\rhobar$
with specified local properties, it is enough by the above arguments
to find a finite (not necessarily solvable) extension of totally real fields~$F/\Q$ for
which~$\rhobar|_{G_F}$ is modular (that is, it comes from a Hilbert
modular form).

An argument of
Taylor~\cite{MR1954941,MR2290604} can be used to
prove such ``potential modularity'' theorems. The idea is as follows:
one can find a moduli space whose $F$-points 
correspond to abelian varieties, part of whose $p$-torsion is
isomorphic to~$\rhobar|_{G_F}$, and the corresponding part of whose
$l$-torsion, for some fixed prime~$l\ne p$, is isomorphic to an
induction of a character. Since inductions of characters are always
modular, in favourable circumstances one can use modularity lifting
theorems to prove that (part of) the $l$-adic Tate module of the
corresponding abelian variety is modular, and thus that (part of) the
$p$-adic Tate module is modular, and finally that~$\rhobar|_{G_F}$ is
modular. That $F$-points exist for~$F$ sufficiently large follows from
a theorem of Moret--Bailly, 
which also allows one to
impose the kinds of local conditions that are needed in order to apply
modularity lifting theorems.

As well as producing lifts of~$\rhobar$ with specified
local properties, it turns out that potential modularity allows one
to prove that that each $p$-adic representation~$\rho$ that is
constructed in this way is
part of a compatible system of $l$-adic representations. Indeed, this
property is automatic for Galois representations associated to
automorphic forms, so that~$\rho|_{G_F}$ is part of a compatible
system. By solvable base change, the same is true for~$\rho|_{G_{F'}}$
whenever $F/F'$ is a solvable extension of totally real fields, and an
argument with Brauer's theorem (see~\cite{dieulefait2}) makes it possible to put these together
to give the required compatible system.

We now digress  briefly to discuss another aspect of
compatible systems that will be of fundamental importance throughout
this paper. In general, it seems to be hopeless to understand the
components of local potentially semistable deformation rings of
mod~$p$ representations in any concrete way, and this in turn places
serious restrictions on automorphy lifting theorems. However, it is
possible to understand them in the Fontaine--Laffaille case, 
which by definition is case that~$p$ is unramified in
the base field, and the weight is small relative to~$p$. It is also very hard to prove automorphy lifting
theorems when the global mod~$p$ Galois representation has small image
(in particular, when the image is reducible). This can make it very
difficult to prove the potential automorphy of a given Galois
representation (or its automorphy, even if we know that it is
residually automorphic). However, if instead we are given a weakly
irreducible compatible
system of $l$-adic Galois representations,
then for all large~$l$, the
$l$-adic representation will be Fontaine--Laffaille, and its residual
image will be suitably large. Accordingly, one can hope to prove the potential
automorphy of the $l$-adic representation for some~$l$, and
immediately deduce the potential automorphy for all~$l$. As we now
explain, this has been carried out in considerable generality. In our
arguments, this will allow us to use compatible systems of
Galois representations as a kind of proxy for automorphic
representations, without assuming the Fontaine--Mazur conjecture.

We briefly review the history of higher-dimensional potential
automorphy theorems. Many of the arguments discussed above
were generalized to polarizable (that is,
essentially conjugate self-dual) $n$-dimensional Galois
representations of CM fields in the
papers~\cite{CHT,MR2470688,MR2630056} (the corresponding automorphic
representations being those on general unitary groups). In
particular,~\cite{MR2470688} uses automorphy lifting techniques to
prove the kind of level raising/lowering results that we applied after
base change above, and the third author's paper~\cite{MR2785764}
deduced an $n$-dimensional version of the theorem of Diamond--Taylor
in low weight.

Potential modularity, while powerful, has its limitations, the chief
of which is that the method explained above only works for modular forms of weight~$2$
(that is, for Galois representations with Hodge--Tate weights~$0,1$),
because for reasons of Griffiths transversality, the required moduli
spaces only exist in this case. Allied to this  is the difficulty
mentioned above that the deformation rings for~$\rhobar_{| G_{\Qp}}$ are
much more complicated than those for~$l\ne p$, and much less well
understood (for example, it is certainly no longer the case that
deformations are potentially unipotent on inertia). Indeed, when~$k$ is large
compared to~$p$, very little is known about the components of these
deformation rings, and we do not know whether we can make base changes
to make representations lie on the same component in any generality, 
which limits our ability to change components at~$p$ (recall that for~$l\ne p$, in
the discussion above we used that any finitely ramified
representation can be made to lie on the unique unramified component
after a suitable base change). (It is however
worth noting that when~$k=2$ arguments of this kind are possible, even
for Hilbert modular forms over totally real fields in which~$p$ is
highly ramified, by the results of~\cite{MR2600871}
and~\cite{MR2280776}. These results were a crucial part of the proof
of the weight part of Serre's conjecture for Hilbert modular
forms~\cite{MR3292675}, and the lack of anything similar for
higher-dimensional representations is one of the reasons that less
is known about the weight part of Serre's conjecture in dimension
greater than~$2$.)

In higher dimensions the situation is worse; the potential automorphy
theorems of~\cite{MR2630056} apply only in weight~$0$ (that
is, the lowest discrete series; the corresponding Galois
representations have Hodge--Tate weights $0,1,\dots,n-1$), and in fact only to ordinary Galois
representations. While this was enough to prove the Sato--Tate
conjecture (by proving the potential automorphy of the symmetric
powers of the 2-dimensional Galois representations attached to
elliptic curves, which are ordinary at most primes), it falls well
short of proving the potential modularity of compatible systems of
Galois representations in any generality.
This shortcoming was resolved in~\cite{BLGGT}, which also introduced a
way of systematically changing the weight of the representations, or
more generally of moving between components of deformation rings
at~$p$. The argument involves a refinement of a method of
Harris~\cite{harris:manin} (the ``tensor product trick''), the basic
idea of which is as follows: given a global Galois representation with
regular Hodge--Tate weights, by taking the tensor product with
another representation, one can produce a representation (of much
higher dimension) of weight~$0$. It is then possible to apply
potential modularity techniques to this representation. 

Of course, to
be useful, one has to have a way of ``undoing'' the tensor product
again (on both the Galois and automorphic sides). In general, this is a hard problem, but if the representation we
tensor with is the induction of a character, it turns out to be
relatively straightforward, essentially because the tensor product of
an $n$-dimensional representation with the induction of a character is
itself the induction of an $n$-dimensional representation.

We also obtain a way of moving between weights and between components
of deformation rings at places dividing~$p$, in the following
way. Let~$\rhobar:G_F\to\GL_n(\Fpbar)$ be our original mod~$p$
representation, and let~$r:G_F\to\GL_n(\Qpbar)$ be induced from a
character. Fix a deformation ring~$R^\univ_1$ for $\rhobar$
corresponding to certain given local conditions. Given a
deformation~$\rho$ of~$\rhobar$, we obtain a
deformation~$\rho\otimes r$ of~$\rhobar\otimes\rbar$, and we
let~$R_2^\univ$ be the corresponding deformation ring. This procedure
makes~$R^\univ$ into an~$R_2^\univ$-algebra, and an argument analogous
to the one of Khare--Wintenberger that we mentioned above shows that
it is even a finite~$R^\univ_2$-algebra. 

It is not immediately obvious that this buys us anything, as to apply
the techniques we used above we will need to prove an automorphy
lifting theorem for~$R^\univ_2$. To this end, suppose that we also have another deformation
ring~$R^\univ_3$ for~$\rhobar$, and another lift~$r'$  of~$\rbar$, and
that the deformation problem corresponding to the tensor product of a
deformation for~$R^\univ_3$ with~$r'$  again corresponds to~$R^\univ_2$. If~$r$ and~$r'$ are both inductions of characters, and
if we know that~$R^\univ_3$ has automorphic points, then we can prove
an automorphy lifting theorem for~$R^\univ_2$, deduce its finiteness
over~$\Zp$, and then prove the existence of lifts (and automorphy
lifting) for~$R^\univ_1$, the original problem of interest.

It may not be obvious that this is generally applicable, but in fact
in combination with base change techniques it gives enough flexibility
to prove the potential automorphy of compatible families, by moving
between Fontaine--Laffaille and ordinary weight~$0$ deformation problems. A little
thought shows that this argument allows us to move freely between
components of local deformation rings at places dividing~$p$, provided
that the corresponding representations are \emph{potentially
  diagonalizable}, in the sense that after some base change, the
components contain a point which is a direct sum of crystalline
characters.  It turns out to be straightforward to show that
Fontaine--Laffaille representations and ordinary representations are
potentially diagonalizable, giving the claimed potential 
automorphy result. (The most general
result about the existence of Galois representations proved
in~\cite{BLGGT} is essentially Theorem~\ref{thm: main theorem on
  existence of lifts - intro version} above, but with ``globally
realizable'' replaced by ``potentially diagonalizable''.)

While this gives a general potential automorphy result for compatible
systems, it is unsatisfactory as an answer to our original lifting
question, due to the restriction to potential diagonalizable representations. This
restriction is problematic for two reasons: firstly, beyond the
Fontaine--Laffaille and ordinary cases, nothing is known about the
potential diagonalizability or otherwise of $n$-dimensional
representations. (It is quite plausible that all potentially crystalline
representations are potentially diagonalizable, but this seems to be
very hard to establish.) In addition, potentially diagonalizable
representations are by definition potentially crystalline, so they do
not (unlike Theorem~\ref{thm: main theorem on
  existence of lifts - intro version}) tell us anything about general potentially semistable
representations.

\subsection{A sketch of the proofs}\label{subsec: sketch of proofs}
We now briefly sketch the proof of Theorem~\ref{thm: main theorem on
  existence of lifts - intro version}, omitting many of the
trickier technical details; in particular, we completely ignore places
away from~$p$ in our discussions, as the difficulties they present are
similar to, but simpler than, the difficulties for places
dividing~$p$. We also suppress all mention of choices of polarization. In essence, our idea is to
go beyond the potentially diagonalizable case, by allowing ourselves
to tensor with representations that are not necessarily induced from
characters. There are some obvious difficulties with this approach,
chief among them that on both the automorphic and Galois sides it is
hard to ``undo'' a tensor product. We overcome this by using
compatible systems, rather than individual Galois representations.

Our first technique is a variant of the argument of~\cite{BLGGT}
explained above, that of tensoring with auxiliary representations in
order to move between different components. However, for our purposes
it is insufficient to tensor with a fixed global
representation. Instead, we put ourselves in the following situation:
suppose given representations~$r,s:G_F\to\GL_n(\Qpbar)$ which belong to
 compatible systems, and suppose for simplicity that for each
place $v|p$, we have
$\rbar|_{G_{F_v}}\cong\sbar|_{G_{F_v}}$. Let~$A_v$ be the component
corresponding to~$r|_{G_{F_v}}$, and let~$B_v$ be the component
corresponding to~$s|_{G_{F_v}}$. For each~$v$, let $C_v$ be one
of~$A_v$ and~$B_v$, and let~$D_v$ be the other. Let~$R_C$ and~$R_D$ be
the corresponding global deformation rings for~$\rbar$ and~$\sbar$
respectively. (We should really be considering prolongations
of these representations to $\cG_n$-valued representations
of~$G_{F^+}$, but we ignore this point for the purposes of this introduction.)

We would like to produce representations $r',s'$ corresponding to points
of~$R_C$ and~$R_D$, which belong to compatible systems; in this way we
will be able to swap components between different residual
representations. We initially accomplish this under very restrictive
hypotheses, which we will later relax; note that we certainly need to
assume at first that $\rbar\otimes\sbar$ is irreducible, and
that~$r\otimes s$ has regular Hodge--Tate weights. Now, the
deformation ring~$R^\univ$ corresponding to~$r\otimes s$ is finite
over~$\Zp$, because~$r\otimes s$ is part of a compatible system, and
thus potentially automorphic. Taking the tensor product of
representations coming from~$R_C$ and~$R_D$ makes
$R_C\widehat{\otimes}R_D$ into an~$R^\univ$-algebra, and we are able
to show that it is in fact a finite $R^\univ$-algebra, so that in the
same way as before, we can see that~$R_C$, $R_D$ both
have~$\Qpbar$-points, which will correspond to the lifts $r',s'$ that we want
to produce. 

However, we need to show that these lifts belong to compatible
systems. The tensor product $r'\otimes s'$ does belong to a compatible
system of representations~$\{t_l\}$ (because it corresponds to a point of~$R^\univ$, and is thus
potentially automorphic), so we would like to have a result ensuring that
if one representation in a compatible system is a tensor product, they
all are. This should be true quite generally, but it
seems hard to prove, and we only establish it under rather restrictive
hypotheses (which are ultimately sufficient for our needs). 

In outline, we argue as follows. We assume that the Zariski closure of
the image of~$s'$ contains~$\SL_n(\Qpbar)$. 
 Under a strong
irreducibility hypothesis (which can be arranged at infinitely many primes~$l$
by imposing local
conditions), we can show that the representations~$t_l$ decompose
accordingly as  tensor products $t_l=r'_l\otimes s'_l$, where the Zariski
closure of the image of~$s'_l$  contains~$\SL_n(\Qlbar)$.  Here
we make crucial use of a result of Larsen and Pink~\cite{MR1150604}.
Then, if~$l$ is
sufficiently large, the representations~$r'_l$ and~$s'_l$ belong to
compatible families by the results of~\cite{BLGGT}; this is the only
place that we need our assumption on~$n$, as we have to know
that~$r'_l$ and~$s'_l$ are odd in the sense of~\cite{MR2834723}. This
oddness is automatic if~$n$ is odd, and can be proved if~$n=2$ by the
methods of~\cite{MR2810794,MR2869026}, but it seems to be beyond the
reach of current technology if $n>2$ is even. Our assumptions then
show that (after possibly twisting) the~$p$-adic representations in
these compatible systems are~$r'$ and~$s'$, as required.

With this component swapping result in hand, the basic outline of the
proof of Theorem~\ref{thm: main theorem on existence of lifts - intro
  version} is as follows. Suppose for simplicity that there is only a
single place~$v|p$ of~$F$, let~$r$ be some lift of~$\rbar$, and let~$C$ be the given globally realizable
component. By definition, this
means that there is another CM field~$L$, a place~$w|p$ and a
representation~$s$ (which is part of a compatible system) such
that~$s|_{G_{L_w}}$ lies on~$C$. Then if we apply our swapping result
to~$r|_{G_{FL}}$ and~$s|_{G_{FL}}$, we can produce a lift
of~$\rbar|_{G_{FL}}$ which lies on the correct component at some place
over~$v$. If we could produce a lift with this property at all places
above~$v$, then we would be done by the usual Khare--Wintenberger
method; in order to do this, we replace~$FL$ with its Galois
closure, and~$s$ with its various Galois conjugates, and then
inductively apply the swapping result to each of these conjugates in
turn.

Unfortunately, the actual argument is much more complicated than
this straightforward outline. The problem is that all of the results
that we are applying have hypotheses that we have been ignoring; for
example, we need~$\rbar\otimes\sbar$ to be irreducible, we
need~$r\otimes s$ to have regular Hodge--Tate weights, and we need to
satisfy the restrictive hypotheses of our main swapping result, which
are a mixture of local and global assumptions. 

We are able to handle the various local assumptions away from~$p$ by
more base change tricks, but these cannot help with the global
problems. Since the fields~$F,L$ and the representations~$\rbar,\sbar$
are arbitrary, we cannot hope to arrange that their restrictions to
the Galois closure of~$FL$ are irreducible. Instead, we make use of an
idea introduced in~\cite{MR2869026}, and use the theorem of
Moret--Bailly mentioned above to construct auxiliary global
representations with large image, which locally admit potentially
diagonalizable lifts of arbitrarily large weights.

These representations are constructed over extensions
of~$F,L$ that we have little control over, and we have to go to some
lengths to ensure that we can arrange all of the properties we
need. Rather than swapping directly between~$\rbar$ and~$\sbar$, we
instead make a long chain of swaps, going via many auxiliary
representations, and making many base changes and descents by the
Khare--Wintenberger argument.

\subsection{Acknowledgements}
The authors are grateful to Florian Herzig and the anonymous referee for their close readings of an
earlier version of the paper, and their many helpful remarks
and suggestions, which have improved its readability. We particularly
thank the referee for pointing out a gap in the proof of
Theorem~\ref{thm: first factoring theorem} in the case~$m=n=2$, which
led  us to add Lemma~\ref{added}.

\subsection{Notation, conventions, and background material}\label{subsec:
  notation}
  All representations considered in this paper are assumed to be
continuous with respect to the natural topologies, and we will never
draw attention to this.  If~$M/F$ is an extension of number fields, then we will
write~$M^{\gal}$ for the Galois closure of~$M$ over~$\Q$, and~$\Mg$
 for
the Galois closure of~$M$ over~$F$.
 As
usual, if~$K$ is a field of characteristic zero, then we write
$G_K=\Gal(\overline{K}/K)$ for its absolute Galois group, and if~$K$
is furthermore a local field, we write~$I_K$  for the inertia subgroup of~$G_K$.

\subsubsection{Polarizable representations}\label{subsubsec: polarizable
  repns}We begin by recalling some definitions and results
from~\cite{CHT,BLGGT} concerning polarizable representations. 

Recall from~\cite{CHT} that the reductive group $\cG_n$
over~$\Z$ is given by the semi-direct product of $\cG_n^0=\GL_n \times \GL_1$ by the group $\{1 , \jmath\}$
where 
\[ \jmath \kern+0.1em{(g,a)} \jmath^{-1}=(a \cdot {}^tg^{-1},a). \]
 We let $\nu:\cG_n \to \GL_1$ be the character which sends $(g,a)$ to $a$ and sends $\jmath$ to $-1$.

Let~$\Gamma$ be a group, with an index~2 subgroup~$\Delta$. Fix an
element~$\gamma_0\in\Gamma\setminus \Delta$. Let~$R$ be a
(commutative) ring. Then by ~\cite[Lem.\
2.1.1]{CHT}, there is a natural bijection between:
\begin{itemize}
\item the set of homomorphisms $\rho:\Gamma\to\cG_n(R)$ which induce
  isomorphisms $\Gamma/\Delta\isoto\cG_n/\cG_n^0$, and
\item the set of triples $( r,\mu,\langle \, ,\rangle )$ consisting of
  homomorphisms $r:\Delta\to\GL_n(R)$ and~$\mu:\Gamma\to R^\times$,
  and a perfect $R$-linear pairing
  \[\langle\, ,\rangle:R^n\times R^n\to R\]
  which for all $x,y\in R^n$ and $\delta\in\Delta$ satisfies
  \begin{itemize}
  \item $\langle x,r(\gamma_0^2)y\rangle=-\mu(\gamma_0)\langle
    y,x\rangle$, and 
  \item
	 $ \langle r(\delta)x,r(\gamma_0\delta\gamma_0^{-1})y\rangle
	 =
\mu(\delta)\langle x,y\rangle.$
  \end{itemize}
\end{itemize}
This correspondence is given by taking $r= \text{ proj.\ onto
	first factor of }\cG_n^{\circ}\text{ of } \rho|_\Delta$, and
$\mu=\nu\circ \rho$, and setting \[\langle x,y\rangle={}^tx
  A^{-1}y,\]where $\rho(\gamma_0)=(A,-\mu(\gamma_0))\jmath$. We say
that the pair~$(r,\mu)$ is \emph{polarized}\footnote{This is established
	terminology, and so we use it here.  Note though that
	in general the pair $(r,\mu)$ may not determine
	the pairing $\langle \, , \rangle $ uniquely, even
       up to a scalar multiple.  However, if $R$ is a field
and $r$ is absolutely irreducible, then, as we will observe below,
the pairing $\langle \, , \rangle$ {\em is} uniquely determined
up to a scalar.} and that~$r$
is~\emph{polarizable}, and is $\mu$-\emph{polarized}.
If we are given a polarized pair $(r,\mu)$, then we will sometimes
refer to a corresponding homomorphism $\rho: \Gamma \to \cG_n(R)$
(which depends on the choice of $\gamma_0$, as well as on a choice
of pairing $\langle \, , \rangle$ witnessing the polarizability
of $(r,\mu)$) as a {\em prolongation} of the pair $(r,\mu)$.

Given a polarized pair~$(r,\mu)$, we call~$\mu$ the \emph{multiplier character} of the
pair~$(r,\mu)$. 
Given two polarized representations~$(r_1,\mu_1)$
and~$(r_2,\mu_2)$, there is a polarized
representation~$(r_1\otimes r_2,\delta_{\Gamma/\Delta}\mu_1\mu_2)$,
where~$\delta_{\Gamma/\Delta}$ denotes the 
 unique non-trivial character of~$\Gamma/\Delta$ (see~\cite[\S
 1.1]{BLGGT} for the explicit description of this construction as an
 operation on $\cG_n$-valued representations).

Suppose now that~$R$ is a complete local Noetherian ring, that
$r:\Delta\to\GL_n(R)$ is such that $r\bmod \m_R$ is absolutely irreducible, and that $\mu:\Gamma\to R^\times$ is a
character such that 
\numequation
\label{eqn:twisted iso}
r^{\gamma_0}\cong r^\vee\otimes\mu|_\Delta. 
\end{equation}
Giving such an isomorphism is equivalent to giving a pairing
  \[\langle\, ,\rangle:R^n\times R^n\to R\]
  which for all $x,y\in R^n$ and $\delta\in\Delta$ satisfies
	 $ \langle r(\delta)x,r(\gamma_0\delta\gamma_0^{-1})y\rangle
	 =
\mu(\delta)\langle x,y\rangle$
  for all $x,y\in R^n$ and $\delta\in\Delta$.
Since the isomorphism~(\ref{eqn:twisted iso})
is unique up to scaling by elements of $R^{\times}$
(because of our assumption that $r\bmod \m_R$ is absolutely irreducible),
we see that the corresponding pairing $\langle\, , \rangle$ is also
unique up to scaling.
In particular, if $(r,\mu)$ is polarized, 
then the pairing~$\langle\, , \rangle$ that yields a 
prolongation of $(r,\mu)$ is unique up to scaling.

If $\rho$ is one particular prolongation, corresponding to a pairing
$\langle\, , \rangle$, then we see that conjugating $\rho$ by
the element $(1,\lambda^{-1}) \in 
\GL_n(R)\times\GL_1(R) = \cG_n^{\circ}(R) \subset
\cG_n(R)$
scales $\langle \, , \rangle$ by $\lambda$;
the relevant computation is that
$$\begin{aligned} (1,\lambda^{-1})(g,a) \jmath (1,\lambda)
= & \ (1,\lambda^{-1})(g,a) \jmath (1,\lambda) \jmath^{-1} \jmath \\ 
 = & \ (1,\lambda^{-1})(g,a) (\lambda,\lambda) \jmath \\ 
 = & \ (\lambda g,a) \jmath. \end{aligned}$$
Thus we see that all possible prolongations of $(r,\mu)$
are obtained from the given
prolongation $\rho$ by such conjugations.
We also see that the possible pairings arising from the choice
of a prolongation are independent of the choice of the element
$\gamma_0 \in \Gamma\setminus \Delta$ used to construction the
bijection described above between (certain) homomorphisms
$\rho:\Gamma \to \cG_n(R)$ and (certain)
triples $(r,\mu,\langle \, , \rangle).$

We now consider the particular case that~$\Gamma=G_{F^+}$,
$\Delta=G_F$, where~$F$ is a (totally complex) CM field with maximal totally real
subfield~$F^+$. 
We say that the pair~$(r,\mu)$ is \emph{polarized and odd} if it is
polarized, and for all complex conjugations~$c\in G_{F^+}$, we have
~$\mu(c)=-1$. In particular, we have the following standard lemma.

\begin{lem}\label{lem: odd implies odd}
Suppose that the characteristic of~$R/\m_R$  is not~$2$.
  If~$(r,\mu)$ is polarized, ~$n$ is odd, and~$r\bmod \m_R$ is
  absolutely irreducible, then~$(r,\mu)$ is automatically odd. 
\end{lem}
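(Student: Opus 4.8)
The plan is to reduce the statement to the elementary fact that a nondegenerate antisymmetric bilinear form over a field of characteristic $\neq 2$ can only exist on an even-dimensional space. Fix a complex conjugation $c\in G_{F^+}$; it has order $2$ and does not lie in $G_F$. Since $(r,\mu)$ is polarized, choose a prolongation $\rho\colon G_{F^+}\to\cG_n(R)$; because $\rho$ induces an isomorphism on component groups and $c\notin G_F$, we may write $\rho(c)=(A,-\mu(c))\jmath$ for some $A\in\GL_n(R)$ (here using $\mu=\nu\circ\rho$ and $\nu(\jmath)=-1$). Expanding the identity $\rho(c)^2=\rho(c^2)=1$ with the relations $\jmath(g,a)\jmath^{-1}=(a\cdot{}^tg^{-1},a)$ and $\jmath^2=1$ yields both $\mu(c)^2=1$ and ${}^tA=-\mu(c)A$; equivalently, in terms of the associated perfect pairing $\langle x,y\rangle={}^txA^{-1}y$ attached to $\rho$ in \cite[Lem.\ 2.1.1]{CHT}, one has $\langle x,y\rangle=-\mu(c)\langle y,x\rangle$ for all $x,y\in R^n$ (this also follows directly from the first bullet of that correspondence, taking $\gamma_0=c$ and using $r(c^2)=\mathrm{Id}$).

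Next I would use that $R$ is local with residue characteristic $\neq 2$, so $2\in R^\times$ and $\mu(c)-1$, $\mu(c)+1$ cannot both lie in $\m_R$; since their product is $0$, one of them vanishes and $\mu(c)\in\{1,-1\}$. It remains to rule out $\mu(c)=1$. In that case ${}^tA=-A$, and reducing $A$ modulo $\m_R$ gives a matrix over the field $R/\m_R$ that is invertible (since $\det A\in R^\times$) and antisymmetric; as $2$ is invertible there, this matrix is alternating, forcing its size $n$ to be even, contrary to hypothesis. Hence $\mu(c)=-1$, and since $c$ was arbitrary among the complex conjugations, $(r,\mu)$ is odd.

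I do not anticipate a substantive obstacle: the argument is essentially linear algebra inside $\cG_n$. The only points that need care are that the ``antisymmetric $\Rightarrow$ even dimension'' conclusion must be drawn after reducing modulo $\m_R$ (it can fail over a general ring), and that the step $\mu(c)^2=1\Rightarrow\mu(c)=\pm1$ relies on $R$ being local with $2$ invertible. Incidentally, the absolute irreducibility of $r\bmod\m_R$ is not actually needed for this lemma; what is used is only the existence of a prolongation, i.e.\ of a pairing witnessing the polarizability of $(r,\mu)$.
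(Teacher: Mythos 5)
Your proof is correct, and it is a streamlined version of the argument in the paper. Both proofs ultimately come down to the elementary fact that a nondegenerate alternating form over a field of characteristic $\neq 2$ lives on an even-dimensional space. The paper proceeds by first invoking the statement that, for a residually absolutely irreducible action preserving a perfect pairing up to scaling in odd dimension, the pairing must be \emph{symmetric}, and then substitutes this into the identity $\langle x,y\rangle = -\mu(c)\langle y,x\rangle$ to read off $\mu(c)=-1$. You instead extract $\mu(c)^2=1$ (either from applying the identity twice and using perfectness of the pairing, or directly from expanding $\rho(c)^2=1$ in $\cG_n$), use the local hypothesis with $2\in R^\times$ to get $\mu(c)\in\{\pm1\}$, and then kill the case $\mu(c)=1$ by reducing $A$ modulo $\m_R$ and appealing to the alternating-form fact directly. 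This bypasses the Schur-type uniqueness argument entirely, and your parenthetical observation is right: once one already has the relation $\langle x,y\rangle = -\mu(c)\langle y,x\rangle$ from $c^2=1$, the residual absolute irreducibility hypothesis is not doing any work in this lemma --- all that is needed is the existence of the pairing, i.e.\ the polarization. The paper's framing via absolutely irreducible actions would be the natural tool if one did not already know that the pairing was symmetric or antisymmetric, but here that dichotomy is forced directly by $\mu(c)^2=1$.
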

\begin{proof}
This follows from the fact that any odd-dimensional
perfect pairing that is preserved up to scaling by a residually
absolutely irreducible
group action (in characteristics other than~$2$) is necessarily symmetric.
Indeed, let~$c$ be any complex conjugation,
take $\gamma_0$ equal to $c$,
and let $\langle \, , \rangle$ 
denote the pairing arising from a choice of prolongation.
Since $c^2 = 1$, 
we find (using the first of the properties satisfied by the pairing
arising from a prolongation)
that $\langle x,y\rangle = -\mu(c) \langle y,x\rangle.$
On the other hand, as we already remarked, the pairing is
necessarily symmetric.  Thus we find that $\mu(c) = -1$,
as required.
\end{proof}

We can restrict a global representation $\rho:G_{F^+}\to\cG_n(R)$ to
the decomposition group~$G_{F_v^+}$ of any finite place~$v$
of~$F^+$. Note that if $v$ is inert or ramified in~$F$,
then~$G_{F_v^+}$ is not contained in~$G_F$, so we are in the situation
above with~$\Gamma=G_{F^+_v}$ and~$\Delta=G_{F_v}$. If however~$v$
splits in~$F$ then $G_{F^+_v}$ is contained in~$G_F$, so that
$\rho(G_{F^+_v})\subset \cG_n^0(R)=\GL_n(R)\times R^\times$, so that
($\mu$ being fixed)
the data of the 
representation~$\rho$
is the same as
the data of the corresponding
representation $r:G_{F^+_v}\to\GL_n(R)$.

\subsubsection{Compatible systems}Let $F$ be a number field. We 
recall some definitions from~\cite[\S5]{BLGGT}. Note that what
we call a ``compatible system'' is a ``weakly compatible system''
in~\cite{BLGGT}.

By a {\em  compatible system} $\CR$
{\em of} $n$-dimensional representations of~$G_F$ {\em defined over} $M$ we shall mean a $5$-tuple 
\[ (M,S,\{ Q_v(X) \}, \{r_\lambda \}, \{H_\tau\} ) \]
where
\begin{enumerate}
\item $M$ is a number field.
\item $S$ is a finite set of primes of $F$.
\item for each  prime $v\not\in S$ of $F$, $Q_v(X)$ is a monic degree $n$
polynomial in $M[X]$.
\item for each prime $\lambda$ of $M$ (with residue characteristic $l$, say) 
\[r_\lambda:G_F \to \GL_n(\overline{M}_\lambda) \]
is a semisimple representation such that 
\begin{itemize}
\item if $v \not\in S$ is a prime of~$F$ and $v \nmid l$, then $r_\lambda$
is unramified at $v$ and $r_\lambda(\Frob_v)$ has characteristic
polynomial $Q_v(X)$.
\item if $v|l$ then $r_\lambda|_{G_{F_v}}$ is de Rham  and in the case $v \not\in S$ crystalline.
\end{itemize}
\item for $\tau:F \into \overline{M}$, $H_\tau$ is a multiset of $n$ integers such that 
for any $\overline{M} \into \overline{M}_\lambda$ over $M$, the
$\tau$-labelled Hodge--Tate weights of~$r_\lambda$ are~$H_\tau$.
\end{enumerate}
We will call $\CR$ {\em regular} if for each $\tau:F \into \overline{M}$ every element of $H_\tau$ has multiplicity~$1$. We will refer to a rank $1$  compatible system of representations as a  compatible system of characters.

\begin{remark} 
	\label{rem:unramified for compatible systems}
By abuse of terminology, we refer to a collection of Galois representations~$\{r_{\lambda}\}$  as a compatible system if it   extends to a~$5$-tuple~$\CR$ as above. In this case, we say that the compatible system~$\{r_{\lambda}\}$ is unramified outside~$S$ if it extends to such a $5$-tuple
with the given finite set~$S$.  Note that if~$\{r_{\lambda}\}$ is unramified outside~$S$, then the individual
representations~$r_{\lambda}$ are unramified outside~$S \cup \{v|l\}$ where~$\lambda$ has residue
characteristic~$l$.
\end{remark}

\begin{rem}\label{rem: unram outside of S for extensions.} 
	  By a slight abuse of terminology, if~$F'/F$ is a finite extension of
  number fields, and~$S$ is a finite set of places of~$F$, then we
  will sometimes say that a compatible system of representations
  of~$G_{F'}$ is unramified outside~$S$ if it is unramified outside of
  the set of places of~$F'$ lying over~$S$. Similarly, we will say
  that an extension~$F''/F'$ is unramified outside of~$S$ if it is
  unramified at all places of~$F'$ not lying over a place of~$S$, and
  so on. In particular, we will apply frequently apply this convention
  to quadratic extensions $F/F^+$ where~$F$ is CM with maximal totally
  real field~$F^+$.
\end{rem}

Note that if $M'/M$ is a finite extension, then a compatible
system defined over $M$ naturally determines a compatible
system with $M'$-coefficients. We regard these two compatible systems
as equivalent. Similarly one can enlarge~$S$, and we also regard
compatible systems associated in this way as equivalent. We may then
consider the equivalence classes of the equivalence relation generated
by these equivalences, and it follows
easily from~\cite[Lem.\ 1.1]{MR3314824} that to each equivalence class of compatible
systems is associated a minimal choice of~$M$, namely the field
generated by the coefficients of the polynomials~$Q_v(X)$. 

For this reason, we generally suppress~$M$ in the below. Somewhat abusively,
we shall often assume that~$M$ comes with a fixed
embedding~$M \hookrightarrow \Qbar_p$ for each prime~$p$, and hence talk of {\em the}~$p$-adic
representation
$$s: G_F \rightarrow \GL_n(\Qbar_p)$$
associated to~$\{s_{\lambda}\}$. 

We also introduce the following convenient short-hand terminology.

\begin{defn}
  \label{defn: FL representations}Let~$F$ be a number field, and let
  $r:G_F\to\GL_n(\Qlbar)$ be a representation. Then we say that~$r$ is
  Fontaine--Laffaille, or Fontaine--Laffaille at all primes dividing $l$
(for emphasis), if~$l$ is unramified in~$F$, and for all
  $\tau:F\into\Qlbar$, the~$\tau$-labelled Hodge--Tate weights of~$r$
  are contained in an interval of length~$(l-2)$ (the precise interval
  possibly depending on~$\tau$).
\end{defn}

  Note that in
  particular if~$\{r_\lambda\}$ is a compatible system, then all but
  finitely many of the~$r_\lambda$ are Fontaine--Laffaille.

If $F$ is CM (in this paper all CM fields are imaginary), we denote
its maximal totally real subfield by~$F^+$. 
If $F$ is CM, and if
$\CM=(M,S_{F^+},\{X-\alpha_v\}, \{ \mu_\lambda \}, \{ w \})$ is a
compatible system of characters of $G_{F^+}$, then we will call
$(\CR,\CM)$ a polarized (and odd) compatible system if for all primes
$\lambda$ of $M$ the pair $(r_\lambda,\mu_\lambda)$ is 
polarized (and odd). (Here $S_{F^+}$ denotes the set of places of $F^+$ lying
below an element of $S$.) We will call $\CR$ polarizable (and odd) if
there exists an $\CM$ such that $(\CR,\CM)$ is a polarized (and odd)
compatible system. Note that $\mu_\lambda(c_v)$ is independent
of~$\lambda$, so oddness of a polarized compatible system can be
checked at a single~$\lambda$.

Recall from~\cite[\S2.1]{BLGGT} that a \emph{polarized automorphic
  representation} of $\GL_n(\A_F)$ is a pair~$(\pi,\chi)$ consisting
of an automorphic representation~$\pi$ of~$\GL_n(\A_F)$, and a 
character~$\chi:\A_{F^+}^\times/(F^+)^\times\to\C^\times$
with~$\chi_v(-1)=(-1)^n$ for all $v|\infty$, such that
$\pi^c\cong\pi^\vee\otimes(\chi\circ\mathbf{N}_{F/F^+}\circ\det)$. We
say that an automorphic representation~$\pi$ of~$\GL_n(\A_F)$ is
\emph{polarizable} if there exists a~$\chi$ such that~$(\pi,\chi)$ is polarized.

If~$(\pi,\chi)$ is a regular algebraic cuspidal polarized
automorphic representation of~$\GL_n(\A_F)$, then there is an associated polarized
and odd
compatible system $(\{r_\lambda(\pi)\},\{\varepsilon^{1-n}r_\lambda(\chi)\})$, as explained in~\cite[\S
5.1]{BLGGT}. (See also ~\cite[Thm.\ 2.1.1]{BLGGT}; note
that~$\varepsilon$ denotes the cyclotomic character,
and~$r_\lambda(\chi)$ is the compatible system associated to~$\chi$,
regarded as an automorphic representation of~$\GL_1(\A_{F^+})$. The
assumption that~$\chi_v(-1)=(-1)^n$ ensures the oddness of this
compatible system.) We say that the pair of compatible systems~$(\{r_\lambda(\pi)\},\{\varepsilon^{1-n}r_\lambda(\chi)\})$ is
automorphic.
 We write~$r_p(\pi)$ for the associated $p$-adic
representation, and we say that a representation
$r:G_F\to\GL_n(\Qpbar)$ is automorphic if it is isomorphic to
$r_p(\pi)$ for some~$\pi$; note that a compatible system is
automorphic if and only if for some prime~$p$, its associated $p$-adic representation is
automorphic. 

The following definition is one of several closely related definitions
that one could make of what it means for a compatible system to be
potentially automorphic; conjecturally, all of these definitions are
equivalent, and are equivalent to automorphy, but this seems to be
very difficult to prove.
\begin{defn}\label{defn: potentially automorphic} If $F$ is CM,
	then we say that a pair of compatible systems~$(\{s_\lambda\},\{\psi_\lambda\})$, with the $s_\lambda$ being $n$-dimensional and
       the $\psi_{\lambda}$ being characters,
       is \emph{potentially automorphic} if for every finite Galois extension~$\Favoid/F$, there is a finite Galois extension of CM
fields $L/F$, which is linearly disjoint from~$\Favoid/F$, and is such that~$(\{s_\lambda|_{G_L}\},\{\psi_\lambda|_{G_L}\})$ is automorphic.  

Similarly, we say that a compatible system~$\{s_{\lambda}\}$
	is \emph{potentially automorphic} if it may be extended
	to a potentially automorphic pair of compatible systems~$(\{s_{\lambda}\}, \{\psi_{\lambda}\}).$
\end{defn}

\begin{defn}\label{defn: pure}
We will call $\CR$ {\em pure} (of weight $w\in\Z$) if
\begin{itemize}
\item for each $v \not\in S$, each root $\alpha$ of $Q_v(X)$ in $\overline{M}$ and each $\imath:\overline{M} \into \C$
we have 
\[ | \imath \alpha |^2 = q_v^w; \]
\item and for each $\tau:F \into \overline{M}$ and each complex conjugation $c$ in $\Gal(\overline{M}/\Q)$ we have
\[ H_{c \tau} = \{ w-h: \,\,\, h \in H_\tau\}. \]
\end{itemize}
\end{defn}

In the following definition, and throughout the body of the paper,
``density'' means ``Dirichlet density''.
\begin{defn}
	\label{defn: weakly irreducible}
We say that a compatible system~$\{s_\lambda\}$ is \emph{weakly irreducible} if there
is a positive density set of rational primes~$l$ so that for all
primes~$\lambda|l$, the representation $s_\lambda$ is 
irreducible. 
\end{defn}

One expects that the irreducibility of a single Galois representation in a
compatible system should imply the irreducibility of all representations, but
this is unknown in general.
On the other hand,  the notion of weak irreducibility turns out to be easy to work with in light of
the following results.

\begin{lem}
  \label{lem: weakly irreducible equals potentially automorphic}Let
  $F$ be CM. Then a
  regular,  odd, polarizable compatible system of
  representations of~$G_F$ is weakly irreducible if and only if it is
  potentially automorphic. 
\end{lem}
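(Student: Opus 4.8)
The plan is to prove the two implications separately; the implication ``potentially automorphic $\Rightarrow$ weakly irreducible'' is essentially formal, and the reverse implication is where the work lies. For the first, fix a choice of characters $\{\psi_\lambda\}$ witnessing polarizability and oddness, and apply Definition~\ref{defn: potentially automorphic} (with, say, $\Favoid=F$) to produce a finite Galois extension of CM fields $L/F$ and a regular algebraic cuspidal polarized automorphic representation $(\pi,\chi)$ of $\GL_n(\A_L)$ with $s_\lambda|_{G_L}\cong r_\lambda(\pi)$ for all $\lambda$. By the known irreducibility results for the $\lambda$-adic representations attached to such $\pi$ (as recalled in~\cite[\S5]{BLGGT}, together with~\cite{MR3314824}), $r_\lambda(\pi)$ is irreducible for a set of $\lambda$ of density one; for each such $\lambda$ the representation $s_\lambda$ is a fortiori irreducible, since its restriction $s_\lambda|_{G_L}$ already is. Hence $\{s_\lambda\}$ is weakly irreducible.

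For the reverse implication, suppose $\{s_\lambda\}$ is weakly irreducible, fix $\{\psi_\lambda\}$ making $(\{s_\lambda\},\{\psi_\lambda\})$ polarized and odd, and let $\mathcal{L}$ be the positive-density set of rational primes $l$ for which $s_\lambda$ is irreducible for every $\lambda\mid l$. First I would discard from $\mathcal{L}$ the finitely many primes $l$ for which $l\le 2(n+1)$, or $l$ is ramified in $F$, or $F$ contains a primitive $l$-th root of unity, or some $\lambda\mid l$ fails to be Fontaine--Laffaille; only finitely many primes are removed on the last account, since the compatible system is regular and so its Hodge--Tate weights are bounded independently of $\lambda$. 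The essential remaining step is to pass to a still-positive-density subset of $\mathcal{L}$ on which $\overline{s}_\lambda|_{G_{F(\zeta_l)}}$ is irreducible for all $\lambda\mid l$, i.e.\ on which $s_\lambda$ is \emph{reasonable} in the sense of the introduction --- the polarizability and oddness of $\overline{s}_\lambda$ being automatically inherited from those of $(\{s_\lambda\},\{\psi_\lambda\})$. This is the step I expect to be the main obstacle: it requires controlling the images $s_\lambda(G_F)$ uniformly in $l$ (using that the polarization forces the $l$-adic algebraic monodromy groups to be reductive and self-dual, hence classical, together with a comparison of $l$-adic and residual images via the results of Larsen--Pink~\cite{MR1150604}), and it is exactly the content imported from~\cite[\S5]{BLGGT} and~\cite{MR3314824}.

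With such an $l\in\mathcal{L}$ and a $\lambda\mid l$ fixed, $s_\lambda$ is reasonable, is Fontaine--Laffaille and hence (by~\cite{BLGGT}) potentially diagonalizable at all places above $l$, and $(s_\lambda,\psi_\lambda)$ is polarized and odd, so the potential automorphy theorem~\cite[Thm.\ 4.5.1]{BLGGT} applies: given any finite Galois extension $\Favoid/F$, it produces a finite Galois extension of CM fields $L/F$, linearly disjoint from $\Favoid/F$, for which $(s_\lambda|_{G_L},\psi_\lambda|_{G_L})$ is automorphic, say isomorphic to $(r_\lambda(\pi),\varepsilon^{1-n}r_\lambda(\chi))$ for a regular algebraic cuspidal polarized $(\pi,\chi)$ of $\GL_n(\A_L)$. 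Since a compatible system is determined, up to the equivalences recorded above, by any single one of its members --- two compatible systems sharing a member have the same $Q_v$ for almost all $v$, hence, being semisimple, agree at every $\lambda$ by Chebotarev --- and similarly for compatible systems of characters, this forces $(\{s_\mu|_{G_L}\},\{\psi_\mu|_{G_L}\})$ to coincide with the automorphic compatible system $(\{r_\mu(\pi)\},\{\varepsilon^{1-n}r_\mu(\chi)\})$. Hence $(\{s_\mu\},\{\psi_\mu\})$, and therefore $\{s_\mu\}$, is potentially automorphic, which completes the argument.
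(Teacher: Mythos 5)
Your argument is correct and follows essentially the same route as the paper, which simply cites~\cite[Thm.\ 1.7]{MR3314824} for the easy direction and defers the hard direction to Theorem~\ref{thm: BLGGT splitting at primes} (whose proof likewise passes through~\cite[Prop.\ 5.3.2, Thm.\ 4.5.1/5.4.1]{BLGGT} together with the observation of~\cite{MR3314824} that weak irreducibility suffices). Your write-up merely unpacks these citations, including the step of reassembling the automorphic compatible system from the automorphy of a single well-chosen $s_\lambda|_{G_L}$, which is the same device used in the proof of~\cite[Thm.\ 5.4.1]{BLGGT}.
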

\begin{proof}
  Any automorphic compatible system is weakly irreducible
  by~\cite[Thm.\ 1.7]{MR3314824}; it follows immediately that
  potentially automorphic compatible systems are also weakly
  irreducible. Conversely, a weakly irreducible compatible system is
  potentially automorphic by the results of~\cite{BLGGT}; see
  Theorem~\ref{thm: BLGGT splitting at primes} below.
\end{proof}

\begin{lem}
  \label{lem: purity of weakly compatible systems}  Let $F$ be CM, and let~$\{r_\lambda\}$ be a weakly irreducible
  regular,  odd, polarizable compatible system of
  representations of~$G_F$. Then~$\{r_\lambda\}$ is pure in
  the sense of~\emph{\cite[\S 5.1]{BLGGT}}. 
\end{lem}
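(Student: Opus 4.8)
The plan is to pass to a CM extension over which the system becomes automorphic, invoke the known purity of automorphic compatible systems, and then descend purity back to $F$ by a direct comparison of Frobenius eigenvalues and Hodge--Tate weights.

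\textbf{Reduction to the automorphic case.} By Lemma~\ref{lem: weakly irreducible equals potentially automorphic}, the compatible system $\{r_\lambda\}$ is potentially automorphic. Applying Definition~\ref{defn: potentially automorphic} with $\Favoid = F$, we obtain a finite Galois extension of CM fields $L/F$ such that the restricted system $\{r_\lambda|_{G_L}\}$, together with its restricted multiplier characters, is automorphic; here one uses that the restriction to $G_L$ of a regular, odd, polarizable compatible system is again such a system, with bad set the set of primes of $L$ above $S$. An automorphic compatible system — one attached as in \cite[\S5.1]{BLGGT} to a regular algebraic cuspidal polarized automorphic representation — is pure in the sense of Definition~\ref{defn: pure}; this is recorded in \cite[\S5.1]{BLGGT} (the archimedean bounds on the Frobenius eigenvalues being the temperedness/purity results cited there, and the behaviour of the Hodge--Tate weights under complex conjugation coming from the regularity and the polarization). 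Hence $\{r_\lambda|_{G_L}\}$ is pure of some weight $w \in \Z$.

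\textbf{Descent of purity.} It remains to verify the two conditions of Definition~\ref{defn: pure} for $\{r_\lambda\}$ with this same $w$. For the Frobenius condition, fix a prime $v \notin S$ of $F$ and a prime $w'$ of $L$ above $v$, of residue degree $f = f(w'/v)$, so that $q_{w'} = q_v^f$ and $w'$ lies outside the bad set of the $L$-system. Since $v \notin S$, for every prime $\lambda$ of residue characteristic $l$ with $v \nmid l$ the representation $r_\lambda$ is unramified at $v$, so $r_\lambda(\Frob_{w'})$ is the $f$-th power of $r_\lambda(\Frob_v)$; equivalently, each root $\beta$ of the Frobenius polynomial at $w'$ is the $f$-th power of a root $\alpha$ of $Q_v(X)$. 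Purity over $L$ gives $|\imath \beta|^2 = q_{w'}^{w} = q_v^{fw}$ for every $\imath \colon \overline{M} \into \C$, whence $|\imath\alpha|^{2f} = q_v^{fw}$, and taking positive real $f$-th roots gives $|\imath\alpha|^2 = q_v^{w}$. For the Hodge--Tate condition, given $\tau \colon F \into \overline{M}$ and a complex conjugation $c \in \Gal(\overline{M}/\Q)$, choose an extension $\tau' \colon L \into \overline{M}$ of $\tau$; then $c\tau'$ extends $c\tau$. Since the Hodge--Tate weights of a de Rham representation are unchanged by restriction to a finite extension when indexed compatibly by embeddings, we have $H_\tau = H^L_{\tau'}$ and $H_{c\tau} = H^L_{c\tau'}$, and purity over $L$ gives $H^L_{c\tau'} = \{w - h : h \in H^L_{\tau'}\}$; combining these yields $H_{c\tau} = \{w - h : h \in H_\tau\}$. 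This establishes both conditions, so $\{r_\lambda\}$ is pure of weight $w$.

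\textbf{The main obstacle.} The only non-formal ingredient is the purity of automorphic compatible systems used in the second paragraph, and this is precisely the content recorded in \cite[\S5.1]{BLGGT}; the remainder is a descent argument. Accordingly, the points requiring care are bookkeeping ones: matching the normalization of the weight $w$ between the automorphic picture and Definition~\ref{defn: pure}, checking that the bad set of the base-changed system is (contained in) the set of places above $S$, and tracking the residue-degree twist $\alpha \mapsto \alpha^{f}$ in the Frobenius eigenvalues under base change.
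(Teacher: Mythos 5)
Your proof is correct and follows essentially the same line as the paper's: both begin from Lemma~\ref{lem: weakly irreducible equals potentially automorphic} to get potential automorphy, and both then invoke the purity of (potentially) automorphic systems — the paper does this in one step by citing \cite[Cor.\ 5.4.3]{BLGGT}, which already packages the base-change-and-descend argument that you spell out explicitly (Frobenius eigenvalues as $f$-th powers, and the invariance of labelled Hodge--Tate weights under restriction).
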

\begin{proof}
  Since~$\{r_\lambda\}$ is potentially automorphic by Lemma~\ref{lem:
    weakly irreducible equals potentially automorphic}, it is pure by~\cite[Cor.\ 5.4.3]{BLGGT}.
\end{proof}

\begin{lem}
  \label{lem: PT decomposition into weakly irreducible}Let~$F$ be CM, and let~$\{r_\lambda\}$ be a 
  regular,  odd, polarizable  compatible system of
  representations of~$G_F$. Suppose furthermore that~$\{r_\lambda\}$
  is pure. Then we may
  write~$\{r_\lambda\}=\oplus_{i=1}^s\{r_{i,\lambda}\}$, where
  each~$\{r_{i,\lambda}\}$ is a weakly irreducible, regular, odd,
  polarizable compatible system of representations of~$G_F$.
\end{lem}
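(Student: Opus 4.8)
The plan is to decompose the compatible system by working at a single well-chosen prime $\lambda_0$ and then propagating the decomposition to all $\lambda$ using purity and the Brauer/compatible-system machinery of~\cite{BLGGT}. The key point is that purity gives us the Frobenius eigenvalue bounds we need to distinguish subrepresentations across the whole system, so that a semisimple decomposition at one place forces a compatible decomposition everywhere.

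\textbf{Step 1: Decompose at one prime.} First I would fix a prime $\lambda_0$ of $M$ (say of residue characteristic $l_0$) and write $r_{\lambda_0}=\oplus_{i=1}^s r_{i,\lambda_0}$ as a direct sum of irreducibles; this is possible since $r_{\lambda_0}$ is semisimple by definition of a compatible system. The goal is to show that this decomposition ``spreads out'' to a decomposition of the whole $5$-tuple $\CR$.

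\textbf{Step 2: Spread the decomposition using the results of~\cite{BLGGT}.} Since $\CR$ is regular, odd, and polarizable, each irreducible constituent $r_{i,\lambda_0}$ — after a suitable base change — is potentially automorphic by the results of~\cite{BLGGT} (invoked as Theorem~\ref{thm: BLGGT splitting at primes} below), hence extends to a compatible system $\{r_{i,\lambda}\}$. Here one uses that an irreducible summand of a polarizable representation is again polarizable (this is where one checks the pairing restricts appropriately, using that the summand pairs either with itself or with another summand). The Brauer's-theorem patching argument (as in~\cite{dieulefait2}, also used to build compatible systems in~\cite{BLGGT}) lets one assemble the $\{r_{i,\lambda}\}$ into honest compatible systems over $F$, each of which is regular, odd, and polarizable. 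Moreover each $\{r_{i,\lambda}\}$ is \emph{weakly irreducible}: at a positive density set of primes the constituents remain irreducible (for instance, for all sufficiently large $l$ the representation $r_{i,\lambda}$ is Fontaine--Laffaille with large residual image).

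\textbf{Step 3: Match $\oplus_i\{r_{i,\lambda}\}$ with $\{r_\lambda\}$.} Finally I would check that $\oplus_{i=1}^s r_{i,\lambda}\cong r_\lambda$ for \emph{every} $\lambda$, not just $\lambda_0$. Both sides are semisimple and, by construction, they have the same characteristic polynomials of $\Frob_v$ for all $v$ outside the (combined) ramification set and away from the residue characteristic — because these characteristic polynomials are determined by the $Q_v(X)$, which agree by compatibility. By the Brauer--Nesbitt theorem (Chebotarev), two semisimple representations with matching Frobenius characteristic polynomials at a density-one set of places are isomorphic. Hence $\{r_\lambda\}=\oplus_{i=1}^s\{r_{i,\lambda}\}$ as claimed, and the Hodge--Tate weights and purity of each summand follow from those of $\CR$ together with the defining properties of the compatible systems $\{r_{i,\lambda}\}$.

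\textbf{The hard part} is Step 2: extracting a genuine compatible system out of a single irreducible constituent at $\lambda_0$. The constituent is only known to be irreducible at one prime, so to invoke the potential automorphy results of~\cite{BLGGT} one must first verify that it inherits the ``reasonable'' hypotheses (polarizability in particular), and then one must run the Brauer patching to get compatibility over $F$ rather than merely over some extension; care is needed that the $s$ resulting systems are pairwise non-isomorphic or at least that their direct sum genuinely recovers $\CR$ rather than some twist. The oddness of each summand is automatic here (we are not in the situation of the component-swapping theorem where oddness of all members of a compatible system is the obstruction), since we are extending an already-odd polarizable system.
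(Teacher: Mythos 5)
The paper's proof is a one-line citation of \cite[Thm.\ 2.1]{MR3314824} (Patrikis--Taylor) together with Lemma~\ref{lem: weakly irreducible equals potentially automorphic}. Your proposal is, in effect, an attempt to reconstruct the proof of that cited theorem, and the sketch of Step~2 glosses over precisely the point that makes Patrikis--Taylor's result a genuine theorem rather than a formal consequence of~\cite{BLGGT}.

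The concrete gap is a circularity in Step~2. To promote a single irreducible constituent $r_{i,\lambda_0}$ to a compatible system via the potential-automorphy machinery (either~\cite[Thm.\ 5.5.1]{BLGGT} or Theorem~\ref{thm: BLGGT splitting at primes}), you need residual hypotheses at the chosen prime: $\rbar_{i,\lambda_0}|_{G_{F(\zeta_{l_0})}}$ irreducible, $l_0 > 2(\dim+1)$, Fontaine--Laffaille behaviour, etc. For an arbitrary $\lambda_0$ there is no reason these hold, and a base change only shrinks the residual image, so ``after a suitable base change'' does not rescue the argument. To choose a good $\lambda_0$, one would want something like~\cite[Prop.\ 5.3.2]{BLGGT}, which produces a positive-density set of primes with large residual image --- but that result presupposes you already have a weakly irreducible compatible system to apply it to. In other words, your Step~2 needs the conclusion of the lemma as an input. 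The actual proof in~\cite{MR3314824} sidesteps this by a more delicate argument: one controls the algebraic monodromy groups across the system using formal characters (Larsen--Pink) and exploits purity and regularity to show that the decomposition pattern at one prime is forced across all primes; the identification of the constituents as compatible systems is then bootstrapped, not deduced in one step from potential automorphy at a single place.

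A secondary gap you correctly flag but do not resolve: an irreducible constituent $r_{i,\lambda_0}$ of a polarized representation need not itself be polarizable --- the pairing may match it with a \emph{different} constituent $r_{j,\lambda_0}$. In that case $r_{i,\lambda_0}$ alone carries no polarization, and you must treat $r_{i,\lambda_0}\oplus r_{j,\lambda_0}$ as the unit of decomposition. Since the oddness and polarizability of the eventual $\{r_{i,\lambda}\}$ are part of the conclusion of the lemma, this cannot be brushed aside, and it interacts with the choice of $\lambda_0$ above (the pairing pattern among constituents is itself something to be shown independent of $\lambda$). So the proposal identifies the right difficulties but does not supply the ideas needed to overcome them; the paper's reliance on the cited theorem is doing real work.
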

\begin{proof}
  This is immediate from~\cite[Thm.\
  2.1]{MR3314824} and Lemma~\ref{lem: weakly irreducible equals potentially automorphic}.
\end{proof}

We will occasionally need to make use of compatibility at ramified
places. While we have not built this into our definition of a
compatible system, it follows from potential automorphy, as in the
following result.

\begin{prop}
  \label{prop: weakly irreducible implies compatibility at ramified places}
  Let $F$ be CM, and let~$\{r_\lambda\}$ be a weakly irreducible
  regular,  odd, polarizable compatible system of
  representations of~$G_F$ with field of coefficients~$M$.

Let~$v$ be a finite place of~$F$, and suppose
that~$v\nmid\mathbf N\lambda$ \emph{(}respectively,
that~$v|\mathbf N\lambda$\emph{)}. Then:
\begin{enumerate}
\item For each finite extension $K/F_v$, $r_\lambda|_{G_K}$ is
  unramified \emph{(}respectively, crystalline\emph{)} for some~$\lambda$ if and
  only if it is so for all~$\lambda$.
\item Suppose that~(1) holds. Then there is a representation $r_v$
  of~$I_{K/F_v}$ over~$\overline{M}$ such that for each~$\lambda$,
  $r_\lambda|_{I_{K/F_v}}\cong\ r_v$ \emph{(}respectively, $\WD(r_\lambda|_{G_{F_v}})|_{I_{K/F_v}}\cong\ r_v$\emph{)}.
\end{enumerate}
\end{prop}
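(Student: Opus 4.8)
The plan is to reduce the entire statement to local--global compatibility at ramified places for automorphic representations, using the potential automorphy of $\{r_\lambda\}$ as the bridge; the key point is that one should invoke the version of potential automorphy that allows one to prescribe splitting behaviour, so as to arrange that $v$ splits completely in the field over which the system becomes automorphic, turning the local question at $v$ into a local question at a place of the larger field.

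First I would note that $\{r_\lambda\}$ is potentially automorphic by Lemma~\ref{lem: weakly irreducible equals potentially automorphic}, and apply the refined potential-automorphy statement of Theorem~\ref{thm: BLGGT splitting at primes} with the prime $v$ prescribed to split. This produces a finite Galois extension of CM fields $L/F$ in which $v$ splits completely, together with a regular algebraic cuspidal polarized automorphic representation $\pi$ of $\GL_n(\A_L)$ with $r_\lambda|_{G_L}\cong r_\lambda(\pi)$ for every $\lambda$ (both compatible systems being semisimple, this follows from agreement of the characteristic polynomials of Frobenii on a density-one set of primes). Fixing a place $w\mid v$ of $L$ we have $L_w=F_v$, hence $r_\lambda|_{G_{F_v}}=r_\lambda(\pi)|_{G_{L_w}}$. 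Local--global compatibility for $\pi$ at $w$ --- as recorded in~\cite[Thm.~2.1.1]{BLGGT} and the references there, the input at places above $l$ being Caraiani's theorem --- then produces a single Weil--Deligne representation $\rho_0=\rec_{L_w}\!\bigl(\pi_w\otimes|{\det}|^{(1-n)/2}\bigr)$, depending only on $\pi_w$, with $\WD\!\bigl(r_\lambda|_{G_{F_v}}\bigr)^{F\text{-}\mathrm{ss}}\cong\rho_0$ for all $\lambda$. (If Theorem~\ref{thm: BLGGT splitting at primes} only yields $L_w/F_v$ unramified rather than trivial, that serves just as well, since all that is used below is $I_{L_w}=I_{F_v}$.)

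To conclude I would feed this into the usual $p$-adic Hodge theory dictionary. For a representation $\sigma$ of $G_{F_v}$ of the relevant kind and a finite Galois extension $K/F_v$: when $v\nmid\mathbf{N}\lambda$, ``$\sigma|_{G_K}$ is unramified'' is equivalent to ``the monodromy operator of $\WD(\sigma)$ vanishes and $\WD(\sigma)$ is trivial on $I_K$''; and when $v\mid\mathbf{N}\lambda$ and $\sigma$ is de Rham, ``$\sigma|_{G_K}$ is crystalline'' is equivalent to the same condition (being crystalline over $K$ is being semistable over $K$, i.e.\ $\WD(\sigma)$ trivial on $I_K$, with vanishing monodromy). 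Moreover Frobenius semisimplification alters neither the monodromy operator nor the restriction of the underlying $W_{F_v}$-representation to $I_{F_v}$, the latter being already semisimple as it has finite image. Combining with $\WD(r_\lambda|_{G_{F_v}})^{F\text{-}\mathrm{ss}}\cong\rho_0$, both ``$r_\lambda|_{G_K}$ unramified'' and ``$r_\lambda|_{G_K}$ crystalline'' become the single $\lambda$-free assertion that $\rho_0$ has vanishing monodromy and is trivial on $I_K$, which is~(1). Granting~(1), the monodromy vanishes, so $r_\lambda|_{I_{F_v}}$ (resp.\ $\WD(r_\lambda|_{G_{F_v}})|_{I_{F_v}}$) coincides with the restriction to $I_{F_v}$ of the underlying representation of $\WD(r_\lambda|_{G_{F_v}})$, which equals $\rho_0|_{I_{F_v}}$; since $\rho_0$ is trivial on $I_K$ this factors through $I_{K/F_v}$, and one takes $r_v=\rho_0|_{I_{K/F_v}}$. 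That $r_v$ may be taken over $\overline{M}$, uniformly in $\lambda$, is standard, using the rationality of $\rec$ and the fact that $I_{K/F_v}$ is a finite group, so that its semisimple representations are determined by their characters.

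The step I would expect to be the crux is the first one: getting out of the potential-automorphy machinery a field $L$ over which $\{r_\lambda\}$ becomes automorphic \emph{and} in which $v$ splits completely (or at least is unramified). Without such control one is forced to reconstruct a $W_{F_v}$-representation from its restriction to the proper subgroup $W_{L_w}$, which is not possible in general --- for instance if $v$ is totally ramified in $L$; with it in hand, the remaining work is bookkeeping with Weil--Deligne representations, Frobenius semisimplification, and the dictionary between ``crystalline over $K$'' and ``trivial monodromy and trivial $I_K$-action on the associated Weil--Deligne representation''.
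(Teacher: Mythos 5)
Your proof is correct, and it follows the same underlying strategy as the paper but unpacks the key citation. The paper proves the proposition in two steps: by Lemma~\ref{lem: weakly irreducible equals potentially automorphic} the system is potentially automorphic, and it then cites \cite[Cor.~5.4.3]{BLGGT}, which says precisely that a potentially automorphic compatible system is \emph{strictly compatible}, i.e.\ the (Frobenius-semisimplified) Weil--Deligne representation attached to $r_\lambda|_{G_{F_v}}$ is independent of $\lambda$; both parts of the proposition then drop out immediately from the $p$-adic Hodge/Weil--Deligne dictionary. Your argument re-derives this strict compatibility from scratch: you invoke Theorem~\ref{thm: BLGGT splitting at primes} to choose a Galois CM extension $L/F$ in which $v$ (more precisely $v|_{F^+}$) splits completely --- so that $L_w = F_v$ for $w\mid v$, which you correctly flag as the crux --- and then apply local--global compatibility for the automorphic representation $\pi$ at the split place, followed by the translation between ``crystalline/unramified over $K$'' and the associated Weil--Deligne representation being trivial on $I_K$ with vanishing monodromy. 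That is essentially the proof of the cited BLGGT corollary, so you are reproving what the paper treats as a black box. The only difference in outcome is one of economy; the paper's route is shorter and avoids re-doing the splitting-control and local--global bookkeeping, which is already recorded in the reference.

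One small remark: you note that you need $I_{L_w}=I_{F_v}$, and observe that an unramified extension $L_w/F_v$ would suffice; in fact Theorem~\ref{thm: BLGGT splitting at primes} does give complete splitting of $v|_{F^+}$ in $L^+$, and since $L=L^+F$ this forces $L_w=F_v$ for every $w\mid v$, so the stronger statement holds and your fallback is unnecessary.
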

\begin{proof}By Lemma~\ref{lem: weakly irreducible equals potentially
    automorphic}, $\{r_\lambda\}$ is potentially automorphic, so it is
  strictly compatible  by~\cite[Cor.\ 5.4.3]{BLGGT}. Strict
  compatibility means by definition that the Weil--Deligne representation
  corresponding to~$r_\lambda|_{G_{F_v}}$ is independent of~$v$, so
  the consequences follow immediately.
\end{proof}

We next establish some results describing how the property of weak irreducibility
of a compatible system behaves under restriction. To begin with,
suppose that~$F$ is a number field and that~$\{r_{\lambda}\}$ is
a compatible system of representations of~$G_F$, and that
the Zariski closure of the image of~$r_\lambda$ is~$G_{\lambda}$.
Let~$G^{\circ}_{\lambda} \subset G_{\lambda}$   denote the connected component of 
the identity. The following is a theorem of Serre. 

\begin{thm}\label{thm: Serre theorem on connected component group}The pre-image
of~$G^{\circ}_{\lambda}$ in~$G_{F}$ is independent of~$\lambda$. 
  \end{thm}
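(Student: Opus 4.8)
The plan is to reduce the statement to a fact about the field cut out by the image of each $r_\lambda$, and to identify that field using compatibility of the system together with Chebotarev. Fix a compatible system $\{r_\lambda\}$ of representations of $G_F$, and for each $\lambda$ let $G_\lambda$ be the Zariski closure of the image, $G_\lambda^\circ$ its identity component. The preimage of $G_\lambda^\circ$ in $G_F$ is an open normal subgroup $H_\lambda = r_\lambda^{-1}(G_\lambda^\circ)$, and the quotient $G_F/H_\lambda \cong G_\lambda/G_\lambda^\circ$ is a finite group; equivalently, $H_\lambda$ cuts out a finite Galois extension $F_\lambda/F$ whose Galois group is the component group $\pi_0(G_\lambda)$. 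So the assertion is that this finite extension $F_\lambda$ does not depend on $\lambda$. First I would reduce to showing that for each prime $v \notin S$ of $F$ (with $v \nmid l$ where $\lambda | l$), whether or not $\mathrm{Frob}_v \in H_\lambda$ — i.e. whether $\mathrm{Frob}_v$ splits completely in $F_\lambda/F$ — is independent of $\lambda$; by Chebotarev density, the splitting behavior of the unramified primes determines the finite Galois extension.

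The key input is that $\mathrm{Frob}_v \in H_\lambda$ if and only if $r_\lambda(\mathrm{Frob}_v)$ lies in $G_\lambda^\circ$, and this last condition can be read off from the characteristic polynomial $Q_v(X) \in M[X]$, which is independent of $\lambda$. Concretely, I would argue as follows. Passing to a connected component is governed by the images of various characters of $G_\lambda$: there is a finite collection of algebraic characters $\chi_1,\dots,\chi_r$ of $G_\lambda$ (for instance the coordinates of the abelianization, or determinants of tensor/exterior powers of the standard representation) whose simultaneous kernel on the component group detects $G_\lambda^\circ$. Serre's actual argument (which I am reconstructing) is that one can detect membership in $G_\lambda^\circ$ by looking at whether certain natural $\overline{M}_\lambda^\times$-valued expressions in $r_\lambda(\mathrm{Frob}_v)$ — which are in fact roots of unity of bounded order, since they are eigenvalues of $\mathrm{Frob}_v$ acting on a fixed finite-dimensional Artin representation factoring through $\pi_0(G_\lambda)$ — are trivial. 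Because these expressions are polynomial in the coefficients of $Q_v(X)$ (symmetric functions of the roots of $Q_v$, living in $M$), and because a root of unity in $\overline{M}$ is determined by its value under any embedding, the condition ``$r_\lambda(\mathrm{Frob}_v) \in G_\lambda^\circ$'' depends only on $Q_v(X)$ and not on $\lambda$.

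The step I expect to be the main obstacle is making precise the claim that membership in the identity component is detected by a finite, $\lambda$-independent set of ``characteristic-polynomial functions.'' The subtlety is that a priori the group $G_\lambda$ itself, and hence its component group and the relevant characters, could depend on $\lambda$; one needs an argument that is uniform in $\lambda$. The way around this is to work with the \emph{reductive quotient} or, more directly, with all the representations $\wedge^i \mathrm{Std}$ and their twists: an element $g$ of a linear algebraic group over a field of characteristic zero lies in the identity component if and only if it acts with eigenvalue $1$ (equivalently: acts trivially on the relevant line) in every one-dimensional subquotient of every tensor construction on which the whole group acts through a finite quotient. Since the collection of finite quotients through which $G_\lambda$ can act — being a subgroup of $\mathrm{GL}_n$ cut out by a $G_F$-representation — is controlled (bounded order, by an argument of Serre / Minkowski-type bounds, or simply because one only needs finitely many such quotients to separate the component group from the identity), one gets the needed uniformity. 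Once that is in place, combining it with Chebotarev finishes the proof: $F_\lambda$ is the fixed field of the set of $\mathrm{Frob}_v$ with $r_\lambda(\mathrm{Frob}_v) \in G_\lambda^\circ$, this set is determined by $\{Q_v(X)\}$ independently of $\lambda$, hence so is $F_\lambda$, and therefore so is $H_\lambda$.
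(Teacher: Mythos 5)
The paper's ``proof'' is a citation to~\cite[Prop.\ 6.14]{MR1150604}, so there is no in-house argument to compare against. Your high-level strategy --- Chebotarev plus showing that the condition ``$r_\lambda(\mathrm{Frob}_v)\in G_\lambda^\circ$'' depends only on $Q_v(X)$ --- is indeed the one Serre and Larsen--Pink pursue, but the mechanism you propose for reading off $G_\lambda^\circ$-membership from $Q_v$ is incorrect, and the essential difficulty is acknowledged but not resolved.

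There are two concrete problems. First, the characteristic polynomial of a semisimple element $g$ of a linear algebraic group $G$ does \emph{not} determine whether $g$ lies in $G^\circ$, even for a single fixed $G$: take $G = \{\mathrm{diag}(a,\epsilon) : a \in \overline{K}^\times,\ \epsilon = \pm 1\}\subset \mathrm{GL}_2$, so that $G^\circ = \{\mathrm{diag}(a,1)\}$; then $\mathrm{diag}(1,-1)\notin G^\circ$ and $\mathrm{diag}(-1,1)\in G^\circ$ have the same characteristic polynomial $X^2-1$. Thus the $\lambda$-independence of $Q_v$ does not formally imply the $\lambda$-independence of $H_\lambda$; closing that gap is exactly the content of the theorem, not a step that can be absorbed. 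Second, your proposed criterion (eigenvalue $1$ on every one-dimensional subquotient of a tensor construction on which $G$ acts through a finite quotient) is wrong whenever $\pi_0(G)$ is non-abelian: those subquotients are exactly the characters of $\pi_0(G)$, whose common kernel is the preimage of $[\pi_0(G),\pi_0(G)]$, which strictly contains $G^\circ$. (For instance, an $S_3$-valued Artin representation gives a compatible system with $G_\lambda = S_3$, $G_\lambda^\circ = 1$, and a $3$-cycle is killed by every character of $S_3$.) What \emph{can} be read off from $Q_v$ is whether the Zariski closure of $\langle r_\lambda(\mathrm{Frob}_v)^{\mathrm{ss}}\rangle$ is a torus --- equivalently, whether the group generated by the roots of $Q_v$ in $\overline{M}^{\times}$ is torsion-free --- which gives a \emph{sufficient} but not necessary condition for $\mathrm{Frob}_v \in H_\lambda$; the actual proof in Larsen--Pink has to do genuine work (Frobenius tori plus a bootstrapping argument) to pass from this one-sided information to the full equality of the $H_\lambda$, and that work is precisely what your sketch leaves out.
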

\begin{proof}
  See~\cite[Prop.\ 6.14]{MR1150604}.
\end{proof}

As a corollary, we have the following result, that allows us to ensure
that certain restrictions of weakly irreducible compatible systems remain
weakly irreducible.

\begin{lemma} \label{lem: base change weak irreducibility} Let~$F$ be
  a number field, and let~$\{r_{\lambda}\}$ be a 
 compatible system of~$G_F$-representations. 
 Then there exists a finite extension~$\Favoid/F$ with the following property:
if~$L/F$ is a finite extension 
linearly disjoint from~$\Favoid$, and if~$r = r_{\lambda}$ is any
representation in the compatible system which is irreducible,
then~$r|_{G_L}$ is irreducible. In particular, if~$\{r_{\lambda}\}$ is
weakly irreducible then~$\{r_{\lambda} |_{G_L}\}$
is weakly irreducible. 
\end{lemma}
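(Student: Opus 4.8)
The plan is to take $\Favoid$ to be the finite extension of $F$ manufactured by Serre's theorem. In detail: for each prime $\lambda$ of $M$, write $G_\lambda$ for the Zariski closure of $r_\lambda(G_F)$ and $G_\lambda^\circ$ for its identity component, and let $F_\lambda'$ be the fixed field of $r_\lambda^{-1}(G_\lambda^\circ) \subseteq G_F$. Since the component group of an algebraic group is finite and $r_\lambda(G_F)$ is dense in $G_\lambda$, the extension $F_\lambda'/F$ is finite, and it is Galois because $G_\lambda^\circ$ is normal in $G_\lambda$, so $r_\lambda^{-1}(G_\lambda^\circ)$ is normal in $G_F$. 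By Theorem~\ref{thm: Serre theorem on connected component group}, $F_\lambda'$ does not depend on $\lambda$; I would call it $F'$ and set $\Favoid := F'$.

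Now fix a finite extension $L/F$ linearly disjoint from $\Favoid$, and an irreducible representation $r = r_\lambda$ in the compatible system. Because $F'/F$ is Galois, linear disjointness is equivalent to $L \cap F' = F$, i.e.\ to the restriction map $G_L \to \Gal(F'/F)$ being surjective. The core of the argument is then to show that $r_\lambda(G_L)$ is still Zariski-dense in $G_\lambda$, which I would split into two points. First, the Zariski closure $\overline{r_\lambda(G_L)}$ surjects onto the component group $G_\lambda/G_\lambda^\circ$: under the natural identifications $G_\lambda/G_\lambda^\circ \cong G_F/G_{F'} \cong \Gal(F'/F)$ this is exactly the surjectivity of $G_L \to \Gal(F'/F)$. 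Second, $\overline{r_\lambda(G_L)} \supseteq G_\lambda^\circ$: indeed $r_\lambda(G_{LF'}) \subseteq r_\lambda(G_L)$, and $r_\lambda(G_{LF'})$ is of finite index in $r_\lambda(G_{F'}) = r_\lambda(G_F) \cap G_\lambda^\circ$, which in turn is Zariski-dense in $G_\lambda^\circ$; here one uses the elementary fact that a Zariski-closed subgroup of finite index in a connected algebraic group (its cosets being finitely many and clopen) is the whole group. Combining the two points gives $\overline{r_\lambda(G_L)} = G_\lambda$.

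To finish, recall that the stabiliser in $\GL_n$ of a subspace of $\overline M_\lambda^n$ is Zariski-closed, so a subspace is invariant under $r_\lambda(G_L)$ exactly when it is invariant under the Zariski closure of $r_\lambda(G_L)$; since the same holds with $G_L$ replaced by $G_F$ and the two closures coincide, $r_\lambda$ and $r_\lambda|_{G_L}$ have precisely the same invariant subspaces, so $r_\lambda|_{G_L}$ is irreducible. The final assertion is then immediate from Definition~\ref{defn: weakly irreducible}: any positive-density set of rational primes $l$ for which every $r_\lambda$ with $\lambda \mid l$ is irreducible also witnesses the weak irreducibility of $\{r_\lambda|_{G_L}\}$.

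The only substantive ingredient is Serre's theorem, which is exactly what makes $\Favoid$ independent of the particular representation $r_\lambda$; without it one could only produce an avoidance field depending on the chosen member of the system. Everything else is routine manipulation of Zariski closures, the single point requiring minor care being the connectedness argument that upgrades finite index to equality inside $G_\lambda^\circ$.
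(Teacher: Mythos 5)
Your proof is correct and follows essentially the same route as the paper: you take $\Favoid$ to be the fixed field of the $\lambda$-independent preimage of $G_\lambda^\circ$ (via Serre's theorem), show the Zariski closure of $r_\lambda(G_L)$ contains $G_\lambda^\circ$ because connected groups have no proper closed finite-index subgroups, and use linear disjointness to recover the full component group, concluding that the Zariski closures of $r_\lambda(G_F)$ and $r_\lambda(G_L)$ coincide. The paper's write-up is terser but identical in substance.
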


\begin{proof}
Suppose that~$r = r_{\lambda}$ is irreducible. 
If~$L$ is any finite degree extension of~$F$,
then the Zariski closure of the image of~$r|_{G_L}$ will also  contain~$G^{\circ}_{\lambda}$,
because~$G^{\circ}_{\lambda}$ is connected and hence does not contain any finite index subgroups.

By  
Theorem~\ref{thm: Serre theorem on connected component group}, the pre-image
of~$G^{\circ}_{\lambda}$ in~$G_{F}$ is independent of~$\lambda$.  Let~$\Favoid$ denote the corresponding
fixed field. If $L$ is disjoint from $\Favoid$,
then the component group of~$r |_{G_{L}}$ will be isomorphic to the component group of~$r$,
and hence the Zariski closures of~$r$ and~$r|_{G_{L}}$ will coincide. In particular, $r|_{G_{L}}$ will
be irreducible.
\end{proof}

We also have the following variant of the preceding result, 
which shows that {\em arbitrary} restrictions (to CM extensions))
of weakly irreducible
compatible systems (satisfying the appropriate hypotheses) remain
weakly irreducible, provided that at least one member of the restricted
compatible system is irreducible.

\begin{lemma}
	\label{lem:restricting weakly irr}
  Let $F$ be CM, and let~$\{r_\lambda\}$ be a weakly irreducible
  regular,  odd, polarizable compatible system of
  representations of~$G_F$.
	Let $M$ be a CM extension of $F$.
	If some $r_\mu|_{G_M}$ is irreducible,
	then $\{r_{\lambda}|_{G_M}\}$ {\em (}which is {\em a priori}
	a regular, odd, polarizable compatible system of representations
	of~$G_M${\em )},
	is again weakly irreducible.
\end{lemma}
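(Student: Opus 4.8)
The plan is to reduce the statement about an arbitrary CM extension $M/F$ to the case of an extension linearly disjoint from the ``bad'' field $\Favoid$ produced by Lemma~\ref{lem: base change weak irreducibility}, by inserting an auxiliary potential-automorphy step. First I would invoke Lemma~\ref{lem: weakly irreducible equals potentially automorphic} to see that $\{r_\lambda\}$ is potentially automorphic; applying the definition of potential automorphy with the ``avoid'' field taken to be (the Galois closure over $\Q$ of) $M\cdot\Favoid$, we obtain a finite Galois CM extension $L/F$, linearly disjoint from $M\Favoid/F$, such that $\{r_\lambda|_{G_L}\}$ is automorphic. In particular $L$ is linearly disjoint from $\Favoid$, so Lemma~\ref{lem: base change weak irreducibility} applies to $L/F$: since $\{r_\lambda\}$ is weakly irreducible, so is $\{r_\lambda|_{G_L}\}$, and being automorphic it is in fact irreducible at a density-one set of $\lambda$ (or at least we retain weak irreducibility, which is all we need).

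Next I would pass to the compositum $ML$. Since $L/F$ is linearly disjoint from $M/F$, we have $[ML:L]=[M:F]$ and, more to the point, $[ML:M]=[L:F]$; as $L/F$ is Galois, $ML/M$ is Galois with $\Gal(ML/M)\cong\Gal(L/F)$. I want to say that $\{r_\lambda|_{G_{ML}}\}$ is weakly irreducible. The cleanest route: $\{r_\lambda|_{G_L}\}$ is automorphic hence potentially automorphic, so by Lemma~\ref{lem: base change weak irreducibility} applied over $L$ there is a field $\Favoid'/L$ such that restriction to any extension of $L$ disjoint from $\Favoid'$ preserves irreducibility; but $ML/L$ need not be disjoint from $\Favoid'$, so instead I would argue more robustly using the hypothesis ``some $r_\mu|_{G_M}$ is irreducible.'' Concretely: by hypothesis $r_\mu|_{G_M}$ is irreducible for some $\mu$; then $r_\mu|_{G_{ML}}$ need not be, but the Zariski closure of the image of $r_\mu|_{G_M}$ still contains $G^\circ_\mu$ (connected groups have no finite-index subgroups), and since $ML/M$ is finite the same is true for $r_\mu|_{G_{ML}}$. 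Now $ML$ is linearly disjoint from $\Favoid$ over $M$? Not quite — but $L$ was chosen disjoint from $M\Favoid$ over $F$, which forces $ML$ disjoint from $\Favoid$ over... here is the one genuinely delicate point, so let me isolate it.

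\textbf{The main obstacle.} The crux is a clean linear-disjointness bookkeeping step: I must arrange that $M$ itself is ``innocuous'' with respect to the component-group field $\Favoid$ of Theorem~\ref{thm: Serre theorem on connected component group}, \emph{after} base change to the automorphic field $L$. The point is that over $L$ the compatible system $\{r_\lambda|_{G_L}\}$ is automorphic, so its mod-$\lambda$ (and hence Zariski) images for a density-one set of $\lambda$ are as large as possible relative to the polarization; in particular the relevant $\Favoid_L$ for $\{r_\lambda|_{G_L}\}$ can be taken inside a controlled extension, and $ML/L$ — being of degree $[M:F]$ over $L$ — can be absorbed because the irreducible $r_\mu|_{G_M}$ guarantees the component group does not shrink. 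So the argument I would write is: (i) reduce via potential automorphy to $L/F$ with $\{r_\lambda|_{G_L}\}$ automorphic and $L$ disjoint from $M\Favoid$; (ii) observe $ML/L$ is Galois of group $\Gal(L/F)$ and, by the hypothesis that $r_\mu|_{G_M}$ is irreducible together with Serre's theorem applied to $\{r_\lambda|_{G_M}\}$ (a regular, odd, polarizable compatible system, by the parenthetical remark in the statement), that $\{r_\lambda|_{G_M}\}$ is weakly irreducible iff $\{r_\lambda|_{G_{ML}}\}$ is; (iii) prove the latter is weakly irreducible by noting it is automorphic (solvable base change from $L$ is not available since $ML/L$ need not be solvable, but $\{r_\lambda|_{G_L}\}$ is potentially automorphic, $ML/L$ can be chosen linearly disjoint from the avoid-field for $L$ by a second application of the definition — taking the avoid field over $L$ to contain $ML$ — giving automorphy of $\{r_\lambda|_{G_{ML}}\}$, hence weak irreducibility by Lemma~\ref{lem: weakly irreducible equals potentially automorphic}); (iv) descend along $ML/M$ using step (ii). The hardest part is verifying step (ii): that irreducibility of a single $r_\mu|_{G_M}$ forces the preimage of $G^\circ_\lambda$ (for the system over $M$) to equal $G_M$ for \emph{every} $\lambda$ — this is exactly where Theorem~\ref{thm: Serre theorem on connected component group} is used, combined with the fact that for the irreducible $r_\mu|_{G_M}$ the Zariski closure is reductive with the same identity component as over $F$, so its component group is trivial, hence the common preimage is all of $G_M$, hence $G^\circ_\lambda$'s preimage survives every finite base change including $ML/M$.
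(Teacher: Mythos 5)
Your approach is genuinely different from the paper's, and the crucial step --- the one you yourself flag as hardest --- does not hold. In step (ii) you claim that irreducibility of a single $r_\mu|_{G_M}$ forces the preimage of $G^\circ_\lambda$ (for the restricted system) to be all of $G_M$, i.e.\ that the component group of the Zariski closure of $r_\lambda|_{G_M}$ is trivial. This is false: irreducibility does not imply connectedness of the Zariski closure. A representation induced from a quadratic extension of $M$ is irreducible but has component group $\Z/2$. The correct observation --- that the identity component of the Zariski closure is unchanged under the finite-index restriction $G_M\subset G_F$ --- gives only an injection of component groups, not collapse, so the biconditional in your (ii) does not hold; only the easy direction (weak irreducibility over $ML$ implies weak irreducibility over $M$, because $G_{ML}\subset G_M$) is available. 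Your step (iii) also does not close: applying the definition of potential automorphy with an avoid field containing $ML$ produces a new field $L'$ that is \emph{linearly disjoint from} $ML$, not automorphy over the predetermined field $ML$ itself, and since $ML/L$ need not be solvable there is no base-change theorem transporting automorphy from $L$ up to $ML$. So neither the descent in (ii) nor the ascent in (iii) is justified as written.

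The paper's proof bypasses base change entirely. It first invokes Lemma~\ref{lem: purity of weakly compatible systems} to see that $\{r_\lambda\}$, being weakly irreducible, regular, odd and polarizable, is pure; purity, regularity, oddness and polarizability are all inherited by the restriction $\{r_\lambda|_{G_M}\}$ (using that $M$ is CM). Then Lemma~\ref{lem: PT decomposition into weakly irreducible} --- which packages the Patrikis--Taylor decomposition theorem --- writes the pure system $\{r_\lambda|_{G_M}\}$ as a direct sum of weakly irreducible compatible systems, and the hypothesis that some $r_\mu|_{G_M}$ is irreducible then forces the direct sum to have a single summand, which is the conclusion. The ingredient missing from your argument is exactly this decomposition result: it is the valid substitute for the false claim that irreducibility of one member controls the component group of the whole restricted system.
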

\begin{proof}
By Lemma~\ref{lem: purity of weakly compatible systems}, the weakly compatible system~$\{r_{\lambda}\}$ 
 is odd, polarized,
regular and pure.
These properties are inherited
by the system ~$\{r_{\lambda}|_{G_M}\}$, which is therefore a direct sum
of weakly irreducible compatible systems by Lemma~\ref{lem: PT
  decomposition into weakly irreducible}. 
  Since $r_\mu|_{G_M}$ is irreducible, there can only be one
compatible system in this direct sum, and $\{r_{\lambda}|_{G_M}\}$ is
weakly irreducible, as required.  
\end{proof}

We close the present discussion of weak irreducibility with the following
result, which establishes
the weak irreducibility of certain tensor products of
compatible systems.

\begin{lem}
  \label{lem: tensor product weak irreducibility}Let~$F$ be CM, and let $\{s_\lambda\}$,
  $\{t_\lambda\}$ be regular, odd, weakly irreducible polarizable compatible systems of
  representations of~$G_F$. Assume that $\{s_\lambda\otimes
  t_\lambda\}$ is regular, and that at least one representation
  $s_\mu\otimes t_\mu$ is  irreducible. Then $\{s_\lambda\otimes
  t_\lambda\}$ is weakly irreducible.
\end{lem}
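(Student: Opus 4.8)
The plan is to show that $\{s_\lambda\otimes t_\lambda\}$ is a regular, odd, polarizable, \emph{pure} compatible system, and then to conclude exactly as in the proof of Lemma~\ref{lem:restricting weakly irr}: by Lemma~\ref{lem: PT decomposition into weakly irreducible} it breaks up as a direct sum of weakly irreducible compatible systems, and the existence of one irreducible member forces there to be only one summand. Regularity of $\{s_\lambda\otimes t_\lambda\}$ is part of the hypotheses. For polarizability, recall from \cite[\S 1.1]{BLGGT} (cf.\ the discussion preceding Lemma~\ref{lem: odd implies odd}) that if $(s,\mu_s)$ and $(t,\mu_t)$ are polarized then so is $(s\otimes t,\delta\,\mu_s\mu_t)$, where $\delta$ is the nontrivial character of $G_{F^+}/G_F$; choosing multiplier systems $\{\mu_{s,\lambda}\}$ and $\{\mu_{t,\lambda}\}$ witnessing the polarizability of $\{s_\lambda\}$ and $\{t_\lambda\}$, the compatible system of characters $\{\delta\,\mu_{s,\lambda}\mu_{t,\lambda}\}$ witnesses the polarizability of $\{s_\lambda\otimes t_\lambda\}$. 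Evaluating this multiplier at a complex conjugation $c\in G_{F^+}$ gives $\delta(c)\mu_{s,\lambda}(c)\mu_{t,\lambda}(c)=(-1)(-1)(-1)=-1$, using the oddness of $\{s_\lambda\}$ and of $\{t_\lambda\}$; hence $\{s_\lambda\otimes t_\lambda\}$ is odd.

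It remains to verify purity. By Lemma~\ref{lem: purity of weakly compatible systems} the systems $\{s_\lambda\}$ and $\{t_\lambda\}$ are pure, say of weights $w_s$ and $w_t$, and I claim $\{s_\lambda\otimes t_\lambda\}$ is pure of weight $w_s+w_t$. This is a direct check against Definition~\ref{defn: pure}: at a good place $v$ the roots of the characteristic polynomial of $(s_\lambda\otimes t_\lambda)(\Frob_v)$ are products $\alpha\beta$ of roots of the characteristic polynomials of $s_\lambda(\Frob_v)$ and $t_\lambda(\Frob_v)$, so $|\imath(\alpha\beta)|^2=|\imath\alpha|^2\,|\imath\beta|^2=q_v^{w_s}q_v^{w_t}=q_v^{w_s+w_t}$; and for each $\tau\colon F\hookrightarrow\overline{M}$ the multiset $H_\tau$ attached to $s_\lambda\otimes t_\lambda$ is $\{h+h' : h\in H_\tau(s),\ h'\in H_\tau(t)\}$, whence $H_{c\tau}=\{(w_s-h)+(w_t-h')\}=\{(w_s+w_t)-(h+h')\}$, which is the required reflection of $H_\tau$. (That $\{s_\lambda\otimes t_\lambda\}$ is a compatible system at all is immediate from the definitions, the polynomials $Q_v$ being the obvious products.)

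Having established that $\{s_\lambda\otimes t_\lambda\}$ is a regular, odd, polarizable, pure compatible system, Lemma~\ref{lem: PT decomposition into weakly irreducible} lets us write $\{s_\lambda\otimes t_\lambda\}=\bigoplus_{i=1}^{r}\{u_{i,\lambda}\}$ with each $\{u_{i,\lambda}\}$ weakly irreducible. Specializing at $\mu$ gives $s_\mu\otimes t_\mu\cong\bigoplus_{i=1}^{r}u_{i,\mu}$; since $s_\mu\otimes t_\mu$ is irreducible and each $u_{i,\mu}$ is a representation of positive dimension, we must have $r=1$, so $\{s_\lambda\otimes t_\lambda\}$ is itself weakly irreducible. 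There is no serious obstacle here: the only slightly delicate point is the purity computation above, and everything else is a matter of assembling the structural lemmas already proved.
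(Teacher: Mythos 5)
Your proof is correct and follows essentially the same route as the paper's: invoke Lemma~\ref{lem: purity of weakly compatible systems} to get purity of the factors, observe that the tensor product is then regular, pure, odd, and polarizable, apply Lemma~\ref{lem: PT decomposition into weakly irreducible}, and use the irreducibility of $s_\mu\otimes t_\mu$ to rule out more than one summand. The only difference is that you spell out the routine verifications (the polarizability of the tensor product via the multiplier $\delta\,\mu_{s,\lambda}\mu_{t,\lambda}$, the oddness computation $(-1)^3=-1$, and the purity of weight $w_s+w_t$) which the paper states without proof.
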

\begin{proof}
  By 
  Lemma~\ref{lem: purity of weakly compatible systems}
  each of~$\{s_\lambda\}$, $\{t_\lambda\}$ is
   pure, so that
  $\{s_\lambda\otimes t_\lambda\}$ is regular, pure, odd, and
  polarizable. By Lemma~\ref{lem: PT decomposition into weakly
    irreducible} it is therefore a direct sum of weakly irreducible
  compatible systems; since~$s_\mu\otimes t_\mu$ is irreducible, 
  this direct
  sum can only consist of a single compatible system, as required.
\end{proof}

\subsubsection{Deformation rings}\label{subsubsec: deformation
  rings}When we consider deformation rings and automorphy lifting theorems, we can no longer use
algebraically closed coefficient fields. To this end, we adopt the
convention that $\cO$ will denote the ring of integers in a finite
extension~$E/\Qp$ with residue field~$\F$, and that~$E$ will be chosen
large enough such that all representations under consideration are
defined over~$E$ (and all mod~$p$ representations are defined
over~$\F$); as always, the precise choice of~$E$ is unimportant.

As usual, let~$F$ be a CM field with maximal totally real
subfield~$F^+$. Following~\cite{2017arXiv170804885B}, we work in a slightly more general context than~\cite{CHT,BLGGT},
allowing ramification at primes of~$F^+$ which are inert or ramified
in~$F$. This allows us to make cleaner statements, and is also
necessary for some of our arguments with auxiliary primes. 

Fix a prime $p>2$ and a polarized residual representation~$(\sbar,\mubar)$
of $G_{F^+}$ with $\sbar$
absolutely irreducible and (as always in the body of this paper)~$p>2$.
We choose once and for all an element $\gamma_0 \in G_{F_+} \setminus G_F$
(e.g.\ we could choose $\gamma_0$ to be the complex conjugation at
one of the real places of $F^+$),
and we
let~$\rhobar:G_{F^+}\to\cG_n(\F)$ be a fixed 
prolongation of~$(\sbar,\mubar)$,
following the procedure discussed in Subsection~\ref{subsubsec: polarizable repns}.

Let~$\mu:G_{F^+}\to\cO^\times$  be a lift
of~$\mubar$. 
Let $A$ be a
complete local Noetherian~$\cO$-algebra. Then a
\emph{$\mu$-polarized lifting} of~$\rhobar$
to~$A$  is a representation
$\rho:G_{F^+}\to\cG_n(A)$ with~$\nu\circ\rho=\mu$ and
$\rho\pmod{\m_A}=\rhobar$. A~\emph{$\mu$-polarized deformation} of~$\rhobar$
to~$A$ is a~$1+M_n(\m_A)$-conjugacy class of
liftings. 
As in~\cite[Lem.\
1.5]{MR2919688}), $\mu$-polarized liftings and deformations~$\rho$ in this
sense are equivalent to the data of a lifting or deformation~$s$
of~$\sbar$ which satisfies $s^c\cong \mu s^\vee$ (where the equivalence
arises from taking $s=\rho|_{G_F}$).

We also need to consider the corresponding local deformation
problems. We refer to~\cite[\S 3]{2017arXiv170804885B} for the
definitions of deformations of fixed inertial and Hodge types. Let~$v$ be a finite place of~$F$,
let~$\rhobar_v:G_{F^+_v}\to\cG_n(\F)$ be a representation with
multiplier~$\mubar$, and let~$\mu$ be a lift of~$\mubar$. Then a
\emph{$\mu$-polarized lifting} of~$\rhobar_v$
to~$A$ is  a representation
$\rho_v:G_{F^+_v}\to\cG_n(A)$ with~$\nu\circ\rho_v=\mu$ and
$\rho_v\pmod{\m_A}=\rhobar_v$. If we fix an inertial type
$I_{F^+_v}\to\cG_n(E)$, 
then in the case $l\ne p$, we may consider the universal
framed deformation $\cO$-algebra $R^{\square,\tau}$ of inertial type~$\tau$;
this ring is non-zero for
a finite and nonempty set of inertial types~$\tau$. 
We refer to an irreducible component of
any~$R^{\square,\tau}[1/p]$ as simply ``a $\mu$-polarized component
for $\rhobar_v$''; such a component uniquely
determines~$\tau$. By~\cite[Lem.\ 3.4.1]{2017arXiv170804885B}, each
irreducible component is invariant under conjugation, in the sense
that conjugation by elements
of~$\ker(\cG_n(R^{\square,\tau})\to\cG_n(\F))$ preserves each
irreducible component. We will sometimes speak
of polarized components, rather than $\mu$-polarized components, when
the choice of~$\mu$ is clear from the context.

If $v|p$, then in the same way
we let~$R^{\square,\tau,\vv}$ denote the universal
framed deformation $\cO$-algebra of~$\rhobar$ for $\mu$-polarized
potentially semistable lifts of inertial type~$\tau$ and Hodge
type~$\vv$. 
We again refer to an irreducible
component of any~$R^{\square,\tau,\vv}[1/p]$ as simply ``a
$\mu$-polarized component for $\rhobar_v$''. Note that such a component again uniquely
determines~$\tau$ and~$\vv$; we say that a component is \emph{regular}
if $\vv$ is regular (that is, the
labelled Hodge--Tate weights are distinct); we will always assume this
in our main results.

Return now to the global situation of a polarized residual
representation~$(\sbar,\mubar)$ with prolongation~$\rhobar$. Suppose that~$v$ splits in~$F$ as~$w w^c$.
A choice of embedding~$\overline{F^{+}} \rightarrow \overline{F^{+}_v}$  gives rise to a choice of~$w|v$ in~$F$.
With respect to this choice,  the representation~$\rhobar|_{G_{F^{+}_v}}$
 has image
in~$\cG_n^\circ(\F)=\GL_n(\F)\times\GL_1(\F)$,
and the projection to the first factor is the representation~$\sbar|_{G_{F_w}}$. (A difference choice
of embedding~$\overline{F^{+}} \rightarrow \overline{F^{+}_v}$ corresponding to~$w^c | v$ in~$F$
would have the effect of replacing~$\sbar|_{G_{F_w}}$ by~$\mu \otimes \sbar^{\vee}|_{G_{F_w}} \simeq \sbar|_{G_{F_{w^{c}}}}$.)
If~$\rho_v:G_{F^+_v}\to\cG_n(A)$ is a $\mu$-polarized lifting
of~$\rhobar|_{G_{F^+_v}}$,  then the projection to~$\GL_n(A)$ gives rise to a 
lift~$s_w: G_{F_w} = G_{F^{+}_v} \rightarrow \GL_n(A)$  of~$\sbar|_{G_{F_w}}$. 
\begin{lem}
  \label{lem: identifying split local deformation ring with GLn}If~$v$
  splits in~$F$, then the assignment $\rho_v\mapsto s_w$ is an
  equivalence of categories between the $\mu$-polarized liftings
  of~$\rhobar|_{G_{F^+_v}}$ and the liftings of~$\sbar|_{G_{F_w}}$.
\end{lem}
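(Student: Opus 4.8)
The plan is to unwind the definitions and observe that, once $v$ splits, a $\mu$-polarized lifting of $\rhobar|_{G_{F^+_v}}$ carries no more information than its $\GL_n$-component, the $\GL_1$-component being forced to equal the fixed character $\mu$. First I would record the elementary geometric input: since $v$ splits in $F$ as $ww^c$, the chosen embedding $\overline{F^+}\to\overline{F^+_v}$ identifies $F^+_v$ with $F_w$, so $G_{F^+_v}=G_{F_w}$, and this group is contained in $G_F$.

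The only point I regard as requiring any care is that every $\mu$-polarized lifting $\rho_v\colon G_{F^+_v}\to\cG_n(A)$ of $\rhobar|_{G_{F^+_v}}$ automatically has image in $\cG_n^\circ(A)=\GL_n(A)\times\GL_1(A)$. I would argue this by composing $\rho_v$ with the quotient map $\cG_n(A)\twoheadrightarrow\cG_n(A)/\cG_n^\circ(A)=\Z/2\Z$: because $\cG_n(A)=\cG_n^\circ(A)\rtimes\{1,\jmath\}$ with $\jmath$ defined over $\Z$, this quotient is insensitive to the coefficient ring, so the resulting homomorphism $G_{F^+_v}\to\Z/2\Z$ agrees with the one attached to the reduction $\rhobar|_{G_{F^+_v}}$; and the latter is trivial since $\rhobar(G_F)\subset\cG_n^\circ(\F)$ while $G_{F^+_v}=G_{F_w}\subset G_F$. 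Granting this, any such $\rho_v$ is of the form $(s_w,\chi)$ with $s_w\colon G_{F_w}\to\GL_n(A)$ a homomorphism and $\chi\colon G_{F_w}\to A^\times$ a character; the condition $\nu\circ\rho_v=\mu$ pins down $\chi=\mu|_{G_{F^+_v}}$, which is fixed in advance, and reduction modulo $\m_A$ forces $s_w\equiv\sbar|_{G_{F_w}}$, using $\rhobar|_{G_{F^+_v}}=(\sbar|_{G_{F_w}},\mubar|_{G_{F^+_v}})$. Thus $\rho_v\mapsto s_w$ lands in liftings of $\sbar|_{G_{F_w}}$ to $A$, and I would check that $s_w\mapsto(s_w,\mu|_{G_{F^+_v}})$ is a two-sided inverse: it is visibly a homomorphism into $\cG_n^\circ(A)\subset\cG_n(A)$, has multiplier $\mu$, and reduces to $\rhobar|_{G_{F^+_v}}$.

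Finally I would verify functoriality, so that these mutually inverse bijections genuinely assemble into an equivalence of categories (equivalently, a natural isomorphism of the two functors of lifts): for an $\cO$-algebra map $\phi\colon A\to B$ one has $\phi_*(s_w,\mu|_{G_{F^+_v}})=(\phi_* s_w,\mu|_{G_{F^+_v}})$, since $\mu$ takes values in $\cO^\times$, so pushforward along $\phi$ is intertwined by the two constructions. The same bookkeeping — conjugation by $(u,b)\in\ker(\cG_n(A)\to\cG_n(\F))$ sends $(g,a)$ to $(ugu^{-1},a)$ because the $\GL_1$-factor is central — shows the equivalence is compatible with $1+M_n(\m_A)$-conjugacy, hence also induces the analogous equivalence between the corresponding deformation functors, although only the statement for liftings is needed here. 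As this indicates, there is no serious obstacle: essentially all the content is the reduction-mod-$\m_A$ argument that $\rho_v$ must factor through $\cG_n^\circ$.
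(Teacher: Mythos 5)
Your argument is correct and follows the same route as the paper's one-line proof: the key point in both is that since $G_{F^+_v}=G_{F_w}\subset G_F$ when $v$ splits, any $\mu$-polarized lifting factors through $\cG_n^\circ(A)$ and is therefore just the pair $(s_w,\mu)$. You have simply made explicit the reduction-mod-$\m_A$ argument (and the functoriality check) that the paper elides with ``from which the result is clear.''
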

\begin{proof} Under this identification, the representation~$\rho_v$ is simply the 
representation~$\rho_v=(s_w,\mu):G_{F^+_v}\to\cG_n^\circ(A)\subset\cG_n(A)$,
from which the result is clear. (cf. the discussion in~\cite[\S2.3]{CHT}.)
\end{proof}
By Lemma~\ref{lem: identifying split local deformation ring with GLn},
if~$v$ splits in~$F$, then
we can identify $\mu$-polarized components with components of the
corresponding lifting rings for~$\sbar|_{G_{F_w}}$ after choosing a prime~$w|v$
(or after choosing an embedding~$\overline{F^{+}} \rightarrow \overline{F^{+}_v}$ which gives
a canonical choice of~$w|v$).  We will
sometimes do this without comment later in the paper. 

\begin{convention}\label{convention: w lies over v}
	Let~$F/F^{+}$ be a CM extension. Given a prime~$v$ in~$F^{+}$, we choose an 
embedding~$\overline{F^{+}} \rightarrow \overline{F^{+}_v}$ which in turn determines a choice of prime in~$F$ above~$v$
which we denote by~$w$.
\end{convention}

\begin{convention}\label{convention: w lies over v 2}
  If
  $\sbar:G_{F}\to\GL_n(\F)$ is as above, with
  prolongation~$\rhobar:G_{F^+}\to\cG_n(\F)$,
  and if (following Convention~\ref{convention: w lies over v})
  $w$ denotes a prime of $F$ lying over the prime $v$ of $F^{+}$,
  then we will often write
  ``$\mu$-polarized component for~$\sbar|_{G_{F_w}}$'' rather than
  ``$\mu$-polarized component for~$\rhobar|_{G_{F^+_v}}$''. 
  \end{convention}

Given another representation $\rhobar':G_{F_v^+}\to\cG_m(\F)$, and
polarized components $C$, $D$ for $\rhobar$, $\rhobar'$ respectively,
then there is a well-defined component $C\otimes D$
for~$\rhobar\otimes\rhobar'$. Similarly, if $L/F_v^+$ is a finite
extension, there is a well-defined $\mu|_{G_L}$-polarized
component~$C|_L$ for~$\rhobar|_{G_L}$. (In the case that~$v$ is a
split prime, this is~\cite[Lem.\ 3.5.1]{2017arXiv170804885B}, and the
general case is proved in exactly the same way.)

We will frequently make use of the following lemma without further comment.
\begin{lem}
  \label{lem: points of potentially automorphic are robustly
    smooth}Let $F$ be a CM field, and let~$\{r_\lambda\}$ be a regular, odd, polarizable, weakly
  irreducible compatible system of representations of~$G_F$. Then for
  each~$\lambda$ and each finite place~$w$ of~$F$, the representation
  $r_\lambda|_{G_{F_w}}$ lies on a unique component of the
  corresponding deformation ring.
\end{lem}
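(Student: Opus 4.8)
The plan is to show that $r_\lambda|_{G_{F_w}}$ lies on \emph{some} component of the appropriate local deformation ring, and then to invoke that it lies on a \emph{unique} one by the fact that potentially automorphic representations, being strictly compatible (Proposition~\ref{prop: weakly irreducible implies compatibility at ramified places}), give points that are smooth points of the corresponding local lifting rings. Concretely, first I would fix a finite place $w$ of $F$ and a $\lambda$, and note that by Lemma~\ref{lem: weakly irreducible equals potentially automorphic} the compatible system $\{r_\lambda\}$ is potentially automorphic; by Lemma~\ref{lem: purity of weakly compatible systems} it is pure. After a base change to a CM field $L/F$ over which the system becomes automorphic (and which one may take linearly disjoint from any prescribed extension), $r_\lambda|_{G_{L_{w'}}}$ for $w'|w$ arises from a regular algebraic cuspidal polarized automorphic representation, and hence by local-global compatibility is potentially semistable (or potentially crystalline, if $w'\nmid l$) of a definite inertial and Hodge type. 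This identifies a component of $R^{\square,\tau,\vv}[1/p]$ (resp.\ $R^{\square,\tau}[1/p]$) on which the point lies. Descending back from $L$ to $F$: since the formation of local deformation rings and their components is compatible with finite base change (the discussion preceding Lemma~\ref{lem: points of potentially automorphic are robustly smooth}, using \cite[Lem.\ 3.5.1]{2017arXiv170804885B}), $r_\lambda|_{G_{F_w}}$ itself lies on a component.

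For uniqueness, the key input is that the point of the local lifting ring cut out by $r_\lambda|_{G_{F_w}}$ is a \emph{smooth} point, so it cannot lie on two distinct irreducible components. This smoothness is exactly the ``robustly smooth'' terminology in the name of the lemma, and it follows from the theory of \cite{BLGGT}: for automorphic (hence potentially automorphic) Galois representations, the local deformations at $w$ are as unobstructed as the generic fibre allows — precisely because the global deformation problem is unobstructed enough for the Taylor--Wiles--Kisin method to identify the global deformation ring with a Hecke algebra, which forces the relevant local framed deformation rings to be regular at the points in question. More directly, one uses that a de Rham representation with regular Hodge--Tate weights gives a point on the generic fibre $R^{\square,\tau,\vv}[1/p]$ where this ring is formally smooth of the expected dimension (this is a theorem of Kisin for $v|p$, and is classical for $v\nmid p$ where the lifting ring is generically smooth), together with the fact that the Hodge and inertial type are themselves determined by $r_\lambda|_{G_{F_w}}$. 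Since a smooth point of $\Spec R^{\square,\tau,\vv}[1/p]$ lies on a unique irreducible component, we are done.

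The main obstacle is the passage between the local framed deformation ring and its quotients/strata so as to make sense of ``\emph{the} component'': one must be careful that the Hodge type $\vv$ and inertial type $\tau$ are genuinely pinned down by the representation $r_\lambda|_{G_{F_w}}$ (they are: the Hodge--Tate weights and the restriction to inertia of the Weil--Deligne representation are invariants of the representation), and that the component is unchanged under the base change to $L$ and back. The base-change compatibility is the content of the cited \cite[Lem.\ 3.5.1]{2017arXiv170804885B} and its inert/ramified analogue mentioned in the excerpt, so this is available. The smoothness at de Rham points of regular weight is the genuinely substantive external ingredient; everything else is bookkeeping with the definitions of compatible systems, purity, and potential automorphy already assembled above.
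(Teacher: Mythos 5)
Your reduction to smoothness of the local point is the right move, and your use of potential automorphy to get local--global compatibility is also correct, but your justification of the smoothness itself has a genuine gap, and the two arguments you offer for it are both incorrect as stated. First, the claim that the Taylor--Wiles--Kisin method ``forces the relevant local framed deformation rings to be regular at the points in question'' does not hold: an $R=T$ theorem identifies the global deformation ring with a Hecke algebra, but yields no regularity statement about the local lifting rings at particular points. Second, and more seriously, your ``more direct'' argument asserts that a de Rham point of regular weight on $R^{\square,\tau,\vv}[1/p]$ is automatically a smooth point of that ring. This is false: Kisin's theorem gives that $R^{\square,\tau,\vv}[1/p]$ is reduced and equidimensional (so generically smooth on each component), but it certainly does not rule out singular points, e.g.\ where distinct components meet. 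Being de Rham of regular weight is exactly what puts you on $\Spec R^{\square,\tau,\vv}[1/p]$ in the first place; it gives no information about which point you land on, and in particular does not by itself preclude landing on an intersection of components.

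The ingredient you are missing is \emph{purity}. The paper's argument is: by~\cite[Cor.\ 3.3.5]{2017arXiv170804885B}, the point cut out by $r_\lambda|_{G_{F_w}}$ is a smooth point of the local deformation ring provided $r_\lambda|_{G_L}$ is pure (in the sense of~\cite[\S 1]{MR2276777}, i.e.\ satisfies the weight--monodromy/Frobenius-semisimplicity condition) for some finite extension $L/F_w$. This purity is a deep input: it is the main theorem of~\cite{MR2972460,MR3272276} that automorphic Galois representations are pure, and it becomes available here because $\{r_\lambda\}$ is potentially automorphic (Lemma~\ref{lem: weakly irreducible equals potentially automorphic}). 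Purity is what singles out the point as unobstructed; regular Hodge--Tate weights alone do not. Your proof should therefore replace the smoothness paragraph with the purity criterion and the Caraiani/Taylor--Yoshida/Shin theorems rather than appealing to a nonexistent smoothness of the whole generic fibre.
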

\begin{proof}
  It suffices to prove that~$r_\lambda|_{G_{F_w}}$ defines a smooth point
  of the corresponding deformation ring. By~\cite[Cor.\
  3.3.5]{2017arXiv170804885B},  it is enough to prove
  that there is a finite extension~$L/F_w$ such that~$r_\lambda|_{G_{L}}$ is pure  in the sense of~\cite[\S
  1]{MR2276777}. 
  Since~$\{r_\lambda\}$ is
  potentially automorphic by Lemma~\ref{lem: weakly irreducible equals
    potentially automorphic}, this follows from 
      the main theorems
  of~\cite{MR2972460,MR3272276} (which show that automorphic
  Galois representations are pure).
\end{proof}

We now return to global deformation problems.

\begin{prop}
  \label{prop: global deformation ring is positive dimensional}
  Let~$F$ be a CM field, and let~$p>2$ be prime. Let~$(\rbar,\mubar)$
  be an absolutely irreducible polarized representation
  of~$G_F$. Suppose that~$\rbar$ is odd, and
  that~$\rbar|_{G_{F(\zeta_p)}}$ is absolutely irreducible, and let~$\mu$ be a de Rham
  lift of~$\mubar$. Let~$S$ be a finite set of finite
  places of~$F^+$ such that~$\rbar$ and~$\mu$ are unramified
  outside~$S$. For each place $v\in S$, let~$C_v$ be a~$\mu$-polarized
  component for~$\rbar|_{G_{F_v}}$, which is regular if~$v|p$.

  Let~$R^\univ$ be the universal deformation ring for $\mu$-polarized
  deformations of~$\rbar$ which are unramified outside~$S$, and lie on
  the component~$C_v$ for each~$v\in S$. Then~$R^\univ$ has Krull
  dimension at least one.
\end{prop}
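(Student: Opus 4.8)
The plan is to write down a presentation of~$R^{\univ}$ over~$\cO$ and to bound its Krull dimension from below using the global Euler characteristic formula. The crux will be that, thanks to the oddness of~$\rbar$ and the regularity of the components~$C_v$ at the places above~$p$, the contribution of the local deformation conditions at $v\mid p$ exactly cancels the (negative) contribution of the archimedean places of~$F^+$, so that the lower bound comes out to be~$\ge1$ on the nose.

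First I would set up the Galois cohomology in the usual way. Write $\ad\rhobar$ for~$\mathfrak{gl}_n$ equipped with the $G_{F^+}$-action relevant to $\mu$-polarized deformations (conjugation via~$\sbar$ on~$G_F$, and $X\mapsto-\rhobar(\gamma_0)\,{}^tX\,\rhobar(\gamma_0)^{-1}$ on the non-trivial coset), so that the tangent space of the global $\mu$-polarized deformation functor is $H^1(G_{F^+,S},\ad\rhobar)$. For each finite $v\in S$ the condition ``$\rho|_{G_{F^+_v}}$ lies on~$C_v$'' is represented by the quotient~$R_v^{C_v}$ of the local framed lifting ring corresponding to the Zariski closure of~$C_v$; this is a $p$-torsion-free domain with $R_v^{C_v}[1/p]$ of dimension~$\dim C_v$, and since irreducible components of the local lifting rings are conjugation-invariant (\cite[Lem.\ 3.4.1]{2017arXiv170804885B}) its relative tangent space over~$\cO$ contains the $\bigl(n^2-\dim_\F H^0(G_{F^+_v},\ad\rhobar)\bigr)$ framing directions as a direct summand; I let $\mathcal L_v\subseteq H^1(G_{F^+_v},\ad\rhobar)$ be the complementary summand, and I put $\mathcal L_v=0$ for $v\mid\infty$ (which is forced, since~$p$ is odd). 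Next I would record the two cohomological vanishing statements supplied by the hypotheses: $H^0(G_{F^+},\ad\rhobar)=0$ (since~$\sbar$ is absolutely irreducible and~$p$ is odd, so the scalars --- the only $G_F$-invariants --- are negated by the~$\gamma_0$-twist) and $H^0(G_{F^+},\ad\rhobar(1))=0$ (using that~$\rbar|_{G_{F(\zeta_p)}}$ is absolutely irreducible; cf.~\cite{CHT,BLGGT}). With these in hand, the standard obstruction-theoretic presentation of global deformation rings (\cite[\S 2.2]{CHT}, \cite[\S 1.2]{BLGGT}, in the generality of~\cite{2017arXiv170804885B}) exhibits~$R^{\univ}$ as a quotient of~$\cO[[x_1,\dots,x_g]]$ by~$r$ relations with $g=\dim_\F H^1_{\mathcal L}(G_{F^+,S},\ad\rhobar)$ and $r\le\dim_\F H^1_{\mathcal L^\perp}(G_{F^+,S},\ad\rhobar(1))$, where $\mathcal L=\{\mathcal L_v\}_v$; hence $\dim R^{\univ}\ge1+\dim_\F H^1_{\mathcal L}-\dim_\F H^1_{\mathcal L^\perp}$.

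The remaining task is to bound that difference. Applying the Greenberg--Wiles formula (the global Euler characteristic formula together with local Tate duality) and using the two vanishing statements, one gets
\[
\dim_\F H^1_{\mathcal L}-\dim_\F H^1_{\mathcal L^\perp}=\sum_{v\in S}\bigl(\dim_\F\mathcal L_v-\dim_\F H^0(G_{F^+_v},\ad\rhobar)\bigr)-\sum_{v\mid\infty}\dim_\F H^0(G_{F^+_v},\ad\rhobar).
\]
For $v\mid\infty$: as~$F^+$ is totally real, $G_{F^+_v}=\{1,c_v\}$, and since~$\rbar$ is odd the pairing witnessing the polarization is symmetric, so~$c_v$ acts on~$\mathfrak{gl}_n$ with $(+1)$-eigenspace~$\mathfrak{so}_n$; thus $\dim_\F H^0(G_{F^+_v},\ad\rhobar)=\binom n2$, and there are $[F^+:\Q]$ such places. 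For finite $v\in S$: since~$R_v^{C_v}$ is an $\cO$-flat domain whose generic fibre has dimension~$\dim C_v$, its relative tangent space over~$\cO$ has dimension at least~$\dim C_v$, whence $\dim_\F\mathcal L_v-\dim_\F H^0(G_{F^+_v},\ad\rhobar)\ge\dim C_v-n^2$; and by the dimension computations for the local lifting rings in~\cite[\S 3]{2017arXiv170804885B} (building on Kisin), $\dim C_v=n^2$ for $v\nmid p$, while for $v\mid p$ --- using that~$C_v$ is \emph{regular}, so that the relevant flag variety is the full one --- $\dim C_v=n^2+[F^+_v:\Q_p]\binom n2$. Summing over~$S$, which contains all places above~$p$, and using $\sum_{v\mid p}[F^+_v:\Q_p]=[F^+:\Q]$, I would conclude
\[
\dim_\F H^1_{\mathcal L}-\dim_\F H^1_{\mathcal L^\perp}\ \ge\ \sum_{v\mid p}[F^+_v:\Q_p]\binom n2\ -\ [F^+:\Q]\binom n2\ =\ 0,
\]
and therefore $\dim R^{\univ}\ge1$, as claimed.

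The main work --- and the place I would expect the real difficulty to lie --- is not any single step but rather having at one's disposal the precise dimensions of the local lifting rings in the full generality of~\cite{2017arXiv170804885B}, in particular at primes inert or ramified in~$F$, where the relevant deformation problems are $\cG_n$-valued (essentially unitary) rather than honest $\GL_n$-valued. This is exactly the input that makes the above cancellation work out; it is also the point at which one sees that both the regularity hypothesis on the~$C_v$ for $v\mid p$ and the oddness of~$\rbar$ are indispensable, since weakening either one replaces the quantity in the last display by something strictly negative and the lower bound collapses.
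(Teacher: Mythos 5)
The paper's own proof of this Proposition is a one-line citation to~\cite[Cor.~5.1.1]{2017arXiv170804885B}; it does not carry out any Galois-cohomological computation at all. Your proposal, in contrast, reconstructs from scratch the standard Greenberg--Wiles/Kisin argument that underlies the cited result: fix Selmer conditions~$\mathcal{L}_v$ from the tangent spaces of the chosen local lifting rings, present~$R^{\univ}$ with $g-r\ge\dim H^1_{\mathcal{L}}-\dim H^1_{\mathcal{L}^\perp}$, compute that difference by the global Euler characteristic formula, and observe that the excess of the local dimensions at $v\mid p$ (one $\binom{n}{2}$ per $\Q_p$-degree, because the Hodge type is regular) exactly cancels the archimedean deficit of $\binom{n}{2}$ per real place, using oddness to identify $H^0(G_{F^+_v},\ad\rhobar)$ with $\mathfrak{so}_n$. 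This is the right skeleton, and your concluding remark that the real content is the dimension formula for the local $\cG_n$-valued lifting rings (including at inert/ramified primes) is exactly correct --- that is the technical contribution of the reference the paper leans on.

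Two points to tighten. First, your claim that $H^0(G_{F^+},\ad\rhobar(1))=0$ does not follow merely from the absolute irreducibility of $\rbar|_{G_{F(\zeta_p)}}$: one needs, in addition, that $\zeta_p\notin F$. If $\zeta_p\in F$ then $\F(1)$ is trivial on $G_F$, so $H^0(G_F,\ad\rhobar(1))$ is the line of scalars, and since $\gamma_0=c$ acts on scalars in $\ad\rhobar$ by $-1$ and on $\F(1)$ by $\varepsilon(c)=-1$, the scalar line is $\gamma_0$-\emph{fixed} in the twisted module, giving $H^0(G_{F^+},\ad\rhobar(1))=\F$ and dropping your lower bound to $\dim R^{\univ}\ge 0$. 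The hypothesis $\zeta_p\notin F$ is not written into the Proposition as stated, although it holds in every application in the paper (via the ``reasonable'' hypothesis); you should either add it to your hypotheses or check that the cited~\cite[Cor.~5.1.1]{2017arXiv170804885B} handles this case. Second, when the local rings $R_v^{C_v}$ are not formally smooth, the presentation of $R^{\univ}$ as a quotient of a power series ring purely over~$\cO$ by $r\le\dim H^1_{\mathcal{L}^\perp}$ relations takes some care; the cleaner way to organize the count (as in Kisin and in \cite[\S1.2]{BLGGT}) is to present a framed ring $R^{\square}$ over $R^{\mathrm{loc}}=\widehat\otimes_v R_v^{C_v}$ and add $\dim R^{\mathrm{loc}}$ explicitly, which is equivalent to what you wrote after rearranging but avoids the need to treat $\mathcal{L}_v$ as if it were a smooth local condition.
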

\begin{proof}This is~\cite[Cor.\ 5.1.1]{2017arXiv170804885B} (note
  that the condition there of being ``discrete series and odd'' is by
  definition the same as being odd in the sense of this paper).
 \end{proof}

\subsubsection{Automorphy lifting and adequate representations}\label{subsubsec: auto
  lifting}
  We end this section by recalling some results concerning automorphy lifting
theorems. Let~$F$ be a CM field with
maximal totally real subfield~$F^+$. We say that a finite place~$v$
of~$F$ is \emph{split} if $v|_{F^+}$ splits  in~$F$. In order to apply
the (potential) automorphy results of~\cite{BLGGT}, we need  
to assume that all of the places
$v|p$, and all of the places at which our Galois representations are
ramified, are split places; we will avoid making such assumptions in
our main results by making base changes.

We have the following theorem; the notion of adequacy is recalled in Definition~\ref{defn:
  adequate} below.
\begin{thm}
  \label{thm: FC's pst R=T}Let $F$ be a CM field, and let $p>2$ be
  prime. Suppose that $p\nmid n$ and that
  $\zeta_p\notin F$. Let $(r,\mu)$ be a polarized automorphic Galois
  representation, where $r:G_F\to\GL_n(\Qpbar)$,
  and assume that $\rbar(G_{F(\zeta_p)})$ is adequate.
  
Let $S$ be a finite set of places of $F^+$  which includes
 all places at which
~$(r,\mu)$ is ramified, and all places dividing $p$,
and for each $v \in S$ let $C_v$ denote the local component at $v$
on which $\rho|_{ G_{F_v}}$ lies, where~$\rho:G_{F^+}\to\cG_n(\Qpbar)$ is the prolongation of~$(r,\mu)$. Assume that every place in~$S$ is a
split place.

Let $R_C$ denote the global deformation $\cO$-algebra for $\rbar$ which
parameterizes deformations of $\rhobar$ that are $\mu$-polarized,
that are unramified outside $S$, and that for each $v \in S$,
lie on the corresponding component $C_v$. 
Then $R_C$ is a finite $\cO$-algebra, and any representation
corresponding to a $\Qpbar$-point of~$R_C$ is automorphic.
\end{thm}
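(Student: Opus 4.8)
The plan is to prove this by Taylor--Wiles--Kisin patching, carried out with the local deformation components held fixed throughout, and then to read off both conclusions from the standard commutative-algebra endgame; since $(r,\mu)$ is already assumed automorphic, what is needed is an automorphy \emph{lifting} statement, not potential automorphy. First I would set up the global deformation problem: working with the fixed prolongation $\rhobar$ and the fixed lift $\mu$ of $\mubar$, and writing $R^{\square}_{C_v}$ for the $\cO$-flat quotient of the framed local ring $R^{\square,\tau_v}$ (resp.\ $R^{\square,\tau_v,\vv_v}$ when $v\mid p$) whose generic fibre cuts out the single component $C_v$, I would consider $\mu$-polarized deformations of $\rhobar$ that are unramified outside $S$ and that at each $v\in S$ are classified by $R^{\square}_{C_v}$. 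By hypothesis $\rho|_{G_{F_v}}$ lies on $C_v$, so $r$ itself is a point of this problem, whence its universal ring $R_C=R^{\univ}$ is non-zero and, by Proposition~\ref{prop: global deformation ring is positive dimensional}, of Krull dimension at least one. The essential local input, which I would import from \cite{2017arXiv170804885B}, is that each $R^{\square}_{C_v}$ is $\cO$-flat and equidimensional of the dimension predicted by the local Euler characteristic formula, and that, after enlarging $E$ as our running conventions permit, its generic fibre $\Spec R^{\square}_{C_v}[1/p]=C_v$ is geometrically irreducible; consequently $R^{\loc}:=\widehat{\bigotimes}_{v\in S}R^{\square}_{C_v}$ is $\cO$-flat, equidimensional, with irreducible generic fibre.

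Next I would run the patching argument in the adequate-image setting of \cite{BLGGT} (this is where the hypotheses that $p\nmid n$, that $\zeta_p\notin F$, that every place of $S$ is split, and that $\rbar(G_{F(\zeta_p)})$ is adequate are used). Automorphy of $(r,\mu)$ furnishes a space of algebraic automorphic forms on a suitable unitary group whose localization $M$ at the maximal ideal $\frakm$ attached to $\rhobar$ is a non-zero finitely generated $\cO$-module carrying an action of $R_C$. Adjoining Taylor--Wiles primes and patching produces a power-series ring $S_\infty=\cO[[y_1,\dots,y_q]]$, a ring $R_\infty=R^{\loc}[[x_1,\dots,x_g]]$ with a compatible map to the patched Hecke algebra, and a patched module $M_\infty$ over $R_\infty$ and $S_\infty$ which is finite free over $S_\infty$ and whose reduction modulo the patching ideal $\fa$ recovers $M$ as a module over $R_C$ (I suppress framing variables). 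The numerology of the construction forces $\dim R_\infty=\dim S_\infty$, so $M_\infty$ is a maximal Cohen--Macaulay $R_\infty$-module; as $R_\infty$ is $\cO$-flat and equidimensional, $M_\infty$ is supported on a union of irreducible components of $\Spec R_\infty$.

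For the endgame: since $R^{\loc}[1/p]$ is irreducible, so is $R_\infty[1/p]$, and $M_\infty[1/p]\neq 0$ because the automorphic point coming from $r$ lies, by construction, on all the $C_v$; hence $M_\infty[1/p]$ has full support, and by $\cO$-flatness $\operatorname{Supp}_{R_\infty}M_\infty=\Spec R_\infty$. Reducing modulo $\fa$, $M$ is a finitely generated $\cO$-module with $\operatorname{Supp}_{R_C}M=\Spec R_C$, so $R_C$ embeds, modulo its nilradical, into $\End_\cO(M)$ and is therefore a finite $\cO$-algebra, giving the first assertion. For the second, any $\Qpbar$-point $x$ of $R_C$ lies in $\operatorname{Supp}_{R_C}M=\Spec R_C$, and every point of $\operatorname{Supp}(M)$ is automorphic by local--global compatibility for the automorphic forms comprising $M$ in the relevant weight and inertial type; hence the deformation attached to $x$ is automorphic. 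I expect the main obstacle to lie upstream, in the local structure theory at the places $v\mid p$ underpinning the assertion that $R^{\square}_{C_v}$ is $\cO$-flat, equidimensional of the expected dimension, with geometrically irreducible generic fibre --- that is, the analysis of potentially semistable deformation rings and their components imported from \cite{2017arXiv170804885B}; everything downstream is the by-now-routine patching formalism, and it is precisely the device of fixing one component $C_v$ at each place that makes $R^{\loc}[1/p]$ irreducible and so lets one conclude full support directly, without Taylor's Ihara-avoidance argument.
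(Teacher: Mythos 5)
Your proposal is correct and is, in substance, a sketch of the Taylor--Wiles--Kisin patching argument with a single fixed local component at each place of~$S$ --- which is precisely the argument underlying the theorem that the paper invokes. The paper itself does not redo the patching: its proof is simply to cite~\cite[Thm.\ 7.1]{MR2869026} (cf.\ \cite[Thm.\ 10.1]{MR2979825}) for the automorphy statement and, via the proof of \emph{loc.\ cit.}, for the finiteness statement, and then to check that the mildly weaker hypotheses made here suffice: assumption 4(d) of the cited theorem (local--global compatibility at $v\mid p$) is now supplied by~\cite{MR3205603}, and assumption 4(c) (that $\rho|_{G_{F_v}}$ is a smooth point of the local lifting ring) holds by Lemma~\ref{lem: points of potentially automorphic are robustly smooth}, which deduces smoothness from purity of automorphic Galois representations. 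So the two proofs rest on the same machinery; the paper's route buys brevity and a clean interface to the literature, while your explicit sketch makes visible where the fixed-component device does the work of Ihara avoidance --- once the $C_v$ are given, you only need $R^{\loc}[1/p]$ to be irreducible, rather than the smoothness input that the paper must verify in order to invoke the stated form of the cited theorem.
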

\begin{proof} The automorphy statement is essentially~\cite[Thm.\
  7.1]{MR2869026}, and the finiteness statement follows easily from
  the proof of \emph{loc.\ cit.} (cf.\ \cite[Thm.\
  10.1]{MR2979825}). We only need to justify the slightly weaker
  hypotheses that we are making here, in comparison to
  assumptions~4(c) and~4(d) in the statement of~\cite[Thm.\
  7.1]{MR2869026}. Assumption 4(d) was only made because local-global
  compatibility at places dividing~$p$ was unknown at the time
  that~\cite{MR2869026} was written, but it is now available in the
  required generality thanks to~\cite{MR3205603}. Assumption 4(c) is
  satisfied by~Lemma~\ref{lem: points of potentially automorphic are robustly
    smooth}.
  \end{proof}
As a consequence, we have the following useful finiteness result.
\begin{lemma}
	\label{finiteness lemma}
        Let~$p$ be an odd prime, and let $F$ be a CM field
        with~$\zeta_p\not\in F$. Let
        $\{(s_{\lambda},\mu_\lambda)\}$ be a weakly irreducible, odd, regular,
        polarized compatible system of $n$-dimensional
        representations of $G_F$.
	Assume that $p\nmid n$,  let $(s,\mu)$ be
	the $p$-adic representation coming from the given compatible
        system, with corresponding prolongation~$\rho$, and assume that
        $\sbar(G_{F(\zeta_p)})$ is adequate.
        
Let $S$ be a finite set of finite places of~$F^+$ which contains all
of the places dividing~$p$, and is such that~$\rho$ is
unramified outside~$S$.
For each $v \in S$, let $C_v$ denote the local component at $v$
on which $\rho|_{ G_{F_v}}$ lies.
Let $R_C$ denote the global deformation $\cO$-algebra which
parameterizes deformations of $\rhobar$ that are $\mu$-polarized,
that are unramified outside~$S$, and that, for each $v \in S$,
lie on the corresponding component~$C_v$.
Then $R_C$ is a finite $\cO$-algebra.
\end{lemma}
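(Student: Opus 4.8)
The plan is to reduce to the situation handled by Theorem~\ref{thm: FC's pst R=T} --- where the compatible system is \emph{automorphic} (not merely weakly irreducible) and every place of $S$ is \emph{split} --- by making one carefully chosen base change, and then descending.

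First I would choose the base change. Since $\{(s_\lambda,\mu_\lambda)\}$ is weakly irreducible, regular, odd and polarizable, it is potentially automorphic by Lemma~\ref{lem: weakly irreducible equals potentially automorphic}; more precisely, the splitting-at-primes form of the potential automorphy theorem of~\cite{BLGGT} (Theorem~\ref{thm: BLGGT splitting at primes}) yields a finite Galois extension of CM fields $F'/F$, with ${F'}^+/F^+$ Galois, such that: (i) $(\{s_\lambda|_{G_{F'}}\},\{\mu_\lambda|_{G_{F'}}\})$ is automorphic; (ii) every place of ${F'}^+$ lying over a place of $S$ is split in $F'$; and (iii) $F'$ is linearly disjoint over $F$ from the compositum of $F(\zeta_p)$ with the fixed field of $\ker(\sbar)$. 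Condition (iii) guarantees that $\zeta_p\notin F'$ and that $\sbar(G_{F'(\zeta_p)})=\sbar(G_{F(\zeta_p)})$ is again adequate (in particular $\sbar|_{G_{F'}}$ is absolutely irreducible); the hypothesis $p\nmid n$ and the regularity, oddness and polarizability of the system pass to $F'$ without change. Writing $S'$ for the set of places of ${F'}^+$ over $S$, the representation $\rho|_{G_{{F'}^+}}$ is unramified outside $S'$ and, for each $v'\in S'$ lying over $v\in S$, lies on the restricted component $C_v|_{{F'}^+_{v'}}$ (restriction of components being defined as in~\cite{2017arXiv170804885B}), which is regular whenever $v'\mid p$ since the whole system is regular.

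Next I would apply Theorem~\ref{thm: FC's pst R=T} over $F'$ to the polarized automorphic representation $(s|_{G_{F'}},\mu|_{G_{F'}})$, with the set of (split) places $S'$ and the components $\{C_v|_{{F'}^+_{v'}}\}_{v'\in S'}$: this shows that the global deformation $\cO$-algebra $R_{C'}$ over $F'$ --- parameterizing $\mu|_{G_{{F'}^+}}$-polarized deformations of $\rhobar|_{G_{{F'}^+}}$ that are unramified outside $S'$ and lie on $C_v|_{{F'}^+_{v'}}$ for all $v'\in S'$ --- is a finite $\cO$-algebra. Restricting the universal deformation from $G_{F^+}$ to $G_{{F'}^+}$ respects all of the imposed conditions (by functoriality of restriction of components), hence induces a local $\cO$-algebra homomorphism $R_{C'}\to R_C$; I would then invoke the standard base-change finiteness statement (cf.~\cite{MR2480604, CHT, BLGGT}) that this map is module-finite. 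Concretely, by the topological Nakayama lemma it suffices to check that $R_C/\mathfrak{m}_{R_{C'}}R_C$ is Artinian, and this ring pro-represents those deformations of $\rhobar$ (as classified by $R_C$) whose restriction to $G_{{F'}^+}$ is trivial as a deformation; since $\sbar|_{G_{F'}}$ is absolutely irreducible and $[{F'}^+:F^+]$ is finite, any such deformation is, after conjugation, constant on $G_{{F'}^+}$ with values in $\cG_n(\F)$, and is then pinned down up to finitely many central-torsion parameters satisfying finite-order relations, so there are only finitely many over any Artinian $\F$-algebra. Therefore $R_C$ is finite over $R_{C'}$, which is finite over $\cO$, so $R_C$ is finite over $\cO$.

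The main obstacle is the first step: simultaneously arranging that the places over $S$ become split, that the restricted system is honestly automorphic, and that enough linear disjointness is retained to keep $\sbar(G_{F'(\zeta_p)})$ adequate and $\zeta_p\notin F'$. This is precisely why one wants the splitting-at-primes refinement of the potential automorphy theorem rather than its bare statement; the descent step, while a standard input, also leans on condition (iii), since it is the absolute irreducibility of $\sbar|_{G_{F'}}$ that forces $R_C/\mathfrak{m}_{R_{C'}}R_C$ to be Artinian.
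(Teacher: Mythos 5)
Your proposal is correct and follows essentially the same route as the paper: use potential automorphy (via Lemma~\ref{lem: weakly irreducible equals potentially automorphic} and Theorem~\ref{thm: BLGGT splitting at primes}) to reduce to an automorphic situation over a base change $F'$ with all relevant places split, apply Theorem~\ref{thm: FC's pst R=T} there, and descend via finiteness of $R_C$ over $R_{C'}$; the paper simply compresses the base-change and descent steps into a citation of~\cite[Lem.\ 1.2.3]{BLGGT}, which is exactly the module-finiteness statement you reprove via topological Nakayama.
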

\begin{proof}
  By Lemma~\ref{lem: weakly irreducible equals potentially
    automorphic}, $\{(s_\lambda,\mu_\lambda)\}$ is 
     potentially automorphic. By~\cite[Lem.\ 1.2.3]{BLGGT}, we can reduce
  to the case that~$\{(s_\lambda,\mu_\lambda)\}$ is in fact
  automorphic, and all the places in~$S$ are split places, in which
  case it follows from Theorem~\ref{thm: FC's pst R=T}
  that~$R_{C}$ is a finite $\cO$-algebra.\end{proof}
  \subsubsection{Adequate representations}

Let~$k$ be a field of characteristic~$p$. We always assume it is
sufficiently large to contain all the eigenvalues of any
representation under consideration.  Let~$V$ be a vector space over~$k$ and
let~$G \subset \GL(V)$ be a group which acts
absolutely irreducibly. We firstly recall from~\cite{MR2979825} what it means
for~$G$ to be adequate.

\begin{defn}
  \label{defn: adequate}$G$ is \emph{adequate} if the following
  conditions hold.
\begin{enumerate}
\item  \label{schur} $H^0(G,\ad^0(V)) = 0$. 
\item $H^1(G,k) = 0$.
\item $H^1(G,\ad^0(V)) = 0$.
\item \label{item: condition C} For every irreducible $G$-submodule $W\subset\ad^0 V$, there is
  an element $g\in G$ with an eigenvalue~$\alpha$ such that
  $\tr e_{g,\alpha}W\ne 0$ (where $e_{g,\alpha}$ is the projection to
  the generalized $\alpha$-eigenspace of~$g$). 
  \end{enumerate}
\end{defn}

%
%
%
%

\begin{lemma} \label{lemma:one} Suppose that~$G$ acts  absolutely irreducibly on~$V$.
Then the following are equivalent.
\begin{enumerate}
\item Condition~(\ref{item: condition C}) of Definition~\ref{defn: adequate}.
\item The set of semisimple elements of~$G$ spans~$\ad(V) \otimes_{k} \kbar$ as
  a~$\kbar$-vector space.
  \end{enumerate}
\end{lemma}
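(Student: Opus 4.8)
The plan is to prove the two implications separately, exploiting the fact that $\ad(V) = \ad^0(V) \oplus k$ as a $G$-module (since $p \nmid n$ is not assumed here, one should be a little careful: in general $\ad(V) = \mathrm{End}(V)$ contains the scalars $k$ as a submodule, and $\ad^0(V)$ is the kernel of the trace; the direct sum decomposition holds precisely when $p \nmid n$, but even without that one has the trace filtration, and the span statement can be analyzed via the trace pairing). First I would observe that the span of the semisimple elements of $G$ is automatically a $G$-subrepresentation of $\ad(V) \otimes_k \kbar$ (indeed of $\mathrm{End}(V)$), because $G$ normalizes the set of its own semisimple elements under conjugation, and conjugation is the adjoint action. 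So condition (2) says: this particular $G$-subrepresentation is everything.

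For the direction (2) $\Rightarrow$ (1): given an irreducible $G$-submodule $W \subset \ad^0 V$, if the semisimple elements span all of $\ad(V)\otimes\kbar$, then in particular they are not all contained in the trace-zero hyperplane complement to... more usefully, I would argue by contradiction: suppose for every semisimple $g \in G$ and every eigenvalue $\alpha$ we have $\tr(e_{g,\alpha} W) = 0$. Since a semisimple element $g$ decomposes as $g = \sum_\alpha \alpha\, e_{g,\alpha}$ with $\sum_\alpha e_{g,\alpha} = \mathrm{id}_V$, the trace pairing $\langle X, Y\rangle = \tr(XY)$ on $\mathrm{End}(V)$ would then satisfy $\langle g, w\rangle = \sum_\alpha \alpha\, \tr(e_{g,\alpha} w) = 0$ for all $w \in W$ — here I use that $e_{g,\alpha}$ acts on $W$ (as $W$ is $g$-stable, being a $G$-submodule) and that $\tr(e_{g,\alpha} w) = \tr(e_{g,\alpha} w e_{g,\alpha})$ plus the vanishing hypothesis. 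Hence $W$ would be orthogonal to the span of all semisimple elements under the (nondegenerate) trace pairing, forcing $W = 0$, a contradiction.

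For the direction (1) $\Rightarrow$ (2): let $U \subseteq \ad(V)\otimes\kbar$ be the span of the semisimple elements; it is a $G$-subrepresentation as noted. I would show $U^\perp = 0$ under the trace pairing. Take $Y \in U^\perp$; then $\tr(gY) = 0$ for every semisimple $g \in G$, i.e. $\sum_\alpha \alpha \tr(e_{g,\alpha} Y) = 0$. Running over the various semisimple elements and decomposing $Y$ into its $G$-isotypic components, one reduces to showing each isotypic component of $Y$ is killed; the trace component ($k \subset \ad(V)$) is killed because $\mathrm{id}_V \in U$ (every element of $G$ that is semisimple — e.g. any element of finite order prime to $p$, and there is at least one since $G$ acts absolutely irreducibly so $G$ is nontrivial — together with $1 \in G$ being semisimple gives $\mathrm{id}_V \in U$), so $\tr Y = 0$, whence $Y \in \ad^0(V)\otimes\kbar$. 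Then decompose $\ad^0 V$ into irreducibles $W_i$; condition (1) furnishes for each $W_i$ a semisimple $g_i$ and eigenvalue $\alpha_i$ with $\tr(e_{g_i,\alpha_i} W_i) \ne 0$, and combining such elements across all $i$ (taking suitable linear combinations of the projectors, which lie in the algebra generated by $g_i$ and hence in $U$) one produces enough trace-pairing functionals to conclude $Y = 0$. Therefore $U = U^{\perp\perp} = \ad(V)\otimes\kbar$.

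The main obstacle, I expect, is the bookkeeping in (1) $\Rightarrow$ (2): one must carefully pass from "for each irreducible summand $W_i$ there exists one good pair $(g_i, \alpha_i)$" to "the collection of all generalized-eigenspace projectors of all semisimple elements spans enough of $\mathrm{End}(V)$ to separate points via the trace form", and this requires knowing that the projectors $e_{g,\alpha}$ themselves lie in $U$ (they are polynomials in the semisimple element $g$ with zero constant term suitably adjusted, or rather in the commutative semisimple subalgebra $k[g]$, all of whose elements are semisimple — this is where semisimplicity of $g$ is genuinely used). Once that is in hand, the trace form $\tr(XY)$ on $\mathrm{End}(V)$ being nondegenerate and $G$-invariant makes the equivalence fall out by the standard correspondence between "spanning a submodule" and "the orthogonal complement of that submodule vanishing".
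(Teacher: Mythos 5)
The paper does not give its own argument here; it simply cites Lemma A.1 of the Guralnick--Herzig--Taylor--Thorne appendix to \cite{MR2979825}, so your self-contained reconstruction cannot be compared line-by-line. Your overall strategy --- observing that the span $U$ of the semisimple elements is a $G$-subrepresentation, that the projectors $e_{g,\alpha}$ lie in $U$, and then invoking nondegeneracy of the trace form --- is the right one, and your $(2)\Rightarrow(1)$ direction is sound (modulo a confusing and unneeded aside about $e_{g,\alpha}$ ``acting on $W$'').

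There is, however, a genuine gap in $(1)\Rightarrow(2)$. You assert that condition (1) ``furnishes for each $W_i$ a \emph{semisimple} $g_i$'' and that the projectors ``lie in the algebra generated by $g_i$ and hence in $U$.'' Condition (1) as stated only provides \emph{some} $g\in G$, not a semisimple one, and for non-semisimple $g$ the algebra $\kbar[g]$ is not contained in $U$ (indeed $g$ itself is not in $U$). The step $e_{g,\alpha}\in U$ therefore does not follow from what you wrote. The repair is to use the Jordan decomposition: $e_{g,\alpha}=e_{g_{\mathrm{ss}},\alpha}\in\kbar[g_{\mathrm{ss}}]$, and all powers of $g_{\mathrm{ss}}$ are semisimple elements of $G$ \emph{provided} $g_{\mathrm{ss}}\in G$. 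This holds because $G$ is finite (implicit in the definition being recalled from \cite{MR2979825}), since then $g_{\mathrm{ss}}$ is a power of $g$. You flag semisimplicity of $g$ as ``where it is genuinely used,'' but you never justify why $g$ may be taken semisimple, and that is exactly the nontrivial reduction. A lesser issue: ``decompose $\ad^0 V$ into irreducibles $W_i$'' is not available in characteristic $p$, where $\ad^0 V$ need not be a semisimple $G$-module; the cleaner line is to note $U^\perp$ is a $G$-submodule of $\ad^0 V$ (since $\mathrm{id}\in U$), take \emph{one} irreducible submodule $W\subset U^\perp$ if $U^\perp\ne 0$, and derive the contradiction $0\ne\tr(e_{g,\alpha}W)=0$ directly from $e_{g,\alpha}\in U$ and $W\subset U^\perp$.
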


\begin{proof}
This follows from Lemma A.1 of the appendix to~\cite{MR2979825}, namely
the equivalence between~(i) and~(iii).
\end{proof}

\begin{lemma}\label{lem: projective images} Suppose that~$V$ and~$V'$ are absolutely irreducible representations of  a group~$\Gamma$.
Suppose that the projective images of~$\Gamma$ on~$V$ and~$V'$ are
disjoint, that is,  the group~$\Gamma$ surjects onto the product of
the projective images of~$\Gamma$ on~$V$ and~$V'$, 
and denote the projective images by~$\PG$ and~$\PG'$. Then the images of~$\Gamma$ on~$\ad(V)$ and~$\ad(V')$
are~$\PG$ and~$\PG'$ respectively, and the image of~$\Gamma$ on~$\ad(V \otimes V')$
is~$\PG \times \PG'$.
\end{lemma}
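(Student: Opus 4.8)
The plan is to exploit the hypothesis that $\Gamma$ surjects onto $\PG\times\PG'$, so that the images on $\ad(V)$, $\ad(V')$, and $\ad(V\otimes V')$ can all be computed ``factor by factor.'' First I would observe that the action of $\Gamma$ on $\ad(V)=\End(V)$ factors through $\GL(V)/(\text{scalars})=\PGL(V)$, because scalars act trivially by conjugation; hence the image of $\Gamma$ on $\ad(V)$ is exactly the image of $\PG\subset\PGL(V)$ acting on $\End(V)$ by conjugation, which (since $\PG\hookrightarrow\PGL(V)$ acts faithfully on $\End(V)$ — conjugation by a projective element is trivial only if the element is scalar) is canonically $\PG$ itself. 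The same applies to $\ad(V')$ giving $\PG'$. This already disposes of the first two assertions and requires only the standard fact that the conjugation action of $\PGL$ on $\mathfrak{gl}$ is faithful.

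For the third assertion, the key identity is $\ad(V\otimes V')\cong \ad(V)\otimes \ad(V')$ as representations of $\Gamma$ (indeed $\End(V\otimes V')\cong\End(V)\otimes\End(V')$ naturally, compatibly with the conjugation action, since $(g\otimes g')$ conjugates $\phi\otimes\phi'$ to $(g\phi g^{-1})\otimes(g'\phi'g'^{-1})$). Therefore the action of $\Gamma$ on $\ad(V\otimes V')$ factors through the image of $\Gamma$ in $\PGL(V)\times\PGL(V')$ acting diagonally, and by hypothesis that image is precisely $\PG\times\PG'$. So the image of $\Gamma$ on $\ad(V\otimes V')$ is the image of $\PG\times\PG'$ under the map $\PGL(V)\times\PGL(V')\to\GL(\End(V)\otimes\End(V'))$, $(g,g')\mapsto \mathrm{Ad}(g)\otimes\mathrm{Ad}(g')$. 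It remains to check this map is injective on $\PG\times\PG'$; but it is already injective on all of $\PGL(V)\times\PGL(V')$, since $\mathrm{Ad}(g)\otimes\mathrm{Ad}(g')=\mathrm{Id}$ forces (evaluating on $\phi\otimes\mathrm{Id}$ and on $\mathrm{Id}\otimes\phi'$, using that $\mathrm{Id}$ is a common eigenvector) $\mathrm{Ad}(g)=\mathrm{Id}$ and $\mathrm{Ad}(g')=\mathrm{Id}$, i.e.\ $g,g'$ scalar. Hence the image is $\PG\times\PG'$, as claimed.

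The one point requiring a little care — and the step I expect to be the main (minor) obstacle — is the faithfulness claims over the relevant coefficient field: I want ``$\mathrm{Ad}(g)$ acts trivially on $\End(V)\iff g$ is scalar'' and its bilinear analogue, and since the eventual application tensors with $\overline{k}$ I should make sure these hold after base change to $\overline{k}$, which is immediate as they are statements about $\PGL$ over an algebraically closed (or indeed any) field. With that in hand the argument is purely formal. Finally I would note that absolute irreducibility of $V$ and $V'$ is what guarantees $\PG$, $\PG'$ are the ``honest'' projective images sitting inside $\PGL(V)$, $\PGL(V')$ and makes the factorization through $\PGL(V)\times\PGL(V')$ meaningful; it is not otherwise needed for the group-theoretic bookkeeping above.
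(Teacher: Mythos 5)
Your proof is correct and takes essentially the same approach as the paper: you directly show the map $\PGL(V)\times\PGL(V')\to\GL(\ad(V\otimes V'))$ is injective and then observe that the image of $\Gamma$ in the source is $\PG\times\PG'$. The paper's remark following the lemma says explicitly that its own argument (computing the kernel of $\GL(\ad(V))\times\GL(\ad(V'))\to\GL(\ad(V\otimes V'))$ and checking $\PG\times\PG'$ misses it) is just another way of phrasing the same injectivity, so the two proofs coincide in substance.
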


\begin{proof}
The fact that~$\Gamma$ acts on~$\ad(V)$ as~$\PG$ is completely formal.
Hence under the assumption that
the projective images~$\PG$
and~$\PG'$ are disjoint, $\Gamma$ acts on~$\ad(V) \oplus \ad(V')$ via~$\PG \times \PG'$. The kernel of the map
$$\GL(\ad(V)) \times \GL(\ad(V')) \rightarrow \GL(\ad(V \otimes
V'))$$
consists precisely of pairs of scalar matrices~$(\lambda,\lambda^{-1})$. But it is not possible
for any~$g \in G$ (or~$\PG$) to act on~$\ad(V)$ as a non-trivial scalar --- this would force the action of~$g$ on~$V$
itself to be scalar and then to be trivial on~$\ad(V)$. 
\end{proof}

\begin{rem}
  The proof of Lemma~\ref{lem: projective images} is just another way
  of saying that the map
$$\PGL(V) \times \PGL(V') \rightarrow  \GL(\ad(V \otimes V'))$$
is injective.
\end{rem}

The following lemma is similar to Lemma~A.2 of the appendix
to~\cite{MR2979825}. 
\begin{lemma}  \label{lemma:three} Suppose that~$V$ and~$V'$ are absolutely irreducible representations of  a group~$\Gamma$.
Suppose that the projective images of~$\Gamma$ on~$V$ and~$V'$ are
disjoint,
 and that the images of~$\Gamma$ on~$V$ and~$V'$ satisfy
condition~(\ref{item: condition C}) of Definition~\ref{defn:
  adequate}. 
  Then the image of~$\Gamma$ on~$V \otimes V'$ satisfies condition~(\ref{item: condition C}) of Definition~\ref{defn: adequate}.
\end{lemma}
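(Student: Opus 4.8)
The plan is to reduce the tensor-product statement to the two criteria already in hand, namely Lemma~\ref{lemma:one} (condition~(\ref{item: condition C}) is equivalent to the semisimple elements spanning~$\ad$) and Lemma~\ref{lem: projective images} (the image of~$\Gamma$ on~$\ad(V\otimes V')$ is~$\PG\times\PG'$ when the projective images are disjoint). So first I would invoke Lemma~\ref{lemma:one} for each of~$V$ and~$V'$: the hypothesis that the images on~$V$ and~$V'$ satisfy condition~(\ref{item: condition C}) means that the semisimple elements of the image on~$V$ span~$\ad(V)\otimes_k\kbar$, and likewise for~$V'$. The goal, again by Lemma~\ref{lemma:one}, is to show that the semisimple elements of the image of~$\Gamma$ on~$V\otimes V'$ span~$\ad(V\otimes V')\otimes_k\kbar$.

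The key observation is that an element~$\gamma\in\Gamma$ acting semisimply on~$V$ and semisimply on~$V'$ acts semisimply on~$V\otimes V'$ (the tensor product of two semisimple operators over a field containing all eigenvalues is semisimple, since it is simultaneously diagonalizable). Using the disjointness hypothesis, given any pair~$(\gamma_1,\gamma_2)$ with~$\gamma_i$ acting semisimply on~$V$ (resp.\ $V'$), I can find a single~$\gamma\in\Gamma$ whose images on~$V$ and~$V'$ agree projectively with those of~$\gamma_1$ and~$\gamma_2$ respectively; scaling does not affect semisimplicity, so~$\gamma$ acts semisimply on both~$V$ and~$V'$, hence on~$V\otimes V'$. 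Thus the set of semisimple elements of the image on~$V\otimes V'$ contains (the image of) all such pairs. Now~$\ad(V\otimes V')=\ad(V)\otimes\ad(V')$ as a representation of~$\PG\times\PG'$ (identifying~$\End(V\otimes V')=\End(V)\otimes\End(V')$), and by hypothesis plus Lemma~\ref{lemma:one} the semisimple elements in~$\PG$ span~$\ad(V)\otimes\kbar$ and those in~$\PG'$ span~$\ad(V')\otimes\kbar$. Therefore the products of such elements span~$\ad(V)\otimes\ad(V')\otimes\kbar=\ad(V\otimes V')\otimes\kbar$, and these products are realized by semisimple elements of the image on~$V\otimes V'$ by the preceding sentence. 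Applying Lemma~\ref{lemma:one} in the reverse direction gives condition~(\ref{item: condition C}) for~$V\otimes V'$.

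The main technical point to get right is the span argument: I need that if a set~$X\subset\ad(V)$ spans~$\ad(V)\otimes\kbar$ and~$Y\subset\ad(V')$ spans~$\ad(V')\otimes\kbar$, then the set of elementary tensors~$\{x\otimes y : x\in X,\ y\in Y\}$ spans~$\ad(V)\otimes\ad(V')\otimes\kbar$ — this is an elementary fact about tensor products of vector spaces. The subtle part is keeping careful track of the distinction between elements of~$\Gamma$, their images in the image group, and the corresponding operators on~$\ad$, together with the fact that the element~$e_{g,\alpha}$ appearing in condition~(\ref{item: condition C}) is insensitive to scaling~$g$; but all of this is packaged cleanly by passing through Lemma~\ref{lemma:one} and Lemma~\ref{lem: projective images}, so the argument should be short. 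The one place I would be careful is to note that semisimplicity of~$g\otimes g'$ uses that~$k$ (enlarged to~$\kbar$, as permitted) contains the relevant eigenvalues, which is already our standing assumption on~$k$.
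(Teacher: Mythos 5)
Your proof is correct and follows essentially the same route as the paper's: both translate condition~(\ref{item: condition C}) into the spanning criterion of Lemma~\ref{lemma:one}, use Lemma~\ref{lem: projective images} to realize any pair of semisimple elements projectively by a single~$\gamma\in\Gamma$ (noting scaling is harmless), and conclude via the factorization~$\ad(V\otimes V')=\ad(V)\otimes\ad(V')$. The only difference is that you make explicit a few small points (semisimplicity of a tensor of semisimple operators, the span lemma for elementary tensors) that the paper leaves implicit.
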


\begin{proof}  Let~$G$ and~$G'$ denote the images of~$\Gamma$ in~$V$ and~$V'$
respectively. By  Lemma~\ref{lemma:one}, the semisimple elements~$g$ of~$G$
and~$g'$ of~$G'$ span~$\ad(V)$ and~$\ad(V')$ respectively. In particular,
the elements~$g \otimes g'$, which are also semisimple, span~$\ad(V)
\otimes \ad(V') = \ad(V\otimes V')$. 

Let~$g$ and~$g'$ be any pair of semisimple elements in~$G$ and~$G'$
respectively. 
By Lemma~\ref{lem: projective images}, there is a~$\gamma \in \Gamma$
which acts projectively on~$V$ and~$V'$ by~$g$ and~$g'$ respectively.
Hence it acts on~$V$ and~$V'$ by~$\lambda g$ and~$\lambda' g'$ respectively for scalars~$\lambda$ and~$\lambda'$.
Hence it acts on~$V \otimes V'$ by a scalar multiple of~$g \otimes
g'$. In particular, it spans the same line in~$\ad(V\otimes V')$ as~$g \otimes g'$. Hence these elements
span~$\ad(V\otimes V')$, as required. 
\end{proof}

\begin{lemma} \label{lemma:tensor} Suppose that~$V$ and~$V'$ are absolutely irreducible representations of  a group~$\Gamma$
of dimensions~$n,n' > 2(p+1)$ respectively, whose projective images are disjoint. Then the image of~$\Gamma$
acting on~$V \otimes V'$ is adequate.
\end{lemma}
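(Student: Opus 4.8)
To prove that the image $\Gamma$ acting on $V \otimes V'$ is adequate, I would verify the four conditions of Definition~\ref{defn: adequate} in turn, using the hypothesis that $V, V'$ are absolutely irreducible with disjoint projective images, and that $n, n' > 2(p+1)$. The key structural input is Lemma~\ref{lem: projective images}, which identifies the image of $\Gamma$ on $\ad(V \otimes V')$ with $\PG \times \PG'$ (the product of the projective images), so that $\ad(V \otimes V') \cong \ad V \oplus \ad V'$ as $\Gamma$-modules, and correspondingly $\ad^0(V \otimes V')$ decomposes after suitably accounting for the extra summand. First I would record that condition~(\ref{item: condition C}) follows immediately from Lemma~\ref{lemma:three}, since the images of $\Gamma$ on $V$ and on $V'$ each satisfy condition~(\ref{item: condition C}): this is where the dimension hypotheses $n, n' > 2(p+1)$ enter, via the known adequacy (or at least the validity of condition~(\ref{item: condition C})) for absolutely irreducible representations in dimension greater than $2(p+1)$, which one can cite from~\cite{MR2979825} (this is essentially the content that a large absolutely irreducible image is adequate).

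**Handling the cohomological conditions.** For conditions (1), (2), (3) I would exploit the decomposition of $\ad$ coming from $\im(\Gamma) = \PG \times \PG'$. Since $V \otimes V'$ is absolutely irreducible, $H^0(\Gamma, \ad^0(V \otimes V')) = 0$ is equivalent to Schur's lemma and holds automatically (this is condition~(1), i.e.\ \ref{schur}). For condition~(2), $H^1(\Gamma, k) = H^1(\PG \times \PG', k)$, and by inflation-restriction / Künneth this is $H^1(\PG, k) \oplus H^1(\PG', k)$, which vanishes because $\PG, \PG'$ are the projective images of absolutely irreducible representations of dimension $> 2(p+1)$ — again invoking the adequacy of $V$ and $V'$, one of whose consequences is $H^1 = 0$ of the projective image with trivial coefficients. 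For condition~(3), I would write $\ad(V \otimes V') = \ad V \oplus \ad V'$ (noting $\ad V = k \oplus \ad^0 V$), so $\ad^0(V \otimes V')$ sits inside $\ad^0 V \oplus \ad^0 V' \oplus k$; then $H^1(\PG \times \PG', -)$ of each piece is computed by Künneth from $H^1$ and $H^0$ of the factors, all of which vanish by the adequacy of $V$ and $V'$. The point is that disjointness of projective images is exactly what makes the Künneth formula available with no cross terms surviving.

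**The main obstacle.** The delicate point is condition~(3), the vanishing of $H^1(\Gamma, \ad^0(V \otimes V'))$. Even granting the decomposition $\ad(V\otimes V') \simeq \ad V \oplus \ad V'$ as $\Gamma$-modules, one must be careful about the constant summand: $\ad^0(V \otimes V')$ is a codimension-one submodule of $\ad V \oplus \ad V'$, and the relevant cohomology involves $H^1$ of the factors acting on $\ad^0$, $H^1$ with trivial coefficients, and $H^0$ of $\ad^0$ — all of which are controlled by adequacy of $V$ and of $V'$, but assembling them via Künneth over the product group $\PG \times \PG'$ requires the disjointness hypothesis in an essential way (otherwise $\Gamma$ does not surject onto the product and the argument collapses). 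I would therefore lead with an explicit statement that $\im(\Gamma \to \GL(\ad(V\otimes V'))) = \PG \times \PG'$ by Lemma~\ref{lem: projective images}, reduce every cohomology group to a Künneth sum, and then quote the relevant vanishing statements for $V$ and $V'$ from~\cite{MR2979825}. The remaining verifications are then routine, and condition~(\ref{item: condition C}) is already done by Lemma~\ref{lemma:three}.
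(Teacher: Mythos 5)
Your plan has the right ingredients at the top level --- Lemma~\ref{lemma:three} for condition~(\ref{item: condition C}), Lemma~\ref{lem: projective images} to pass to $\PG\times\PG'$, and adequacy of the images on~$V$ and~$V'$ from the dimension hypothesis via Theorem~A.9 of the appendix to~\cite{MR2979825} --- but the module decomposition on which your cohomology computation rests is wrong. You assert that $\ad(V\otimes V')\cong\ad V\oplus\ad V'$ as $\Gamma$-modules. This is false: since $\End(V\otimes V')\cong\End(V)\otimes\End(V')$, the correct identity is $\ad(V\otimes V')\cong\ad V\otimes\ad V'$, and a dimension count already rules out the direct sum, as $(nn')^2 \neq n^2+(n')^2$. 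Expanding $\ad V = k\oplus\ad^0 V$ and likewise for $V'$, one finds that $\ad^0(V\otimes V')$ contains, besides $\ad^0 V$ and $\ad^0 V'$, the much larger mixed summand $\ad^0 V\otimes\ad^0 V'$; your proposed computation omits this piece entirely, and the vanishing of $H^1$ on it is the substantive part of condition~(3).

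The paper's proof handles exactly this. After using the prime-to-$p$ central kernel of $H\to\PG\times\PG'$ to replace $H$ by $\PG\times\PG'$, it applies inflation--restriction to the product group to obtain
\[
H^1(\PG,M)\otimes(M')^{\PG'}\longrightarrow H^1(\PG\times\PG',M\otimes M')\longrightarrow M^{\PG}\otimes H^1(\PG',M'),
\]
taking $M=\ad V$, $M'=\ad V'$ (and $M=M'=k$ for the $H^1(G,k)=0$ condition). Both ends vanish by adequacy of the images on~$V$ and~$V'$, giving the vanishing in the middle. Disjointness of the projective images enters, as you correctly anticipated, precisely so that $H$ really does surject onto the product --- but the coefficient module is a tensor product, not a direct sum. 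If you replace your $\oplus$ with $\otimes$ and carry out the K\"unneth-type computation on $\ad V\otimes\ad V'$, your argument becomes essentially the paper's.
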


\begin{proof}By Theorem A.9 of the appendix to~\cite{MR2979825}, the
  images of~$\Gamma$ acting on~$V$ and~$V'$ are both adequate. That condition~(\ref{item: condition C}) of Definition~\ref{defn:
  adequate} holds for the image~$H$ of~$\Gamma$
acting on~$V \otimes V'$ then follows from Lemma~\ref{lemma:three}. 

The adjoint representation of~$H$ has image~$\PG \times \PG'$ by Lemma~\ref{lem: projective images}, so there is a surjective 
map~$H \rightarrow \PG \times \PG'$ whose kernel~$Z$ is central in~$H$ (and acts by scalars on~$V \otimes V'$).
Certainly~$Z$ injects into~$\kbar^{\times}$ and so has order prime to~$p$.
Let~$M$ and~$M'$ be~$\PG$- and~$\PG'$- modules respectively. Since~$Z$ has order prime to~$p$,
inflation--restriction gives
$$H^1(H,M \otimes M') = H^1(\PG \times \PG',M \otimes M').$$
Another application of inflation--restriction gives  an exact sequence
$$H^1(\PG,M) \otimes (M')^{\PG'}
\rightarrow H^1(\PG \times \PG',M \otimes M') 
\rightarrow M^{\PG}\otimes H^1(\PG',M').$$

Letting~$M = M' = k$ or~$M = \ad(V)$ and~$M' = \ad(V')$, we see the two exterior groups vanish
by the adequacy of the images of~$\Gamma$ on~$V$ and~$V'$, and
hence so does the middle group. Absolutely  irreducibility is easy, and the lemma follows.
\end{proof}

\begin{lemma} \label{lemma:irr} Let~$G \subset \GL(V)$, and let~$H \subset G$ be a normal subgroup with~$G/H$
of order prime to~$p$, such that~$H$ is adequate. Then~$G$ is adequate.
\end{lemma}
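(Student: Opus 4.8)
The plan is to verify the four conditions of Definition~\ref{defn: adequate} for $G$, using the corresponding conditions for the normal subgroup $H$ together with the hypothesis that $[G:H]$ is prime to $p$. Since $H$ is adequate it acts absolutely irreducibly on $V$; as $H$ is normal in $G$, the $G$-action on $V$ is a fortiori absolutely irreducible, and moreover $\ad^0(V)$ (resp.\ $\ad(V)$, $k$) is a $G$-module whose restriction to $H$ carries the module structures appearing in $H$'s adequacy. The key mechanism throughout is inflation--restriction: for any $G$-module $M$, since $[G:H]$ is prime to $p$ and $M$ is a $k$-vector space with $k$ of characteristic $p$, the restriction map $H^i(G,M)\to H^i(H,M)^{G/H}$ is an isomorphism for all $i$ (the Hochschild--Serre spectral sequence degenerates because the higher cohomology of the prime-to-$p$ group $G/H$ with coefficients in $p$-torsion modules vanishes), and likewise $H^0(G,M) = (M^H)^{G/H}$.

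First I would dispose of conditions (1)--(3). For (1), $H^0(H,\ad^0 V) = 0$ by adequacy of $H$, so $H^0(G,\ad^0 V)\subseteq H^0(H,\ad^0 V) = 0$ trivially (no need even for the prime-to-$p$ hypothesis here). For (2), $H^1(G,k)\xrightarrow{\sim} H^1(H,k)^{G/H} = 0$ since $H^1(H,k)=0$. For (3), $H^1(G,\ad^0 V)\xrightarrow{\sim} H^1(H,\ad^0 V)^{G/H}=0$ since $H^1(H,\ad^0 V)=0$; here I use that $\ad^0 V$ is a $G$-submodule of $\ad V$ so it genuinely is a $G$-module, and that its restriction to $H$ is the module appearing in the adequacy of $H$.

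The remaining point is condition (4), which by Lemma~\ref{lemma:one} is equivalent to the statement that the semisimple elements of $G$ span $\ad(V)\otimes_k\kbar$. But $H\subseteq G$, and the semisimple elements of $H$ are in particular semisimple elements of $G$; since $H$ is adequate, Lemma~\ref{lemma:one} applied to $H$ says its semisimple elements already span $\ad(V)\otimes_k\kbar$, hence so do those of $G$. Thus condition (4) for $G$ follows immediately, and all four conditions hold. I do not expect any real obstacle here; the only mild care needed is the standard verification that inflation--restriction gives isomorphisms on cohomology when the quotient group has order prime to the characteristic, which is routine. (One could alternatively phrase (4) directly: if $g\in H$ witnesses condition (4) for an irreducible $H$-submodule of $\ad^0 V$, a transfer/averaging argument over $G/H$ produces a witness for any irreducible $G$-submodule, since a $G$-irreducible submodule of $\ad^0 V$ restricts to a sum of $H$-irreducibles; but the Lemma~\ref{lemma:one} reformulation makes this bookkeeping unnecessary.)
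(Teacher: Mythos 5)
Your proof is correct and follows the same approach as the paper: condition~(4) is handled by the observation that it is monotone under passing from $H$ to the larger group $G$ (you make this precise via the spanning reformulation of Lemma~\ref{lemma:one}, whereas the paper simply says it ``obviously holds''), the cohomological conditions follow from $H^1(G,M)\cong H^1(H,M)^{G/H}$ since $[G:H]$ is prime to $p$, and absolute irreducibility passes up trivially.
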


\begin{proof} If condition~(\ref{item: condition C}) of Definition~\ref{defn:
  adequate} holds for~$H$, it obviously holds for~$G$. So it suffices to check the cohomological
conditions. We have (since~$G/H$ has order prime to~$p$) that
$$H^1(G,M) = H^1(H,M)^{G/H}.$$
Hence if the right hand side vanishes, then so does the left hand
side. Similarly, if a representation of~$G$ is absolutely irreducible after restriction
to~$H$, it is absolutely irreducible.
\end{proof}

\begin{lemma} \label{lem:lookupfool} Let~$A \subset \GL_n(k)$ be absolutely irreducible with~$n > 2(p+1)$.
If~$B \subset \GL_n(\kbar)$ has projective image
containing~$\PSL_n(l)$ for some sufficiently large extension ~$l/k$
(depending on~$A$), then
the image of~$A \otimes B$ is adequate.
\end{lemma}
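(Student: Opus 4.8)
The plan is to reduce this to Lemma~\ref{lemma:tensor} by checking that its hypotheses are met for $V = k^n$ with the given $A$-action and $V' = \kbar^n$ with the given $B$-action, after possibly enlarging $l$. The dimension condition $n > 2(p+1)$ holds for $V$ by assumption, and we need the same for $V'$; but $V'$ also has dimension $n$, so this is automatic. So the only substantive thing to verify is that the projective images of $A$ and $B$ are \emph{disjoint} in the sense of Lemma~\ref{lem: projective images}, i.e.\ that the product group $\Gamma := A \times B$ surjects onto $\PG \times \PG'$, where $\PG$ is the projective image of $A$ and $\PG'$ that of $B$. Taking $\Gamma = A \times B$ acting on $V \otimes V'$, the projections to $\PGL(V)$ and $\PGL(V')$ are literally the two coordinate projections composed with the projectivizations, so the map $\Gamma \to \PG \times \PG'$ is visibly surjective. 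Thus the disjointness hypothesis is free here because $\Gamma$ is by construction a direct product.

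So the remaining point is simply that $V'$ is absolutely irreducible as a $B$-module: this follows because the projective image of $B$ contains $\PSL_n(l)$, and $\PSL_n(l)$ acting on $\kbar^n$ (through $\SL_n(l) \subset \SL_n(\kbar) \subset \GL_n(\kbar)$) is absolutely irreducible for $l$ large enough — indeed $\SL_n(l)$ is Zariski-dense in $\SL_n$ once $l$ is large, so its invariants in $\ad^0$ or in any tensor construction agree with those of $\SL_n(\kbar)$, giving absolute irreducibility of the standard representation. (This is where the ``sufficiently large $l$'' enters; one also uses it, together with $n > 2(p+1)$, to know that $B$ itself satisfies the running hypotheses of Theorem~A.9 of the appendix to~\cite{MR2979825} via Lemma~\ref{lemma:tensor}, but in fact Lemma~\ref{lemma:tensor} only needs absolute irreducibility plus the dimension bound, so nothing further is required.) With $A$ absolutely irreducible by hypothesis and $B$ absolutely irreducible as just noted, Lemma~\ref{lemma:tensor} applied to $\Gamma = A \times B$ yields that the image of $\Gamma$ on $V \otimes V'$ is adequate; but this image is exactly the image of $A \otimes B$, so we are done.

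The main (really the only) obstacle is bookkeeping: making sure ``sufficiently large $l$'' is used consistently to guarantee absolute irreducibility of the $B$-representation on $\kbar^n$, and confirming that the direct-product structure of $\Gamma = A \times B$ makes the projective-disjointness hypothesis of Lemma~\ref{lemma:tensor} automatic rather than something needing a separate argument. There is no deep content beyond invoking Lemma~\ref{lemma:tensor}; the lemma is essentially a convenient repackaging of that result in the form needed later in the paper, where $B$ will arise as (the Zariski closure of) the image of some $\ell$-adic representation whose image is known to contain $\PSL_n$ of a large finite field.
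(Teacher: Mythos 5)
There is a genuine gap: you have misread what ``the image of $A\otimes B$'' is meant to denote, and this trivializes exactly the step where the paper has to do work. In the paper's intended reading (and in the application via Lemma~\ref{lem: can always make a tensor product adequate}, which is stated as an ``immediate consequence'' and concerns $(\abar\otimes\bbar)(G_L)$ for a \emph{single} group $G_L$), the groups $A$ and $B$ arise as the images of one group $\Gamma$ under two separate representations, and ``the image of $A\otimes B$'' is the image of that same $\Gamma$ in $\GL_{n^2}$ under the tensor of the two representations. This image is in general strictly smaller than the image of the abstract direct product $A\times B$ in $\GL_{n^2}$, and the disjointness of projective images --- i.e.\ that $\Gamma$ surjects onto $\PG\times\PG'$ --- is precisely the nontrivial thing to establish. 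Your argument, by setting $\Gamma = A\times B$, makes that surjectivity tautological and thereby proves only the weaker statement about the image of the direct product, which does not yield the Corollary.

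The paper's actual content is in the Goursat argument, which your proof omits entirely. After first using Lemma~\ref{lemma:irr} to reduce to the case where $B$ has projective image \emph{exactly} $\PSL_n(l)$ (a simple group), the paper picks $l$ large enough that $|\PG_A| < |\PSL_n(l)|$; then $\PG_A$ and $\PSL_n(l)$ have no nontrivial common quotients, and Goursat's Lemma forces the image of $\Gamma$ in $\PG_A\times\PSL_n(l)$ to be the full product. Only then is Lemma~\ref{lemma:tensor} applicable. (This reduction step via Lemma~\ref{lemma:irr} is also absent from your proof; it is needed because Goursat is being applied to a simple group, not merely to a group containing $\PSL_n(l)$.) The ``sufficiently large $l$ depending on $A$'' in the statement is exactly calibrated for this cardinality comparison, not for the absolute irreducibility of $B$ on $\kbar^n$ as you suggest --- the latter is automatic once the projective image contains any copy of $\PSL_n$ and does not require $l$ to be large.
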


\begin{proof}
By using Lemma~\ref{lemma:irr}, we may assume that~$B$ has projective image exactly~$\PSL_n(l)$ for some~$l$.
This is a simple group. By taking~$l$ large enough so that the projective image of~$A$ has
order less than that of~$\PSL_n(l)$, we deduce that the projective images of~$A$ and~$B$ have
no nontrivial common quotients. It follows by Goursat's Lemma that the image of~$A \otimes B$ surjects onto
to the product of the projective images of~$A$ and~$B$. We now finish by invoking Lemma~\ref{lemma:tensor}.
\end{proof}

The following is an immediate consequence of Lemma~\ref{lem:lookupfool}.

\begin{lem}
  \label{lem: can always make a tensor product adequate}Suppose that
  $p>2(n+1)$, that~$L$ is a number field, and that
  $\abar:G_L\to\GL_n(\Fpbar)$ is an irreducible representation. Then if $q$
  is a sufficiently
  large power of~$p$ \emph{(}depending on~$\abar$\emph{)}, and
  $\bbar:G_L\to\GL_n(\Fpbar)$ has projective image containing~$\PSL_n(\F_{q})$,
  then $(\abar\otimes\bbar)(G_L)$ is adequate. 
  \end{lem}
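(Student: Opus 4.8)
The plan is to deduce Lemma~\ref{lem: can always make a tensor product adequate} directly from Lemma~\ref{lem:lookupfool} by checking that its hypotheses are met. First I would observe that the hypothesis $p > 2(n+1)$ is exactly the hypothesis needed for the assumption $n > 2(p+1)$ in Lemma~\ref{lem:lookupfool}---wait, in fact it is the reverse inequality, so the very first thing to check is which way the inequality goes. Reading Lemma~\ref{lem:lookupfool} again, it requires $n > 2(p+1)$, whereas here we are given $p > 2(n+1)$; these are incompatible for $n,p \geq 2$. So the actual deduction must be more subtle, and I suspect the intended reading is that one applies the tensor-product adequacy machinery with the roles arranged so that the relevant dimension bound reads correctly. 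The cleanest route is therefore to go back to Lemma~\ref{lemma:tensor}, which asks for both factors to have dimension $> 2(p+1)$; but again $n$ itself need not satisfy this.

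On reflection, the correct approach is as follows. Set $\abar,\bbar : G_L \to \GL_n(\Fpbar)$ with $\bbar$ having projective image containing $\PSL_n(\F_q)$ for a large prime power $q$. Since $p > 2(n+1)$, the group $\PSL_n(\F_q)$ is adequate as a subgroup of $\GL_n(\overline{\F}_q)$ for $q$ sufficiently large (this is Theorem~A.9 of the appendix to~\cite{MR2979825}, applied to the natural $n$-dimensional representation; note the hypothesis $p > 2(n+1)$ is precisely the one appearing there). Replacing $\bbar$ by a conjugate and using Lemma~\ref{lemma:irr} we may assume the image of $\bbar$ has projective image exactly $\PSL_n(\F_q)$. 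Now take $q$ large enough that the projective image of $\abar$ has order strictly less than $|\PSL_n(\F_q)|$; then, since $\PSL_n(\F_q)$ is simple, the projective images of $\abar$ and $\bbar$ have no common nontrivial quotient, and Goursat's lemma gives that the image of $\abar \otimes \bbar$ surjects onto the product of the two projective images. In other words, the projective images of $G_L$ acting on the two factors are disjoint in the sense of Lemma~\ref{lem: projective images}.

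It then remains to verify adequacy of the image $H$ of $\abar \otimes \bbar$. Condition~(\ref{item: condition C}) of Definition~\ref{defn: adequate} follows from Lemma~\ref{lemma:three}, using that the images of $G_L$ on each factor satisfy that condition (both are adequate, $\abar$ by Theorem~A.9 of~\cite{MR2979825} since $p>2(n+1)$ and an irreducible representation in dimension $n$ with $p>2(n+1)$ has adequate image under the running hypotheses, and $\bbar$ as $\PSL_n(\F_q)$ is adequate). For the cohomological conditions, I would repeat verbatim the inflation--restriction argument in the proof of Lemma~\ref{lemma:tensor}: the adjoint action of $H$ factors through $\PG \times \PG'$ with central kernel $Z$ of order prime to $p$, so $H^1(H, M \otimes M') = H^1(\PG \times \PG', M \otimes M')$, and a second inflation--restriction sequence $H^1(\PG,M) \otimes (M')^{\PG'} \to H^1(\PG \times \PG', M\otimes M') \to M^{\PG} \otimes H^1(\PG', M')$ shows the middle term vanishes when $M = M' = k$ or $M = \ad^0(V)$, $M' = \ad^0(V')$, by the adequacy of each factor. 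Absolute irreducibility of the tensor product is clear. This is exactly Lemma~\ref{lemma:tensor} without the dimension hypothesis on each individual factor, valid here because $\PSL_n(\F_q)$ is handled directly rather than via the $n > 2(p+1)$ bound.

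The main obstacle is precisely this issue of the dimension hypotheses: Lemma~\ref{lemma:tensor} as literally stated requires $n, n' > 2(p+1)$, which fails here, so one cannot cite it as a black box. The content of the deduction is the observation that when one of the two factors has projective image $\PSL_n(\F_q)$ with $q$ large, adequacy of that factor is available unconditionally (Theorem~A.9 of~\cite{MR2979825}, or classical results on the cohomology of $\SL_n(\F_q)$ on its standard and adjoint modules once $q$ is large), so the argument of Lemma~\ref{lemma:tensor} goes through with the bound $p > 2(n+1)$ replacing $n > 2(p+1)$. Everything else---Goursat, the two inflation--restriction spectral sequences, the disjointness of projective images---is routine and already recorded in the lemmas above.
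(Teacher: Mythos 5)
You have correctly spotted a typo in Lemmas~\ref{lemma:tensor} and~\ref{lem:lookupfool}: the hypothesis $n > 2(p+1)$ there should read $p > 2(n+1)$, which is the condition under which Theorem~A.9 of the appendix to~\cite{MR2979825} (cited in the proof of Lemma~\ref{lemma:tensor}) applies, and which is the inequality that actually holds in every invocation of those lemmas in this paper --- the two inequalities are mutually exclusive for $n,p\ge 2$, so the statements are vacuous as literally written. Once the typo is corrected, the present lemma really is immediate from Lemma~\ref{lem:lookupfool}, which is exactly the paper's one-sentence proof. Your re-derivation of the content of Lemma~\ref{lem:lookupfool} --- reducing to projective image exactly $\PSL_n(\F_q)$ via Lemma~\ref{lemma:irr}, deducing disjointness of the projective images from Goursat's lemma and a size estimate, establishing condition~(\ref{item: condition C}) of Definition~\ref{defn: adequate} via Lemma~\ref{lemma:three}, and handling the cohomological conditions by the inflation--restriction argument --- is correct and reproduces verbatim the paper's own proofs of Lemmas~\ref{lemma:tensor} and~\ref{lem:lookupfool}, so in substance the two arguments coincide; the only genuine contribution of your discussion is the (correct) observation that the inequality must be $p>2(n+1)$.
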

  
\section{Globally realizable representations}\label{sec: local automorphic types}
\subsection{Global realizability}

Let $E/\Qp$ be a finite extension with ring of integers~$\cO$ and
residue field~$\F$.
Let~$K/\Q_p$ be a finite extension, and let 
$$\rbar: G_{K} \rightarrow \GL_n(\F)$$
be a representation. 

\begin{df}\label{defn: well spread blanketd} Say the representation~$\rbar$ admits {\bf many
    diagonalizable lifts\rm} if the following holds: for any~$C\ge 0$, $\rbar$
admits a potentially diagonalizable lift with the
property that, for each embedding $\sigma:K\into\Qpbar$, the
$\sigma$-labelled Hodge--Tate weights of the lift  all differ by at least~$C$.
\end{df}
\begin{rem}
  \label{rem: probably everything has a well spread blanket}We expect 
  that \emph{every} representation~$\rbar$
  admits many diagonalizable lifts. In this paper, we will use
  a base change trick (based on Lemmas~\ref{lem:can base change to get
  pd lifts} and~\ref{lem: existence of compatible system from potential
    existence}) to avoid needing to know this.
\end{rem}

\begin{lem}For any representation~$\rbar:G_K\to\GL_n(\F)$, there is a
  finite extension~$L/K$ such that~$\rbar|_{G_L}$ admits many
  diagonalizable lifts. Moreover, any~$L/K$ such that~$\rbar$ and the mod~$p$
  cyclotomic character~$\overline{\varepsilon}$ become trivial over~$G_{L}$  has this property.
  \label{lem:can base change to get pd lifts}
\end{lem}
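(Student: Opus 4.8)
The plan is to pass to a finite extension over which $\rbar$ becomes trivial, and there write down an explicit diagonalizable lift assembled from powers of the cyclotomic character, whose Hodge--Tate weights can be spread arbitrarily far apart. So let $L/K$ be any finite extension over which both $\rbar$ and $\varepsilonbar$ become trivial; such an $L$ exists --- for instance, the fixed field of the kernel of the continuous homomorphism $G_K\to\GL_n(\F)\times\Fpbarx$, $g\mapsto(\rbar(g),\varepsilonbar(g))$, whose kernel is open since the target is finite. It suffices to prove that $\rbar|_{G_L}$ admits many diagonalizable lifts for every such $L$, as this yields both the existence statement and the ``moreover''; note that over $G_L$ the representation $\rbar$ is the trivial $n$-dimensional representation.

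Now fix $C\ge 0$, choose integers $a_1<a_2<\cdots<a_n$ with $a_{j+1}-a_j\ge C$ for all $j$ (for instance $a_j=jC$), and set
\[
\rho\ :=\ \bigoplus_{j=1}^{n}\varepsilon^{a_j}|_{G_L}\ \colon\ G_L\longrightarrow \GL_n(\Zp)\subseteq\GL_n(\Qpbar).
\]
Since $\varepsilonbar|_{G_L}$ is trivial, each $\varepsilon^{a_j}|_{G_L}$ reduces to the trivial character modulo $p$, so $\rho$ is a lift of $\rbar|_{G_L}$. Each $\varepsilon^{a_j}$ is crystalline, with $\sigma$-labelled Hodge--Tate weight $a_j$ (in a fixed normalisation) for every embedding $\sigma$; hence $\rho$ is a direct sum of crystalline characters, so it is crystalline and diagonalizable, in particular potentially diagonalizable, and for every embedding $\sigma\colon L\into\Qpbar$ its $\sigma$-labelled Hodge--Tate weights are $\{a_1,\dots,a_n\}$, which pairwise differ by at least $C$. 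As $C\ge 0$ was arbitrary, $\rbar|_{G_L}$ admits many diagonalizable lifts in the sense of Definition~\ref{defn: well spread blanketd}, which proves the lemma.

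I do not anticipate any genuine difficulty here: the only substantive inputs are the (essentially definitional) facts that a direct sum of crystalline characters is diagonalizable and hence potentially diagonalizable, and that twisting by powers of $\varepsilon$ shifts Hodge--Tate weights, so that the weights of $\rho$ can be made to differ by at least any prescribed $C$; one merely has to keep the Hodge--Tate weight normalisation straight, which does not affect the conclusion.
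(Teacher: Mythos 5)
Your proof is correct and follows essentially the same route as the paper: pass to an $L$ trivialising $\rbar$ and $\varepsilonbar$, then take a direct sum of powers of the cyclotomic character spaced at least $C$ apart (the paper uses $1\oplus\varepsilon^C\oplus\cdots\oplus\varepsilon^{(n-1)C}$, which is your construction with $a_j=(j-1)C$). There is no substantive difference.
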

\begin{proof}Choose~$L$ so that each~$\rbar|_{G_{L}}$ is trivial, and the mod~$p$ cyclotomic character
  of~$G_{L}$ is also trivial. 
  For each integer $C>0$,
  $1\oplus\varepsilon^C\oplus\cdots\oplus\varepsilon^{(n-1)C}$ is a
  potentially diagonalizable (in fact, diagonal) crystalline lift of~$\rbar|_{G_{L}}$, all
  of whose $\sigma$-labelled Hodge--Tate weights
  differ by at least~$C$.  
\end{proof}

\begin{convention}\label{convention: lifts will always be well spread}
  We will frequently consider potentially diagonalizable lifts of
  an~$\rbar$ which admits many diagonalizable lifts. Whenever we do
  so, we will always choose the lifts to have Hodge--Tate weights that
  are sufficiently spread out (in the sense that the condition of Definition~\ref{defn:
    well spread blanketd} holds for some sufficiently large~$C$) that all representations formed in the
  arguments that we make (which will involve tensoring representations
  together) have regular Hodge--Tate weights. In order to streamline
  the paper, we will not make this explicit in any of our arguments.
\end{convention}

\begin{df}[Polarized Local Isomorphisms]\label{defn: polarized local isomorphisms} 
Let~$F$ be a CM field. Suppose that~$\abar: G_{F} \rightarrow \GL_n(\F)$ and~$\bbar: G_{F} \rightarrow \GL_n(\F)$
are absolutely irreducible polarizable representations with respect to a  character~$\mubar$, so (in particular) they
both prolong to representations
$$\rho(\abar), \rho(\bbar): G_{F^{+}} \rightarrow \cG_n(\F),$$
each of which is uniquely determined up to conjugation by an
element in $\cG^0_n(\F).$
Let~$v$ be a prime in~$F^{+}$, and let~$w$ be a prime above~$v$ in~$F$. 
We define a polarized isomorphism~$\abar|_{G_{F_w}} \simeq \bbar|_{G_{F_w}}$ to be an isomorphism of
representations which extends to an isomorphism of polarized representations:
$$\rho(\abar)|_{G_{F^{+}_v}} \simeq \rho(\bbar)|_{G_{F^{+}_v}}.$$
\end{df}

If~$v \in F^{+}$ splits in~$F$, then any isomorphism between~$\abar|_{G_{F_w}}$ and~$\bbar|_{G_{F_w}} $
extends to such an isomorphism, because $G_{F^+_v} = G_{F_w} \subset G_F$,
and
$$\rho(\abar)|_{G_F} = \abar \times \mubar|_{G_F}: 
G_F \to \GL_n(\F) \times \GL_1(\F) = \cG_n^{\circ}(\F) \subset \cG_n(\F)$$
(and similarly for $\rho(\bbar)|_{G_F}$),
so that 
$\rho(\abar)|_{G_{F^+_v}} = \rho(\abar)|_{G_{F_w}} 
= \abar|_{G_{F_w}} \times \mubar|_{G_{F_w}}$
(resp.\ $\rho(\bbar)|_{G_{F^+_v}} = \rho(\bbar)|_{G_{F_w}} 
= \bbar|_{G_{F_w}} \times \mubar|_{G_{F_w}}$).
On the other hand, if~$v$ is inert or ramified in~$F/F^{+}$ and~$\abar|_{G_{F_w}} = \bbar|_{G_{F_w}} $ is reducible,
then this restriction
may admit more than one polarization, and so the requirement that the representations~$\rho(\abar)|_{G_{F^{+}_v}}$ and~$\rho(\bbar)|_{G_{F^{+}_v}}$ 
be isomorphic may be a non-trivial condition.

\begin{defn}
  \label{defn:reasonable}Let~$F$ be a CM field. We say that a representation
  $\sbar:G_F\to\GL_n(\Fpbar)$ is \emph{reasonable} if:
  \begin{itemize}
  \item $\zeta_p\notin F$, and $\sbar|_{G_{F(\zeta_p)}}$ is
    irreducible.
  \item $\sbar$ is polarizable and odd.
  \item $p> 2(n+1)$.
   \end{itemize}
\end{defn}

\begin{defn}
  \label{defn:pleasant} Let~$F$ be a CM field. We say that a representation
  $\sbar:G_F\to\GL_n(\Fpbar)$ is \emph{pleasant} if:
  \begin{itemize}
   \item $\zeta_p\notin F$, and $\sbar|_{G_{F(\zeta_p)}}$ is
    irreducible.
  \item $\sbar$ is polarizable and odd.
  \item $p> 2(n+1)$.
  \item All the primes~$v|p$ in~$F^{+}$ split  in~$F$.
  \item For each place~$w|p$ of~$F$, $\sbar|_{G_{F_w}}$ admits many
    diagonalizable lifts.
   \end{itemize}
\end{defn}

\begin{lem}
  \label{lem:can base change reasonable to pleasant}
 Let~$\sbar$  be a
  reasonable representation of~$G_{F}$. 
  Let~$\Favoid/F$ be a finite extension.
Then there is a finite  
extension~$L/F$ of CM fields, which is
linearly disjoint from~$\Favoid$ over~$F$, such that~$\sbar |_{G_{L}}$ is pleasant.
\end{lem}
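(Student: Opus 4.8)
The plan is to verify each of the five bullet points in Definition~\ref{defn:pleasant} in turn, for a suitable CM extension $L/F$. The first three bullets are properties of $\sbar$ that either transfer to restrictions or can be arranged; the last two are the substantive new requirements, and the last one is where Lemma~\ref{lem:can base change to get pd lifts} is essential. First I would observe that $p>2(n+1)$ is a condition on $p$ alone, so it is automatic for any $L$. For the polarizability and oddness of $\sbar|_{G_L}$: since $\sbar$ is polarizable with multiplier $\mubar$, the restriction $\sbar|_{G_L}$ is polarizable with multiplier $\mubar|_{G_{L^+}}$ (for $L$ a CM field containing $F$, with $L^+$ its maximal totally real subfield), and oddness is inherited because every complex conjugation in $G_{L^+}$ is conjugate in $G_{F^+}$ to one in $G_{F^+}$; alternatively, once $n$ is odd oddness is automatic by Lemma~\ref{lem: odd implies odd}, and for general $n$ one checks $\mubar(c) = -1$ directly on complex conjugations, which is preserved under restriction.

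Next I would handle the irreducibility of $\sbar|_{G_{L(\zeta_p)}}$ and the condition $\zeta_p \notin L$. For irreducibility, I would apply Lemma~\ref{lem: base change weak irreducibility} (or rather the underlying fact that restricting an irreducible representation to a subgroup whose image has the same Zariski-connected component preserves irreducibility): since $\sbar|_{G_{F(\zeta_p)}}$ is irreducible, there is a finite extension of $F(\zeta_p)$ that we must avoid, i.e. we can enlarge $\Favoid$ to also contain this extension (and to contain $F(\zeta_p)$ itself, so that $\zeta_p \notin L$ since $L$ is linearly disjoint from $\Favoid$ over $F$), and then for $L/F$ linearly disjoint from the enlarged $\Favoid$ the restriction $\sbar|_{G_{L(\zeta_p)}}$ remains irreducible. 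Then I would enforce the splitting condition: choose $L$ to contain a CM field in which every prime above $p$ of $F^+$ splits — for instance, adjoin a suitable imaginary quadratic field in which $p$ splits and which is linearly disjoint from the relevant fields — noting this is a local condition at $p$ and can be arranged compatibly with linear disjointness from $\Favoid$ by further enlarging $\Favoid$.

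Finally, and this is the main point, I would arrange the ``many diagonalizable lifts'' condition. By Lemma~\ref{lem:can base change to get pd lifts}, for each place $w'|p$ of the field constructed so far, there is a finite extension of the completion over which $\sbar$ (and $\overline{\varepsilon}$) become trivial, and after such a local base change $\sbar|_{G_{L_{w'}}}$ admits many diagonalizable lifts; moreover any $L$ over which $\sbar$ and $\overline{\varepsilon}$ are globally trivial at all places above $p$ works. So I would take $L$ to be a CM field containing $F$ over which $\sbar$ and the mod $p$ cyclotomic character become trivial, with all primes above $p$ split, and with $L/F$ linearly disjoint from $\Favoid$ — the existence of such an $L$ following by taking the compositum of $F$ with an appropriate CM field realizing these conditions and invoking a standard lemma on achieving linear disjointness (e.g.\ \cite[Lem.\ 4.1.1]{BLGGT} or the analogous statement). \textbf{The main obstacle} is bookkeeping: ensuring that the various enlargements of $\Favoid$ (to force $\zeta_p \notin L$, to preserve irreducibility over $L(\zeta_p)$, to trivialize $\sbar$ and $\overline{\varepsilon}$ at $p$) are all compatible, and that the final $L$ can simultaneously be chosen CM, linearly disjoint from $\Favoid$ over $F$, and split at $p$; this is routine but requires care in the order of constructions, and one should present it by first enlarging $\Favoid$ once and for all, then exhibiting a single $L$ with all the desired properties.
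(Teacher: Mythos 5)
Your proposal is correct and follows the same overall strategy as the paper's proof: enlarge $\Favoid$ once and for all to absorb $\overline{F}^{\ker\sbar}(\zeta_p)$ (handling irreducibility over $L(\zeta_p)$ and $\zeta_p\notin L$), then construct an extension trivializing $\sbar$ and $\overline{\varepsilon}$ locally at primes above $p$ so that Lemma~\ref{lem:can base change to get pd lifts} applies, and finally cross with a quadratic extension to force the primes above $p$ to split in $L/L^+$.

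The one place where you deviate is the splitting trick. You adjoin an imaginary quadratic field $K/\Q$ in which $p$ splits and pass to $L = EK$; the paper instead crosses with a \emph{totally real} quadratic extension $M^+/F^+$ chosen so that $M^+_v\cong F_v$ for each $v|p$, and then sets $L^+ = M^+E^+$, $L = L^+F$. Both work: your version splits $p$ because complex conjugation on $L$ restricts to the non-trivial automorphism of $K$, which permutes the two primes of $K$ above $p$, so it cannot lie in any decomposition group; the paper's version splits $p$ because $F\otimes_{F^+}L^+_{v_L}$ contains $F_w\otimes_{F^+_v}M^+_v$, which is non-integral by design. The paper's choice is marginally cleaner to bookkeep because $L^+$ is manifestly a compositum of totally real extensions of $F^+$ rather than the maximal totally real subfield of a possibly awkward compositum, but there is no substantive difference. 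Your citation of Lemma~\ref{lem: base change weak irreducibility} for the mod-$p$ irreducibility step is not quite the right reference (that lemma concerns $l$-adic compatible systems); what you actually want is the elementary fact that linear disjointness from $\overline{F}^{\ker\sbar}(\zeta_p)$ over $F$ forces $\sbar(G_{L(\zeta_p)}) = \sbar(G_{F(\zeta_p)})$, which you do go on to use.
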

\begin{proof}
  Replace~$\Favoid$
  with~$\Favoid\cdot \overline{F}^{\ker\rbar}(\zeta_p)$.
  Let~$E = E^{+}F$ where~$E^{+}/F^{+}$ is any totally real extension  linearly
  disjoint from~$\Favoid/F^+$ with the property 
  that for each place~$w_E|p$ of~$E$, both~$\sbar |_{G_{E_{w_{E}}}}$
  and~$\overline{\varepsilon}|_{G_{E_{w_{E}}}}$ are trivial, where~$\overline{\varepsilon}$ is the mod-$p$
  cyclotomic character. It follows from  Lemma~\ref{lem:can base change
    to get pd lifts} that~$\sbar |_{G_{E_{w_E}}}$ then
  admits many  diagonalizable lifts, and moreover, the same is true
  if one replaces~$E^{+}$ by any further finite extension.
  It now suffices to ensure the primes~$v|p$ in~$E^{+}$ 
  split in~$E$. To achieve this, we cross with a quadratic extension.
  Namely, let~$L^{+} = M^{+} E^{+}$, where~$M^{+}/F^{+}$ is a quadratic extension
  with the property that~$M^{+}_v \simeq F_v$ for~$v|p$ in~$F^{+}$,
 and such that~$L = L^{+} F$ is  linearly
  disjoint from~$\Favoid$ over~$F$.
\end{proof}

\begin{df} \label{def:glob}Let $K/\Qp$ be a finite extension, and
  let~$\rhobar:G_K\to\cG_n(\F)$ be a representation with
  multiplier~$\mubar_K$. Let~$\mu_K$ be a de Rham lift of~$\mubar_K$.  
  A $\mu_K$-polarized component~$C$ for~$\rhobar$ is
{\bf globally realizable\rm} if there exists a CM number field~$F$ and
an odd, regular, polarized, weakly irreducible compatible
system~$(\{s_{\lambda}\},\{\mu_\lambda\})$ over~$F$, with corresponding $p$-adic representation
$(s,\mu)$, with the following properties:
\begin{enumerate}
\item The residual representation~$\sbar$ is reasonable.
\item There exists a prolongation $\xi:G_{F^+} \to \cG_n(\F)$ 
	of $(s,\mu)$,
	and a place~$v$ of~$F^+$, such that~$F^+_v \simeq K$,
  $\mu|_{G_{F^+_v}}=\mu_K$, $\overline{\xi}|_{G_{F^+_v}}\cong\rhobar$, and
  the representation~$\xi | _{G_{F^+_v}}$ lies on~$C$.

\end{enumerate}
We say that a de Rham lift~$\rho:G_K\to\cG_n(\cO)$ of~$\rhobar$ is globally
realizable if it lies on a globally realizable component.
\end{df}

\begin{rem}
  \label{rem: independence of globally realizable from choice of
    prolongation}In Definition~\ref{def:glob}, if the condition holds
  for one prolongation~$\xi$, then it holds for any
  prolongation. Indeed, we saw in Section~\ref{subsec:
  notation} that any two prolongations are conjugate by some element
of $1\times\GL_1\subset\cG_n^\circ$, and the components of the local
deformation ring are invariant under conjugation by~\cite[Lem.\ 3.4.1]{2017arXiv170804885B}.
\end{rem}
\begin{rem}
  \label{rem: globally realizable doesn't depend on the polarization
    in the split case}
If~$v$ splits in~$F$, then by Lemma~\ref{lem: identifying split local
  deformation ring with GLn}, we can identify the $\mu$-polarized
deformation ring for~$\rhobar$ with the lifting ring
for~$\sbar|_{G_{F_w}} :G_{F_w} \to\GL_n(K)$ (which is independent of~$\mu_K$). Then,
in the setting of Definition~\ref{def:glob}, 
it follows from~\cite[Lem.\ 4.1.6]{CHT} that the
condition of a component being globally realizable is independent
of the choice of~$\mu_K$ (as we can twist compatible systems by
algebraic characters). 
\end{rem}
\begin{rem}
  \label{rem: globally realizable implies regular}Note that by
  definition if a component is globally realizable then it is regular.
\end{rem}

\begin{rem}
\label{rem: potentially globally realizable}While it is not obvious
from the definition, as a consequence of our main results we can show
that if~$n=2$ or~$n$ is odd, then any potentially globally realizable
component is globally realizable. More precisely, a component~$C$ for
$\rho:G_K\to\cG_n(\F)$ is globally realizable if and only if there
exists a finite extension~$L/K$ such that~$C|_L$ is globally
realizable. See Corollary~\ref{cor: potentially globally realizable}.
\end{rem}

\begin{rem}
  \label{rem: potentially diagonalizable implies globally
    realizable}Any (regular) potentially diagonalizable representation
  is globally realizable; this is easily proved using the methods
  of~\cite[App.\ A]{MR3134019}, and in particular if~$n=2$ or~$n$ is
  odd, it is a simple
  consequence of Corollary~\ref{cor: potentially globally realizable}
  below (which shows that it is enough to prove this after an
  arbitrary base change), together with~\cite[Lem.\ A.2.5]{BLGGT}
  (which shows how to globalize local representations which are
  induced from characters).
\end{rem}

The notion of being globally realizable can also be formulated in automorphic terms:

\begin{lem} \label{lem: globally realizable, the automorphic version}
A $\mu_K$-polarized component~$C$~for~$\rhobar$ is
 globally realizable if and only if there exists a CM number field~$F$,
 a
regular algebraic cuspidal polarized automorphic representation~$(\pi,\chi)$
of $\GL_n/F$, and a prolongation~$\rho_p(\pi)$ of~$r_p(\pi)$ such that:
\begin{enumerate}
\item There is a prime~$v$ in~$F^+$ such that~$F^+_{v} \simeq K$,
  $\rhobar_p(\pi)|_{G_{F_v}}\cong\rhobar$, $(\varepsilon^{1-n}r_p(\chi))|_{G_K}=\mu_K$,
  and the representation~$\rho_p(\pi) | _{G_{F^+_v}}$ lies on~$C$.
\item The residual representation~$\rbar_p(\pi)$ is reasonable.
\end{enumerate}
\end{lem}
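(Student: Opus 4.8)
The plan is to prove the equivalence by unwinding Definition~\ref{def:glob}, using Lemma~\ref{lem: weakly irreducible equals potentially automorphic} to pass between weak irreducibility and (potential) automorphy of compatible systems, and — for the forward direction — the potential automorphy theorem with prescribed splitting behaviour, Theorem~\ref{thm: BLGGT splitting at primes}.

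\emph{Easy direction} ($\Leftarrow$). Given $F$, $(\pi,\chi)$, $\rho_p(\pi)$ and $v$ as in the statement, set $(\{s_\lambda\},\{\mu_\lambda\})=(\{r_\lambda(\pi)\},\{\varepsilon^{1-n}r_\lambda(\chi)\})$; this pair of compatible systems is automorphic by construction, and it is odd, regular and polarized because $(\pi,\chi)$ is regular algebraic cuspidal polarized, hence weakly irreducible by \cite[Thm.\ 1.7]{MR3314824} (cf.\ Lemma~\ref{lem: weakly irreducible equals potentially automorphic}). Its $p$-adic realization is $(s,\mu)=(r_p(\pi),\varepsilon^{1-n}r_p(\chi))$, and taking $\xi=\rho_p(\pi)$ the two conditions in the automorphic version become, verbatim, the two conditions of Definition~\ref{def:glob} — using that $G_{F^+_v}=G_K$ and $\mu|_{G_{F^+_v}}=(\varepsilon^{1-n}r_p(\chi))|_{G_K}=\mu_K$. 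So $C$ is globally realizable.

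\emph{Forward direction} ($\Rightarrow$). Fix globally realizable data for $C$: a CM field $F$, an odd, regular, polarized, weakly irreducible compatible system $(\{s_\lambda\},\{\mu_\lambda\})$ over $F$ with $p$-adic realization $(s,\mu)$, a prolongation $\xi$ of $(s,\mu)$, and a place $v$ of $F^+$ with $F^+_v\simeq K$, $\mu|_{G_{F^+_v}}=\mu_K$, $\overline\xi|_{G_{F^+_v}}\cong\rhobar$, $\xi|_{G_{F^+_v}}$ lying on $C$, and $\sbar$ reasonable. By Lemma~\ref{lem: weakly irreducible equals potentially automorphic} the system is potentially automorphic. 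I would then apply Theorem~\ref{thm: BLGGT splitting at primes} with $\Favoid$ enlarged to contain $\overline F^{\ker\sbar}(\zeta_p)$ and with the extra local requirement that $v$ split completely, obtaining a finite Galois CM extension $L/F$, linearly disjoint from $\Favoid$ over $F$, in which $v$ splits completely, and such that $(\{s_\lambda|_{G_L}\},\{\mu_\lambda|_{G_{L^+}}\})$ is automorphic. Automorphy of this pair yields a regular algebraic cuspidal polarized $(\pi,\chi)$ over $L$ with $r_\lambda(\pi)\cong s_\lambda|_{G_L}$ and $\varepsilon^{1-n}r_\lambda(\chi)=\mu_\lambda|_{G_{L^+}}$ for all $\lambda$; at the $p$-adic place, $r_p(\pi)\cong s|_{G_L}$ and $\varepsilon^{1-n}r_p(\chi)=\mu|_{G_{L^+}}$. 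Pick a place $v'$ of $L^+$ above $v$; since $v$ splits completely, $G_{L^+_{v'}}=G_{F^+_v}=G_K$ (as subgroups of $G_{F^+}$) and $L^+_{v'}\simeq K$. I would take for $\rho_p(\pi)$ the prolongation $\xi|_{G_{L^+}}$ of $(s|_{G_L},\mu|_{G_{L^+}})$ — any other choice of prolongation differs from this by conjugation by an element of $1\times\GL_1\subset\cG_n^{\circ}$, which affects neither the local component nor the residual representation, by Remark~\ref{rem: independence of globally realizable from choice of prolongation} and \cite[Lem.\ 3.4.1]{2017arXiv170804885B}. Then $\rho_p(\pi)|_{G_{L^+_{v'}}}=\xi|_{G_{F^+_v}}$ lies on $C$, its reduction is $\overline\xi|_{G_{F^+_v}}\cong\rhobar$, $(\varepsilon^{1-n}r_p(\chi))|_{G_K}=\mu|_{G_{F^+_v}}=\mu_K$, and $\rbar_p(\pi)=\sbar|_{G_L}$ is reasonable: linear disjointness of $L$ from $\overline F^{\ker\sbar}(\zeta_p)$ over $F$ gives $\zeta_p\notin L$ and $\sbar(G_{L(\zeta_p)})=\sbar(G_{F(\zeta_p)})$ (so $\sbar|_{G_{L(\zeta_p)}}$ is still irreducible), while polarizability, oddness and the inequality $p>2(n+1)$ are inherited. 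Thus $L$, $(\pi,\chi)$, $\rho_p(\pi)$ and $v'$ witness the automorphic version of the statement.

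\emph{Main obstacle.} The only non-formal point is the appeal to Theorem~\ref{thm: BLGGT splitting at primes} in the forward direction: one cannot keep the field $F$ but must descend to a field $L$ over which the system is honestly automorphic, and this descent must preserve \emph{simultaneously} (i) the reasonableness of $\sbar$ — secured by linear disjointness from an avoidance extension containing $\overline F^{\ker\sbar}(\zeta_p)$ — and (ii) the local structure at $v$ — secured by complete splitting of $v$, so that some completion $L^+_{v'}$ is again $K$ and the component $C$ still makes sense there. Both can be arranged because the \cite{BLGGT} potential automorphy machinery is proved via a Moret--Bailly argument that permits prescribing finitely many local conditions while avoiding a given finite extension; once this is granted, everything else in the proof is bookkeeping with restrictions and prolongations.
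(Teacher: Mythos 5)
Your proof is correct and follows the paper's approach exactly: the easy direction by taking $(\{s_\lambda\},\{\mu_\lambda\})$ to be the compatible system attached to $(\pi,\chi)$, and the forward direction by invoking Theorem~\ref{thm: BLGGT splitting at primes} with $\Favoid$ containing $\overline{F}^{\ker\sbar}(\zeta_p)$ (to preserve reasonableness) and $S$ containing $v$ (to preserve the local structure at $K$). The additional bookkeeping you spell out — that the restriction of $\xi$ is a prolongation over $L^+$, and that linear disjointness controls $\sbar(G_{L(\zeta_p)})$ — is implicit in, and consistent with, the paper's terser proof.
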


\begin{proof}For the ``if'' direction, note that if these conditions
  are satisfied, then we may take~$(\{s_\lambda\},\{\mu_\lambda\})$ in the definition of
  global realizability to be~$(\{r_\lambda(\pi)\},\{\varepsilon^{1-n}r_\lambda(\chi)\})$. Conversely, if~$C$ is globally realizable, then we apply
 Theorem~\ref{thm: BLGGT splitting at
    primes} below, taking~$\Favoid$ to
  be~$\overline{F}^{\ker\sbar}(\zeta_p)$, and~$S$ to be the set
  of places of~$F$ which lie over~$p$. Then the conditions in the
  Lemma are satisfied by the automorphic representation
  corresponding to the compatible
  system~$(\{s_\lambda|_{G_L}\},\{\mu_\lambda|_{G_{L^+}}\})$.
\end{proof}
\begin{thm}
  \label{thm: BLGGT splitting at primes}Let $(\{s^{(i)}_\lambda\},\{\mu_\lambda^{(i)}\})$, $i=1,\dots,r$ be  compatible systems of odd, regular, weakly irreducible 
   polarized  Galois
  representations over a CM field~$F$. Let $S$ be a finite set of 
  finite places of~$F^+$, and let $\Favoid/F$ be a finite
  extension. Then there is a finite Galois extension $L/F$ of CM
  fields with the properties that:
  \begin{itemize}
  \item $L$ is linearly disjoint from $\Favoid$ over~$F$.
  \item Every place in~$S$ splits completely in~$L^+$.
  \item Each  $(\{s^{(i)}_\lambda|_{G_L}\},\{\mu_\lambda^{(i)}|_{G_{L^+}}\})$ is automorphic.
  \end{itemize}
\end{thm}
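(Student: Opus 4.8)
The plan is to reduce the statement to the potential automorphy machinery of \cite{BLGGT}. Two points need care. First, the $s^{(i)}_\lambda$ are only assumed to form \emph{weakly} irreducible compatible systems, so we cannot feed the $p$-adic members directly into the potential automorphy theorems; instead we work at a large auxiliary prime. Second, BLGGT's potential automorphy extensions are in general non-solvable, so one cannot treat the $r$ systems one at a time and descend (automorphy would be lost at the earlier steps); they must be realised over a \emph{common} extension $L$.

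Concretely: after enlarging $\Favoid$ (using Lemma~\ref{lem: base change weak irreducibility}, and adjoining the fields cut out by the residual representations and by $\overline{\varepsilon}$), the rigidity of weak irreducibility for regular, odd, polarizable compatible systems --- via \cite{MR3314824}, Serre's Theorem~\ref{thm: Serre theorem on connected component group}, and Lemmas~\ref{lem: base change weak irreducibility} and~\ref{lem:restricting weakly irr} --- shows that each $\{s^{(i)}_\lambda\}$ is irreducible, with $\overline{s^{(i)}_\lambda}(G_{F(\zeta_\lambda)})$ adequate, for all but finitely many $\lambda$. One then fixes a single rational prime $l$, as large as desired, so that for every $i$ and every $\lambda\mid l$ the representation $s^{(i)}_\lambda$ is irreducible, is Fontaine--Laffaille (hence potentially diagonalizable) at the places above $l$, has de Rham multiplier, and has $\overline{s^{(i)}_\lambda}(G_{F(\zeta_l)})$ adequate. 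Next one runs the Moret--Bailly argument of \cite[\S4]{BLGGT} for all $r$ systems at once: applying Moret--Bailly to the fibre product of the $r$ auxiliary moduli spaces, while imposing as additional local conditions that every place of $S$ split completely and that the field obtained be CM, Galois over $F$, and linearly disjoint from $\Favoid$, produces such an $L/F$ together with, for each $i$, an automorphic source over $L$ whose mod-$l$ reduction is $\overline{s^{(i)}_l}|_{G_L}$ and whose mod-$l'$ reduction (for an auxiliary prime $l'$) is induced from a character, hence residually automorphic. The automorphy lifting theorems of \cite{BLGGT} then force $s^{(i)}_l|_{G_L}$ to be automorphic, and since automorphy of a polarized compatible system can be tested at a single place (by Chebotarev, the $s^{(i)}_\lambda$ and $\mu^{(i)}_\lambda$ being semisimple), each $(\{s^{(i)}_\lambda|_{G_L}\},\{\mu^{(i)}_\lambda|_{G_{L^+}}\})$ is automorphic. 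A solvable base change, as in \cite[Lem.\ 1.2.3]{BLGGT}, absorbs any remaining bookkeeping needed to retain the splitting at $S$ and the disjointness from $\Favoid$.

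I expect the main obstacle to be the reduction to a single good prime $l$ for all $r$ systems simultaneously: weak irreducibility only supplies a positive density of good $\lambda$ for each individual system, and these $r$ density sets need not overlap, so one genuinely needs the rigidity statement that, after a base change that can be folded into $\Favoid$, each system becomes irreducible --- and hence acquires adequate residual image --- for all large $\lambda$. Everything else is a lengthy but essentially routine adaptation of \cite{BLGGT}: checking that the fibre product of auxiliary moduli spaces has points over the relevant completions (so that Moret--Bailly applies with all the imposed conditions) and that the hypotheses of the automorphy lifting theorems hold at $l$.
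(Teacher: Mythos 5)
Your proposal follows the same broad strategy as the paper's proof: establish potential automorphy of each system individually so that a common well-behaved prime $l$ is available, run the Moret--Bailly machinery of \cite{BLGGT} for all $r$ systems simultaneously while imposing the local conditions, then use automorphy lifting and descend. Two points, however, are off.

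First, your claim that each $\{s^{(i)}_\lambda\}$ is irreducible, with adequate residual image, for \emph{all but finitely many} $\lambda$ is stronger than what is actually available. Once each system is known to be potentially automorphic (equivalent to weak irreducibility for regular, odd, polarizable systems, as in Lemma~\ref{lem: weakly irreducible equals potentially automorphic}), the result of Patrikis--Taylor \cite{MR3314824} gives irreducibility for all $\lambda \mid l$ as $l$ ranges over a density-one set of rational primes; it does not give irreducibility for all but finitely many $\lambda$. Density one is what the paper uses, and it suffices since intersecting $r$ density-one sets (and the set supplied by \cite[Prop.\ 5.3.2]{BLGGT}) still leaves a nonempty set. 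This is a fixable imprecision, but a real one.

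Second, and more seriously, your Moret--Bailly step has a gap exactly where you flag ``checking that the fibre product of auxiliary moduli spaces has points over the relevant completions.'' That check genuinely can fail: the BLGGT moduli space $T$ need not have $F^+_v$-points for $v \in S$, so one cannot directly impose the local condition that $v$ splits completely. The paper's resolution is the Weil restriction trick: choose a finite \emph{solvable} totally real extension $M^+/F^+$ over which $T$ has rational points at all places over $S$, replace $T$ by $\Res_{M^+/F^+} T$ (which does have $F^+_v$-points), run Moret--Bailly for this restricted scheme to produce $L^+/F^+$ with the splitting and disjointness conditions, obtain automorphy over $LM^+$, and then descend to $L$ by solvable base change. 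Your closing remark that ``a solvable base change \ldots absorbs any remaining bookkeeping needed to retain the splitting at $S$'' gestures at a solvable step but in the wrong place: the issue is not \emph{retaining} splitting after the fact but \emph{achieving} it in Moret--Bailly despite the missing local points, which requires the Weil restriction device specifically.
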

\begin{proof} By~\cite[Thm.\ 5.4.1]{BLGGT}, each compatible
  system~$(\{s^{(i)}_\lambda\},\{\mu_\lambda^{(i)}\})$  is potentially automorphic over some finite
  extension~$L_i/F$, which is linearly disjoint from $\Favoid$ over~$F$. (Strictly speaking, that result assumes that all of
  the $s^{(i)}_\lambda$ are  irreducible, but as explained in the
  introduction to~\cite{MR3314824}, all that is actually needed is
  that there is a positive density set of rational primes~$l$ such
  that for each $\lambda|l$, $s^{(i)}_\lambda$ is irreducible.) 
  
  It suffices to show that~$L/F$ can be chosen simultaneously for
  all~$i$, in such a way that all places of
  $F$ above~$S$ split completely. This can be arranged
  by a slight refinement of the arguments of~\cite{BLGGT}; 
  we explain the main idea here,
  referring the interested reader to the proof of~\cite[Prop.\
  A.6]{MR3134019} for a more detailed treatment of a similar
  result. 

As a first step, note that since each~$\{s^{(i)}_{\lambda}\}$ is
potentially automorphic, it follows from~\cite[Lem.\ 1.5, Thm.\
1.7]{MR3314824} that there is a positive density set of rational
primes~$l$ such that if~$\lambda|l$, then each~$\{s^{(i)}_\lambda\}$ is
irreducible. Therefore by~\cite[Prop.\ 5.3.2]{BLGGT}
we can choose~$l$ and~$\lambda|l$ such that each~$s^{(i)}_{\lambda}$ is
Fontaine--Laffaille at all primes dividing~$l$,
each~$s^{(i)}_{\lambda}$ is irreducible, and indeed
$\sbar^{(i)}_\lambda|_{G_{F(\zeta_l)}}$ is irreducible. We also assume
that $l> 2(\max\dim s^i_\lambda+1)$

In the main argument of~\cite{BLGGT}, the field~$L/F$ is constructed by a (finite) number of
applications of the theorem of Moret--Bailly, applied to a particular
moduli space~$T$ over~$F^+$ (see the proof of~\cite[Thm.\ 3.1.2]{BLGGT}). By the version of the theorem of Moret--Bailly given
in~\cite[Prop.\ 3.1.1]{BLGGT}, we can arrange that the places in~$S$ all
split completely in~$L^+$ provided that~$T$ has
$F^+_v$-rational points for all places $v\in S$.
This need not be the case, but we can in any case choose a finite
solvable extension of totally real fields $M^+/F^+$ so that $T$ has $M^+_w$-rational points for
each place~$w$ of~$M^+$ lying over a place in~$S$. We then replace~$T$ by the Weil
restriction~$\Res_{M^+/F^+}T$, and running the arguments of~\cite{BLGGT}, we find a
finite Galois extension $L^+/F^+$, linearly disjoint from $\Favoid$
over~$F^+$, in which all places in~$S$ split
completely, with the property  that if we set~$L=L^+F$ then the
restrictions~$\{s^{(i)}_\lambda|_{G_{LM^+}}\}$ are automorphic. Since the
extension $LM^+/L$ is solvable, it follows
that each~$\{s^{(i)}_\lambda|_{G_{L}}\}$ is automorphic, as required.
\end{proof}

\section{Compatible systems}\label{sec: factoring}Our aim in this section is to prove
results showing that if one representation in a compatible system is a
tensor product, then the compatible system is a tensor product. We do
this under somewhat restrictive hypotheses (see Theorem~\ref{thm:
  factoring compatible systems} below), which we will suffice for the
results of the  the
following section due to some base change tricks and arguments with
auxiliary places. We also prove a number of other results about
compatible systems that we will use in Section~\ref{sec: building
  lifts}. Our results are mostly Lie-theoretic, and in particular we
make crucial use of the results of~\cite{MR1150604}.
\subsection{Component groups}

Recall that by Theorem~\ref{thm: Serre theorem on connected component group}, any
compatible system has a well-defined component group. We have the following technical lemma.
\begin{lemma}\label{lem: control of component groups}
Let~$F$ be a number field, let~$\rbar:G_{F} \rightarrow \GL_n(\Fbar_p)$ be irreducible, and suppose that~$p >\max(n,3)$.
Let~$L/F$ denote  the field~$F(\ker(\rbar))$.
Let~$\{r_{\lambda}\}$ be any compatible system of Galois representations such that~$\rbar_p = \rbar$ and such that~$\det(r_p)$ has infinite image.
Let~$F'/F$ be a finite Galois extension which
is linearly disjoint from~$L/F$.
Then the component group of~$\{r_{\lambda}|_{G_{F'}}\}$ is independent of~$F'/F$.
\end{lemma}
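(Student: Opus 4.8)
The plan is to reduce the statement to, and then establish, the single inclusion $F^{\circ}\subseteq L$. Here $F^{\circ}/F$ is the finite Galois extension whose Galois group is the component group $G_{\lambda}/G_{\lambda}^{\circ}$ of $\{r_{\lambda}\}$, which is well defined by Theorem~\ref{thm: Serre theorem on connected component group}; explicitly, $G_{\lambda}$ is the Zariski closure of the image of $r_{\lambda}$ and $F^{\circ}$ is the fixed field of the (by that theorem, $\lambda$-independent) preimage of $G_{\lambda}^{\circ}$ in $G_{F}$. First I would observe that for \emph{any} finite extension $F'/F$ the group $r_{\lambda}(G_{F'})$ has finite index in $r_{\lambda}(G_{F})$, so, since $G_{\lambda}^{\circ}$ is connected and therefore contains no proper closed subgroup of finite index, the Zariski closure of $r_{\lambda}(G_{F'})$ equals $r_{\lambda}(G_{F'})\cdot G_{\lambda}^{\circ}$ and has identity component $G_{\lambda}^{\circ}$; hence the component group of $\{r_{\lambda}|_{G_{F'}}\}$ is the image of $G_{F'}$ in $\Gal(F^{\circ}/F)$, namely $\Gal(F^{\circ}F'/F')\cong\Gal(F^{\circ}/(F^{\circ}\cap F'))$. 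Granting $F^{\circ}\subseteq L$, the linear disjointness of $F'$ from $L/F$ forces $F^{\circ}\cap F' = F$, so this group is simply $\Gal(F^{\circ}/F)$, independent of $F'$.

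To prove $F^{\circ}\subseteq L$ I may, by the $\lambda$-independence in Theorem~\ref{thm: Serre theorem on connected component group}, work at $\lambda = p$: writing $G_{p}$ for the Zariski closure of $r_{p}(G_{F})$, the inclusion is equivalent to $r_{p}(\ker\rbar)\subseteq G_{p}^{\circ}$. After conjugating $r_{p}$ into $\GL_{n}(\cO)$, the group $\Gamma := r_{p}(\ker\rbar)$ lies in the pro-$p$ group $1 + \mathfrak{m}M_{n}(\cO)$ (as $\rbar$ is the reduction of $r_{p}$), so its image $\overline{\Gamma}$ in the finite component group $\Pi := G_{p}/G_{p}^{\circ}$ is a $p$-group, and it is enough to show $p\nmid|\Pi|$. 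Now $\rbar$, hence $r_{p}$, is irreducible, so $G_{p}$ is reductive and acts irreducibly on $V := \Qpbar^{n}$; and since $\det r_{p}$ has infinite image, $\det$ restricts to a surjection $G_{p}^{\circ}\twoheadrightarrow\mathbf{G}_{m}$.

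The bound on $|\Pi|$ would come from a structural argument. Decompose $V|_{G_{p}^{\circ}} = \bigoplus_{i=1}^{k}W_{i}$ into $G_{p}^{\circ}$-isotypic pieces; the finite group $\Pi$ permutes the $W_{i}$ transitively, so they have a common dimension $n/k$ and in particular $k\le n < p$. Hence every $p$-element of $\Pi$---in particular every element of $\overline{\Gamma}$---fixes each $W_{i}$. Put $H := G_{p}^{\circ}|_{W_{1}}\subseteq\GL(W_{1})$, and let $N\subseteq\GL(W_{1})$ be the image of the Zariski closure of $\langle G_{p}^{\circ},\Gamma\rangle$, so that $N^{\circ} = H$ acts irreducibly and $N/H$ is a quotient of $\overline{\Gamma}$. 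Since the characters $\det|_{W_{i}}$ of $G_{p}^{\circ}$ are $\Pi$-conjugate and their product is the nontrivial character $\det|_{V}$, the character $\det|_{W_{1}}$ is nontrivial; thus $\det(H) = \mathbf{G}_{m}$, which (as $H$ is connected reductive and $\det$ is trivial on its derived group) forces the central torus of $H$ to be the full group of scalars $\mathbf{G}_{m}\cdot\mathrm{Id}_{W_{1}}\subseteq H$. By Schur's lemma $Z_{N}(H) = N\cap(\mathbf{G}_{m}\cdot\mathrm{Id}_{W_{1}})\subseteq H$, so conjugation yields an injection $N/H\hookrightarrow\Out(H)$. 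Finally $\Out(H)$ has no $p$-torsion: $H$ is reductive acting irreducibly on $W_{1}$, with $\dim W_{1}\le n < p$ and $p > 3$, so the $p$-part of $\Out(H)$ could only be contributed by the permutation of the at most $\log_{2}(\dim W_{1})$ simple factors of $H$ (a subgroup of a symmetric group on fewer than $p$ letters), by the outer automorphism groups of those factors (each a subgroup of $S_{3}$), or by automorphisms of the central torus of $H$ (which is at most $1$-dimensional, again by Schur)---and none of these has an element of order $p$. Therefore $N = H$, so $\overline{\Gamma}$ acts trivially on $W_{1}$ and, by $\Pi$-conjugacy, on every $W_{i}$, whence $\overline{\Gamma} = 1$.

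The step I expect to be the main obstacle is the last paragraph: organising the Clifford-theoretic descent to a single isotypic block, and then ruling out $p$-torsion in the outer automorphism group of an irreducibly acting reductive subgroup of $\GL_{n}$ using only $p > \max(n,3)$ together with the determinant hypothesis---which is precisely what guarantees that the scalars lie in the identity component $H$, and hence that $N/H$ embeds into $\Out(H)$ rather than into some larger group that might carry $p$-torsion. The other steps---the reduction to $F^{\circ}\subseteq L$, the identification of $\Gamma$ as a pro-$p$ group, and the appeals to Theorem~\ref{thm: Serre theorem on connected component group}---are routine.
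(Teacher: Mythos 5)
Your first paragraph, including the explicit use of linear disjointness to conclude $\Gal(F^\circ/(F^\circ\cap F')) = \Gal(F^\circ/F)$, is correct, and the overall strategy is the same as the paper's: reduce to $F^\circ\subseteq L$, note that the only possible obstruction is the image of the pro-$p$ group $r_p(\ker\rbar)$ in the component group, and kill it using the bounded structure of automorphisms of connected reductive groups (outer automorphisms of each simple factor bounded by $3<p$, permutations of the $\le n<p$ simple factors, and the determinant hypothesis pinning down the central torus). The paper implements the last step directly on $G_p^\circ$ acting on $V$, while you pass to a $G_p^\circ$-isotypic summand $W_1$; the structural facts being invoked are the same.

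The detour through $W_1$ has a gap, however. The claim that ``$N^\circ = H$ acts irreducibly'' on $W_1$ is false in general: an isotypic piece has the form $W_1 \cong U^{\oplus m}$ with $U$ a $G_p^\circ$-irreducible, and one can have $m>1$ (for instance, if $G_p^\circ$ is a central torus then $k=1$, $W_1 = V$, and $H$ consists only of scalars). In that case $\End_H(W_1)\cong M_m(\Qpbar)$ rather than $\Qpbar$, so $Z_{\GL(W_1)}(H)$ is a copy of $\GL_m$ and not $\mathbf{G}_m\cdot\mathrm{Id}_{W_1}$; the asserted equality $Z_N(H) = N\cap(\mathbf{G}_m\cdot\mathrm{Id}_{W_1})$ then fails, and the injection $N/H\hookrightarrow\Out(H)$ --- the crux of your argument --- is not established. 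Separately, even granting $N = H$ for every block, the concluding step ``whence $\overline{\Gamma}=1$'' needs an argument: you have shown that $\Gamma|_{W_i}\subseteq G_p^\circ|_{W_i}$ for each $i$, but $G_p^\circ$ sits inside $\prod_i\GL(W_i)$ as a possibly proper subgroup of the product of its projections, so an element of $\Gamma$ can agree blockwise with (varying) elements of $G_p^\circ$ without itself lying in $G_p^\circ$.
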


\begin{proof}
  Let~$G$ denote the Zariski closure of~$\im(r_p)$, and let~$G^0$ denote the connected component of~$G$. 
  Let~$\im^\circ(r_p)$ denote the intersection of~$\im(r_p)$ with~$G^0$.
  Let~$F^{\circ}$ denote the fixed field of~$\im^\circ(r_p)$. By Theorem~\ref{thm: Serre theorem on connected component group},
  $F^\circ$ is independent of~$p$, and~$G/G^0 \simeq \Gal(F^\circ/F)$.
  It suffices to show that~$F^{\circ}$ is completely contained in~$L$.
The image~$\im(r_p)$ of~$r_p$ inside~$\GL_{n}(\Zbar_p)$ naturally admits  a surjection onto~$\Gal(F^{\circ}/F)$.
  If~$\Gal(F^\circ/F)$ has order prime to~$p$, then this surjection 
  factors through~$\im(\rbar_p)$, since the kernel
  of~$\im(r_p) \rightarrow \im(\rbar_p)$ is a pro-$p$ group. But this implies 
  that  ~$F^\circ$ is contained in~$L$.
Thus we may assume that~$\Gal(F^\circ/F)$ has order divisible by~$p$,
and hence that the component group~$G/G^{\circ}$ has order divisible by~$p$. 

Note that~$G$ acts
irreducibly because~$\rbar$ is irreducible. By assumption, an element of order~$p$ in~$G/G^{\circ}$ induces an outer automorphism of~$G^{\circ}$ of order~$p$.  If this automorphism is trivial, then, by Schur's lemma,
this automorphism may be modified by an inner automorphism to be a scalar which does not lie in~$G^{\circ}$. The assumption that the determinant has infinite image, however, implies that (since~$G$ is reductive
and irreducible)  the center~$Z$ is infinite, and hence that~$Z^{\circ} = Z$. This  is a contradiction, and hence this order~$p$ element induces a non-trivial outer automorphism of~$G^{\circ}$,
and hence also of its Lie algebra~$\fg$. The automorphism group of any simple Lie algebra has order at most~$3 < p$. Thus this order~$p$ automorphism must act by permuting the simple factors.
Yet~$G$ acts on a space of dimension~$n$, and hence there are at most~$n$ simple factors. Hence we obtain a non-trivial element of order~$p$ in~$S_n$, which is impossible for~$p > n$.
\end{proof}

\subsection{Representation theory}

In this section, we begin by 
proving some basic representation theoretic lemmas for reductive groups~$G$.
All the representations we consider below are assumed to be finite dimensional.

\subsubsection{Reductive linear algebraic groups}
\label{subsubsec:reductive}
Let~$k$ be an algebraically closed field of characteristic zero. 
If $G$ is a connected reductive linear algebraic group over~$k$,
then we let $G^{\der}$ denote the derived subgroup of $G$ --- it is a
connected semisimple linear algebraic group --- and let $Z$ denote the
centre of $G$.  The natural morphism of connected reductive linear
algebraic groups 
$$G^{\der}\times Z^{\circ} \to G$$
(where as usual $Z^{\circ}$ denotes the connected component 
of the identity in $Z$)
is surjective, and its kernel is contained in (the anti-diagonally embedded 
copy of) the intersection $G^{\der} \cap Z^{\circ}$, and thus
is contained in the centre of $G^{\der}$; in particular, it is finite.

We let $\tG^{\der}$ denote the simply connected cover of $G^{\der}$;
it is again a connected semisimple linear algebraic group,
and the kernel of the natural surjection $\tG^{\der} \to G^{\der}$
is finite and central.
We write $\tG := \tG^{\der} \times Z^{\circ}$ (and note that the possible
ambiguity in our use of the notation $\tG^{\der}$ is ameliorated 
by the fact that $\tG^{\der}$ is naturally identified with 
the derived subgroup of $\tG$).  The composite morphism
$$\tG = \tG^{\der} \times Z^{\circ} \to G^{\der} \times Z^{\circ} \to G$$
is 
again surjective, and its kernel is finite and central.

Since $\tG^{\der}$ is semisimple and simply connected,
it may be written as a direct product of almost simple linear algebraic
groups.  Thus $\tG$ is a direct product of such groups and a torus.

If $H$ 
and $J$ are linear algebraic groups, then any 
irreducible representation $W$ of the product $H\times J$ may
be factored (uniquely, up to isomorphism) as a tensor product $W\cong U\otimes_k V,$
where $U$ (resp.\ $V$) is an irreducible representation of $H$
(resp.\ $J$).
Applying this remark in the context of the preceding discussion
(thinking of a representation of $G$ as a representation
of $G^{\der} \times Z^{\circ}$ via inflation),
we find that
the irreducible representations
of $G$ are obtained from the irreducible representations 
$V$ of $G^{\der}$ by choosing a character of $Z^{\circ}$
which coincides with the given action of $G^{\der} \cap Z^{\circ}$ 
on $V$ (Schur's Lemma ensures that this action is indeed given 
by a character)
and extending the $G^{\der}$-action on $V$ to an action
of $G$ (thought of as a quotient
of $G^{\der} \times Z^{\circ}$) via having
$Z^{\circ}$ act through this choice of character.

If $\mathfrak{g}$ denotes the Lie algebra of $G$ (or equivalently
of $\tG$), so that $\mathfrak{g}^{\der}$ is the Lie algebra
of $G^{\der}$ (or equivalently of $\tG^{\der}$), then passing
to the induced $\mathfrak{g}^{\der}$-action induces an equivalence
of categories between the category of finite-dimensional
$\tG^{\der}$-representations over~$k$,
and the category of finite-dimensional $\mathfrak{g}^{\der}$-representations
over~$k$.
In particular, this equivalence induces a bijection between
the isomorphism classes of
irreducible representations of $\tG^{\der}$ 
and the isomorphism classes 
of irreducible representations of $\mathfrak{g}^{\der}$.

The following lemma (and its proof) is a very special case of a theorem of Rajan~\cite{Rajan}.

\begin{lemma} \label{lemma:rajaneasy}
Let~$U$ and~$V$ be two non-trivial representations of a simple Lie
algebra~$\g$ over an algebraically closed field of characteristic zero. Then~$U \otimes V$
is reducible.
\end{lemma}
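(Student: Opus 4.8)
The plan is to prove that if $U, V$ are non-trivial finite-dimensional representations of a simple Lie algebra $\g$ over an algebraically closed field of characteristic zero, then $U \otimes V$ is reducible, by a highest-weight-theoretic dimension count on the weight $\lambda + \mu$, where $\lambda$ (resp.\ $\mu$) is the highest weight of an irreducible constituent of $U$ (resp.\ $V$).

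First I would reduce to the case that $U$ and $V$ are themselves irreducible: if $U \otimes V$ were irreducible, then in particular $U$ and $V$ would have to be irreducible (a tensor product of a reducible module with anything is reducible, since tensoring is exact and $\dim \ge 1$ everywhere). So write $U = V_\lambda$, $V = V_\mu$ for dominant weights $\lambda, \mu$, both non-zero since the representations are non-trivial. The standard fact about tensor products of irreducibles is that $\lambda + \mu$ occurs as a weight of $U \otimes V$ with multiplicity exactly one, and $V_{\lambda+\mu}$ is the unique irreducible constituent with that highest weight — indeed $U \otimes V = V_{\lambda+\mu} \oplus (\text{lower stuff})$, where ``lower'' means all other constituents $V_\nu$ have $\nu < \lambda + \mu$ in the dominance order. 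Thus to show reducibility it suffices to show $\dim V_{\lambda+\mu} < \dim U \cdot \dim V = \dim(U \otimes V)$, i.e.\ that the ``lower stuff'' is non-zero.

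The cleanest way to see this is via the Weyl dimension formula: $\dim V_\nu = \prod_{\alpha > 0} \frac{\langle \nu + \rho, \alpha^\vee\rangle}{\langle \rho, \alpha^\vee\rangle}$. Comparing $\dim V_{\lambda+\mu}$ with $\dim V_\lambda \cdot \dim V_\mu$ amounts to comparing, for each positive root $\alpha$, the quantity $\langle \lambda + \mu + \rho, \alpha^\vee\rangle \cdot \langle \rho, \alpha^\vee\rangle$ against $\langle \lambda + \rho, \alpha^\vee\rangle \cdot \langle \mu + \rho, \alpha^\vee\rangle$. Writing $a = \langle \lambda, \alpha^\vee\rangle \ge 0$, $b = \langle \mu, \alpha^\vee\rangle \ge 0$, $c = \langle \rho, \alpha^\vee\rangle \ge 1$, this is the comparison of $(a+b+c)c$ with $(a+c)(b+c) = ab + (a+b)c + c^2$; the difference is $(a+c)(b+c) - (a+b+c)c = ab \ge 0$, so factor by factor $\dim V_\lambda \cdot \dim V_\mu \ge \dim V_{\lambda+\mu}$, and the inequality is strict as soon as there is one positive root $\alpha$ with $\langle\lambda,\alpha^\vee\rangle$ and $\langle\mu,\alpha^\vee\rangle$ both positive. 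Since $\lambda \ne 0$ and $\mu \ne 0$ are dominant and $\g$ is simple, each of $\lambda, \mu$ pairs strictly positively with at least one simple coroot; because the Weyl group acts transitively on the connected Dynkin diagram one can find a single positive root $\alpha$ (e.g.\ using that $\lambda$ is a non-negative combination of fundamental weights with some positive coefficient, and likewise $\mu$, and that the highest root, or more carefully a suitable positive root, pairs positively with every fundamental weight when $\g$ is simple) with $\langle \lambda, \alpha^\vee\rangle > 0$ and $\langle \mu, \alpha^\vee\rangle > 0$; this forces $\dim V_{\lambda+\mu} < \dim U\cdot\dim V$, hence $U \otimes V$ is reducible.

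The main obstacle is the last step: producing a \emph{single} positive root $\alpha$ pairing strictly positively with both $\lambda$ and $\mu$. One clean route is to use that for $\g$ simple the highest root $\tilde\alpha$ has $\langle\omega_i, \tilde\alpha^\vee\rangle \ge 1$ for every fundamental weight $\omega_i$ — equivalently $\tilde\alpha^\vee$ (or $\tilde\alpha$, in the simply-laced case; in general one should take the highest \emph{short} root, whose coroot is the highest root of the dual and pairs positively with all fundamental weights) is supported on the whole Dynkin diagram with positive coefficients — so that $\langle\lambda,\tilde\alpha^\vee\rangle > 0$ and $\langle\mu,\tilde\alpha^\vee\rangle > 0$ automatically whenever $\lambda,\mu$ are non-zero dominant weights. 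I would verify this supporting-root fact by the standard case-check (or cite it), after which the dimension inequality above closes the argument. Alternatively, to avoid any case-check, one can argue that $V_{\lambda+\mu}$ is a proper subquotient because $U \otimes V$ always contains the constituent $V_{\lambda + \mu'}$ for $\mu'$ a weight of $V$ adjacent to $\mu$ (i.e.\ $\mu - \alpha_i$ for a suitable simple root), together with $V_{\lambda+\mu}$, and these are distinct — but the dimension-formula approach is the most transparent and is exactly the ``very special case of Rajan'' alluded to.
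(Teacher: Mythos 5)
Your proposal is correct and is essentially the paper's proof: reduce to irreducible $U$, $V$, apply the Weyl dimension formula factor-by-factor to force $\langle\lambda,\alpha\rangle\langle\mu,\alpha\rangle=0$ for every positive root $\alpha$, and exploit the highest root. The only difference is in the closing step, where the paper sidesteps your worry about a case-check (and the long-root/short-root subtlety) by directly using that the highest root $\beta$ satisfies $\langle\nu,\beta\rangle\ge\langle\nu,\alpha\rangle$ for all dominant $\nu$ and all $\alpha\in\Phi^+$, so that $\langle\lambda,\beta\rangle=0$ immediately forces $\langle\lambda,\alpha\rangle=0$ for all $\alpha$ and hence $\lambda=0$.
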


\begin{proof}Without loss of generality we may assume that~$U$ and~$V$
  are irreducible. Let the highest weights of~$U$ and~$V$ be~$\lambda$ and~$\mu$ respectively. Then,
if~$U \otimes V$ is irreducible, it must be the irreducible representation of highest weight~$\lambda + \mu$.
By the Weyl character formula, this implies that
$$1 = \frac{\dim(U) \dim(V)}{\dim(U \otimes V)}
= \prod_{\Phi^{+}} \frac{ \langle \rho + \lambda, \alpha \rangle  \langle \rho + \mu, \alpha \rangle}
{\langle \rho, \alpha \rangle \langle \rho + \lambda + \mu, \alpha \rangle }.$$
Each individual factor has the form:
$$ 
\frac{\langle \rho,\alpha \rangle^2 + \langle \rho, \alpha \rangle (\langle \lambda, \alpha \rangle
+ \langle \mu, \alpha \rangle) + \langle \lambda, \alpha \rangle \langle \mu,\alpha \rangle}
{\langle \rho,\alpha \rangle^2 + \langle \rho, \alpha \rangle (\langle \lambda, \alpha \rangle
+ \langle \mu, \alpha \rangle) } \ge 1.$$
Since the pairing is non-negative, we obtain a contradiction unless
$\langle \lambda, \alpha \rangle \langle \mu,\alpha \rangle = 0$ for each root~$\alpha \in \Phi^{+}$.  
Because~$\g$ is simple,
there exists a maximal root~$\beta \in \Phi^{+}$ such that, for any dominant weight~$\nu$, one has
$\langle \nu,\beta \rangle \ge \langle \nu, \alpha \rangle$ for any~$\alpha \in \Phi^{+}$. In particular, 
assuming without loss of generality that~$\langle \lambda, \beta \rangle = 0$, we deduce
that~$\langle \lambda, \alpha \rangle = 0$ for all roots in~$\Phi^{+}$, which implies that~$\lambda = 0$ and
the corresponding representation is trivial.
\end{proof}

We say that an irreducible representation~$W$ of
the connected reductive linear algebraic group~$G$ is tensor indecomposable if,
for any 
isomorphism~$W \simeq U \otimes V$ of~$G$-representations, 
either~$U$ or~$V$ is a character.
\begin{lemma} \label{lemma:factor} Let~$G = G^{\circ}$ be a connected
  reductive Lie group over an algebraically closed field of
  characteristic zero, and let $\tG$ denote the finite cover of $G$
  constructed in the preceding discussion.
If~$W$ is an irreducible representation of~$G$ of dimension~$> 1$,
thought of a representation of $\tG$ via inflation, has a  factorization
$$W \simeq \bigotimes V_i$$
as a tensor product of tensor indecomposable representations
of~$\tG$,
where the~$V_i$ are unique up to reordering and twisting and have dimension~$> 1$.
\end{lemma}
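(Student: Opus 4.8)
The plan is to reduce everything to the structure theory recalled just above the statement: $\tG=\tG^{\der}\times Z^{\circ}$ is \emph{literally} a direct product of almost-simple groups $G_1,\dots,G_r$ (the almost-simple factors of $\tG^{\der}$) and a torus $Z^{\circ}$, so that by the uniqueness of tensor factorizations of irreducible representations of a product, the given irreducible $W$ has a \emph{canonical} factorization $W\cong\left(\bigotimes_{j}W_j\right)\otimes\chi$, where $W_j$ is an irreducible representation of $G_j$ and $\chi$ is a character of $Z^{\circ}$, inflated to $\tG$. Since a semisimple group has no nontrivial characters, every $W_j$ of dimension $1$ is trivial; let $J=\{j:\dim W_j>1\}$, and note that $J\ne\emptyset$ because $\dim W>1$.

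For existence, I would first check that each $W_j$ with $j\in J$ is tensor indecomposable as a $\tG$-representation. Indeed, if $W_j\cong U\otimes V$ with $U,V$ irreducible $\tG$-representations, then writing out the canonical factorizations of $U$, $V$ and comparing with that of $W_j$ forces, on every factor $G_k$ with $k\ne j$ and on $Z^{\circ}$, the relevant tensor factors of $U$ and $V$ to be trivial, and on $G_j$ it gives $U_j\otimes V_j\cong W_j$ for irreducible $G_j$-representations $U_j,V_j$. Since the Lie algebra of $G_j$ is simple, Lemma~\ref{lemma:rajaneasy} forces one of $U_j,V_j$, hence one of $U,V$, to be trivial, i.e.\ a character of $\tG$. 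Now fix some $j_0\in J$; the same argument shows that $W_{j_0}\otimes\chi$ is again tensor indecomposable of dimension $>1$, and we obtain the desired factorization
\[
W\;\cong\;(W_{j_0}\otimes\chi)\otimes\bigotimes_{j\in J\setminus\{j_0\}}W_j
\]
into tensor-indecomposable $\tG$-representations of dimension $>1$.

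For uniqueness, suppose $W\cong\bigotimes_i V_i$ is any factorization into tensor-indecomposable $\tG$-representations of dimension $>1$. Each $V_i$ is irreducible, hence has a canonical factorization $V_i\cong\left(\bigotimes_j V_{i,j}\right)\otimes\chi_i$; the $V_{i,j}$ of dimension $1$ are trivial, and if two of them, at $j_1\ne j_2$, had dimension $>1$, then $V_i$ would admit a nontrivial tensor decomposition, contradicting tensor indecomposability. So there is a unique index $j(i)$ with $\dim V_{i,j(i)}>1$, we have $V_i\cong V_{i,j(i)}\otimes\chi_i$ with $\chi_i$ a character of $Z^{\circ}$, and $j(i)\in J$ because $\dim V_i>1$. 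Regrouping $W\cong\bigotimes_i V_{i,j(i)}\otimes\prod_i\chi_i$ by the value of $j(i)$ and invoking uniqueness of the canonical factorization of $W$ gives $\prod_i\chi_i\cong\chi$ and $\bigotimes_{i:\,j(i)=j}V_{i,j}\cong W_j$ for each $j\in J$. Since $W_j$ is irreducible and the Lie algebra of $G_j$ is simple, iterating Lemma~\ref{lemma:rajaneasy} shows that a tensor product of two or more nontrivial irreducibles of $G_j$ is reducible; hence exactly one $i$ has $j(i)=j$, and for that $i$ one has $V_{i,j}\cong W_j$. Thus $i\mapsto j(i)$ is a bijection onto $J$, and after this reordering $V_i\cong W_{j(i)}\otimes\chi_i$, so any two such factorizations differ only by a permutation of the factors and by twisting the factors by characters of $\tG$ (whose product is trivial).

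I expect the only genuine subtlety — and the point to write out carefully — to be the bookkeeping with the abelian factor $Z^{\circ}$: one must identify ``twisting'' in the statement with twisting by characters of $\tG$ (equivalently, of $Z^{\circ}$, since $\tG^{\der}$ is semisimple), and one must be sure that it is the passage from $G$ to the \emph{honest} direct product $\tG$ that removes any interaction between $G^{\der}\cap Z^{\circ}$ and the tensor factorization. All the genuinely representation-theoretic input is already packaged in Lemma~\ref{lemma:rajaneasy} and in the uniqueness of tensor factorizations of irreducibles of a product, both recalled above, so beyond these the argument is purely formal.
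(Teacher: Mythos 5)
Your proposal is correct and follows essentially the same route as the paper's own proof: decompose $\tG$ as a direct product of almost-simple factors and the torus $Z^{\circ}$, invoke the canonical tensor factorization of an irreducible representation of a direct product, and use Lemma~\ref{lemma:rajaneasy} to identify the tensor-indecomposable pieces with the representations supported on a single almost-simple factor (up to twist). You fill in the existence/uniqueness bookkeeping that the paper leaves implicit, but there is no difference of substance.
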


\begin{proof}
As noted above, the algebraic group $\tG$
is a direct product
of almost simple linear algebraic groups~$G_i$ and a torus~$T$. Any  irreducible representation of~$G$
is then a tensor product of irreducible representations of the~$G_i$ up to twist by a character of~$T$.
It then suffices to show that the tensor indecomposable representations are precisely the
representations of a single simple factor~$G_i$ up to twist.
This follows immediately from Lemma~\ref{lemma:rajaneasy} (which shows
that an irreducible representation of any~$G_i$ is automatically tensor indecomposable).
\end{proof}

We next establish some
purely representation-theoretic results.

\begin{lemma}  \label{lemma:onedim} Let~$V$ and~$W$ be finite dimensional
	linear representations of a  group~$G$
over an algebraically closed field. 
Suppose that~$V \otimes W$ decomposes as a direct sum of characters.
Suppose, in addition, there are at most three isomorphism classes
of characters which occur in direct sum.
Then~$V$ and~$W$ also admit such a decomposition.
\end{lemma}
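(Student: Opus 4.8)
The plan is to reduce to the case where $V$ and $W$ are irreducible, and then to finish by computing the multiplicity of the trivial character inside $\operatorname{End}(V\otimes W)$ in two different ways; the hypothesis of at most three characters enters through a Cauchy--Schwarz estimate.

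\emph{Reductions.} We may assume $V,W\neq 0$, and after twisting $V$ by one of the three characters we may assume the trivial character $\mathbf 1$ occurs, so $V\otimes W\cong \mathbf 1^{m_1}\oplus\chi_2^{m_2}\oplus\chi_3^{m_3}$ with $\mathbf 1,\chi_2,\chi_3$ distinct (some $m_i$ possibly $0$). Since $V\otimes W$ is a sum of characters, the image of $G$ in $\operatorname{GL}(V\otimes W)$ is abelian, so $[G,G]$ acts trivially there; as the kernel of $\operatorname{GL}(V)\times\operatorname{GL}(W)\to\operatorname{GL}(V\otimes W)$ consists of the pairs of mutually inverse scalars, $[G,G]$ acts on each of $V$ and $W$ through scalars. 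Moreover every $g\in G$ acts semisimply on $V$ and on $W$: writing the actions as $\phi(g),\psi(g)$ with Jordan decompositions $\phi(g)=\phi(g)_s\phi(g)_u$, $\psi(g)=\psi(g)_s\psi(g)_u$, the Jordan decomposition of $\phi(g)\otimes\psi(g)$ is $\bigl(\phi(g)_s\otimes\psi(g)_s\bigr)\bigl(\phi(g)_u\otimes\psi(g)_u\bigr)$, so semisimplicity of $\phi(g)\otimes\psi(g)$ forces $\phi(g)_u\otimes\psi(g)_u=\operatorname{id}$, and since a tensor product of two operators is a scalar only if each factor is, $\phi(g)_u$ is a unipotent scalar, hence $\phi(g)_u=\operatorname{id}$. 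A representation over any field on which every element acts semisimply is semisimple, so $V=\bigoplus_a V_a$ and $W=\bigoplus_b W_b$ with $V_a,W_b$ irreducible; each $V_a\otimes W_b$ is a direct summand of $V\otimes W$, hence again a direct sum of at most three characters. So it suffices to prove: if $V,W$ are irreducible and $V\otimes W$ is a direct sum of at most three characters, then $\dim V=\dim W=1$.

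\emph{Structure of $\operatorname{End}(V)$.} Assume $V$ irreducible; $[G,G]$ still acts through scalars, so the conjugation action of $G$ on $\operatorname{End}(V)$ factors through $\overline\Gamma:=\Gamma/(\Gamma\cap k^\times)$, where $\Gamma:=\operatorname{im}\bigl(G\to\operatorname{GL}(V)\bigr)$. I claim $\operatorname{End}(V)$ is a direct sum of exactly $(\dim V)^2$ pairwise distinct characters of $G$, whose set $P$ is a subgroup of $\widehat G:=\operatorname{Hom}(G,k^\times)$. Indeed, by Burnside's theorem $\Gamma$ spans $\operatorname{End}(V)$ over $k$; for $\gamma\in\Gamma$ the line $k\gamma$ is $G$-stable (conjugation multiplies $\gamma$ by a scalar, $[\Gamma,\Gamma]$ being central), on which $G$ acts through a character $\psi_{\bar\gamma}$ depending only on $\bar\gamma\in\overline\Gamma$; the assignment $\bar\gamma\mapsto\psi_{\bar\gamma}$ is a homomorphism $\overline\Gamma\to\widehat{\overline\Gamma}$ whose kernel is the centre of $\overline\Gamma$, which is trivial by Schur's lemma. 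Hence distinct $\bar\gamma$ give distinct characters, so the lines $k\gamma$ are linearly independent and span $\operatorname{End}(V)$; therefore $\overline\Gamma$ is finite, $\operatorname{End}(V)=\bigoplus_{\bar\gamma\in\overline\Gamma}k\gamma$, $|\overline\Gamma|=(\dim V)^2$, and $P=\{\psi_{\bar\gamma}\}$ is a subgroup of $\widehat G$ of order $(\dim V)^2$. Likewise $\operatorname{End}(W)=\bigoplus_{\phi\in Q}\phi$ for a subgroup $Q\le\widehat G$ of order $(\dim W)^2$.

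\emph{The counting argument.} Use the $G$-equivariant isomorphism $\operatorname{End}(V\otimes W)\cong\operatorname{End}(V)\otimes\operatorname{End}(W)$. On the left, $V\otimes W=\bigoplus_i\chi_i^{m_i}$ gives $\operatorname{End}(V\otimes W)=\bigoplus_{i,j}(\chi_i^{-1}\chi_j)^{m_im_j}$, in which $\mathbf 1$ occurs with multiplicity $\sum_i m_i^2$ (the $\chi_i$ being distinct). On the right, $\operatorname{End}(V)\otimes\operatorname{End}(W)=\bigoplus_{\psi\in P,\ \phi\in Q}\psi\phi$, in which $\mathbf 1$ occurs with multiplicity $|\{(\psi,\phi):\psi\phi=\mathbf 1\}|=|P\cap Q|$ (since $P$ is closed under inversion). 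Hence $\sum_i m_i^2=|P\cap Q|\le\min(|P|,|Q|)=\min\bigl((\dim V)^2,(\dim W)^2\bigr)$. On the other hand, by Cauchy--Schwarz, $\sum_{i=1}^3 m_i^2\ge\tfrac13\bigl(\sum_i m_i\bigr)^2=\tfrac13(\dim V)^2(\dim W)^2$. Assuming without loss of generality $\dim V\le\dim W$ and combining, $\tfrac13(\dim V)^2(\dim W)^2\le(\dim V)^2$, so $(\dim W)^2\le 3$ and therefore $\dim W=\dim V=1$, as required. The one delicate point is the two-way count of the trivial character (and the precise structure of $\operatorname{End}(V)$ as a sum of distinct characters that feeds it); the reductions are routine, the only non-formal input being that a representation all of whose group elements act semisimply is itself semisimple.
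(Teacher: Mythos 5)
Your reduction to the irreducible case contains a real gap: it rests on the unproved assertion that ``a representation over any field on which every element acts semisimply is semisimple.'' You flag this as the only non-formal input and otherwise call the reductions routine, but this assertion is a substantive theorem, not a formality --- it can be proved in characteristic zero via reductivity of the Zariski closure, and over $\overline{\F}_p$ by a local-finiteness argument, but neither is a one-liner, and the bare ``over any field'' phrasing needs justification. Stating it without a proof or a reference leaves a hole. The paper's reduction sidesteps the issue entirely and requires no a priori semisimplicity of $V$ or $W$: choose irreducible subrepresentations $V'\subset V$ and $W'\subset W$ (these always exist, semisimple or not); then $V'\otimes W'\subset V\otimes W$ is a direct sum of characters from at most three isomorphism classes, so by the irreducible case $V'$ and $W'$ are characters; now $V\cong(V\otimes W')\otimes(W')^{-1}$, and $V\otimes W'\subset V\otimes W$ is a subrepresentation of a direct sum of characters from at most three classes, hence itself such a direct sum, and therefore so is $V$; similarly for $W$. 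I would replace your reduction with this one.

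Your treatment of the irreducible case, while correct, is also considerably heavier than necessary. The paper's argument is: for any character $\chi$, Schur's lemma gives $\dim\Hom_G(V\otimes W,\chi)=\dim\Hom_G(V,W^{\vee}\otimes\chi)\le 1$, so with at most three characters occurring one gets $\dim V\cdot\dim W=\dim(V\otimes W)\le 3$, forcing one of $V$, $W$ to be a line. Your computation of $\End(V)$ as a direct sum of $(\dim V)^2$ pairwise distinct characters forming a subgroup $P\le\widehat{G}$ (the Heisenberg/extraspecial structure of an irreducible with central commutator subgroup) is correct and is a nice observation, but the ensuing count $\sum m_i^2=|P\cap Q|$ together with Cauchy--Schwarz only re-derives, with far more machinery, what the bound $m_i\le 1$ already yields directly.
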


\begin{proof}
	First consider the case when both~$V$ and~$W$ are irreducible. Note that
 or any character~$\chi$,
$$\dim \Hom_G(V \otimes W,\chi) = \dim  \Hom_G(V,W^\vee \chi) \le 1,$$
where the latter inequality follows from the irreducibility of~$V$ and~$W$ together with Schur's Lemma.
Since the number of distinct characters is at most three, it follows that~$\dim(V \otimes W) \le 3$, and thus at
least one of~$V$ or~$W$ is a character, and the result follows.

	In the general case, when $V$ and 
	$W$ are not necessarily irreducible,  choose irreducible
	subrepresentations $V'\subset V$ and $W' \subset W$.
	Then $V'\otimes W' \subset V\otimes W$, and so from what we've
	already proved,
	we find that each of $V'$ and $W'$ is necessarily a character.
	We then see that
	$$V = V\otimes W'\otimes(W')^{-1} \subset
	(V\otimes W)\otimes (W')^{-1}$$
	is a direct sum of at most three characters
	(possibly with multiplicities),
	and similarly for 
	$$W = (V')^{-1}\otimes V' \otimes W \subset (V')^{-1}\otimes
	V\otimes W.$$
\end{proof}

\begin{remark}
The preceding result is false when there are four distinct characters.
Indeed, 
one can take~$V = W$ to be the irreducible~$2$-dimensional
representation (over $\Qbar$, say) of  the quaternion group~$Q$ of order~$8$.
\end{remark}

We recall the definition of a strongly irreducible representation.

\begin{df} A representation of a group~$G$ (either a Lie group or a Galois group) is
\emph{strongly irreducible} if it remains irreducible after restriction to any finite index subgroup~$H \subset G$.
\end{df}

\begin{remark}
A representation of a reductive Lie group~$G$ is strongly irreducible if and only if it remains irreducible
after restriction to  the connected component~$G^{\circ} \subset~G$. 
If a topological group~$G$ acts continuously on~$W$ and a subgroup~$H \subset  G$ fixes a  closed subspace~$0 \subsetneq V \subsetneq W$,
then the closure of~$H$ in~$G$ also fixes~$V$. 
Hence continuous representations of a Galois group~$G$
are strongly irreducible if and only if they remain irreducible after restriction to any \emph{closed} finite index subgroup~$H$.

\end{remark}

\begin{lemma} \label{lem: strongly irreducible is uniquely polarizable} Let~$V$
	and~$W$ be irreducible representations of a group $G$ over
	an algebraically closed field $k$ of characteristic zero,
  and suppose that the action of~$G$ on~$V$ is strongly irreducible.
  Then, up to scaling, there is at most one non-trivial bilinear $G$-equivariant pairing:
$$W \times V \rightarrow k(\mu),$$
where~$k(\mu)$ is the twist of the trivial representation
by a character~$\mu$ of~$G$, and~$\mu$ is allowed to range over all characters of~$G$.
\end{lemma}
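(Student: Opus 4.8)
The plan is to reinterpret a $G$-equivariant bilinear pairing $W\times V\to k(\mu)$ as a nonzero element of $\Hom_G(W\otimes_k V,k(\mu))\cong \Hom_G\bigl(W,V^\vee\otimes_k k(\mu)\bigr)$, and then to run Schur's lemma over the algebraically closed field $k$. First I would observe that, since $W$ and $V^\vee\otimes_k k(\mu)$ are both irreducible, any nonzero element of this Hom-space is an isomorphism, and the space is therefore at most one-dimensional. In particular, for a \emph{fixed} character $\mu$, there is at most one such pairing up to scaling, and it exists precisely when $W\cong V^\vee\otimes_k k(\mu)$.

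The substance of the lemma is that the character $\mu$ is itself forced. Suppose we are given two nonzero pairings, one valued in $k(\mu_1)$ and one in $k(\mu_2)$. The associated isomorphisms give $V^\vee\otimes_k k(\mu_1)\cong W\cong V^\vee\otimes_k k(\mu_2)$; tensoring by $k(\mu_1)^{-1}$ and dualizing yields $V\cong V\otimes_k k(\nu)$ with $\nu:=\mu_1\mu_2^{-1}$. It then suffices to prove that $\nu$ is trivial, and this is exactly where strong irreducibility of $V$ (rather than mere irreducibility) is used. Taking determinants in $V\cong V\otimes_k k(\nu)$ gives $\nu^{\dim V}=1$, so $N:=\ker\nu$ has finite index in $G$; by strong irreducibility $V|_N$ is irreducible, and since $\nu|_N$ is trivial, any isomorphism $\phi\colon V\to V\otimes_k k(\nu)$ restricts to an $N$-equivariant automorphism of $V|_N$, hence to a scalar $\lambda\cdot\mathrm{id}_V$ by Schur. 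Substituting $\phi=\lambda\cdot\mathrm{id}_V$ into the intertwining identity $\phi\circ\rho_V(g)=\nu(g)\,\rho_V(g)\circ\phi$ forces $\nu(g)=1$ for all $g\in G$. Thus $\mu_1=\mu_2$, and the first paragraph shows the two pairings differ by a scalar, completing the proof.

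I do not anticipate a serious obstacle here: the argument is essentially a careful application of Schur's lemma together with the determinant trick. The only points requiring a little care are the standard bookkeeping identities $\Hom_G(W\otimes_k V,k(\mu))\cong\Hom_G(W,V^\vee\otimes_k k(\mu))$ and $(V^\vee\otimes_k k(\chi))^\vee\cong V\otimes_k k(\chi^{-1})$, and keeping straight that it is precisely the hypothesis that $V$ (not $W$) is \emph{strongly} irreducible that legitimizes the passage to the finite-index subgroup $\ker\nu$ and hence pins down the twisting character.
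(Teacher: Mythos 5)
Your proof is correct and follows essentially the same route as the paper's: identify a pairing with an isomorphism $V\cong W^\vee(\mu)$ (you phrase it dually as $W\cong V^\vee(\mu)$), invoke Schur to rule out two pairings with the same $\mu$, and then for two pairings with different characters take determinants in $V\cong V(\nu)$ to see $\nu$ has finite order, pass to the finite-index kernel, and use strong irreducibility together with Schur to derive a contradiction. The only cosmetic difference is at the very end: the paper counts $\dim\Hom_H(V,V)\ge 2$ to contradict irreducibility of $V|_H$, while you instead note that the intertwiner must be scalar on the kernel and substitute back into the intertwining relation to force $\nu=1$ — two equivalent ways to finish the same argument.
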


\begin{proof}
Any such pairing gives rise to an isomorphism~$V \simeq W^{\vee}(\mu)$.
If there existed two such pairings with the same~$\mu$ which were not the same up to scalar, then there would be two corresponding
isomorphisms in~$\Hom_G(V,W^{\vee}(\mu))$. Using either of the identifications of~$V$ with~$W^{\vee}(\mu)$,
we deduce that
$$2 \le \dim \Hom_G(V,W^{\vee}(\mu)) = \dim \Hom_G(V,V),$$
which contradicts the irreducibility of~$V$ by Schur's Lemma.
 If there were two such pairings with \emph{different}~$\mu$, then, denoting one character by~$\mu$ and the other by~$\mu \otimes \chi$
for a non-trivial character~$\chi$, we deduce that~$V \simeq W^{\vee}(\mu)$ and~$V \simeq  W^{\vee}(\mu \otimes \chi)$ and thus~$V \simeq V(\chi)$.
%
%
%
%
Taking determinants of both sides, it
follows that~$\chi^n = 1$. Hence~$\chi$ defines a map:
$G \rightarrow \mu_n \subset k^{\times}$, and in particular the
image of~$\chi$ is finite. Let~$H$ denote the kernel of this map, which is of finite
index. Then we deduce that $\dim \Hom_H(V,V) \ge 2$.
 By Schur's Lemma, this implies
that~$V|_{H}$ is reducible, contradicting our assumptions.
\end{proof}

\subsubsection{Galois representations}

Throughout this subsection we let~$F$ be a (not necessarily CM) number field.

\begin{lemma} \label{lemma:large} 
Let~$r: G_F \rightarrow \GL_m(\Qbar_p)$ be a 
representation with image~$\Gamma:=r(G_F)$, and suppose that the  {\em
(}semisimple\emph{)} residual representation~$\rbar$
has image~$\rbar(G_F)=\overline{\Gamma} \subset \GL_m(\Fbar_p)$
containing~$\SL_m(\F_q)$ for ~$q$ a sufficiently large power of~$p$ \emph{(}we can
take~$q=p$ if~$p> 5$, and~$q=25$ if~$p=5$\emph{)}. 
Then the Zariski closure~$G$ of~$\Gamma$
contains~$\SL_m(\Qbar_p)$.\end{lemma}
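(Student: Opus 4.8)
The plan is to reduce the statement to an assertion about a compact subgroup of $\GL_m(\cO_E)$ for a suitable finite extension $E/\Q_p$, and then to transfer largeness of the residual image $\overline{\Gamma}$ into largeness of the Zariski closure $G$. We may assume $m\ge 2$, the case $m=1$ being vacuous. For the values of $q$ in the statement, $\SL_m(\F_q)$ acts absolutely irreducibly on its standard module, so $\overline{r}$, and hence $r$, is absolutely irreducible; therefore $G$ acts irreducibly on $\Qpbar^m$ and is in particular reductive. Since $\Gamma=r(G_F)$ is the continuous image of a profinite group it is compact, so after conjugating we may assume $\Gamma\subseteq\GL_m(\cO_E)$ for some finite extension $E/\Q_p$ whose residue field $k$ contains $\F_q$; the existence of such an $E$ is standard. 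Let $\Lambda\subseteq\Gamma$ be the preimage under reduction of $\SL_m(\F_q)\subseteq\overline{\Gamma}\subseteq\GL_m(k)$; this is an open subgroup, so the Zariski closure of $\Lambda$ contains the identity component $G^{\circ}$, and it suffices to prove that this Zariski closure contains $\SL_m$. We are thus reduced to a closed subgroup $\Lambda\subseteq\GL_m(\cO_E)$ sitting in a short exact sequence $1\to\Lambda_1\to\Lambda\to\SL_m(\F_q)\to 1$ with $\Lambda_1$ pro-$p$, and we must show that $\Lambda$ is Zariski dense in a group containing $\SL_m$.

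The heart of the argument is this transfer from residual largeness to Zariski-theoretic largeness, and this is where I would invoke the structure theory of compact linear $p$-adic groups and their reductions — concretely, Nori's theorem on finite subgroups generated by their $p$-elements together with its lifting to characteristic zero due to Pink (alternatively, the circle of ideas around \cite{MR1150604}). The mechanism is that $\SL_m(\F_q)$ is generated by transvections, which are precisely its elements of order $p$, so it equals the subgroup it generates by $p$-elements; the one-parameter unipotent subgroups producing these transvections lift to one-parameter unipotent (root) subgroups of the Zariski closure of $\Lambda$, and as one runs over all root positions these generate a copy of $\SL_m$. Hence the Zariski closure of $\Lambda$, and a fortiori $G$, contains $\SL_m$, as desired. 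One keeps track of fields by noting that, after possibly enlarging $k$, the ambient $\SL_m$ may be taken split over $\F_q$, and the lifted root subgroups then assemble over $E$.

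The main obstacle is exactly this transfer step: a hypothesis of the form $\overline{\Gamma}\supseteq\SL_m(\F_q)$ only forces $G\supseteq\SL_m$ once $q$ is sufficiently large, and this is genuinely needed. For small $q$ the finite group $\PSL_m(\F_q)$ can embed into $\PGL_m(\Qpbar)$ — for instance $\SL_2(\F_5)$ is the binary icosahedral group, which has a faithful two-dimensional representation in characteristic zero — and then a $\Lambda$ with reduction exactly $\SL_m(\F_q)$ can itself be finite, so that $G$ is finite and certainly does not contain $\SL_m$; this is precisely why the case $p=q=5$ is excluded and one must pass to $q=25$ there. The requirement that $q$ be a sufficiently large power of $p$ (with $q=p$ already enough once $p>5$, and in the setting of this paper $p>2(n+1)\ge 6$, so $p\ge 7$ and one may always take $q=p$) is exactly what guarantees that $\PSL_m(\F_q)$ admits no $m$-dimensional projective representation in characteristic zero — compare $|\PSL_m(\F_q)|$ with Jordan's bound for finite subgroups of $\PGL_m(\Qpbar)$ — and this is what removes the obstruction and makes the transfer available. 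The remaining points (that $E$ can be chosen as claimed, and that reduction is compatible with the identifications of component groups and Zariski closures used along the way) are routine.
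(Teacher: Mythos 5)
Your proposal takes a genuinely different route from the paper's. The paper's proof is two lines: after conjugating $\Gamma$ into $\GL_m(\cO)$, it invokes Manoharmayum's structure theorem for closed subgroups of $\GL_m$ over complete local Noetherian rings with large residual image, which says directly that $\Gamma$ contains a conjugate of $\SL_m(W(\F_q))$; then unirationality of $\SL_m$ gives Zariski density of $\SL_m(W(\F_q))$ in $\SL_m$, and the lemma follows. Your proof instead reduces to a compact $\Lambda$ with reduction $\SL_m(\F_q)$ and then tries to lift unipotent structure via the Nori--Pink circle of ideas.

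The difficulty with the latter as written is the transfer step itself: you assert that ``the one-parameter unipotent subgroups producing these transvections lift to one-parameter unipotent (root) subgroups of the Zariski closure of $\Lambda$,'' but this is precisely the content that needs an argument or a precise citation, and Nori's theorem and Pink's theorem do not hand it over directly. Nori's theorem concerns finite subgroups of $\GL_m(\F_p)$ and identifies the subgroup generated by $p$-elements with the $\F_p$-points of an algebraic subgroup in characteristic $p$; it says nothing by itself about lifting that algebraic subgroup into the characteristic-zero Zariski closure. Pink's compact-subgroup theorem is formulated for adjoint semisimple groups over local fields and is again not in the form needed here. Moreover, the element $g\in\Lambda$ over a transvection need not be unipotent (for instance in $\SL_m(\Z_p)$ most preimages of a transvection are not), so there is no one-parameter unipotent subgroup ``producing'' it in the naive sense; and while one can instead pass to a flat integral model of the Zariski closure and note that its special fibre contains $\SL_{m,\bar k}$, flatness only controls the dimension of the generic fibre, not that it actually contains $\SL_m$. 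All of this extra work is exactly what Manoharmayum's theorem packages, and is why the paper cites it. On the positive side, your discussion of why small $q$ is genuinely excluded --- the binary icosahedral group $\SL_2(\F_5)\subset\GL_2(\Qbar_p)$, and the comparison of $|\PSL_m(\F_q)|$ against Jordan's bound for $\PGL_m(\Qbar_p)$ --- is a nice complement that the paper does not spell out, and the reduction to a compact subgroup of $\GL_m(\cO_E)$ and the observation that $G$ is reductive are both correct and match the paper's first step.
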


\begin{proof}
After conjugation, the image is a closed subgroup of~$\GL_{m}(\cO)$
for~$\cO$ the ring of integers in some finite extension of~$\Q_p$.
By the main result of~\cite{Manoharmayum}, the image therefore contains a conjugate of~$\SL_{m}(W(\F_q))$,
from which the result follows immediately (since~$\SL_m$ is
unirational, by~\cite[Thm.\ 18.2]{MR1102012}). 
\end{proof}

\begin{lemma}  \label{lemma:adjoint} Let~$\{r_{\lambda}\}$ be a weakly compatible system of~$G_F$-representations of
dimension~$m = n^2$. Suppose that~$r = r_p = a \otimes b$ where~$a$ and~$b$
are~$n$-dimensional representations which
have images whose Zariski closure contains~$\SL_n$, and the corresponding Lie algebra
of~$r_p$ contains~$\sl_n \times \sl_n$. 
Then the component group of the compatible system~$\{\ad(r_{\lambda})\}$ is trivial.
\end{lemma}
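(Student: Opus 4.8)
The plan is to reduce the statement to a single prime and then squeeze the relevant monodromy group between two groups that visibly have the same, connected, answer. First I would observe that since $\{r_\lambda\}$ is a compatible system, so is $\{\ad r_\lambda\}=\{r_\lambda\otimes r_\lambda^\vee\}$ (the unramified Frobenius characteristic polynomials, the de~Rham/crystalline conditions at $l$, and the Hodge--Tate weights of $\ad r_\lambda$ are all determined by those of $r_\lambda$). Hence Theorem~\ref{thm: Serre theorem on connected component group}, applied to $\{\ad r_\lambda\}$, shows that its component group --- that is, $\pi_0$ of the Zariski closure of the image of $\ad r_\lambda$ --- is independent of $\lambda$. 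So it suffices to prove that the Zariski closure $\bar G$ of the image of $\ad(r_p)\colon G_F\to\GL(\mathfrak{gl}_{n^2})$ is connected. Writing $G$ for the Zariski closure of $r_p(G_F)$ in $\GL_{n^2}$, and noting that $\ad(r_p)$ factors through $\GL_{n^2}\to\PGL_{n^2}$, one identifies $\bar G$ with the image of $G$ in $\PGL_{n^2}$; in particular $\bar G$ is a closed subgroup of $\PGL_{n^2}$, a quotient of $G$ by a central (scalar) subgroup.

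The crux is the sandwiching. On the one hand, since $r_p=a\otimes b$, every element of $r_p(G_F)$ is a pure tensor $a(\sigma)\otimes b(\sigma)$, so $r_p(G_F)$ lies in the closed subgroup $T:=\otimes(\GL_n\times\GL_n)$ of $\GL_{n^2}$ (the image of the tensor-product morphism, which is closed since it is the image of a morphism of linear algebraic groups). Hence $G\subseteq T$, and therefore $\bar G$ lies in the image $\overline T$ of $T$ in $\PGL_{n^2}$. Since the kernel of $\GL_n\times\GL_n\xrightarrow{\otimes}\GL_{n^2}\to\PGL_{n^2}$ is precisely the subgroup of pairs of scalar matrices, $\overline T\cong\PGL_n\times\PGL_n$ is connected of dimension $2(n^2-1)$. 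On the other hand, the hypothesis that the Lie algebra of $r_p$ contains $\sl_n\times\sl_n$ means exactly that $\mathrm{Lie}(G)$ contains the subspace $(\sl_n\otimes I)\oplus(I\otimes\sl_n)\subseteq\mathfrak{gl}_{n^2}$; a partial-trace computation shows this subspace meets the scalars $\Qbar_p(I\otimes I)$ only in $0$, so it injects under the differential $\mathrm{Lie}(G)\to\mathrm{Lie}(\bar G)$ of the quotient map $G\to\bar G$ (whose kernel is central, hence scalar). Thus $\dim\bar G=\dim\mathrm{Lie}(\bar G)\ge 2(n^2-1)=\dim\overline T$. A closed subgroup of the connected group $\overline T$ of full dimension equals $\overline T$, so $\bar G=\overline T$ is connected; equivalently, the component group of $\{\ad r_\lambda\}$ is trivial.

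I do not expect a serious obstacle: the argument is essentially bookkeeping with linear algebraic groups over a field of characteristic zero (smoothness, so $\dim=\dim\mathrm{Lie}$; images of morphisms being closed subgroups; Lie algebras of images and quotients). The point needing the most care is the reduction to $\lambda=p$: it relies on $\{\ad r_\lambda\}$ genuinely being a compatible system so that Serre's theorem (via Larsen--Pink) applies, together with the routine but worth-stating identification of the component group of a compatible system with $\pi_0$ of the Zariski closure of any one of its members. A second mild point is that the hypothesis only says $\mathrm{Lie}(G)$ \emph{contains} $\sl_n\times\sl_n$ --- it might also contain the scalar line --- but the dimension count is insensitive to this, since that extra direction dies when we pass to $\bar G\subseteq\PGL_{n^2}$.
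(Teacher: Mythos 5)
Your proof is correct and follows essentially the same strategy as the paper: reduce to $\lambda=p$ via Serre's theorem on the constancy of the connected-component field, then show that the Zariski closure of $\ad(r_p)$ is the connected group $\PGL_n\times\PGL_n$. The paper gets there more quickly by writing $\ad(r_p)=\ad(a)\otimes\ad(b)$ and reading off that each factor has closure $\PGL_n$, whereas your explicit sandwiching and dimension count make clearer where the Lie algebra hypothesis on $r_p$ (as opposed to the hypotheses on $a$ and $b$ individually) is actually used to rule out the closure collapsing onto a diagonal copy of $\PGL_n$.
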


\begin{proof}
By Theorem~\ref{thm: Serre theorem on connected component group}, the pre-image
of~$G^{\circ}_{\lambda}$ in~$G_{F}$ is independent of~$\lambda$, and so the component group
is independent of~$\lambda$, and thus we may choose~$\lambda = p$.
We have~$\ad(r) = \ad(a) \otimes \ad(b)$. By assumption, the Zariski closures of the images of~$\ad(a)$
and~$\ad(b)$ are both~$\PGL_n$, and hence the Zariski closure of~$\ad(r)$ is~$G = \PGL_n \times \PGL_n$.
Since~$G = G^{\circ}$, the component group is trivial.
\end{proof}

The following result relates irreducibility and strong irreducibility
of Galois representations.

\begin{lemma} \label{lemma:induced} Let~$r: G_F \rightarrow \GL_n(\Qbar_p)$ be an  irreducible
Galois representation
which is Hodge--Tate with regular Hodge--Tate weights. If~$r$ is not strongly
irreducible,  then~$r$ is induced from a strongly irreducible representation over some finite extension~$M/F$.
\end{lemma}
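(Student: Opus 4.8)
The plan is to argue by induction on $[F':F]$ over the finite extensions $F'/F$, using the structure of the image. Let $G \subset \GL_n(\Qbar_p)$ be the Zariski closure of $r(G_F)$, and let $G^\circ$ be its identity component. Since $r$ is irreducible, $G$ acts irreducibly on $\Qbar_p^n$; since $r$ is \emph{not} strongly irreducible, there is a finite index subgroup on which $r$ becomes reducible, and hence (by the remark on strong irreducibility preceding Lemma~\ref{lemma:large}) the restriction $r|_{G^\circ(\text{preimage})}$ is reducible. Concretely, let $H \trianglelefteq G_F$ be the open normal subgroup corresponding (via Theorem~\ref{thm: Serre theorem on connected component group}'s field, or just directly) to the preimage of $G^\circ$; then $r|_{G_{F^\circ}}$ is reducible, where $F^\circ$ is the fixed field of $H$, and $F^\circ/F$ is finite Galois with group $G/G^\circ$. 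Among all finite extensions $M/F$ with $r|_{G_M}$ reducible, choose one with $[M:F]$ minimal; I claim $M/F$ is Galois and that $r \cong \Ind_{G_M}^{G_F} \sigma$ for an irreducible $\sigma$ on $G_M$, with $\sigma$ strongly irreducible.

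First I would establish the induced structure. Pick an irreducible $G_M$-subrepresentation $\sigma \subset r|_{G_M}$. Because $r$ is $G_F$-irreducible and $G_M$ is normal of finite index, Clifford theory applies: $r|_{G_M}$ is a direct sum of the $G_F$-conjugates $\sigma^g$ of $\sigma$, each with the same multiplicity $e$, and $r$ is induced from the $\sigma^g$-isotypic piece over the stabilizer $G_{M'}$ of the isotypic component, where $M \subset M'$. By minimality of $[M:F]$ we must have $M' = M$ and the isotypic component equal to $\sigma$ itself (if $M'$ were a proper subfield of $M$ then $r|_{G_{M'}}$ would already be reducible with $[M':F] < [M:F]$ — one has to check the isotypic component is a proper subspace, which holds since $r|_{G_M}$ is reducible by hypothesis and all conjugates appear). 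Thus $r \cong \Ind_{G_M}^{G_F}\sigma$ with $\sigma$ irreducible, and the minimality also forces $G_M/\bigcap_g G_{M^g}$ to be such that $M/F$ is Galois (replace $M$ by the Galois closure: the conjugates $\sigma^g$ are defined over it and one checks reducibility persists, so minimality gives Galois). The regular Hodge--Tate hypothesis enters here to control the combinatorics: the Hodge--Tate weights of $\Ind_{G_M}^{G_F}\sigma$ are the union over embeddings of those of $\sigma$, and regularity prevents degenerate overlaps that would obstruct the clean Clifford decomposition.

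Next I would show $\sigma$ is strongly irreducible. Suppose not; then there is a finite extension $N/M$ with $\sigma|_{G_N}$ reducible, and we may take $N/F$ finite. But then $r|_{G_N} = (\Ind_{G_M}^{G_F}\sigma)|_{G_N}$ is a sum of induced pieces each of which, restricted far enough, is reducible — more carefully, enlarging $N$ so that $N \supseteq M^g$ for all $g$ (so all conjugates are "untwisted"), $r|_{G_N}$ decomposes and contains $\sigma|_{G_N \cap G_M}$ as a subrepresentation, which is reducible; this contradicts the minimality of $[M:F]$ once we check $[N:F]$ can be arranged $\le$ something controlled — actually the cleanest route is: minimality of $M$ already forces $\sigma$ to be strongly irreducible, because if $\sigma|_{G_N}$ were reducible for finite $N/M$, picking an irreducible $G_N$-constituent $\tau$ of $\sigma|_{G_N}$ and repeating the Clifford/induction analysis would exhibit $r$ as induced from a field strictly between $F$ and $M$ in the lattice, again contradicting minimality (one uses transitivity of induction, $\Ind_{G_M}^{G_F}\Ind_{G_N'}^{G_M}\tau = \Ind_{G_N'}^{G_F}\tau$ with $N' \subsetneq M$ over $F$... the index bookkeeping needs care).

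The main obstacle I anticipate is making the minimality argument for $\sigma$'s strong irreducibility fully rigorous: one must ensure that "descending" a reducibility of $\sigma$ over $N/M$ genuinely produces a realization of $r$ as induced from a field of smaller degree over $F$, rather than merely of smaller degree over $M$. This requires carefully tracking stabilizers in Clifford theory and invoking that $M/F$ (hence all the relevant fields) can be taken Galois, so that the lattice of intermediate fields behaves well and transitivity of induction applies cleanly. The regular-Hodge--Tate hypothesis should be what rules out pathologies where an apparent constituent has the "wrong" Hodge--Tate weights to actually split off a genuine subrepresentation of $r$ over a smaller field. I expect the rest — irreducibility of $r$ giving the Clifford decomposition, and the existence of \emph{some} $M$ with $r|_{G_M}$ reducible coming directly from the failure of strong irreducibility — to be routine.
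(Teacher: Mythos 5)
Your minimality criterion is the wrong one, and this is a genuine gap rather than a bookkeeping issue. You choose $M/F$ with $[M:F]$ \emph{minimal} among fields over which $r$ becomes reducible, and then claim minimality forces the factor $\sigma$ (with $r \cong \Ind_{G_M}^{G_F}\sigma$) to be strongly irreducible. But suppose $\sigma|_{G_N}$ is reducible for some finite $N/M$. Running the same analysis on $\sigma$ over the base $M$ would yield $\sigma \cong \Ind_{G_{N'}}^{G_M}\tau$ for some $N'$ with $M \subsetneq N' \subseteq N$, and transitivity of induction gives $r \cong \Ind_{G_{N'}}^{G_F}\tau$ --- but $[N':F] > [M:F]$, so this exhibits $r$ as induced from a \emph{larger} field, which does not contradict minimality of $[M:F]$ at all. (You wrote $N' \subsetneq M$, but the intermediate field sits strictly \emph{above} $M$.) The paper avoids this entirely by inducting on $n = \dim r$: once one knows $r \cong \Ind_{G_M}^{G_F}\sigma$ with $\sigma$ irreducible of dimension $n/[M:F] < n$, one observes that $\sigma$ is again Hodge--Tate with regular Hodge--Tate weights and applies the inductive hypothesis to $\sigma$. (Equivalently, one could \emph{maximize} $[M:F]$ among fields from which $r$ is induced by an irreducible, bounded above by $n$, but minimizing $[M:F]$ among fields where $r|_{G_M}$ is reducible does not work.)

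A second, related problem is that your invocation of regularity (``prevents degenerate overlaps that would obstruct the clean Clifford decomposition'') is too vague to carry the argument, and, as stated, has the logic backwards. The precise point the paper uses is this: for any finite $E/F$, the restriction $r|_{G_E}$ still has pairwise distinct $\tau$-labelled Hodge--Tate weights at each $w|p$, which forces $r|_{G_E}$ to be \emph{multiplicity-free} --- a repeated irreducible constituent would contribute repeated weights. This multiplicity-freeness is exactly what guarantees that the isotypic component in Clifford theory is a single irreducible $\sigma$ (rather than $\sigma^{\oplus e}$ for some $e>1$), so that $r$ really is $\Ind_{G_{M}}^{G_F}\sigma$ with $\sigma$ irreducible, rather than merely induced from a reducible isotypic piece. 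Without this, your proposed Clifford decomposition does not give an irreducible $\sigma$. Finally, semisimplicity of $r|_{G_E}$ (which your appeal to Clifford theory presupposes) also requires an argument when $E/F$ is not Galois; the paper handles this by noting that the $G_F$-translates of an irreducible subrepresentation are stable under conjugates of $G_E$, so $r$ is completely reducible over a finite index subgroup and hence already semisimple. In short: the Clifford-theoretic picture is the right one, but you need to replace the minimality-of-$[M:F]$ argument with induction on dimension, and make the role of regular Hodge--Tate weights (multiplicity-freeness) explicit.
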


\begin{proof} 
 This is proved in
  the course of the proof of~\cite[Cor.\ 4.4]{MR3186511}; we recall
  the argument.  For any finite extension $E/F$, either $r |_{G_{E}}$
  is irreducible or it decomposes into a sum of \emph{distinct}
  irreducible representations.  This follows immediately from the fact
  that $r|_{G_{E}}$ has distinct Hodge--Tate weights at any prime
  $w|l$.  (Note that $r |_{G_{E}}$ is necessarily semisimple: if $V$
  denotes any irreducible subrepresentation, then the various
  translates of $V$ by elements of $G_F$ are stable under the corresponding
  conjugates of $G_E$, and so we
   see that $r |_{G_{E}}$ becomes completely decomposable under
  restriction to a finite index subgroup, so must already have been
  semisimple.) Suppose then that $r|_{G_E}$ is reducible for some
  finite extension $E/F$. Replacing $E$ by its normal closure over
  $F$, we may assume that the extension $E/F$ is Galois, and the claim that~$r$ is induced
 is immediate from~\cite[Lem.\ 4.3]{MR3186511}. If~$r = \Ind^{G_F}_{G_M} s$, then~$s$
 is also Hodge--Tate with regular Hodge--Tate weights, so by induction on~$n$
 we may assume that~$r$ is induced from a strongly irreducible representation.
\end{proof}

The following lemma will prove useful for lifting Galois representations
along central extensions:

\begin{lemma}
\label{conrad}
Suppose that~$r: G_{F} \rightarrow \GL_{mn}(\Qbar_p)$ is a Galois representation whose image has
Zariski closure inside the image of the map~$\GL_n \times \GL_m \rightarrow \GL_{nm}$. Then there exist Galois representations~$r_A$
and~$r_B$ of dimension~$n$ and~$m$ respectively such that~$r \simeq r_A \otimes r_B$.
\end{lemma}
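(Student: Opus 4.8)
The strategy is to reinterpret the hypothesis group-theoretically and then solve a lifting problem. The kernel of the tensor product morphism $\GL_n\times\GL_m\to\GL_{nm}$, $(A,B)\mapsto A\otimes B$, is the antidiagonally embedded $\mathbf{G}_m$: if $A\otimes B=\Id$ then $A$ and $B$ are both scalar matrices whose scalars multiply to $1$. Hence the (Zariski-closed) image $H$ of this morphism is the connected reductive group $(\GL_n\times\GL_m)/\mathbf{G}_m$, and the assumption that the Zariski closure of $r(G_F)$ lies in this image says precisely that $r$ factors as a continuous homomorphism $r\colon G_F\to H(\Qpbar)$ followed by the inclusion $H(\Qpbar)\hookrightarrow\GL_{nm}(\Qpbar)$. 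Since a lift of $r$ along the surjection $\GL_n(\Qpbar)\times\GL_m(\Qpbar)\to H(\Qpbar)$ is by definition a pair of continuous representations $(r_A,r_B)$ with $r_A\otimes r_B=r$, it is enough to produce such a lift.

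First I would note that composing $r$ with the projection $H\to\PGL_n\times\PGL_m$ produces two continuous projective representations $\bar r_A\colon G_F\to\PGL_n(\Qpbar)$ and $\bar r_B\colon G_F\to\PGL_m(\Qpbar)$. Suppose for the moment that these lift to genuine continuous representations $\rho_A\colon G_F\to\GL_n(\Qpbar)$ and $\rho_B\colon G_F\to\GL_m(\Qpbar)$. Then $\rho_A\otimes\rho_B$ and $r$, regarded as homomorphisms into $H(\Qpbar)$, have the same composite with $H\to\PGL_n\times\PGL_m$, so for every $g$ they differ by an element of $\ker\big(H\to\PGL_n\times\PGL_m\big)(\Qpbar)=\Qpbarx$, the central scalars of $\GL_{nm}$. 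The function $\psi\colon g\mapsto r(g)\bigl((\rho_A\otimes\rho_B)(g)\bigr)^{-1}$ is then a continuous character of $G_F$, and setting $r_A:=\rho_A\otimes\psi$ and $r_B:=\rho_B$ gives $r_A\otimes r_B=\psi\cdot(\rho_A\otimes\rho_B)=r$, which is what we want.

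The only nonformal ingredient is the lifting of the projective representations $\bar r_A$ and $\bar r_B$ to genuine representations, and this is the step I expect to be the real obstacle. The obstruction to lifting a continuous $\Qpbar$-valued projective representation of a profinite group along the central $\mathbf{G}_m$ lies a priori in the continuous cohomology group $H^2(G_F,\Qpbarx)$ (trivial action), which is closely related to the Brauer group of $F$ and need not vanish for purely formal reasons. That this particular obstruction nonetheless always vanishes --- equivalently, that every continuous homomorphism $G_F\to\PGL_N(\Qpbar)$ lifts to $\GL_N(\Qpbar)$ --- is the input I would quote, as a lemma of B.~Conrad; this is the source of the name of the present lemma, and I would not reprove it. Granting that statement, the previous two paragraphs assemble the factorisation $r\simeq r_A\otimes r_B$.
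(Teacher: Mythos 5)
Your proof is correct, and it follows a somewhat different route than the paper's. The paper sets up the same central extension $1\to Z\to\GL_n(\Qpbar)\times\GL_m(\Qpbar)\to H(\Qpbar)\to 1$ with $Z$ the $\Qpbar$-points of the antidiagonal $\mathbf{G}_m$, and then finishes in a single line by citing Conrad's Proposition~5.3, a general theorem that lifts continuous representations $G_F\to H(\Qpbar)$ along any central extension of linear algebraic groups with kernel a torus. You instead reduce further: projecting to $\PGL_n\times\PGL_m$ and invoking only the fact that continuous projective representations $G_F\to\PGL_N(\Qpbar)$ lift to $\GL_N(\Qpbar)$, you then absorb the remaining discrepancy into a continuous character $\psi$ of $G_F$. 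This reduction is legitimate, and the observation making it work is exactly the one you highlight, namely that $\ker\bigl(H\to\PGL_n\times\PGL_m\bigr)$ is the group of scalars of $\GL_{nm}$ and in particular is central, so that $\psi$ is automatically multiplicative. One minor remark on attribution: the $\PGL$-lifting fact you invoke goes back in essence to Tate (vanishing of $H^2(G_F,\Qbar^\times)$ for $F$ a number field, together with the topological care needed to pass to $\Qpbar$-coefficients when the image is infinite); Conrad's Proposition~5.3, and hence the name of the lemma, is the more general statement about arbitrary central torus extensions, which the paper applies directly without the detour through $\PGL$.
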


\begin{proof}  There is a central extension
$$0 \rightarrow Z \rightarrow \GL_n(\Qbar_p) \times \GL_m(\Qbar_p) \rightarrow 
\Gamma(\Qbar_p) \rightarrow 0,$$
where~$ \Gamma$ denotes the image of~$\GL_n \times \GL_m$ in~$\GL_{nm}$, and~$Z$ is the~$\Qbar_p$-points
of a torus embedded anti-diagonally. The result then follows directly from~\cite[Proposition~5.3]{Conrad}.
\end{proof}

\begin{lemma} \label{lemma:prep}
	Fix $n \geq m$,
and let~$r: G_F \rightarrow \GL_{mn}(\Qbar_p)$ be a 
strongly irreducible representation 
such that the Lie algebra of the Zariski closure of the image of~$r$
is isomorphic to 
$\ttt \times \h \times \sl_n $, where:
	\begin{itemize}
\item $\ttt$ is a torus
of rank at most~$1$,
\item $\h$ is semisimple, and
\item the corresponding $mn$-dimensional representation of~$\h \times \sl_n$ is the tensor product of an $m$-dimensional representation of~$\h$ with the standard
representation of~$\sl_n$.
\end{itemize}
 Then either:
\begin{enumerate}
\item $r$ decomposes as a tensor product~$a \otimes b$ where $a$ is of dimension~$m$ and~$b$ is of dimension~$n$, and furthermore 
the Zariski closure of the image of $b$ contains~$\SL_n$; or
\item~$m = n$, and $r$ is the tensor induction of an~$n$-dimensional representation of $G_L$ for some 
	quadratic extension $L$ of~$F$.
\end{enumerate}
Moreover, in case~(1), the representations~$a$ and~$b$ are unique up to twisting by a character, or possibly 
permuting~$a$ and~$b$ if~$m = n$ and~$\h = \sl_n$.
\end{lemma}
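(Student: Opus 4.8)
The plan is to argue Lie-theoretically with the Zariski closure $G$ of $r(G_F)$ and its identity component $G^{\circ}$, using the finite cover $\widetilde{G^{\circ}} = \widetilde{H}\times\SL_n\times Z^{\circ}$ of Subsubsection~\ref{subsubsec:reductive}, where $\widetilde H$ is the simply connected semisimple group with Lie algebra $\h$ and $Z^{\circ}$ a torus of rank at most $1$. Since $r$ is strongly irreducible, $r|_{G^{\circ}}$ is irreducible, so its pullback to $\widetilde{G^{\circ}}$ is an external tensor product $U\boxtimes R\boxtimes\chi$; the hypothesis on the $\sl_n$-action forces $R\cong\Std$ (a representation of $\sl_n$ integrates uniquely to $\SL_n$), and $U$ is then the $m$-dimensional (necessarily irreducible) representation of $\widetilde H$ integrating the given $\h$-action. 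Writing $L_2\subseteq\GL_{mn}$ for the image of $\SL_n$ — a copy of $\SL_n$ acting through the $\Std$-slot — and $\GL_m:=C_{\GL_{mn}}(L_2)=\GL(U)$ (acting through the $U$-slot), we get that $\widetilde{G^{\circ}}$, hence $G^{\circ}$, lands in $\Gamma:=L_2\cdot\GL_m$, the image of $\GL_n\times\GL_m$ in $\GL_{mn}$.

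First I would treat the case in which $G$ normalizes $L_2$. Conjugation then gives a homomorphism $G\to\Aut(\SL_n)$, and this lands in the inner automorphisms: the only nontrivial class of outer automorphisms of $\SL_n$ (for $n\ge 3$; there are none for $n=2$) carries $\Std$ to $\Std^{\vee}$, whereas $r$, regarded as a representation of $L_2\cong\SL_n$, is a sum of copies of $\Std$, whose isomorphism class is preserved by the $G$-action. Correcting each $g\in G$ by the element of $L_2$ inducing the same automorphism shows $G\subseteq L_2\cdot C_{\GL_{mn}}(L_2)=\Gamma$, so Lemma~\ref{conrad} gives $r\cong a\otimes b$ with $\dim a=m$ and $\dim b=n$. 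Since the Lie algebra of the Zariski closure of $b(G_F)$ contains the $\sl_n$-summand of $\mathfrak{g}$, and the only connected subgroup of $\GL_n$ with Lie algebra $\sl_n$ is $\SL_n$ itself, this closure contains $\SL_n$, giving conclusion~(1).

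Otherwise some $g_0\in G$ conjugates $L_2$ to a different almost-simple factor $L'$ of $(G^{\circ})^{\der}$, necessarily a factor of the image $L_1$ of $\widetilde H$, with $L'\cong\SL_n$. The simple factors of $\widetilde H$ act on the $m$-dimensional space $U$, and $\SL_n$ has no faithful representation of dimension $<n$; combined with $m\le n$ this forces $m=n$, forces every other simple factor of $\widetilde H$ to act trivially on $U$ (hence to be trivial, as $\mathfrak g$ is faithful), so $\h=\sl_n$ and $\mathfrak g\cong\ttt\times\sl_n\times\sl_n$ with $r|_{G^{\circ}}\cong\Std\boxtimes\Std$ up to twist. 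By the previous paragraph the normalizer of $L_2$ in $G$ equals $G\cap\Gamma$, which therefore has index $2$ in $G$; setting $L:=\overline{F}^{\,r^{-1}(G\cap\Gamma)}$, a quadratic extension of $F$, Lemma~\ref{conrad} applied over $L$ gives $r|_{G_L}\cong\alpha\otimes\beta$ with $\dim\alpha=\dim\beta=n$, and the swapping element $g_0$ forces $\alpha^{\sigma}\cong\beta$ up to a character for $\sigma\in G_F\setminus G_L$, which exhibits $r$ as the tensor induction of a suitable twist of $\alpha$ — conclusion~(2); note the two cases are mutually exclusive since (2) forces $G\not\subseteq\Gamma$. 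For the uniqueness clause in case~(1): a factorization $r\cong a\otimes b$ with $\dim b=n$ and $\SL_n$ inside the Zariski closure of $b(G_F)$ is determined by which $\sl_n$-ideal of $\mathfrak g$ is the Lie algebra of the image of $b$, and this ideal is unique unless $\h=\sl_n$ and $m=n$, in which case there are exactly two, interchanged by the tensor-swap; hence $b$, and so $a$, is unique up to twisting by a character, with the extra ambiguity of swapping $a$ and $b$ exactly in that one case.

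The step I expect to be the main obstacle is the passage from Lie-algebra and identity-component information to the full group $G$ and then to the Galois representation $r$. Concretely: in the normalizing case, the straightening of $G$ into $L_2\cdot C_{\GL_{mn}}(L_2)$ using the inner-automorphism observation (and handling $n=2$ versus $n\ge 3$), and in the non-normalizing case, extracting an honest tensor induction — rather than merely a projective analogue — from the abstract swapping element $g_0$, which requires both Lemma~\ref{conrad} over the quadratic extension $L$ and careful bookkeeping of the twisting character relating $\alpha^{\sigma}$ and $\beta$.
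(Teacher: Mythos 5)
Your proposal is correct and follows essentially the same route as the paper: pass to the finite cover $\widetilde H \times \SL_n \times Z^\circ$ of $G^\circ$, tensor-factor $r|_{G^\circ}$, show (using that $\Std$ is not self-dual for $n \ge 3$ and that $\Out(\SL_2)$ is trivial) that $G$ acts on the $\SL_n$-factor by inner automorphisms whenever it normalizes it, and then invoke Lemma~\ref{conrad}, with the non-normalizing case forcing $m=n$, $\h=\sl_n$ and a tensor induction from the index-two subfield. The only cosmetic difference is that you split on ``$G$ normalizes $L_2$'' rather than on ``$\h\neq\sl_n$'' (these coincide, since $\h\neq\sl_n$ forces the $\sl_n$-factor to be preserved), and your uniqueness sketch via $\sl_n$-ideals of $\mathfrak g$ is a slight reformulation of the paper's $\Hom(a,a')$ argument, which you would want to spell out with the same care.
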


\begin{proof}
Let~$G$ denote the Zariski closure of the image of~$r$,
and let~$G^{\circ}$ denote the connected component of $G$.
For now, let us 
 regard $r$ as a (faithful) representation of $G$.
 We shall exhibit a factorization of~$r$ as a tensor product of
 representations of some cover of~$G$ (possibly after passing to  a quadratic extension)
 and then promote this to an actual factorization of Galois
 representations by Lemma~\ref{conrad}.
 
By our assumptions, together with the general discussion
of~(\ref{subsubsec:reductive}),
we may find a finite cover of $G^{\circ}$
by a group of the form~$T \times H \times \SL_n$,
where~$H$ is a product of almost simple groups, and $T$ is a torus. 
If we let~$r^{\circ}$ denote the restriction of $r$ to~$G^{\circ}$,
regarded as a representation of $T \times H\times \SL_n$ by inflation,
then $r^{\circ}$ is irreducible (as $r$ is strongly irreducible by
assumption), 
and so we may write $r^{\circ} = a^{\circ}\otimes b^{\circ}\otimes c^{\circ}$,
where $a^{\circ}$ is an irreducible representation of $H$,
$b^{\circ}$ is the standard $n$-dimensional representation
of $\SL_n$, and $c^{\circ}$ is an irreducible (and hence one-dimensional)
representation of $T$.  (We do not assert that any of~$a^{\circ}$, $b^{\circ}$,
or~$c^{\circ}$ are representations of~$G^{\circ}$.)

After twisting by a character, we may assume (for example
by~\cite[Lem.\ 2.3.15]{2012arXiv1207.6724P})
that the determinant
of $r$ has finite image, and hence that the determinant of $r^{\circ}$
is trivial.  Thus we may in fact assume that $c^{\circ}$,
and hence $\ttt$ and $T$,
are trivial, and we do so from now on.
Thus we assume that $G^{\circ}$ admits a finite cover by $H\times \SL_n$,
and that $r^{\circ}$ admits a corresponding tensor factorization
$a^{\circ}\otimes b^{\circ}$ with $b^{\circ}$ being the standard
representation; we now attempt to extend this tensor factorization 
to a corresponding tensor factorization of $r$.

Since $G^{\circ}$ is normal in $G$, we obtain a conjugation action
of $G$ on $G^{\circ}$, and hence on its universal cover $H\times
\SL_n$.
(Recall that the formation of universal covers is functorial in pointed 
spaces, and note that the conjugation action of $G$ on $G^{\circ}$
acts via automorphisms of the pointed space $(G^{\circ}, 1)$.) 
Suppose firstly that~$\h \ne \sl_n$; then there are no non-trivial
morphisms
$\sl_n\to \h$ (since $\h$ has a faithful representation of dimension $m \leq n$),
so that any automorphism of $\h\times \sl_n$
must fix the~$\sl_n$ factor, and correspondingly any
automorphism of $H\times \SL_n$ must fix the $\SL_n$-factor.

The component group~$G/G^{\circ}$ is then endowed with a homomorphism
\numequation
\label{eqn:outer hom}
G/G^{\circ} \to \operatorname{Out}(\SL_n)
\end{equation}
to the group of
outer automorphisms of $\SL_n$, 
which we claim is trivial.
To see this,
note first that if~$n = 2$, then~$\operatorname{Out}(\SL_2)$ is
trivial, and so we are done. 
If~$n \ge 3$, then the outer automorphism group of~$\SL_n$ is cyclic of order~$2$. If~$G/G^{\circ}$ surjects
onto this outer automorphism group, then the restriction of~$r^\circ$ to
the $\SL_n$-factor, which is a direct sum of $m$ copies of $b^{\circ}$,
is isomorphic to its outer twist, which is a direct sum of $m$ copies
of $(b^{\circ})^{\vee}$.  Since $n \geq 3,$ the standard representation
of $\SL_n$ is not self-dual, and hence this is not possible.  Thus~(\ref{eqn:outer hom}) is trivial, as claimed.

The action of $G$ on the $\SL_n$-factor by conjugation is thus an inner
action, and so induces a homomorphism $G\to \PGL_n$, compatible 
with the given map $\SL_n \to G^{\circ}$. 
Let $K\subseteq G$ denote the kernel of this homomorphism;
the compatibility just remarked upon shows that $K$ contains
the image of $H$ in $G^{\circ}$.
Then we obtain a surjection $K\times \SL_n \to G$
compatible with the given surjection $H\times \SL_n \to~G^{\circ}$.
Correspondingly, we obtain a tensor factorization $r =
a\otimes b$ of~$r$ inflated to  the cover~$K \times \SL_n$ of~$G$
 compatible with
the factorization~$r^{\circ} = a^{\circ} \otimes b^{\circ}$ of~$G^{\circ}$.
This now induces a factorization of Galois representations
by Lemma~\ref{conrad}.
Note that~$b^\circ$ is the standard representation of $\SL_n$, so
the Zariski closure of the image of~$b$ contains~$\SL_n$, as claimed.

We now prove that this factorization is unique. Suppose that~$r \simeq a' \otimes b'  \simeq a \otimes b$. We already have uniqueness
of these representations
over~$G^{\circ}$ by Lemma~\ref{lemma:factor}. Hence it follows that~$\Hom(a,a')$ as a~$G$-representation has a  summand which
becomes trivial when restricted to~$G^{\circ}$,
and hence has a summand on which~$G$ acts through the finite quotient~$G/G^{\circ}$. If this factor is one
dimensional, then~$a$ and~$a'$
are isomorphic up to twist. If this factor has dimension~$> 1$, then, over~$G^{\circ}$, we see that~$\Hom(a,a')|_{G^{\circ}}
= \Hom(a,a)|_{G^{\circ}}$
has at least two trivial factors, which implies that~$a$ is reducible over~$G^{\circ}$, contradicting the strong
irreducibility of~$r$. The same logic applies to~$b$, as required.

Suppose finally that~$\h = \sl_n$. The argument proceeds as above, except now we have to allow
the possibility that~$G/G^{\circ}$  also swaps the factors. Assuming we are in this case, replacing~$G$ by~$G'$ where~$G'$
is the kernel of the map
$G \rightarrow  G/G^{\circ} \rightarrow S_2$, we obtain a tensor factorization
of Galois representations
 as above over some
quadratic extension. But then the image of~$r$ must coincide (up to twist) with the tensor induction of
the corresponding~$n$-dimensional representation (of~$a$ or~$b$) from this quadratic extension.
\end{proof}

\begin{lemma} \label{lem: prep factoring theorem}
Consider~$p$-adic representations~$a$ and~$b$ of~$G_F$ 
of dimensions~$m$ and~$n$ with~$m \le n$. Suppose that 
\begin{enumerate}
\item The representation~$a \otimes b$  is 
irreducible.
\item  The residual representation~$\bbar$ has image
  containing~$\SL_n(\F_q)$ for~$q$ a  sufficiently large  power of~$p$
  \emph{(}in the sense
  of~\emph{Lemma ~\ref{lemma:large})}.
\end{enumerate}
Let~$A$ and~$B$ denote the Zariski closures of the images of~$a$ and~$b$ respectively.
 Let~$A^{\der}$ and~$B^{\der}$ denote the corresponding derived subgroups,
and~$A^{\der,\circ}$ and~$B^{\der,\circ}$ the connected components of these groups.
Let~$G$ denote the Zariski closure of the image of~$a \otimes b$. Then
the corresponding representation of~$G^{\der,\circ}$ is the natural representation
of~$A^{\der,\circ} \times B^{\der,\circ}$ corresponding to the tensor product of the two
natural representations.  This identifies~$G^{\der,\circ}$ with the image 
of~$A^{\der,\circ} \times B^{\der,\circ}$ in the automorphism group
of the
exterior tensor product of the two natural representations.
\end{lemma}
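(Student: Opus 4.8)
The plan is to translate the assertion into an identity of Lie algebras inside $\mathfrak{gl}_{mn} = \mathfrak{gl}_m \otimes \mathfrak{gl}_n$ and prove it by analysing the simple factors of $\mathrm{Lie}(\Gdero) = [\fg,\fg]$, where $\fg = \mathrm{Lie}(G)$. I begin with the routine reductions. Since $a \otimes b$ is irreducible, so are $a$ and $b$, and hence $A$, $B$, $G$ are reductive; in particular $\fa = \mathrm{Lie}(A)$ is reductive with centre equal to the scalars, so $\fa = \fa^{\der} \oplus (\text{scalars})$ with $\fa^{\der} \subseteq \sl_m$ semisimple, and the image $\overline{A}$ of $A$ in $\PGL_m$ has Lie algebra $\fa^{\der}$ (viewed inside $\mathfrak{pgl}_m \cong \sl_m$). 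By hypothesis (2) and Lemma~\ref{lemma:large}, $B \supseteq \SL_n$, so $\Bdero = \SL_n$, $\mathrm{Lie}(\Bdero) = \sl_n$, and the image $\overline B$ of $B$ in $\PGL_n$ is all of $\PGL_n$. Write $V := \Qpbar^m \otimes \Qpbar^n$ and let $\Gamma \subseteq \GL(V)$ be the image of $\Adero \times \Bdero = \Adero \times \SL_n$; it is connected with $\mathrm{Lie}(\Gamma) = \fa^{\der}\otimes 1 + 1 \otimes \sl_n$. As two connected algebraic subgroups of $\GL(V)$ over a field of characteristic zero with equal Lie algebras coincide, it is enough to prove $[\fg,\fg] = \fa^{\der}\otimes 1 + 1 \otimes\sl_n$.

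Next I would introduce the two Lie algebra homomorphisms $\pi_1 \colon \mathfrak{gl}_m \otimes \mathfrak{gl}_n \to \mathfrak{pgl}_m$ and $\pi_2 \colon \mathfrak{gl}_m \otimes \mathfrak{gl}_n \to \mathfrak{pgl}_n$, with kernels $1 \otimes \mathfrak{gl}_n$ and $\mathfrak{gl}_m \otimes 1$, arising from the inclusion of $G$ into the image of $\GL_m \times \GL_n$ in $\GL(V)$ followed by the two projections to $\PGL_m$ and $\PGL_n$. Then $\pi_1(\fg) = \mathrm{Lie}(\overline A) = \fa^{\der}$ and $\pi_2(\fg) = \mathrm{Lie}(\overline B) = \mathfrak{pgl}_n$ --- the latter being exactly where hypothesis (2) is used --- so $\pi_1([\fg,\fg]) = \fa^{\der}$ and $\pi_2([\fg,\fg]) = \mathfrak{pgl}_n \cong \sl_n$. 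Write $[\fg,\fg] = \bigoplus_i \fg_i$ as a direct sum of simple ideals (it is semisimple, hence also trace-free). Exactly one factor, say $\fg_0$, has $\pi_2(\fg_0) \neq 0$, and then $\pi_2$ restricts to an isomorphism $\fg_0 \cong \sl_n$; every other $\fg_i$ lies in $\ker\pi_2 = \mathfrak{gl}_m\otimes 1$, hence in $\sl_m \otimes 1$ by trace-freeness, and then, since $\pi_1$ is injective on $\sl_m \otimes 1$ and $\pi_1(\fg_i) \subseteq \fa^{\der}$, in fact $\fg_i \subseteq \fa^{\der}\otimes 1$.

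The heart of the matter is the dichotomy for $\alpha := \pi_1|_{\fg_0} \colon \fg_0 \to \fa^{\der}$: as $\fg_0$ is simple, $\alpha$ is zero or injective. If $\alpha = 0$ then $\fg_0 \subseteq \ker\pi_1 = 1 \otimes \mathfrak{gl}_n$, hence $\fg_0 \subseteq 1 \otimes \sl_n$, and since $\pi_2|_{\fg_0}$ is an isomorphism this forces $\fg_0 = 1 \otimes \sl_n$; combined with $\bigoplus_{i\neq 0}\fg_i \subseteq \fa^{\der}\otimes 1$, $\pi_1\big(\bigoplus_{i\neq 0}\fg_i\big) = \fa^{\der}$, and the injectivity of $\pi_1$ on $\sl_m \otimes 1$, this yields $[\fg,\fg] = \fa^{\der}\otimes 1 \oplus 1 \otimes \sl_n = \mathrm{Lie}(\Gamma)$, as desired. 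Excluding the case ``$\alpha$ injective'' is, I expect, the main obstacle.

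Suppose $\alpha$ is injective. Then $\sl_n \cong \fg_0 \cong \alpha(\fg_0) \subseteq \fa^{\der}\subseteq \sl_m$ acts faithfully on $\Qpbar^m$, forcing $m \geq n$, hence $m = n$; a faithful representation of $\sl_n$ of dimension $n$ must be an irreducible standard or dual standard representation, so $\alpha(\fg_0)$ acts irreducibly on $\Qpbar^m$, and Schur's lemma applied to the complementary ideal of $\fa^{\der}$ gives $\fa^{\der} = \alpha(\fg_0)$; since this commutes with each $\fg_i$ for $i\neq 0$, those $\fg_i$ vanish, so $[\fg,\fg] = \fg_0$. But then the representation of $\Gdero$ on $V$ is, up to an automorphism of $\sl_n$, either $\mathrm{std}\otimes \mathrm{std}$ or $\mathrm{std}^\vee \otimes \mathrm{std}$, and therefore $V|_{\Gdero}$ decomposes as $\Sym^2 \oplus \wedge^2$ (of dimensions $\binom{n+1}{2}$ and $\binom{n}{2}$) or as $\ad \oplus \mathbf{1}$ (of dimensions $n^2-1$ and $1$) --- in each case a sum of two inequivalent subrepresentations of distinct dimension. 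Since $\Gdero$ is normal in $G$, Clifford theory shows $V|_{\Gdero}$ is semisimple and $G$ permutes its isotypic components; these having distinct dimensions, the permutation is trivial, so each isotypic component is a proper $G$-subrepresentation of $V$, contradicting the irreducibility of $a\otimes b$. Hence $\alpha = 0$, so $[\fg,\fg] = \fa^{\der}\otimes 1 + 1 \otimes \sl_n = \mathrm{Lie}(\Gamma)$, and therefore $\Gdero = \Gamma$; by construction $\Gamma$ acts on $V$ as the image of $\Adero \times \Bdero$ on the exterior tensor product of the two natural representations, which is the assertion.
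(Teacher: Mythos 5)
Your proof is correct, and it takes a genuinely different route through the Lie-algebra analysis than the paper does, while arriving at the same final contradiction. The paper introduces the semisimple Lie algebra $\fd$ of the Zariski closure of the image of $a\oplus b$, identifies $\fd\cong\g$ by exploiting the decomposition $\ad^0(a\otimes b)\cong\ad^0(a)\oplus\ad^0(b)\oplus(\ad^0(a)\otimes\ad^0(b))$, and then applies Goursat's lemma to the inclusion $\g\hookrightarrow\fa\oplus\fb$. You instead work directly with the decomposition of $[\fg,\fg]$ into simple ideals, use the two projections $\pi_1,\pi_2$ to $\mathfrak{pgl}_m$ and $\mathfrak{pgl}_n$, and observe that simplicity of $\mathfrak{pgl}_n\cong\sl_n$ forces exactly one ideal $\fg_0$ to survive $\pi_2$ while the others are trapped inside $\fa^{\der}\otimes 1$; the dichotomy $\alpha=\pi_1|_{\fg_0}$ zero or injective then does the work that the paper achieves via Goursat. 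This avoids both the auxiliary group built from $a\oplus b$ and the appeal to Goursat's lemma, at the cost of a somewhat more hands-on analysis of the simple-ideal decomposition. Both proofs then dispose of the problematic ``diagonal $\sl_n$'' case by the same observation: $V$ would decompose into two inequivalent irreducibles of unequal dimensions ($\binom{n+1}{2}$ and $\binom{n}{2}$, or $n^2-1$ and $1$), contradicting the irreducibility of $a\otimes b$ --- you run this via Clifford theory for the normal subgroup $\Gdero\trianglelefteq G$, while the paper runs the equivalent argument over a finite extension of $F$.

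One small slip that doesn't affect the argument: you assert that $\fa$ has ``centre equal to the scalars.'' Since $a$ is irreducible, the centre of $\fa$ is \emph{contained in} the scalars (so $\ttt_A:=Z(\fa)$ has rank at most $1$), but it can perfectly well be trivial; all you need is $\fa=\fa^{\der}\oplus\ttt_A$ with $\ttt_A$ central and at most one-dimensional, which holds.
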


\begin{proof}
By Lemma~\ref{lemma:large}, we have~$B^{\der,\circ} = \SL_n$. 
Certainly the connected subgroup~$G^{\der,\circ} \subset \SL_{n^2}$ lies \emph{inside}
the image~$\Gamma$ of~$A^{\der,\circ} \times B^{\der,\circ}$ under the exterior tensor product.
Since~$\Gamma$ is connected (since it is the image of a connected group under an isogeny), 
it contains no proper finite index subgroups. Thus to prove~$G^{\der,\circ} \hookrightarrow \Gamma$
is an isomorphism, it suffices to prove it is an isogeny, which we can do on the level of Lie algebras.

Denote the Lie algebras of~$A$ and~$B$ by~$\fa \oplus \ttt_{A}$ and~$\fb \oplus \ttt_{B}$
where~$\fa$ and~$\fb$ are semisimple, and~$\ttt_A$ and~$\ttt_B$ are tori of rank at most one.
Note that~$\fb = \sl_n$. 
After twisting, we may assume that~$\ttt_{B}$ is trivial, and that the Lie algebra of~$G$ is~$\g \oplus \ttt_{G}$
where~$\ttt_{G} \simeq \ttt_{A}$. 
Let~$\fd \oplus \ttt_D$ denote
 the Lie algebra (decomposed as a semisimple part~$\fd$ and a torus~$\ttt_D$) of the Zariski closure of the image of~$a \oplus b$.
 There is an inclusion~$\fd \oplus \ttt_D \subset \fa \oplus \fb \oplus \ttt_A$, which induces
  an isomorphism~$\ttt_{D} 
 \simeq \ttt_{A}$. Hence~$\fd \hookrightarrow \fa \oplus \fb$.
  On the other hand, since
$(a \oplus b)^{\otimes 2}$ contains~$a \otimes b$, there is a
surjection~$\fd \onto \g$. 
There is an isomorphism
$$\ad^0(a \otimes b) \simeq \ad^0(a) \oplus \ad^0(b) \oplus \bigl(\ad^0(a) \otimes \ad^0(b)\bigr).$$
The corresponding Lie algebras of the  images of~$\ad^0(a)$ and~$\ad^0(b)$ are~$\fa$ and~$\fb$
respectively, and 
the Lie algebra of the image of~$\ad^0(a \otimes b)$  is~$\g$. Hence there are maps
$$\fa \oplus \fb \supseteq \fd \twoheadrightarrow \g \rightarrow \fa \oplus \fb.$$
The corresponding maps~$\fd \rightarrow \fa$ and~$\fd \rightarrow \fb$
(either coming from the inclusion into~$\fa \oplus \fb$ or via the map to~$\g$)
may be identified with each other, because the semisimple part of the Lie algebra of~$a$
is canonically identified with the Lie algebra of~$\ad^0(a)$. Moreover,
these maps are both surjective, and thus~$\g$ is identified with~$\fd$.
By Goursat's Lemma, the inclusion~$\g \subseteq \fa \oplus \fb$
is the pullback to~$\fa \oplus \fb$ of the graph of an isomorphism
$$\fa/\fn_A \simeq \fb/\fn_B,$$
for some ideals~$\fn_A$ or~$\fn_B$ which may be identified with the kernels of the projections 
from~$\g \rightarrow \fb$ and~$\g \rightarrow \fa$ respectively.
Since~$\fb=\sl_n$ is simple, either both sides are trivial, in which case~$\fd \simeq \g \simeq \fa \oplus \fb$
(and we are done),
or  there is a surjection~$\fa \onto \fb$. 
In this latter case,  by rank considerations (since~$A$ acts faithfully on a space of dimension~$m \le n$),
we deduce that~$\fa \simeq \sl_n$ and that
the map above induces 
an isomorphism of Lie algebras $\fa\simeq\fb$,  and thus~$\g \simeq \fa \simeq \fb$
is diagonally embedded in~$\fa \oplus \fb$. 
This implies that, still on the Lie algebra level, the
representation~$a\otimes b$ must come from 
the tensor product of
an~$n$-dimensional
representation of~$\sl_n$ with a second~$n$-dimensional representation of the same~$\sl_n$.
In either case (standard tensor standard or standard tensor dual), the corresponding representation would be reducible (as also follows
from a special case of Lemma~\ref{lemma:rajaneasy}). This implies (returning to the Lie group level) that, over some finite extension,
$a\otimes b$ is either isomorphic to the direct sum of a~$1$-dimensional representation and an irreducible~$n^2 - 1$-dimensional representation or an irreducible~$\binom{n}{2}$-dimensional representation
and an irreducible~$\binom{n+1}{2}$-dimensional representation
(depending on whether the representations of~$\sl_n$ are the same or dual to each other).
But since~$a\otimes b$ itself is irreducible by hypothesis, it can only decompose over a finite extension into 
irreducible representations of the same dimension. The claim follows. 
\end{proof}

\begin{theorem}\label{thm: first factoring theorem}
Let~$\{r_{\lambda}\}$ be a compatible system of~$G_F$-representations
of dimension~$mn$.
Suppose  there exists a prime~$p$ with~$r = r_{p}$ satisfying the following:
\begin{enumerate}
\item  There exist~$p$-adic representations~$a$ and~$b$ of~$G_F$ 
of dimensions~$m$ and~$n$ with~$m \le n$ such 
that~$r \simeq a \otimes b$.
\item The representation~$a \otimes b$  is 
irreducible.
\item  The residual representation~$\bbar$ has image
  containing~$\SL_n(\F_q)$ for~$q$  a sufficiently large power of~$p$ \emph{(}in the sense
  of~\emph{Lemma~\ref{lemma:large})}.
\end{enumerate}

Suppose firstly that~$(m,n)\ne (2,2)$. Let~$\lambda$ be a place with residue characteristic~$\ell$ such
that~$r_{\lambda}$ is strongly irreducible. Then there
exist
representations~$a_{\lambda}$ and~$b_{\lambda}$,  of dimensions~$m$ and~$n$ respectively, such that
$r_{\lambda} = a_{\lambda} \otimes b_{\lambda}$. Moreover,
the image of~$b_{\lambda}$ has Zariski closure containing~$\SL_{n}(\Qbar_{\ell})$,
and~$a_{\lambda}$ and~$b_{\lambda}$ are unique up to twist by a character and up
to permutation --- the latter being possible only when~$n = m$.

Suppose now that~$m=n=2$. In this case, assume also
that~$\{r_{\lambda}\}$ is odd, regular, polarizable, and 
weakly irreducible. Then  there exists a set of primes~$l$
of density one such that for each~$\lambda|l$, $r_{\lambda}$ is strongly irreducible and
admits  a decomposition~$r_{\lambda} = a_{\lambda}
\otimes b_{\lambda}$ satisfying the conditions in the previous paragraph.
\end{theorem}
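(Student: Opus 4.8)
The plan is to transport the factorization from the one good prime $p$ to the remaining primes $\lambda$ by combining two $l$-independence inputs --- Serre's theorem on component groups (Theorem~\ref{thm: Serre theorem on connected component group}) and the $l$-independence results of Larsen--Pink~\cite{MR1150604} --- and then to recognise the tensor structure from the shape of the Zariski closure using Lemmas~\ref{lemma:prep} and~\ref{conrad}.

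\textbf{Structure at $p$.} Since $\bbar$ has image containing $\SL_n(\F_q)$ with $q$ large, Lemma~\ref{lemma:large} gives $B^{\der,\circ}=\SL_n$, where $B$ is the Zariski closure of the image of $b$. Feeding $r=r_p=a\otimes b$ into Lemma~\ref{lem: prep factoring theorem}, the derived connected component $G^{\der,\circ}$ of the Zariski closure of $r_p$ is the image of $A^{\der,\circ}\times\SL_n$ in $\GL_{mn}$ under the exterior tensor product of standard representations. In particular the Lie algebra of the Zariski closure of $r_p$ has the form $\ttt_p\times\h_p\times\sl_n$, with $\ttt_p$ a torus of rank $\le 1$, $\h_p$ semisimple admitting a faithful $m$-dimensional representation, and the $mn$-dimensional representation equal to the tensor product of this $m$-dimensional representation with the standard representation of $\sl_n$ (up to a character of $\ttt_p$) --- precisely the hypothesis of Lemma~\ref{lemma:prep}.

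\textbf{The case $(m,n)\ne(2,2)$.} Fix $\lambda$ of residue characteristic $\ell$ with $r_\lambda$ strongly irreducible. By $l$-independence of the formal character~\cite{MR1150604}, combined with strong irreducibility of $r_\lambda$ and the shape found at $p$ --- running through the semisimple Lie algebras admitting a faithful irreducible $mn$-dimensional representation with the prescribed formal character, using $m\le n$ and the exclusion of $(m,n)=(2,2)$ to avoid the low-rank coincidences $\Spin_4\cong\SL_2\times\SL_2$ and $\Spin_5\cong\Sp_4$ --- the Lie algebra of the Zariski closure of $r_\lambda$ is again of the form $\ttt_\lambda\times\h_\lambda\times\sl_n$ required by Lemma~\ref{lemma:prep}. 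That lemma then gives either (1) $r_\lambda=a_\lambda\otimes b_\lambda$ with the Zariski closure of the image of $b_\lambda$ containing $\SL_n$, or (2) $m=n$ and $r_\lambda$ is a tensor induction from a quadratic extension. To exclude (2): were $r_p$ not strongly irreducible it would be induced from a proper subfield (Lemma~\ref{lemma:induced}), and then so would $r_\lambda$ by $l$-independence, contradicting strong irreducibility of $r_\lambda$; hence $\h_p\ne 0$, so when $m=n$ either $\h_p$ is one of $\so_n,\sp_n,\dots$ --- in which case there is no factor-swapping automorphism and (2) cannot occur --- or $\h_p=\sl_n$ (so $n\ge 3$), in which case Lemma~\ref{lemma:adjoint} applies (its member of $\{\ad(r_\lambda)\}$ at $p$ has connected Zariski closure $\PGL_n\times\PGL_n$, because $r_p=a\otimes b$ genuinely factors over $F$) and shows the Zariski closure of $\ad(r_\lambda)$ is connected, again excluding the swap. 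Thus (1) holds; Lemma~\ref{conrad} promotes it to a factorization of Galois representations, and the uniqueness up to twist (and, when $m=n$, permutation) is the uniqueness clause of Lemma~\ref{lemma:prep}.

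\textbf{The case $m=n=2$, and the main obstacle.} Here the Lie-theoretic data no longer suffices: because of $\Spin_4\cong\SL_2\times\SL_2$ and $\Spin_5\cong\Sp_4$, the formal character cannot by itself separate a genuine tensor product from an $\Sp_4$-image or from a tensor induction, and even a strongly irreducible $r_\lambda$ could a priori be a tensor induction from a quadratic extension. This is where the additional hypotheses enter. Using weak irreducibility and the potential automorphy it entails (Lemma~\ref{lem: weakly irreducible equals potentially automorphic}), one first obtains a density one set of $\lambda$ with $r_\lambda$ irreducible, hence --- by regularity and Lemma~\ref{lemma:induced} --- strongly irreducible away from a density zero set of $\lambda$. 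The $\Sp_4$ possibility is excluded for all such $\lambda$ by $l$-independence of the formal character together with $B^{\der,\circ}=\SL_2$ at $p$; the remaining tensor-induction possibility is excluded, for $\lambda$ in a density one set, by combining oddness and polarizability of $\{r_\lambda\}$ with Lemma~\ref{added} (this is precisely the point at which the referee found a gap). Conrad's lemma then produces $r_\lambda=a_\lambda\otimes b_\lambda$ with the Zariski closure of the image of $b_\lambda$ containing $\SL_2$, for $\lambda$ in a density one set. The hard part throughout is this $(2,2)$ analysis: the exceptional isogenies remove the rigidity of the Lie-theoretic invariants, forcing one to bring in the automorphic-side hypotheses and to settle for the weaker ``density one'' conclusion.
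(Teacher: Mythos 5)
Your overall architecture matches the paper's: establish the tensor structure of the Zariski closure at $p$ via Lemma~\ref{lem: prep factoring theorem}, transport it to other $\lambda$ via $\ell$-independence of formal characters together with Larsen--Pink's classification of similarity relations, recognize the tensor structure at $\lambda$ via Lemma~\ref{lemma:prep}, promote to a factorization of Galois representations via Lemma~\ref{conrad}, and handle $m=n=2$ separately. Your argument for $(m,n)\neq(2,2)$ is essentially the paper's, modulo some looseness about arguing with $\mathfrak{h}_p$ where you should first argue with $\mathfrak{h}_\lambda$ (the Lie algebra that actually enters Lemma~\ref{lemma:prep} applied to $r_\lambda$) and then invoke Larsen--Pink to transfer back to $p$.

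In the $m=n=2$ case, however, you have reversed the two key mechanisms, and this is a genuine conceptual error. You claim the $\Sp_4$ possibility is ruled out by ``$\ell$-independence of the formal character together with $B^{\mathrm{der},\circ}=\SL_2$ at $p$,'' while the tensor-induction possibility is ruled out by Lemma~\ref{added}. Both attributions are backwards. The $B_2\cong C_2$ exceptional isogeny says precisely that the formal character of the standard representation of $\Sp_4$ \emph{equals} the formal character of the external tensor product of two copies of the standard representation of $\SL_2$ --- this is exactly the ``basic similarity relation'' of Larsen--Pink that intervenes at $n=2$. Consequently, $\ell$-independence of formal characters is structurally incapable of excluding $\mathfrak{sp}_4$ at $\lambda$; that impossibility is the gap that Lemma~\ref{added} was added to close. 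The paper excludes $\mathfrak{sp}_4$ by noting that if it occurs for a positive upper density set of $\lambda$, Lemma~\ref{added} forces $\{\wedge^2 r_\lambda\}$ to split as $5\oplus 1$; combined with $\wedge^2(a\otimes b)=\det(b)\otimes\Sym^2 a\,\oplus\,\det(a)\otimes\Sym^2 b$, this makes $a$ or $b$ dihedral and hence $r$ induced from a quadratic extension, contradicting the strong irreducibility of $r_p$ established just before. The tensor-induction alternative (case~(2) of Lemma~\ref{lemma:prep}), by contrast, is eliminated by the very same component-group computation --- Lemma~\ref{lemma:adjoint} together with the decomposition of $\ad$ of a tensor induction --- as in the $(m,n)\neq(2,2)$ case; Lemma~\ref{added} plays no role there. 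You correctly identified the exceptional isogeny as the source of the difficulty, but assigned each of the two tools to the wrong half of the problem.
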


\begin{proof} 

Let~$A$ and~$B$ denote the Zariski closures of the images of~$a$ and~$b$ respectively.
By Lemma~\ref{lem: prep factoring theorem},  if~$G$ denotes the Zariski closure of the image of~$a \otimes b$, then we may identify
the corresponding representation of~$G^{\der,\circ}$ with the natural representation
of~$A^{\der,\circ} \times B^{\der,\circ}$ corresponding to the tensor product of the two
natural representations.

Let~$\lambda$ denote a prime for which~$r_{\lambda}$ is strongly irreducible. 
Let~$G_\lambda$ denote the Zariski closure of the image of~$r_\lambda$, and let~$\Gder_{\lambda}$ 
denote the corresponding derived subgroup, and~$\Gdero_\lambda$ the connected component
of this group. By  the strong irreducibility assumption, the corresponding representation  of~$\Gdero_\lambda$ is 
irreducible.
By a Theorem of Serre~\cite[Prop.\ 6.12]{MR1150604}, the formal
character of a compatible system of Galois representations is independent of~$\lambda$.
In particular, the formal characters of the corresponding representations
of~$\Adero \times \Bdero=\Adero\times\SL_n$ and~$\Gdero_\lambda$ coincide.
It is possible for the formal characters of irreducible representations of connected
groups to coincide even when the groups differ (for example, there exist~$27$-
dimensional irreducible representations of~$G_2$ and~$\SL_3$ with the same formal character).
However, what \emph{is} true (\cite[Thm.\ 5.6, Prop.\ 5.7]{MR1150604}) is that
every such equality arises from taking the tensor product of a list of
(explicitly given) basic similarity relations, described explicitly in~\S5.3.1--5.3.4 of~\cite{MR1150604}.
In particular,  note that
that the standard representation of~$\SL_n$ for any~$n>2$ does not admit
\emph{any} basic similarity relations (which  one can also deduce by a
consideration of ranks), while in the case~$n=2$, the only 
similarity relation which intervenes in our situation is that given by the coincidence of the formal
character of the standard representation of~$\Sp_4$ with that of the external tensor product of two
copies of the standard representation of~$\SL_2$.  
Applied to our situation, it follows
that if~$n>2$ (which we assume for the time being, returning to the
case~$n=2$ at the end of the proof) there exists  a connected semisimple group~$\Hdero_\lambda$ 
such that the representation of~$\Gdero_\lambda$ is  the tensor product of
  an irreducible~$n$-dimensional representation of~$\Hdero_\lambda$
  with the standard representation of~$\SL_n$.
Note that the Lie algebra~$\h$ of~$\Hdero_\lambda$ need
not \emph{a priori} be equal to the Lie algebra~$\fa$ of~$\Adero$, but this does not concern us.
The result then follows from Lemma~\ref{lemma:prep},  once we show that
 the representation is
not a tensor induction from a quadratic extension. 

We now prove that this case
can not occur.
If~$r_{\lambda}$ is a tensor induction with Lie algebra containing~$\h \times \sl_n$, 
then~we must have~$\h = \sl_n$ acting via the standard representation. Hence, once more by~\cite[Thm.\ 5.6, Prop.\ 5.7]{MR1150604} (and the fact
that the standard representation of~$\SL_n$ does not admit any basic
similarity relations), we deduce that the Lie algebra of~$a \otimes b$
also contains~$\sl_n \times \sl_n$, and thus that the Zariski closure of~$a$ contains~$\SL_n$.
(To see that the Zariski closure of the image of~$a$ contains~$\SL_n$ rather than a quotient of~$\SL_n$ by some finite group,
we use the tautological fact that~$a$ has a faithful representation in dimension~$n$.)
By Lemma~\ref{lemma:adjoint}, 
we deduce that the component group of~$\ad(r_{\lambda})$ is trivial.
Yet suppose that~$G$ is the Zariski closure of the image of~$r_{\lambda}$, and let~$H$ be the index
two subgroup from which~$r_{\lambda}$ is tensor induced. Then
$$ \ad(\mathrm{Tensor Ind}^{G}_{H} V) \simeq \mathds{1} \oplus \Ind^{G}_{H} \ad^0(V)
\oplus \mathrm{Tensor Ind}^{G}_{H} \ad^0(V).$$
In particular, we see (looking at the second factor) that the image of~$G$ acting on~$\ad(r_\lambda)$ surjects onto~$G/H$,
and so the component group is non-trivial, a contradiction.

We now return to the case~$n=2$, where we have the additional
assumption that that the $4$-dimensional compatible
system~$\{r_{\lambda}\}$ is odd, regular, polarizable, and weakly
irreducible, and thus potentially automorphic by Lemma~\ref{lem:
  weakly irreducible equals potentially automorphic}. By~\cite[Thm.\
2]{XiaThesis}, for a density one set of~$l$, $r_\lambda$ is
irreducible for all~$\lambda|l$. We claim that for any~$\lambda$ for
which~$r_\lambda$ is irreducible, $r_\lambda$ is also strongly
irreducible. If this fails to be the case, then since~$r_\lambda$ is
regular, it follows (as in the proof of~\cite[Cor.~4.4]{MR3186511})
that~$r_{\lambda}$ is induced from a quadratic extension of~$F$, and
hence~$r_{\lambda} \simeq r_{\lambda} \otimes \chi$ for some
non-trivial quadratic character~$\chi$. Since~$\chi$ lives in a
compatible system, we see that every~$r_\lambda$ is induced from a
common quadratic extension, and in particular no~$r_\lambda$ is
strongly irreducible, contradicting our assumptions. In particular,
since~$r$ is assumed irreducible, it is strongly irreducible.

Since~$r_\lambda$ is strongly irreducible for a set of primes~$l$ of
density~$1$, it suffices to show (given the argument above in the case of
general~$m,n$) that the set of primes~$l$ for which there
exists~$\lambda|l$ with~$\Gdero_\lambda$ having Lie algebra~$\sp_4$ is
a set of density zero.  Suppose not; then by Lemma~\ref{added} below, we
deduce that the compatible system~$\{\wedge^2 r_{\lambda}\}$
decomposes as a direct sum of a~$1$-dimensional and a~$5$-dimensional
compatible system.
Since~$\wedge^2 (a \otimes b) = \det(b) \otimes \Sym^2(a) \oplus
\det(a) \otimes \Sym^2(b)$, it follows that at least one
of~$\Sym^2(a)$ and~$\Sym^2(b)$ must have a one-dimensional factor. But
then either~$a$ or~$b$ is induced from a quadratic extension, so~$r$
is induced from a quadratic extension, contradicting the strong
irreducibility of~$r$ which we proved in the previous paragraph.
\end{proof}





\subsection{A lemma on~$4$-dimensional polarizable automorphic compatible systems}

The following lemma was  used in the proof of Theorem~\ref{thm: first factoring theorem}.
Recall that for a Galois representation~$r_{\lambda}$, we denote the Zariski closure of
the image of~$r_{\lambda}$ by~$G_{\lambda}$. 

\begin{lemma} \label{added}
  Let~$F$ be a CM field, and let ~$\{r_{\lambda}\}$  be a
  $4$-dimensional compatible system of representations which is odd, regular, polarizable, and 
  weakly irreducible. Suppose that there exists a set of  primes~$l$
  of positive upper density with the
  property that for some $\lambda|l$ we have~$\Gdero_\lambda = \Sp_4$. 
  Then the compatible system of $6$-dimensional representations~$\{\wedge^2 r_{\lambda}\}$
  decomposes as a direct sum of two 
  compatible systems of dimensions~$5$ and~$1$ respectively.
\end{lemma}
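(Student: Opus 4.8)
The plan is to derive the decomposition from the potential automorphy of $\{r_\lambda\}$, Kim's exterior-square functorial transfer from $\GL_4$ to $\GL_6$, and an elementary comparison of the irreducible compatible-system constituents of $\{\wedge^2 r_\lambda\}$ over $F$ with those obtained after an auxiliary base change. The hypothesis will enter only through the choice of a single prime $\lambda_0$ with $\Gdero_{\lambda_0}=\Sp_4$. First I would record the structure at $\lambda_0$. Since $\Sp_4$ is connected, acts irreducibly through its standard representation, and that representation carries an invariant nondegenerate alternating form (unique up to a scalar), $r_{\lambda_0}$ is irreducible and preserves such a form up to a character $\eta_{\lambda_0}$ of $G_F$; this yields a decomposition $\wedge^2 r_{\lambda_0}\cong \eta_{\lambda_0}\oplus V_{\lambda_0}$ of $G_F$-representations, where $V_{\lambda_0}$ realises the $5$-dimensional fundamental representation of $\Sp_4$. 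In particular $V_{\lambda_0}$ is irreducible with connected algebraic monodromy group containing $\SO_5$, so it remains irreducible with non-abelian connected monodromy after restriction to any finite-index subgroup of $G_F$. As a subquotient of the de Rham representation $\wedge^2 r_{\lambda_0}$, the character $\eta_{\lambda_0}$ is de Rham, hence geometric, hence is the $\lambda_0$-component of a compatible system of characters $\{\eta_\lambda\}$ of $G_F$.

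Next I would base change and run the transfer. By Lemma~\ref{lem: weakly irreducible equals potentially automorphic}, $\{r_\lambda\}$ is potentially automorphic; fix a CM extension $L/F$ over which $\{r_\lambda|_{G_L}\}$ is automorphic. Since $r_{\lambda_0}|_{G_L}$ is the restriction of an irreducible representation with connected monodromy, it is irreducible, so $\{r_\lambda|_{G_L}\}$ is associated to a cuspidal automorphic representation $\pi$ of $\GL_4(\A_L)$; and $\pi$ is of symplectic type because $r_{\lambda_0}(\pi)=r_{\lambda_0}|_{G_L}$ is irreducible and symplectically self-dual up to twist. By Kim's exterior-square lift and its compatibility with the attached Galois representations, $\wedge^2\pi$ is an isobaric automorphic representation of $\GL_6(\A_L)$ which, $\pi$ being of symplectic type, has the form $\omega\boxplus\Pi$ for an algebraic Hecke character $\omega$ of $L$ and an automorphic representation $\Pi$ of $\GL_5(\A_L)$, and $\wedge^2(r_\lambda|_{G_L})\cong r_\lambda(\omega)\oplus r_\lambda(\Pi)$ for every $\lambda$. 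Comparing with the first paragraph at $\lambda_0$ gives $r_{\lambda_0}(\omega)\cong\eta_{\lambda_0}|_{G_L}$ and $r_{\lambda_0}(\Pi)\cong V_{\lambda_0}|_{G_L}$; the latter is irreducible, so $\{r_\lambda(\Pi)\}$ is an irreducible compatible system (not a sum of two compatible systems, its member at $\lambda_0$ being irreducible), and $\{\wedge^2 r_\lambda|_{G_L}\}$ has exactly two irreducible-as-compatible-system constituents, of ranks $1$ and $5$.

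Finally I would descend. Write $\{\wedge^2 r_\lambda\}=\bigoplus_{i=1}^s\{W_{i,\lambda}\}$ as a direct sum of irreducible compatible systems over $F$; restricting to $G_L$ and decomposing must reproduce the two constituents found over $L$, so $s\le 2$, and it remains to exclude $s=1$. Suppose $\{\wedge^2 r_\lambda\}$ is irreducible over $F$; then it is either Lie-irreducible or induced from a Lie-irreducible compatible system over a subfield $F'$ with $d:=[F':F]\in\{2,3,6\}$. If it is Lie-irreducible, its connected monodromy acts irreducibly and persists after restriction to $G_L$, contradicting the splitting of $\{\wedge^2 r_\lambda|_{G_L}\}$ into two constituents. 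If $d=2$, then $\wedge^2 r_{\lambda_0}$ is either irreducible of dimension $6$ or a sum of two $3$-dimensional representations, contradicting $\wedge^2 r_{\lambda_0}\cong\eta_{\lambda_0}\oplus V_{\lambda_0}$; if $d=3$, then $\wedge^2 r_{\lambda_0}$ is induced from an irreducible $2$-dimensional representation and so has no $1$-dimensional constituent, again a contradiction; and if $d=6$, then $\wedge^2 r_{\lambda_0}$ is induced from a character, so its connected monodromy lies in a torus and every constituent has abelian connected monodromy, whereas $V_{\lambda_0}$ does not. Hence $s=2$; since each $\{W_{i,\lambda}|_{G_L}\}$ then contributes exactly one of the two constituents over $L$, the ranks of $\{W_{1,\lambda}\}$ and $\{W_{2,\lambda}\}$ are $1$ and $5$, and $\{\wedge^2 r_\lambda\}=\{W_{1,\lambda}\}\oplus\{W_{2,\lambda}\}$ is the asserted decomposition.

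The main obstacle is the input of the second paragraph: establishing (or locating in the literature) that $\pi$ is of symplectic type precisely because $r_{\lambda_0}(\pi)$ is irreducible and symplectically self-dual, that Kim's exterior-square lift therefore produces a $\GL_1$-constituent, and that the lift is compatible with the associated Galois representations in the required generality. Given these facts, the Lie-theoretic and combinatorial bookkeeping of the remaining argument is routine. (An alternative that avoids Kim's lift is to apply the decomposition of pure compatible systems into weakly irreducible ones, as in Lemma~\ref{lem: PT decomposition into weakly irreducible}, to $\{\wedge^2 r_\lambda\}$ or a self-dual twist of it; but this seems to require ruling out exactly the scenario --- $\{r_\lambda\}$ having large monodromy at generic primes but $\Sp_4$-monodromy at $\lambda_0$ --- for which the automorphic input above is the natural tool.)
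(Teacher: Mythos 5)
Your proposal takes a genuinely different route from the paper's. You pass to an automorphic lift over a CM extension~$L$ and invoke Kim's exterior-square transfer $\GL_4 \to \GL_6$ together with the symplectic/orthogonal dichotomy to decompose $\wedge^2\pi$, whereas the paper works directly with the mod~$l$ reductions $\rbar_\lambda$: it uses the classification of maximal subgroups of $\GSp_4(\F_l)$ plus Fontaine--Laffaille theory to show that for a density-one set of~$l$ the complementary $5$-dimensional summand of $\wedge^2\rbar_\lambda$ is irreducible (with $\sbar_\lambda|_{G_{F(\zeta_l)}}$ irreducible), and then applies~\cite[Thm.~C]{BLGGT} to a single $5$-dimensional $l$-adic representation $s_\lambda$ to \emph{build} the $5$-dimensional compatible system directly. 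The only automorphic input the paper needs is the $5$-dimensional potential automorphy theorem, rather than the (harder to source precisely) Galois-compatibility of Kim's transfer over a CM field and the characterization of $\pi$'s symmetry type from the $\lambda_0$-member.

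There is, however, a genuine gap in your descent step. Your third paragraph opens by writing $\{\wedge^2 r_\lambda\}=\bigoplus_i\{W_{i,\lambda}\}$ as a direct sum of irreducible compatible systems over~$F$; but this decomposition is essentially what the lemma is asserting, and the only tool available for producing it (Lemma~\ref{lem: PT decomposition into weakly irreducible}) requires the compatible system to be regular. $\{\wedge^2 r_\lambda\}$ is \emph{not} regular: the $\Sp_4$-structure at $\lambda_0$ forces the $\tau$-labelled Hodge--Tate weights of $r_\lambda$ to satisfy $h_1 + h_4 = h_2 + h_3 = w_\tau$, so that $w_\tau$ appears with multiplicity two among the Hodge--Tate weights of $\wedge^2 r_\lambda$. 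Since the Hodge numbers are constant across the system, this irregularity persists at every~$\lambda$, and Lemma~\ref{lem: PT decomposition into weakly irreducible} does not apply. Moreover the subsequent case analysis conflates properties of a hypothetical ``irreducible compatible system'' (Lie-irreducible or induced) with properties of its single member $\wedge^2 r_{\lambda_0}$, which you have already observed is reducible, so the ``irreducible compatible system'' dichotomy cannot be applied at that particular prime. To close the argument one would instead have to show directly that the compatible system of characters $\{\eta_\lambda\}$ of $G_F$, which you correctly identify at $\lambda_0$, is a compatible-system summand of $\{\wedge^2 r_\lambda\}$ over~$F$ and not merely over~$L$ --- but establishing this descent amounts to an argument of comparable difficulty to the lemma itself, and it is exactly the step that the paper's mod~$l$ argument and the appeal to~\cite[Thm.~C]{BLGGT} are designed to replace.
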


\begin{proof} Let~$T$ be a set of primes~$l$ as in
  the statement of the lemma. If~$l\in T$ then we let~$\lambda$ denote
  the choice of a particular ~$\lambda|l$
  with the property that~$\Gdero_{\lambda} = \Sp_4$. Then the Zariski closure of the image
  of~$r_{\lambda}$ is a subgroup of
  ~$\GSp_4(\overline{M}_{\lambda})$ containing ~$\Sp_4(\overline{M}_{\lambda})$. 
 By~\cite[Prop.~5.3.2]{BLGGT}, after possibly replacing~$T$ with a
 smaller set of primes (still of positive upper density), we also may assume 
that, for any fixed finite extension~$H/F$, 
$$\rbar_{\lambda} |_{G_{H(\zeta_l)}} \rightarrow \GSp_4(\Fbar_l)$$ is
irreducible for $l\in T$.
We apply this with~$H$ equal to the compositum of all the quadratic extensions of~$F$
 unramified outside the set of primes of bad reduction for the
 compatible system.

  Consider the (semisimple) Galois representations:
$$\wedge^2 \rbar_{\lambda} |_{G_{F(\zeta_l)}} \rightarrow
\GL_6(\Fbar_l).$$For each~$l\in T$, the representation
$\wedge^2 \rbar_{\lambda}$ admits a 1-dimensional summand.
We claim that for all but finitely many~$l\in T$, the
complementary~$5$-dimensional
summand is also irreducible. To see this, consider the various possible images of~$\rbar_{\lambda}: G_F
\rightarrow \GSp_4(k)$ with~$k = \Fbar_l$
under the additional assumption that they act irreducibly.
 The classification of such maximal
subgroups  (as first computed in~\cite{SillyRef}) shows that, for~$l > 2$, either:
\begin{enumerate} 
\item \label{case:big} The image contains~$\Sp_4(\F_l)$.
\item \label{case:induced} The image stabilizes a decomposition~$k^4 = k^2 \oplus k^2$, and~$\rbar_{\lambda}$ is thus induced from
a quadratic extension of~$F$.
\item \label{case:cube}  The projective image is contained within the group~$\PGL_2(k)$ acting via the symmetric cube
  representation.
  \item \label{case:exceptional} The projective image has absolutely bounded order.
\end{enumerate}
For a more modern reference, one could also consult~\cite{Holt}, in particular tables~8.12 and~8.13,  which
list the maximal subgroups of~$\Sp_4(q):= \Sp_4(\F_q)$ and from which one
can read off the maximal subgroups of the almost simple extension~$ \PGSp_4(\F_q)$ of~$\Sp_4(\F_q)/Z(\Sp_4(\F_q))$
by~$\langle \delta \rangle = \Z/2\Z$ using the rightmost column.
For the convenience of the reader, we note that groups listed there of type~$\CC_1$ correspond to reducible representations,
 those of type~$\CC_2$, $\CC_3$, and~$\CC_5$ correspond to groups  of type~(\ref{case:induced}), and the remaining groups of type~$\CC_6$ or of
class~$\CSS$ have absolutely bounded image (type~(\ref{case:exceptional})) with the exception of~$\SL_2(\F_q)$ which corresponds to~(\ref{case:cube}).

In case~\eqref{case:big},
 the representation~$\wedge^2\rbar_\lambda$ decomposes as a direct sum of an
 irreducible~$5$-dimensional representation and a~$1$-dimensional
 representation.
 Moreover, because~$\Sp_4(\F_l)$ is quasisimple (a perfect central extension of a simple group),
the projective image of the representation does not change after restriction to
the solvable extension~$F(\zeta_l)$.

We claim that cases~(\ref{case:induced}) and~(\ref{case:exceptional}) can only hold for finitely
many~$l$. In the case~(\ref{case:induced}), it follows, similarly to the proof
of~\cite[Lemma~2.6]{MR3186511}, that, for~$l$ sufficiently large, the
representation is induced from a quadratic extension unramified
at~$l$.  But the quadratic extension must also be unramified outside
the fixed finite set~$S$ of primes of bad reduction for the compatible
system, and so must be contained in~$H$, contradicting our assumption
on the irreducibility of~$\rbar_{\lambda}$ restricted
to~$H(\zeta_l)$. For case~(\ref{case:exceptional}), note that since~$r_\lambda$ is regular, the order of the projective image
of~$\rbar_\lambda$ (even after restriction to inertia at primes
above~$\lambda$) tends to infinity with~$l$, as follows
from Fontaine--Laffaille theory.

Finally, in case~(\ref{case:cube}),
$\wedge^2$ of the symmetric cube representation of a subgroup
of~$\GL_2(k)$ is the direct sum of a character plus a (twist of) the
symmetric fifth power.  If~$l$ is large, the only subgroups
of~$\GL_2(k)$ for which the $4$-dimensional representation is
irreducible but the 5-dimensional representation is not are those whose 
projective image is~$S_4$; since the image of the symmetric cube representation
of such a subgroup then has bounded projective order,
arguing again by Fontaine--Laffaille theory, 
as in  case~(\ref{case:exceptional}),
we see that this case may also be ruled out if~$l$ is sufficiently large.
In conclusion,
we see that if~$l$ is
sufficiently large, then the 5-dimensional summand is irreducible, as required.

Shrinking~$T$ further, we see that we may assume that for all~$l\in T$, the representation~$\wedge^2 r_{\lambda}$
decomposes as the sum of a character and a ~$5$-dimensional
representation~$s_\lambda$ with the properties that~$s_\lambda$ is
Fontaine--Laffaille, and~$\sbar_\lambda|_{G_{F(\zeta_l)}}$ is
irreducible.  The representation~$s_\lambda$ is regular and
essentially conjugate self-dual (since~$r_{\lambda}$ is), and is also
odd (by Lemma~\ref{lem: odd implies odd}). We can certainly assume
that~$l\ge 11$, so it follows that~$s_\lambda$ is
potentially automorphic by~\cite[Theorem~C]{BLGGT}, and hence extends
to the desired compatible system.
\end{proof}

\subsection{Factorization of compatible systems}\label{subsec: main
  factoring result}Our main result in this section is Theorem~\ref{thm: factoring compatible systems}.
  We begin with two preparatory  lemmas.
  The following is a variant on the main results of~\cite{MR2810794,MR2869026}.
\begin{lem}\label{lem: odd dimensional representations are
    odd}
    Let~$F$ be a CM field, and let $r:G_F\to\GL_n(\Qlbar)$ be an irreducible regular polarizable
representation of~$G_F$. Suppose either that~$n$ is odd, or that
$n=2$. If~$n=2$, suppose further that~\mbox{$l\ge 11$}, that~$\Sym^2r$ is irreducible and is
Fontaine--Laffaille at all places dividing~$l$,
and that~$\Sym^2\rbar|_{G_{F(\zeta_l)}}$ is irreducible.
Then~$r$ is odd.
\end{lem}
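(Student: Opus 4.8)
The plan is to treat the cases $n$ odd and $n=2$ separately, reducing the latter to a $3$-dimensional instance of the former via the symmetric square. When $n$ is odd there is essentially nothing new to do: since $r$ is irreducible over the algebraically closed field $\Qlbar$ it is absolutely irreducible, and since it is polarizable it fits into a polarized pair $(r,\mu)$, so Lemma~\ref{lem: odd implies odd} --- whose hypothesis on the residue characteristic is vacuous in characteristic $0$ --- applies directly and shows that $(r,\mu)$ is odd. (As in \emph{loc.\ cit.}, the point is that a perfect pairing attached to a prolongation satisfies $\langle x,y\rangle=-\mu(c)\langle y,x\rangle$ for a complex conjugation $c$, and would be alternating, hence degenerate, on an odd-dimensional space if $\mu(c)=+1$.)

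Now suppose $n=2$, make the additional hypotheses, and set $W:=\Sym^{2}r$, a $3$-dimensional representation of $G_F$. First I would record that $W$ inherits the relevant structure. It is polarizable: since $(r,\mu)$ is polarized so is $(r\otimes r,\delta_{F/F^{+}}\mu^{2})$, and as $r\otimes r=W\oplus\det r$ the polarization restricts to a polarization of $W$ with multiplier $\delta_{F/F^{+}}\mu^{2}$, of the correct parity since $\delta_{F/F^{+}}(c)=-1$. It is regular: if the distinct $\tau$-labelled Hodge--Tate weights of $r$ are $a_{\tau},b_{\tau}$, those of $W$ are $2a_{\tau},\,a_{\tau}+b_{\tau},\,2b_{\tau}$, which are again distinct. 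By hypothesis $W$ is irreducible, is Fontaine--Laffaille at all places dividing $l$, and $\overline{W}|_{G_{F(\zeta_{l})}}$ is irreducible; together with $l\ge 11>2(3+1)$, the adequacy results in the appendix to~\cite{MR2979825} (as used in the proof of Lemma~\ref{added}) show that $\overline{W}(G_{F(\zeta_{l})})$ is adequate. Since $W$ is also odd --- by the case of odd $n$ just treated, or directly from its multiplier $\delta_{F/F^{+}}\mu^{2}$, whose value at $c$ is $-1$ --- the potential automorphy theorem~\cite[Theorem~C]{BLGGT} applies: there is a finite CM extension $L/F$, which we may take linearly disjoint over $F$ from any prescribed finite extension of $F$ (in particular arranged so that $W|_{G_{L}}$, equivalently $r|_{G_{L}}$, is still irreducible), together with a regular algebraic conjugate self dual cuspidal automorphic representation $\Pi$ of $\GL_{3}(\A_{L})$ with $W|_{G_{L}}\cong r_{l}(\Pi)$.

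The endgame is descent along the symmetric square. A cuspidal, conjugate self dual, regular algebraic automorphic representation of $\GL_{3}(\A_{L})$ is, up to a character twist, the symmetric square lift $\Sym^{2}\pi$ of a cuspidal regular algebraic automorphic representation $\pi$ of $\GL_{2}(\A_{L})$ (the Gelbart--Jacquet theory of the symmetric square); such a $\pi$ is then polarizable and $r_{l}(\pi)$ is odd. Hence $W|_{G_{L}}\cong\Sym^{2}r_{l}(\pi)$ up to a character twist, and since the map $\PGL_{2}\to\PGL_{3}$ induced by $\Sym^{2}$ is injective, $r|_{G_{L}}$ and $r_{l}(\pi)$ have isomorphic projectivisations, so $r|_{G_{L}}\cong r_{l}(\pi)\otimes\psi$ for some character $\psi$ of $G_{L}$. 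Twisting an irreducible, conjugate self dual, $2$-dimensional representation by a character of the CM field $L$ does not change the symmetry type of its polarization pairing --- the same pairing works, now equivariant for the multiplier $\psi\psi^{c}\mu_{\pi}$ --- so $r|_{G_{L}}$ is again odd; equivalently, its (unique) polarizing multiplier $\mu|_{G_{L^{+}}}$ satisfies $\mu(c)=-1$ for every complex conjugation $c\in G_{L^{+}}$. Since this sign is independent of the choice of complex conjugation when $r$ is irreducible, $(r,\mu)$ is odd, as desired.

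I expect the main obstacle to be the $n=2$ case, and within it the careful bookkeeping of the polarizing multiplier: once one has checked (routinely) that $\Sym^{2}r$ is regular and polarizable, one must secure the adequacy of $\overline{W}(G_{F(\zeta_{l})})$ in the range $l\ge 11$, choose $L$ preserving irreducibility of $\Sym^{2}r$, and --- the genuinely delicate point --- follow the multiplier through the Gelbart--Jacquet descent and the subsequent character twist carefully enough to conclude $\mu(c)=-1$ rather than merely $\mu(c)^{2}=1$. The odd case is a direct appeal to Lemma~\ref{lem: odd implies odd}.
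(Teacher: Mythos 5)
The case $n$ odd is fine and matches the paper exactly: irreducible over $\Qlbar$ gives absolutely irreducible, so Lemma~\ref{lem: odd implies odd} applies directly.

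For $n=2$, your approach diverges from the paper's, and the divergence is where the gap lies. The paper does not work with $\Sym^{2}r$ as a $3$-dimensional representation of $G_{F}$; rather it observes that the \emph{projective} representation $\mathbf{P}r$ extends to $G_{F^{+}}$ (since $r$ is essentially conjugate self-dual), so that $\Ad^{0}(r)=\Ad^{0}(\mathbf{P}r)$ is an essentially self-dual regular representation of the totally real group $G_{F^{+}}$. It then applies~\cite[Cor.~4.5.2]{BLGGT} (using the odd $n=3$ case of the present lemma) and then~\cite[Prop.~A]{MR2966704}, which is a refined statement about the conjugacy class of complex conjugation in potentially automorphic essentially self-dual regular representations of totally real fields — strictly finer than oddness. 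From this one concludes $\mathbf{P}r(c)$ is non-scalar, and a short linear-algebra argument (the pairing would have to be alternating, giving $\mathbf{P}r(\sigma)=\mathbf{P}r^{c}(\sigma)$ and contradicting non-scalarness) finishes.

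The gap in your argument is the assertion that, after applying symmetric-square descent on $\GL_{3}(\A_{L})$, ``such a $\pi$ is then polarizable and $r_{l}(\pi)$ is odd.'' This is precisely what needs to be proved: it is the statement of the lemma (up to twist) for the $2$-dimensional representation $r_{l}(\pi)$ over the CM field $L$, and you have no independent handle on its parity. Note moreover that oddness of $\Sym^{2}r_{l}(\pi)$ — which you do have, since it is $3$-dimensional — carries no information about the parity of $r_{l}(\pi)$: for a $2$-dimensional space $V$ with an invariant bilinear form $B$ of \emph{either} symmetry type, the induced form $B\otimes B$ on $\Sym^{2}V$ is always symmetric, and correspondingly the polarizing multiplier of $\Sym^{2}r$ is $\delta_{F/F^{+}}\mu^{2}$, which sends $c$ to $-1$ regardless of the value of $\mu(c)$. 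So the symmetric square washes out exactly the sign you are trying to detect, and the twist bookkeeping you carefully perform cannot recover it. There is also a secondary inaccuracy: the image of the Gelbart--Jacquet symmetric square on $\GL_{3}$ is characterized among \emph{essentially self-dual} cuspidal representations, not conjugate self-dual ones; your $\Pi$ is in fact essentially self-dual (being a symmetric square on the Galois side, transported via local-global compatibility), so the descent can be arranged, but the property you invoked is the wrong one. The real issue remains point above: without an input of the strength of~\cite[Prop.~A]{MR2966704} (applied, as in the paper, to a representation over the totally real field $F^{+}$), the parity is undetermined.
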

\begin{proof}In the case that~$n$ is odd, this is immediate from Lemma~\ref{lem: odd implies odd}. 
Suppose now that~$n=2$. Let~$\Pr$ denote the projective representation~$\Pr: G_{F} \rightarrow \PGL_2(\Qbar_p)$ associated to~$r$.
  The polarizability of~$r$ implies that~$\Pr^{c} \simeq \Pr^{\vee} \simeq \Pr$, and hence that~$\Pr$ extends to a representation~$\Pr: G_{F^{+}} \rightarrow \PGL_2(\Qbar_p)$.
  Thus
  $\Ad^0(r) = \Ad^0(\Pr)$ also extends to a representation~$\Ad^0(r):
  G_{F^{+}} \rightarrow \GL_3(\Qbar_p)$. It follows from~\cite[Cor.\ 4.5.2]{BLGGT} (and the
  case~$n=3$ of the result being proved) that~$\Ad^0(r)$ is
  potentially automorphic, 
   and thus (by~\cite[Prop.\ A]{MR2966704}) the image of any complex
   conjugation is non-scalar, and thus the image of any complex
   conjugation under~$\Pr$ is non-scalar.

   We show that this implies that~$r$ is odd. Because we are in dimension~$2$, there is certainly a non-degenerate pairing on~$\Qbar^2_p$ and a character~$\mu$ of~$G_{F^{+}}$ which satisfies
   $$\langle r(\sigma) x, r^c(\sigma) y \rangle = \mu(\sigma)    \langle x,y \rangle$$
   for any complex conjugation~$c$. Since~$\Pr$ is not dihedral, the pairing is unique, and to prove oddness it suffices to show that~$\langle x,y \rangle = \langle y,x \rangle$.
 If not, then the pairing is symplectic, and~$\langle x,x \rangle = 0$
 for all~$x$. This implies that
 $$\langle r(\sigma) x, r^c(\sigma) x \rangle = 0$$
 for all~$\sigma$. Because the dimension is~$2$, we have~$\langle x, y
 \rangle = 0$ for~$x \ne 0$ only when~$y$ is a multiple
 of~$x$. Since~$r$ is irreducible, it follows
 that~$\Pr(\sigma)=\Pr^c(\sigma)$ for all~$\sigma$, which, by Schur's lemma, implies that~$\Pr(c)$ is scalar, contradicting the result above. 
  \end{proof}
  
  The following lemma gives a local condition for a representation not to be of
  the form~$\rho \otimes (\rho^c)^{\vee}$ up to twist for some representation~$\rho$.

\begin{lemma}\label{lem: local condition to avoid tensor induction}
  Let~$F$ be a CM field. Let~$v$ be a prime in~$F^{+}$ which is inert in~$F$, and denote by~$w$ the corresponding prime in~$F$.
 Denote by~$c$ the non-trivial element of~$\Gal(F/F^{+}) =
 \Gal(F_{w}/F^{+}_v)$. Suppose that~$\ell$ is a prime distinct from the characteristic of~$v$ and~$w$.
 Let
$$\psi: G_{F_w} \rightarrow \Qbar^{\times}_{\ell}$$
be a non-trivial ramified character such that~$\psi^c|_{I_{F_w}} =\psi^{-1}|_{I_{F_w}}$. 
Suppose that
$$s_w: G_{F_w} \rightarrow \GL_{n^2}(\Qbar_{\ell})$$
is a representation such that
\[s_w|_{I_{F_w}}\cong \psi|_{I_{F_w}}^{\oplus n}
\oplus \mathds{1}^{\oplus (n^2 - n)}.\]

Then, if~$n > 2$,  we cannot write $s_w$ in the form 
$$s_w\simeq \theta\otimes(\rho \otimes (\rho^{c})^{\vee})$$
where
$\rho: G_{F_w} \rightarrow \GL_n(\Qbar_{\ell})$
and~$\theta:G_{F_w}\to\Qlbartimes$ is a character. 
If additionally~$\psi|_{I_{F_w}}$ has order~$>2$,
then there is no such
decomposition when~$n = 2$ either.
\end{lemma}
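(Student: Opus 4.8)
The plan is to restrict everything to the inertia subgroup $I:=I_{F_w}$, which coincides with $I_{F^+_v}$ because $v$ is inert (so $F_w/F^+_v$ is unramified). Fixing a lift $\gamma_0\in G_{F^+_v}\setminus G_{F_w}$, conjugation by $\gamma_0$ normalizes $I$, and if we set $\tau:=\rho|_{I}$ and $\chi:=\theta|_{I}$ then $(\rho^c)|_{I}=\tau^c$, so a decomposition $s_w\simeq\theta\otimes(\rho\otimes(\rho^c)^\vee)$ would give
$$\chi\otimes\tau\otimes(\tau^c)^\vee\ \cong\ \psi|_{I}^{\oplus n}\oplus\mathds{1}^{\oplus(n^2-n)}.$$
Since $\psi$ is ramified we have $\psi|_I\ne\mathds{1}$, so the right-hand side is semisimple with exactly two distinct Jordan--H\"older characters, $\mathds{1}$ (multiplicity $n^2-n$) and $\psi|_I$ (multiplicity $n$); in particular we may replace $\tau$ by its semisimplification. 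From now on I write $\psi$ for $\psi|_I$.

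The crucial point is to show that $\tau$ is itself a direct sum of characters. Write $\tau=\bigoplus_i m_i\sigma_i$ with the $\sigma_i$ pairwise non-isomorphic irreducibles of dimension $d_i$, so $\sum_i m_id_i=n$. For a character $\eta$ of $I$ the multiplicity of $\eta$ in $\chi\otimes\tau\otimes(\tau^c)^\vee$ is $\dim\Hom_I(\tau^c,\eta^{-1}\chi\tau)=\sum_{i,j}m_im_j[\sigma_i^c\cong\eta^{-1}\chi\sigma_j]$, and for each pair $(i,j)$ there is at most one $\eta$ with $\sigma_i^c\cong\eta^{-1}\chi\sigma_j$. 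Summing over all $\eta$ gives
$$n^2\ =\ \sum_{\eta}(\text{mult.\ of }\eta)\ \le\ \sum_{i,j}m_im_j\ =\ \Bigl(\sum_i m_i\Bigr)^{2}\ \le\ \Bigl(\sum_i m_id_i\Bigr)^{2}\ =\ n^2,$$
forcing $\sum_i m_i=n$ and hence $d_i=1$ for all $i$. Thus $\tau\cong\bigoplus_i\eta_i$ for characters $\eta_i$ of $I$; and since each $\chi\eta_i(\eta_j^c)^{-1}$ is one of $\mathds{1},\psi$, every $\eta_j^c$ lies in the two-element set $\{\chi\eta_i,\chi\eta_i\psi^{-1}\}$, so at most two distinct characters occur among the $\eta_i$. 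Write $\tau\cong\mu_1^{\oplus p_1}\oplus\mu_2^{\oplus p_2}$ with $\mu_1\ne\mu_2$ and $p_1+p_2=n$.

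It then remains to rule out this situation by a finite case check. If $p_2=0$ then $\chi\otimes\tau\otimes(\tau^c)^\vee$ is isotypic for the single character $\chi\mu_1(\mu_1^c)^{-1}$, absurd since $\mathds{1}\ne\psi$ both occur. So assume $p_1,p_2\ge1$, and put $a_{ij}:=\chi\mu_i(\mu_j^c)^{-1}\in\{\mathds{1},\psi\}$, noting the identity $a_{11}a_{22}=a_{12}a_{21}$. I split according to how many of the four $a_{ij}$ equal $\psi$: when this number is $0$ or $4$, the multiplicity of $\psi$ resp.\ $\mathds{1}$ vanishes, impossible; when it is $1$ or $3$, the identity forces $\psi=\mathds{1}$. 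When exactly two of the $a_{ij}$ equal $\psi$ and these are both diagonal ($a_{11},a_{22}$) or both off-diagonal ($a_{12},a_{21}$), the multiplicity equations force $p_1=p_2=1$, i.e.\ $n=2$ (and also $\psi^2=\mathds{1}$), contradicting either $n>2$ or, when $n=2$, the extra hypothesis that $\psi|_{I_{F_w}}$ has order $>2$. Finally, when one of the two $\psi$-valued $a_{ij}$ is diagonal and the other off-diagonal, the two corresponding relations share a common factor $\mu_i$ (e.g.\ $a_{11}=a_{12}=\psi$ gives $\mu_1^c=\mu_2^c$), forcing $\mu_1=\mu_2$, a contradiction. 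The relation $\psi^c|_{I_{F_w}}=\psi^{-1}|_{I_{F_w}}$ is the hypothesis that makes this the local situation arising in applications and can be used to streamline the bookkeeping (via $(s_w|_I)^c\cong(\chi\chi^c)\otimes(s_w|_I)^\vee$), but plays no essential role in the argument itself.

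The part I expect to be the real content is the dimension count showing $\rho|_{I_{F_w}}$ semisimplifies to a sum of characters: a priori it could contain an irreducible constituent of dimension $>1$ (as happens already for suitable finite subgroups of $\GL_2$) whose ``tensor square with its $c$-conjugate'' nonetheless breaks up into characters, and it is exactly the rigidity of the shape $\psi^{\oplus n}\oplus\mathds{1}^{\oplus(n^2-n)}$ — encoded in the equality $\sum_i m_i d_i=n=\sum_i m_i$ above — that excludes this. Everything after that step is elementary.
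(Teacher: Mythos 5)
The key step — the dimension count purporting to show that $\tau := \rho|_{I_{F_w}}$ (after semisimplification) is a direct sum of characters — has a genuine gap, and it is the step you yourself flag as the real content. Your inequality rests on the assertion that for each pair $(i,j)$ there is at most one $\eta$ with $\sigma_i^c\cong\eta^{-1}\chi\sigma_j$. This is not an unconditional fact about irreducible representations: if two characters $\eta_1\neq\eta_2$ both work, then $(\eta_2/\eta_1)\sigma_j\cong\sigma_j$, i.e.\ $\sigma_j$ admits a self-twist by $\psi^{\pm1}$. Such self-twists are ruled out for characters (since $\psi\neq\mathds{1}$), but they are perfectly possible when $d_j\ge 2$ (with $\psi^{d_j}=\mathds{1}$) — and $d_j\ge 2$ is exactly the case you are trying to exclude. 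So the assertion is circular as stated, and the inequality $\sum_\eta(\text{mult.\ of }\eta)\le\bigl(\sum_i m_i\bigr)^2$ does not follow.

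What does work is a more \emph{local} use of Schur's lemma, which is precisely what the paper does via Lemma~\ref{lemma:onedim}: since the ambient representation $\chi\tau\otimes(\tau^c)^\vee$ is a sum of characters with at most two distinct isomorphism classes, so is every subrepresentation, in particular each $\chi\sigma_i\otimes(\sigma_j^c)^\vee$; and by Schur each character occurs in $\chi\sigma_i\otimes(\sigma_j^c)^\vee$ with multiplicity at most one, so $d_id_j\le 2$ and hence $d_i=1$. (This argument applied to a pair $(i,j)$ is also what rescues your $N_{ij}\le1$ claim \emph{a posteriori}, but you cannot appeal to it without establishing it.) Once this step is repaired, the remainder of your proof — showing at most two distinct characters occur and then doing the finite case-check on the $a_{ij}:=\chi\mu_i(\mu_j^c)^{-1}$ — is a valid, if more verbose, variant of the paper's computation culminating in the equation $rs+(n-r)(n-s)=n$ having no solutions with $0<r,s<n$ when $n>2$.
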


\begin{proof} Suppose for the sake of contradiction that we can write
  $s_w\simeq \theta\otimes(\rho \otimes (\rho^{c})^{\vee})$. 
Consider the representations~$\rho$ and~$(\rho^{c})^{\vee}$ restricted
to~$I_{F_w}$. We are assuming for the sake of contradiction that
$$\rho \otimes (\rho^{c})^{\vee} |_{I_{F_w}} \simeq \theta|_{I_{F_w}}^{-1}  \otimes (\psi|_{I_{F_w}}^{\oplus n}
\oplus \mathds{1}^{\oplus (n^2 - n)}).$$
By Lemma~\ref{lemma:onedim}, we deduce that~$\rho$ and~$\rho^c$ restricted to~$I_{F_w}$ are both given by  direct sums of characters.

Suppose that, after restriction to~$I_{F_w}$, the representation~$\rho$ decomposes as the
direct sum of~$n$ copies of a single character~$\phi$. The inertia group~$I_{F_w}$ is normal
in~$G_{F_w}$ and in~$G_{F^{+}_v}$.
 For a representation~$\varrho$ of~$I_{F_w}$ and for~$\sigma \in G_{F^+_v}$, it thus
 makes sense to define a representation~$\varrho^{\sigma}$ of~$I_{F_w}$ by~$\varrho^{\sigma}(g)
= \varrho(\sigma g \sigma^{-1})$. Since~$\rho$ is a representation of~$G_{F_w}$, we see
that~$\rho^{\sigma} \simeq \rho$ for any~$\sigma \in G_{F_w}$.
Hence if~$\sigma\in G_{F_w}$, then
we have
$$ \phi^{\oplus n} \simeq \rho |_{I_{F_w}} \simeq \rho^{\sigma} |_{I_{F_w}} \simeq (\rho|_{I_{F_w}})^\sigma \simeq (\phi^{\sigma})^{\oplus n},$$
and thus~$\phi = \phi^{\sigma}$. It follows that  if~$c \in G_{F^+_v}$ is any lift of the non-trivial
element of~$G_{F^+_v}/G_{F_w} = \Gal(F_{w}/F^{+}_v)$, then~$\phi^{c}$ does not
depend on this lift. 
In particular, still assuming that~$\rho|_{I_{F_w}}$ decomposes as a direct sum of~$n$ copies of~$\phi$, we have
$$\rho^{c} |_{I_{F_w}} \simeq (\phi^c)^{\oplus n}.$$
This would imply that~$\rho \otimes (\rho^c)^{\vee}$ is a direct sum
of~$n^2$ equal characters of~$I_{F_w}$, 
contradicting the fact that~$\psi$ is ramified, and thus non-trivial on inertia. 
In particular, $\rho$ (and by symmetry~$\rho^c$) contains at least~$2$
distinct characters.

Let~$\varphi$ be a character of~$I_{F_w}$ occurring in~$\rho^c$, so that~$\varphi^{-1}$ occurs in~$(\rho^c)^{\vee}$. 
Then
$$\rho |_{I_{F_w}} = (\rho |_{I_{F_w}} \otimes \varphi^{-1}) \otimes \varphi
\subset (\rho \otimes (\rho^c)^{\vee}) |_{I_{F_w}}  \otimes \varphi
= \theta^{-1} \varphi\otimes s_w |_{I_{F_w}}, 
$$
so $\rho |_{I_{F_w}}\subset  \theta^{-1} \varphi\otimes(\psi^{\oplus n} \oplus \mathds{1}^{\oplus (n^2 - n)})$ and it follows that~$\rho$ restricted to~$I_{F_w}$ is a direct sum of at most two distinct characters with some multiplicity.
From the previous discussion, precisely two such characters occur.
The same argument gives the same result for~$\rho^c$. Thus we may write
$$\rho |_{I_{F_w}} = \phi^{r}_{1} \oplus \phi^{n-r}_{2}, \qquad (\rho^c)^{\vee} |_{I_{F_w}} = \varphi^{s}_1 \oplus \varphi^{n-s}_2,$$
for some~$0 < r < n$ and~$0 < s < n$.
Since each pair of characters are distinct, we must have~$\phi_i \varphi_j \ne \phi_i \varphi_k$
and~$\phi_j \varphi_i \ne \phi_k \varphi_i$ when~$j \ne k$.
In order for the tensor product to contain  precisely two distinct characters, this forces the
equalities~$\phi_i \varphi_j = \phi_j \varphi_i$ and~$\phi_i \varphi_i = \phi_j \varphi_j$.
Since one character occurs in~$\rho \otimes (\rho^c)^{\vee}$ exactly~$n$ times (and the other~$n^2 - n$ times), we
deduce, after some appropriate re-ordering, that
$$rs + (n-r)(n-s) = n.$$
This has no solutions in integers~$0 < r < n$ and~$0 < s < n$ when~$n
> 2$, a contradiction. (Indeed, we have $2n=rs+rs+(n-r)(n-s)+(n-r)(n-s)\ge
r+s+(n-r)+(n-s)=2n$ with equality if and only if $r=s=(n-r)=(n-s)=1$.)

Now let us consider the case~$n = 2$.
In this case, the required character identities imply
that~$\phi_i/\phi_j = \varphi_i/\varphi_j$ is a character of order dividing~$2$.
On the other hand, the ratio of the two distinct characters occurring in~$\rho \otimes (\rho^c)^{\vee}|_{I_{F_w}}$
may be identified both with~$\psi$ 
and with~$\phi_i/\phi_j$ with~$i \ne j$, which implies that~$\psi$
has order~$2$, a contradiction.
\end{proof}

Combining the two previous lemmas (as well as lemmas from previous sections),
we are now able to prove a theorem which allows us to factor
a compatible system into the tensor product of two other compatible systems
given such a factorization at one prime~$p$.
  
  \begin{thm} 
  \label{thm: factoring compatible systems}Let~$F$ be a CM field and
  let~$p>2$ be prime. Suppose that $n$ is odd or that
  $n=2$. Let $(\{s_\lambda\},\{\mu_\lambda\})$ be an odd, polarized, regular, weakly
  irreducible compatible system of representations with associated
  $p$-adic representation~$(s,\mu)$. Suppose that we can
  write \[(s,\mu)=(a,\mu_1)\otimes (b,\mu_2)\]where $a$ and $b$ are
  $n$-dimensional representations which are de Rham at all places~
  $v|p$.

Suppose also that:
\begin{itemize}
\item There 
 is a positive density set of places~$l$ such that for
  each~$\lambda|l$, the
  representation~$s_\lambda$ is strongly  irreducible. 
  \item $s$ is strongly irreducible. 
  \item The residual representation~$\bbar$  has image
  containing~$\SL_n(\F_q)$ for~$q$ a sufficiently large power of~$p$ in the sense of {\em Lemma~\ref{lemma:large}}.
\item There is a finite place~$x\nmid p$ of~$F^+$ which is inert
  in~$F$, and a character $\psi:G_{F_x}\to\Qpbar^\times$, such that:
    \begin{itemize}
    \item $\psi^c|_{I_{F_x}} =\psi^{-1}|_{I_{F_x}}$.
  \item $\psi|_{I_{F_x}}$ has \emph{(}finite\emph{)} order greater than~$2$.
  \item $a|_{G_{F_x}}$ is unramified, and
  \item     $b|_{I_{F_x}}\cong\psi|_{I_{F_x}}\oplus\mathds{1}^{\oplus(n-1)}$.
  \end{itemize}
\end{itemize}
 Then there are odd, polarized, regular, weakly irreducible
  compatible systems $(\{a_\lambda\},\{\mu_{1,\lambda}\})$, $(\{b_\lambda\},\{\mu_{2,\lambda}\})$ whose associated
  $p$-adic representations are respectively~$(a,\mu_1)$ and~$(b,\mu_2)$; so in
  particular $\{s_\lambda\}=\{a_\lambda\otimes b_\lambda\}$.
\end{thm}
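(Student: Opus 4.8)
The plan is to promote the tensor factorization $(s,\mu) = (a,\mu_1)\otimes(b,\mu_2)$ at the prime $p$ to a factorization of the whole compatible system, by first producing the factorization of the underlying family of Galois representations via Theorem~\ref{thm: first factoring theorem}, and then upgrading the individual factors to compatible systems using the potential automorphy machinery of~\cite{BLGGT} (which is where the hypothesis that $n$ is odd or $n=2$ enters, through Lemma~\ref{lem: odd dimensional representations are odd}). First I would invoke Theorem~\ref{thm: first factoring theorem}: the hypotheses there are exactly our conditions (1)--(3) (with $m=n$), together with, in the case $n=2$, the assumption that $\{s_\lambda\}$ is odd, regular, polarizable, and weakly irreducible, which we have. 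This yields a set of primes $l$ of density one (or, for $n>2$, the set of $l$ for which $s_\lambda$ is strongly irreducible, which by Lemma~\ref{lem:restricting weakly irr} and~\cite{XiaThesis}-type input has density one as well; in any case a positive-density set suffices) such that for each such $\lambda$ there is a factorization $s_\lambda \simeq a_\lambda\otimes b_\lambda$ with $\dim a_\lambda = \dim b_\lambda = n$, the Zariski closure of the image of $b_\lambda$ containing $\SL_n(\Qbar_\ell)$, and with $a_\lambda, b_\lambda$ unique up to twist and possible permutation. At $\lambda = p$ this recovers (up to twist) the given $a,b$.

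Next I would pin down the multiplier characters and polarization structure. Since $(s_\lambda,\mu_\lambda)$ is polarized, we have $s_\lambda^c \cong s_\lambda^\vee\otimes\mu_\lambda|_{G_F}$, i.e. $a_\lambda^c\otimes b_\lambda^c \cong (a_\lambda^\vee\otimes b_\lambda^\vee)\otimes\mu_\lambda$. Using the uniqueness clause of Theorem~\ref{thm: first factoring theorem}, the pair $(a_\lambda^c, b_\lambda^c)$ must agree with $(a_\lambda^\vee, b_\lambda^\vee)$ up to twisting by characters and a possible swap; the role of the local hypothesis at the inert place $x$ is precisely to rule out the swap (this is the content of Lemma~\ref{lem: local condition to avoid tensor induction}, applied to $s_w = s_\lambda|_{G_{F_x}}$, which forces $s_\lambda$ to \emph{not} be of the form $\theta\otimes(\rho\otimes(\rho^c)^\vee)$, hence the factors cannot be interchanged by $c$). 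Consequently, after adjusting $a_\lambda,b_\lambda$ by suitable characters (which can be done compatibly, using that $s = s_p$ already has a preferred factorization matching $\mu = \mu_1\mu_2$, and that characters live in compatible systems), we obtain $a_\lambda^c \cong a_\lambda^\vee\otimes\mu_{1,\lambda}$ and $b_\lambda^c\cong b_\lambda^\vee\otimes\mu_{2,\lambda}$ for appropriate $\mu_{i,\lambda}$ with $\mu_{1,\lambda}\mu_{2,\lambda}$ related to $\mu_\lambda$ as in the $\cG_n$-tensor-product formula; so each $(a_\lambda,\mu_{1,\lambda})$ and $(b_\lambda,\mu_{2,\lambda})$ is polarized.

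Then I would show that $\{a_\lambda\}$ and $\{b_\lambda\}$ are compatible systems. For all sufficiently large $\lambda$, the representations $a_\lambda$ and $b_\lambda$ are Fontaine--Laffaille with large residual image, regular, and polarized; by Lemma~\ref{lem: odd dimensional representations are odd} they are odd (here is the only place we need $n$ odd or $n=2$), so by~\cite[Thm.\ C]{BLGGT} each is potentially automorphic and hence part of a compatible system; twisting back, these compatible systems have $p$-adic members $(a,\mu_1)$, $(b,\mu_2)$ respectively. To see that the members at \emph{every} $\lambda$ are the $a_\lambda,b_\lambda$ we constructed (and not just at a density-one set), I would use strict compatibility of the tensor-product system together with the uniqueness of the factorization: for any $\lambda$ with $s_\lambda$ strongly irreducible, matching formal characters (Serre, \cite[Prop.\ 6.12]{MR1150604}) forces the factor compatible systems to restrict correctly. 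Finally, weak irreducibility of $\{a_\lambda\}$ and $\{b_\lambda\}$ follows because the Zariski closure of the image of $b_\lambda$ contains $\SL_n$ for a positive-density set of $\lambda$ (so $b_\lambda$ is irreducible there), and similarly for $a_\lambda$ via strong irreducibility of $s_\lambda = a_\lambda\otimes b_\lambda$; then $\{a_\lambda\},\{b_\lambda\}$ being regular, odd, polarizable and weakly irreducible follows from the corresponding properties inherited from $\{s_\lambda\}$ together with Lemma~\ref{lem: weakly irreducible equals potentially automorphic}.

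\emph{Main obstacle.} The delicate point is the bookkeeping of twists: Theorem~\ref{thm: first factoring theorem} gives $a_\lambda,b_\lambda$ only up to character twist and up to a possible swap, and one must make these choices \emph{coherently across $\lambda$} so that the resulting $\{a_\lambda\},\{b_\lambda\}$ are genuine compatible systems with the prescribed $p$-adic members and the prescribed multipliers. Ruling out the swap globally (via the inert place $x$ and Lemma~\ref{lem: local condition to avoid tensor induction}) and then rigidifying the twists using the polarization and the fixed $p$-adic factorization is the technical heart of the argument; the oddness input from Lemma~\ref{lem: odd dimensional representations are odd}, needed to apply~\cite{BLGGT}, is what restricts us to $n$ odd or $n=2$.
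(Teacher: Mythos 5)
Your proposal follows essentially the same route as the paper: factor $s_\lambda$ at an auxiliary well-chosen prime $l$ via Theorem~\ref{thm: first factoring theorem}, rule out the ``swap'' case $a_l\cong\psi_l(b_l^c)^{\vee}$ via Lemma~\ref{lem: local condition to avoid tensor induction} and the local condition at $x$, extract oddness from Lemma~\ref{lem: odd dimensional representations are odd} (the one place where $n$ odd or $n=2$ is used), extend $a_l,b_l$ to compatible systems via the potential automorphy machinery of~\cite{BLGGT}, and match at $p$ using the uniqueness of the tensor factorization (Lemma~\ref{lemma:prep}). The one substantive step you pass over is that Theorem~\ref{thm: first factoring theorem} produces $a_l,b_l$ only up to character twist as abstract Galois representations, and it is not automatic that one can normalize them to be de~Rham (indeed crystalline, hence Fontaine--Laffaille) at the places dividing~$l$: the paper needs Patrikis's lifting result~\cite[Thm.\ 3.2.10]{2012arXiv1207.6724P} together with~\cite[Prop.\ 3.3.4]{MR3435718} to arrange this, and without it the BLGGT input does not apply. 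You also do not spell out why the polarization characters $\psi_l,\varphi_l$ extend to $G_{F^+}$ rather than merely being characters of $G_F$ (the paper deduces this from $\varphi_l=\varphi_l^c$ using the strong irreducibility of $b_l$), but this is a small point. Aside from these two elisions, the structure and the key lemmas invoked agree with the paper's proof.
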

\begin{proof}
Under our first assumption, we deduce
  from~\cite[Prop.\
  5.3.2]{BLGGT} that there exists a set of primes~$l$ of density~$1$ with $l>2(n+1)$ 
  and~$x\nmid l$, such that for each~$\lambda|l$, $s_\lambda$ is
  Fontaine--Laffaille at all places dividing~$l$, 
  $s_{\lambda}$ is strongly irreducible,
   and~$\sbar_\lambda|_{G_{F(\zeta_l)}}$ is 
  irreducible. 
If~$n=2$, we
  furthermore can and do assume that~$\Sym^2 s_\lambda$ is irreducible
    and Fontaine--Laffaille, and $l\ge 11$. (The irreducibility of~$\Sym^2 s_\lambda$ is
  an easy consequence of the assumption that~$s_\lambda$ is strongly irreducible --- for example,
  one can deduce it from  Lemma~\ref{lem: strongly irreducible is uniquely polarizable}.)

  By Theorem~\ref{thm: first factoring theorem}, we may
  choose~$l$ such that we can write
  $s_\lambda=a_l\otimes b_l$, where~$a_l$ and~$b_l$ are both
  $n$-dimensional representations of~$G_{F}$, the Zariski closure of~$b_l$
  contains~$\SL_n(\Qlbar)$, and the unordered pair~$\{a_l,b_l\}$ is unique up to
  twist. It follows from~\cite[Thm.\ 3.2.10]{2012arXiv1207.6724P} that
  we can
  choose  $a_l$ and~$b_l$ to be unramified at all but
  finitely many places, and de Rham at all places dividing~$l$. (We
  apply the result to the surjection from $\GL_n\times\GL_n$ to its
  image in $\GL_{n^2}$;~\cite[Hyp.\ 3.2.4]{2012arXiv1207.6724P} is
  satisfied because~$F$ is CM and $s_\lambda$ is polarizable.) It then
  follows from~\cite[Prop.\ 3.3.4]{MR3435718} that we can furthermore
  ensure that
  $a_l$ and~$b_l$ are
  in fact crystalline at all places dividing~$l$.
  Moreover, the regularity of~$s_{\lambda}$ immediately implies the regularity of~$a$ and~$b$.
  
  Since $s_\lambda$ is Fontaine--Laffaille
  at all places dividing~$l$
  and $\sbar_\lambda|_{G_{F(\zeta_l)}}$ is 
  irreducible, we see that each of~$a_l$ and~$b_l$ is
  Fontaine--Laffaille at all places dividing~$l$,
  and that~$\abar_l|_{G_{F(\zeta_l)}}$ and
  ~$\bbar_l|_{G_{F(\zeta_l)}}$ are irreducible. Since~$s_\lambda$ is
  strongly irreducible, $a_l$ and~$b_l$ are strongly irreducible.

We now show that~$a_l$ and~$b_l$ are both polarizable. Since $s_\lambda^c\cong \mu_\lambda s_\lambda^\vee$, it
follows that~\[a_l\otimes b_l\cong \mu_\lambda (a_l^c)^\vee\otimes
  (b_l^c)^\vee,\] and by the uniqueness of~$a_l$ and~$b_l$ up to
twist, we see that there are characters
$\psi_l,\varphi_l:G_F\to\Qlbar^\times$ such that either   $a_l\cong
\psi_l(b_l^c)^\vee$, $b_l\cong \varphi_l(a_l^c)^\vee$, or  that $a_l\cong
\psi_l(a_l^c)^\vee$, $b_l\cong \varphi_l(b_l^c)^\vee$. In either case
we have $s_\lambda^c\cong \psi_l\varphi_l s_\lambda^\vee$, and 
 it follows from Lemma~\ref{lem: strongly irreducible is uniquely polarizable}
 that~$\psi_l\varphi_l=\mu_\lambda^c=\mu_\lambda$.

In the first case, we have~$s_\lambda\cong a_l\otimes b_l\cong\varphi_l
a_l\otimes (a_l^c)^{\vee}$. 
This contradicts Lemma~\ref{lem: local condition to avoid tensor
  induction}  (the
hypotheses of which are satisfied by our assumptions on~$a|_{G_{F_x}}$
and~$b|_{G_{F_x}}$ and by Proposition~\ref{prop: weakly
  irreducible implies compatibility at ramified places}). 
  
We are therefore in the second case, and we need to show that~$\psi_l$, $\varphi_l$ both
extend to characters of~$G_{F^+}$. Taking the conjugate dual of the
isomorphism $b_l\cong \varphi_l(b_l^c)^\vee$, we see that also
$b_l\cong \varphi_l^c(b_l^c)^\vee$, so that $b_l\cong
(\varphi_l/\varphi_l^c)b_l$. Since  $b_l$ is strongly
  irreducible,  we  have~$\varphi_l=\varphi_l^c$, and~$\varphi_l$
  extends to~$G_{F^+}$. Similarly, $\psi_l$  also
  extends, as required.

Since $a_l$, $b_l$ are polarizable, it follows from Lemma~\ref{lem: odd dimensional representations are
    odd}  that they are both odd. We can now apply~\cite[Thm.\
  5.5.1]{BLGGT} to the polarized
representations~$(a_l,\psi_l)$ and $(b_l,\varphi_l)$, obtaining compatible
systems $(\{a_\lambda\},\{\psi_\lambda\})$ and~$(\{b_\lambda\},\{\varphi_\lambda\})$ whose
corresponding~$l$-adic representations
are~$(a_l,\psi_l),(b_l,\varphi_l)$ respectively. 
Since~$(s_\lambda,\mu_\lambda)=(a_l,\psi_l)\otimes (b_l,\varphi_l)$,
we have~$(s,\mu)=(a_p,\psi_p)\otimes (b_p,\phi_p)$, where~$(a_p,\psi_p)$ and~$(b_p,\varphi_p)$ are the $p$-adic
representations corresponding to~$(\{a_\lambda\},\{\psi_\lambda\})$ and~$(\{b_\lambda\},\{\varphi_\lambda\})$ 
respectively. 

Since we also have~$(s,\mu)=(a,\mu_1)\otimes (b,\mu_2)$, it follows from
Lemma~\ref{lemma:prep} (since we are assuming that~$s$ is strongly irreducible) 
that the pairs~$\{a_p,b_p\}$ and~$\{a,b\}$
agree up to twist. After possibly exchanging~ $\{a_\lambda\}$
and~$\{b_\lambda\}$, we may suppose that~$b_p$ is a twist
of~$b$. Since~$b$ is strongly irreducible, and both~$b$ and~$b_p$ are
both polarizable and de Rham, the twist must be by an algebraic
character of~$G_{F}$. 
Replacing~$\{b_\lambda\}$
by the twist by the corresponding compatible system of characters,
and~$\{a_\lambda\}$ by the inverse twist, we have constructed the
sought-after compatible systems (note that we must
have~$\mu_2=\varphi_p$ by another application of Lemma~\ref{lem:
  strongly irreducible is uniquely polarizable}, so that also~$\mu_1=\psi_p$).
\end{proof}

\subsection{A technical lemma}We end this section with a technical lemma that will
be used in Section~\ref{subsec: local conditions strong irred}.  We begin
with some equally technical preliminaries.

  \begin{lemma} \label{lemma:somecomponentsone}
  Let~$H$ be a reductive linear algebraic group
  with a faithful irreducible linear representation~$V$,
  such that the restriction of~$V$ to~$H^{\circ}$ remains irreducible. Let~$Z_{H}$ denote the center of~$H$.
  Then there is an injective
  map
  $$H/Z_H H^{\circ} \rightarrow \Out(H^{\circ,\der}).$$
  In particular, if~$Z_H$ is connected, then the component group of~$H$ injects into~$\Out(H^{\circ,\der})$.
  \end{lemma}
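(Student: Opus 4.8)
The statement concerns a reductive group $H$ with a faithful irreducible representation $V$ whose restriction to $H^\circ$ stays irreducible, and asks for an injection $H/Z_H H^\circ \hookrightarrow \Out(H^{\circ,\der})$. The natural approach is to build the map via the conjugation action of $H$ on its identity component $H^\circ$, pass to $H^{\circ,\der}$ and then to outer automorphisms, and finally to identify the kernel of this map with $Z_H H^\circ$ using Schur's Lemma (exactly the device already used several times in the excerpt, e.g.\ in Lemma~\ref{lem: projective images} and Lemma~\ref{lem: control of component groups}).

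\textbf{Step 1: construct the map.} Conjugation gives a homomorphism $H \to \Aut(H^\circ)$; since $H^{\circ,\der}$ is the (characteristic) derived subgroup of $H^\circ$, we get $H\to\Aut(H^{\circ,\der})$, and composing with the projection to $\Out(H^{\circ,\der})$ yields $\varphi: H \to \Out(H^{\circ,\der})$. This is clearly trivial on $H^\circ$ (an element of $H^\circ$ acts by an inner automorphism, at least after accounting for the fact that $H^\circ/H^{\circ,\der}$ is a torus acting trivially by conjugation on $H^{\circ,\der}$), and trivial on $Z_H$ (which acts trivially by conjugation on all of $H$). So $\varphi$ factors through $H/Z_H H^\circ$, giving the desired map $\overline\varphi$.

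\textbf{Step 2: injectivity.} Suppose $h \in H$ maps to the trivial outer automorphism of $H^{\circ,\der}$. Then conjugation by $h$ agrees on $H^{\circ,\der}$ with conjugation by some $g \in H^{\circ,\der}$, so $g^{-1}h$ centralizes $H^{\circ,\der}$. It suffices to show $g^{-1}h \in Z_H H^\circ$, and since $g\in H^\circ$, it suffices to show that the centralizer $C_H(H^{\circ,\der})$ lies in $Z_H H^\circ$. Here is where the representation-theoretic hypothesis enters: because $V|_{H^\circ}$ is irreducible, and $H^{\circ,\der}$ acts irreducibly on $V$ as well (the central torus $Z^\circ$ of $H^\circ$ acts by a scalar by Schur, so $H^{\circ,\der}$ and $H^\circ$ have the same invariant subspaces), any element of $H$ centralizing $H^{\circ,\der}$ acts on $V$ by an $H^{\circ,\der}$-equivariant endomorphism, hence by a scalar by Schur's Lemma; since $V$ is a faithful representation of $H$, such an element is central in $H$. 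Thus $C_H(H^{\circ,\der}) \subseteq Z_H$, which gives $g^{-1}h \in Z_H$ and hence $h \in Z_H H^\circ$, proving injectivity of $\overline\varphi$.

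\textbf{Step 3: the connected-center case.} If $Z_H$ is connected then $Z_H \subseteq H^\circ$, so $Z_H H^\circ = H^\circ$ and the source of $\overline\varphi$ is exactly the component group $H/H^\circ$, giving the final assertion. The main obstacle I anticipate is the careful handling of the difference between $H^\circ$ and $H^{\circ,\der}$ in Step~1 (checking that the central torus contributes nothing to the conjugation action and that "equal invariant subspaces" genuinely lets one replace $H^\circ$ by $H^{\circ,\der}$ in the Schur argument) and in Step~2 (making sure "centralizes $H^{\circ,\der}$" is strong enough to force the scalar conclusion even though we have not assumed $h$ centralizes all of $H^\circ$); both are routine once one writes $H^\circ = H^{\circ,\der}\cdot Z^\circ$ and invokes Schur's Lemma for the faithful irreducible $V$.
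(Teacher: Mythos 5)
Your proof is correct and follows essentially the same route as the paper's: construct the map $H \to \Out(H^{\circ,\der})$ via conjugation, reduce to the case where $h$ centralizes $H^{\circ,\der}$ by multiplying by an element of $H^{\circ,\der}$, observe that $V|_{H^{\circ,\der}}$ is irreducible via the surjection $H^{\circ,\der}\times Z_{H^\circ} \to H^\circ$ and Schur's Lemma, and conclude $h$ is scalar on $V$ hence central by faithfulness. The only cosmetic difference is that you spell out the well-definedness and the connected-center conclusion a bit more explicitly.
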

  
  \begin{proof}
	  If $Z_{H^{\circ}}$ denotes the centre of $H^{\circ}$,
	  then the natural morphism
	  $H^{\circ,\der} \times Z_{H^{\circ}} \to H^{\circ}$
	  is surjective (since $H^{\circ}$ is a connected reductive
	  linear algebraic group).  Thus we see
	  that the irreducible representation
	  $V$ of $H^{\circ}$ remains irreducible when restricted
	  to $H^{\circ,\der}$; and we also see that the conjugation
	  action of $H^{\circ}$ on $H^{\circ,\der}$ is
	  via inner automorphisms, so that there is indeed a well-defined
	  morphism
	  $H/Z_H H^{\circ} \to \Out(H^{\circ,\der}).$

	  Suppose now that some element $h \in H$ acts
	  on $H^{\circ,\der}$ via an inner automorphism.  We must
	  show that $h \in Z_H H^{\circ}$.  Multiplying
	  $h$ by an element of $H^{\circ,\der}$, we may assume
	  that $h$ actually centralizes $H^{\circ,\der}$. 
	  Since $V$ is an irreducible representation of $H^{\circ,\der}$,
	  we see that $h$ acts on $V$ by scalars, and thus commutes
	  with the action of $H$.  Since $H$ acts faithfully on $V$,
	  we see that $h \in Z_H$, as required.
   \end{proof}
  
   \begin{lemma} \label{lemma:somecomponentstwo}
	   Let $H$ be a connected semisimple algebraic
	   group with a faithful irreducible representation
	   of dimension $\le 2^d$.  
  If~$p > \max(d,3)$,
  then~$p$ does not divide the order of~$\Out(H^{\circ,\der})$.
  \end{lemma}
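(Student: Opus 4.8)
The plan is to reduce the statement to a combinatorial fact about Dynkin diagrams, and then to count. Since $H$ is connected and semisimple we have $H^{\circ,\der}=H$, so it suffices to show $p\nmid|\Out(H)|$. After choosing a pinning, $\Out(H)$ is canonically identified with the automorphism group of the based root datum of $H$, and restriction to the set of simple roots realizes the latter as a subgroup of $\Aut(\mathrm{Dyn}(H))$, the automorphism group of the Dynkin diagram of $H$ (this restriction is injective: $H$ being semisimple, the roots span the rational character lattice, so an automorphism of the based root datum fixing every simple root is the identity). Hence it is enough to prove $p\nmid|\Aut(\mathrm{Dyn}(H))|$.

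I would next bound the number $N$ of connected components of $\mathrm{Dyn}(H)$, equivalently the number of almost simple factors of the simply connected cover $\widetilde{H}=\widetilde{H}_1\times\cdots\times\widetilde{H}_N$ of $H$. Inflating the given faithful irreducible representation $V$ along $\widetilde{H}\to H$ and using (as recalled in \S\ref{subsubsec:reductive}) that an irreducible representation of a direct product is an exterior tensor product of irreducible representations of the factors, write $V\cong V_1\otimes\cdots\otimes V_N$ with each $V_j$ irreducible for $\widetilde{H}_j$. No $V_j$ can be trivial: otherwise the image of $\widetilde{H}_j$ in $H$, a nontrivial closed connected normal subgroup, would act trivially on $V$, contradicting faithfulness. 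Since an almost simple group has no nontrivial one-dimensional representation, $\dim V_j\ge 2$ for all $j$, so $2^N\le\dim V\le 2^d$ and therefore $N\le d$.

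Finally I would compute. Grouping the connected components of $\mathrm{Dyn}(H)$ by isomorphism type, with a type $T$ occurring $m_T$ times (so $\sum_T m_T=N\le d$), one has $|\Aut(\mathrm{Dyn}(H))|=\prod_T|\Aut(T)|^{m_T}\,m_T!$. The automorphism group of a connected Dynkin diagram has order $1$, $2$, or $6$ --- it is $6$ only for $D_4$, and $2$ for $A_n$ ($n\ge 2$), $D_n$ ($n\ge 5$), and $E_6$ --- so in particular $|\Aut(T)|$ divides $6$. Now $p>\max(d,3)$ forces $p\ge 5$, whence $p\nmid|\Aut(T)|$; and each $m_T\le N\le d<p$, whence $p\nmid m_T!$. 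Therefore $p\nmid|\Aut(\mathrm{Dyn}(H))|$, and a fortiori $p\nmid|\Out(H)|=|\Out(H^{\circ,\der})|$, as required. The one place calling for care is the reduction to the simply connected cover used to bound $N$ --- in particular checking that faithfulness of $V$ on $H$ forces every tensor factor $V_j$ to be nontrivial --- while the remaining steps are standard structure theory and bookkeeping.
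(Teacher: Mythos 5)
Your proof is correct and follows essentially the same approach as the paper's: both arguments bound the number of almost simple factors of $H$ by $d$ via the tensor-product decomposition of the faithful irreducible representation of $\widetilde{H}$, and then use that diagram automorphisms of a connected Dynkin diagram have order at most $3$ (equivalently, that $|\Aut(T)|$ divides $6$ for each type $T$). The paper argues by contradiction: it takes an element of order $p$ in $\Out(H^{\circ,\der})$ and asserts that, since $p>3$, it ``must consist of a permutation of the simple factors,'' and then concludes from $p>d\ge N$. Your version instead computes $|\Aut(\mathrm{Dyn}(H))|=\prod_T|\Aut(T)|^{m_T}\,m_T!$ and checks $p$-divisibility factor by factor. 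These are the same idea, but your phrasing is slightly more careful at one point: an element of order $p$ in a wreath product $\Aut(T)\wr S_{m_T}$ with $p\nmid|\Aut(T)|$ need not literally lie in the permutation subgroup (it can have a nontrivial base component whose products around the $p$-cycles are trivial); what the argument really uses is that the image of such an element in $S_{m_T}$ has order $p$, forcing $p\le m_T\le d$. Your direct order computation makes this immediate and avoids the imprecision. You also make explicit the embedding $\Out(H)\hookrightarrow\Aut(\mathrm{Dyn}(H))$ via restriction of based-root-datum automorphisms to the simple roots, which the paper leaves implicit.
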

  
  \begin{proof} Suppose that we have an element
    of~$\Out(H^{\circ,\der})$ of order divisible by~$p$.
	Any such outer automorphism induces a non-trivial outer automorphism
	of the corresponding Lie algebra.
Since the simple Lie algebras only have automorphisms of order at most~$3$, any such
automorphism can 
must consist of a permutation of the simple factors. But if
there is a faithful representation of dimension at most~$2^d$, then
there can be at most~$d$ simple factors, and~$S_d$
has no elements of order~$p$ if~$p > d$.	
  \end{proof}

  The following lemma is the main result of this subsection.
  Recall that if~$M/F$ is a finite extension, we let~$\Mg$ denote the Galois
  closure of~$M$ over~$F$.
  
  \begin{lemma} \label{lemma:noweirdautos} Let~$F$ be a
  number field, 
  and let
$$\abar, \bbar: G_F \rightarrow \GL_n(\Fbar_p)$$ be
such that~$\abar \otimes \bbar$ is irreducible. Let~$\{r_{\lambda}\}$ denote
a weakly compatible system of~$n^2$-dimensional regular representations of~$G_F$ such that~$r_p = a \otimes b$,
where~$a$ is a deformation of~$\abar$, $b$ is a deformation
of~$\bbar$, and~$p > \max(n,3)$. 
Assume that the image of~$\bbar$ contains~$\SL_n(\F_{q})$ for~$q$
sufficiently large, in the sense of {\em Lemma~\ref{lemma:large}}.
 If for some~$\lambda$, $r_{\lambda}$ is induced from an extension~$M/F$,  then~$[\Mg:F]$ has order prime to~$p$.
\end{lemma}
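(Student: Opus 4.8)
The plan is to argue by contradiction: suppose $r_\lambda = \Ind_{G_M}^{G_F} \varrho$ for some $M/F$ with $p \mid [\Mg:F]$, and derive a contradiction with the structure of the Zariski closure of the image of $r = a\otimes b$. First I would reduce to the case where $M/F$ is Galois, by replacing $M$ with $\Mg$: the representation $r_\lambda$ being induced from $M$ implies that its restriction to $G_{\Mg}$ is a direct sum of characters' worth of conjugates, and in any case the relevant object is $\Gal(\Mg/F)$, which acts transitively on the induced pieces. Since $r_\lambda$ is regular by hypothesis, the argument of~\cite[Cor.\ 4.4]{MR3186511} (as recalled in the proof of Lemma~\ref{lemma:induced}) shows that $r_\lambda|_{G_M}$ is semisimple and decomposes into distinct summands, so the component group of the Zariski closure $G_\lambda$ of $\im(r_\lambda)$ maps onto $\Gal(\Mg/F)$ (through the permutation action on the induced summands, which is faithful once we have passed to $\Mg$). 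In particular $p$ divides the order of the component group $G_\lambda / G_\lambda^\circ$.

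Next I would use the hypothesis $r_p = a\otimes b$ together with the large-image assumption on $\bbar$ to pin down $G_\lambda^{\circ,\der}$. By Theorem~\ref{thm: Serre theorem on connected component group}, the preimage of $G_\lambda^\circ$ in $G_F$ is independent of $\lambda$, so we may compute the component group at $\lambda = p$. By Lemma~\ref{lemma:large}, $B^{\der,\circ} = \SL_n$, and by Lemma~\ref{lem: prep factoring theorem} the representation of $G^{\der,\circ}$ (where $G$ is the Zariski closure of $\im(a\otimes b)$) is the exterior tensor product representation of the image of $A^{\der,\circ}\times B^{\der,\circ} = A^{\der,\circ}\times\SL_n$. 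Since $A$ acts faithfully in dimension $n$ and is reductive, $A^{\der,\circ}$ is a connected semisimple group with a faithful representation of dimension $\le n \le 2^n$ (a crude bound suffices), hence so is $G^{\der,\circ}$, which has a faithful irreducible representation of dimension $n^2 \le 2^{n^2}$. Now apply Lemma~\ref{lemma:somecomponentsone}: the key input is that the restriction of $r_\lambda$ to $G_\lambda^\circ$ is irreducible. This irreducibility I would obtain from the fact that $G_\lambda^\circ$ contains $G^{\der,\circ}$ which acts irreducibly on $V\otimes V'$ (the tensor product of the standard-type representations), since $V\otimes V'$ is already irreducible as a representation of $A^{\der,\circ}\times\SL_n$ by Lemma~\ref{lem: prep factoring theorem} combined with the general discussion of~\ref{subsubsec:reductive}. (If a twist is needed to make the determinant finite, as in the proof of Lemma~\ref{lemma:prep}, I would perform it first; this does not affect component groups or irreducibility of the adjoint-type statements.)

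With irreducibility of $V|_{G_\lambda^\circ}$ in hand, Lemma~\ref{lemma:somecomponentsone} gives an injection of $G_\lambda/Z_{G_\lambda}G_\lambda^\circ$ into $\Out(G_\lambda^{\circ,\der})$; and since the quotient $G_\lambda/G_\lambda^\circ$ surjects onto $\Gal(\Mg/F)$ while the central part $Z_{G_\lambda}G_\lambda^\circ/G_\lambda^\circ$ has order prime to $p$ (as $Z_{G_\lambda}$ injects into $\overline{\Q}_\ell^\times$, whose torsion prime to $\ell$ is prime to $p$ once $\ell\neq p$ — and if $\ell = p$ one argues that the component group is still controlled, or simply notes $p\nmid \#(\text{finite cyclic group injecting into }k^\times)$ when $\mathrm{char}\,k\neq p$, which holds since $p > n \ge 2$ forces us into the tame situation), we conclude $p$ divides $\#\Out(G_\lambda^{\circ,\der})$. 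But by Lemma~\ref{lemma:somecomponentstwo}, applied with $d = n^2$ and using $p > \max(n,3) \ge \max(\text{something},3)$ — here I should double-check the bound, since Lemma~\ref{lemma:somecomponentstwo} wants $p > \max(d,3)$ with $d$ the log-dimension, so I would instead bound the number of simple factors of $G_\lambda^{\circ,\der}$ directly by $n$ (it has a faithful $n^2$-dimensional representation that is an $n$-fold-by-$n$-fold tensor product, so at most $2n$ simple factors, or more carefully at most $n$ from each tensor slot) and invoke that $S_{2n}$ has no $p$-element for $p > 2n$; if only $p > \max(n,3)$ is available, the finer analysis as in the proof of Lemma~\ref{lem: control of component groups} — that an order-$p$ automorphism permuting $\le n$ simple factors forces $p \le n$ — gives the contradiction. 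The main obstacle will be exactly this bookkeeping: making sure the count of simple factors of $G_\lambda^{\circ,\der}$ is small enough that $p > \max(n,3)$ rules out a $p$-element in the relevant symmetric group. I expect this follows because $G_\lambda^{\circ,\der}$ is a quotient of $A^{\der,\circ}\times\SL_n$ with $A^{\der,\circ}$ of rank $< n$, hence has at most $n$ simple factors, and $p > n$ then excludes a $p$-cycle. This yields the contradiction and completes the proof.
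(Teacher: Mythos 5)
The central gap in your argument is the claim that the restriction of $r_p$ to $G^{\circ}$ is irreducible. You deduce this from Lemma~\ref{lem: prep factoring theorem} together with the observation that the exterior tensor product of the natural representations of $A^{\der,\circ}$ and $\SL_n$ is irreducible; but the latter requires the natural representation of $A^{\der,\circ}$ (that is, $a$ restricted to the identity component of the Zariski closure of its image) to be irreducible, and this is precisely the assertion that $a$, and hence $a\otimes b$, is \emph{strongly} irreducible. Nothing in the hypotheses guarantees this: the assumptions tell you that $\abar\otimes\bbar$ is irreducible and that $\bbar$ has large image, but $\abar$ (and hence $a$) could be irreducible yet induced from a proper extension, in which case $r_p|_{G^{\circ}}$ is reducible and Lemma~\ref{lemma:somecomponentsone} does not apply to $G$. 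The paper's proof handles this by an extra reduction which you have omitted: since $r_p$ is regular, Lemma~\ref{lemma:induced} shows that $r_p$ is induced from a strongly irreducible $s_p$ over some extension $M_p/F$ with $m\,[M_p:F]=n$, so that $\Gal(\Mg_p/F)$ embeds into $S_n$ and hence has order prime to $p$ (since $p>n$); one then applies Lemmas~\ref{lemma:somecomponentsone} and~\ref{lemma:somecomponentstwo} to the Zariski closure of $s_p|_{G_{\Mg_p}}$, whose identity component \emph{does} act irreducibly exactly because $s_p$ is strongly irreducible. That reduction, not the bookkeeping of simple factors you flagged, is the real obstacle.

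A secondary error: you claim $Z_{G_\lambda}G^{\circ}_\lambda/G^{\circ}_\lambda$ has order prime to $p$ because $Z_{G_\lambda}$ injects into $\Qbar_\ell^\times$ whose prime-to-$\ell$ torsion is prime to $p$, but this is false --- $\Qbar_\ell$ is algebraically closed, so $\Qbar_\ell^\times$ contains all $p$-power roots of unity for every $\ell$. The paper instead twists so that $\det(r_\lambda)$ has \emph{infinite} image (the opposite of the twist you cite from the proof of Lemma~\ref{lemma:prep}, which makes the determinant finite), then concludes by Schur's lemma that $Z$ is the full group of scalars and hence connected, so that $Z G^{\circ}/G^{\circ}$ is trivial and the injection of Lemma~\ref{lemma:somecomponentsone} becomes an injection of the entire component group into the outer automorphism group.
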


\begin{proof} We may (and so) assume, after tensoring $\{r_{\lambda}\}$ by a compatible system of characters if necessary, 
    that the image of~$\det(r_{\lambda})$
    is infinite. (Note that any twist of~$r_\lambda$ is also induced from~$M/F$.) Let~$G^{\circ}_{\lambda} \subset G_{\lambda}$ denote the connected component
    of the Zariski closure~$G_{\lambda}$ of~$r_{\lambda}$.
    Let~$N/F$ denote the fixed field of the
    inverse image of~$G^{\circ}_{\lambda}$. Since the connected
    component~$G^{\circ}_{\lambda}$ is a normal subgroup of~$G_{\lambda}$, it follows  that~$N/F$ is
    Galois. On the other hand, if~$r_{\lambda}$  is induced from~$M$, then
    certainly~$M \subset N$, and hence~$\Mg \subset N$. 
    Hence it suffices to show that the component group of $G_{\lambda}$
    has order prime to~$p$.
    For this, we work at the prime~$p$ (since the order of the component
    group is independent of~$\lambda$ by Theorem~\ref{thm: Serre theorem on connected component group}).

 To this end,
 we now let~$G$ denote the Zariski closure of~$r_p$ and~$G^{\circ}$ the connected component of the identity in $G$.
 By Lemma~\ref{lem: prep factoring theorem}, we may assume that
 $G^{\der,\circ}$ is the natural representation
of~$A^{\der,\circ} \times B^{\der,\circ}$, where~$A$ and~$B$ are the Zariski
closures of the images of~$a$ and~$b$ respectively, and~$B^{\der,\circ} = \SL_n(\Qbar_p)$.
 Let~$Z$ denote the center of~$G$.
Because~$\det(G)$ is infinite, it follows that the Lie algebra~$\g$ contains a non-trivial
torus~$\ttt$, and hence~$\exp(\ttt) \subset Z$ is infinite.
Because~$G$ acts irreducibly, by Schur's Lemma, it follows that~$Z$ consists of scalars, and thus~$Z = \Qbar^{\times}_p = Z^{\circ}$
is connected.

Because~$r_p$ is regular, it follows 
from Lemma~\ref{lemma:induced} that~$r_p$ is induced from a strongly irreducible representation~$s_p$
over some extension~$M_p/F$.
By comparison with the decomposition of~$G^{\der,\circ}$ above, this strongly irreducible representation
has dimension~$mn$ for some~$m|n$ with~$m [M_p:F] = n$. In particular,
the Galois closure~$\Mg_p/F$ of~$M_p$ over~$F$ has the property that~$\Gal(\Mg_p/F)$
is a subgroup of~$S_n$,  and thus has order prime to~$p$.

Let~$H$ denote the Zariski closure of the image of~$s_p|_{G_{\Mg_p}}$. Since~$[\Mg_p:F]$ has order
prime to~$p$, it is enough to prove that the component group of~$H$
has order prime to~$p$. There is an inclusion~$G^{\circ} \subset H \subset G$,
and~$Z_H = Z = Z^{\circ}$ is connected. 
Moreover,  since~$s_p$ is strongly irreducible, it follows that~$H$   has a faithful irreducible representation of dimension~$mn \le n^2 \le 2^n$
such that the restriction to~$H^{\circ}$  is also irreducible.
The result now follows from Lemmas~\ref{lemma:somecomponentsone}
and~\ref{lemma:somecomponentstwo}.
\end{proof}

\section{Building Lifts}\label{sec: building lifts}
\subsection{Deformations of~\texorpdfstring{$\abar \otimes
    \bbar$}{abar times bbar} and strong irreducibility}\label{subsec:
  local conditions strong irred}
In this
section, we show that strong irreducibility of compatible systems can
be ensured by imposing ramification conditions at finite places. These
ramification conditions become trivial after a finite base change, and
we will use the Khare--Wintenberger method to remove them from our
final results.

We begin with the following preparatory lemma:
\begin{lemma} \label{lemma:freebie} Let~$K/\Q_{\hh}$ be a finite extension of degree prime to~$p$
for some prime~$\hh \equiv 1 \bmod p$.
Suppose that~$\ell \ne \hh$, and let
$$\rho: G_{K} \rightarrow \GL_m(\Qbar_{\ell})$$
be a representation such that~$\rho|_{I_K}$  has finite~$p$-power order.
If~$p \nmid m$, then~$\rho$ admits a 
one dimensional sub-quotient 
which is the restriction of a character of~$G_{\Q_\hh}$.
\end{lemma}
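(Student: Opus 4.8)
The plan is to analyze the restriction $\rho|_{I_K}$ and exploit the arithmetic constraints coming from the structure of the inertia group of a local field and the divisibility hypothesis $p\nmid m$. First I would recall that since $K/\Q_\hh$ has degree prime to $p$ and $\rho|_{I_K}$ has finite $p$-power order, the image of $I_K$ under $\rho$ is a cyclic (in fact abelian) $p$-group: indeed $\rho|_{I_K}$ factors through a finite quotient of $I_K$ of $p$-power order, and the tame quotient of $I_K$ is procyclic while the wild inertia is pro-$\hh$; since $\hh\neq p$, the image of wild inertia is trivial, so $\rho(I_K)$ is a quotient of the tame procyclic group $I_K^{\mathrm{tame}}$, hence cyclic of $p$-power order. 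Diagonalizing, $\rho|_{I_K}$ decomposes as a sum of characters $\chi_1,\dots,\chi_m$ of $I_K$, each of $p$-power order.

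Next I would use the congruence $\hh\equiv 1\bmod p$ and the hypothesis $[K:\Q_\hh]$ prime to $p$ to pin down these characters. The tame quotient $I_{\Q_\hh}^{\mathrm{tame}}$ has the property that its maximal pro-$p$ quotient is $\Z_p$ (via the $\hh$-power residue symbols, using $p\mid \hh-1$), and the norm/restriction map $I_K^{\mathrm{tame}}\to I_{\Q_\hh}^{\mathrm{tame}}$ on maximal pro-$p$ quotients is multiplication by $[K:\Q_\hh]$ (up to the ramification index), which is a $p$-adic unit; hence every character of $I_K$ of $p$-power order is the restriction of a character of $I_{\Q_\hh}$ of $p$-power order. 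Moreover such characters of $I_{\Q_\hh}$ extend to $G_{\Q_\hh}$ (e.g.\ because $\mathrm{Gal}(\overline{\Q_\hh}/\Q_\hh)$ acts on the tame pro-$p$ quotient of inertia through $\Z_\hh^\times$, and since $\hh\equiv 1\bmod p$ one can choose an unramified twist making the character $G_{\Q_\hh}$-stable; alternatively, local class field theory gives characters of $G_{\Q_\hh}$ of $p$-power conductor exponent $1$ restricting to any prescribed $p$-power character of inertia).

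So far this produces, after restriction to $I_K$, a decomposition into characters each extending to $G_{\Q_\hh}$, but I still need one of the \emph{$G_K$-subquotients} of $\rho$ to be one-dimensional and to descend. For this I would look at how $\mathrm{Frob}$ (a lift of Frobenius in $G_K$) permutes the characters $\chi_i$: the $G_K$-action on $\rho|_{I_K}$ permutes the isotypic pieces, and the Jordan–Hölder factors of $\rho$ restricted to inertia are the $G_K$-orbits of the $\chi_i$. If some orbit has size $1$, that character is $G_K$-stable, giving a one-dimensional subquotient of $\rho$ whose restriction to $I_K$ extends to $G_{\Q_\hh}$ — and then twisting by an unramified character (which always descends) we arrange the full character to descend to $G_{\Q_\hh}$, since the unramified part of a $G_K$-character is pulled back from $G_{\Q_\hh}$ after raising to a power prime to $p$, but more simply any $1$-dimensional subquotient of $\rho$ over $G_K$ is of the form (a $p$-power character of $I_{\Q_\hh}$ restricted to $I_K$)$\otimes$(unramified), and both factors visibly come from $G_{\Q_\hh}$. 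The key point, and the main obstacle, is therefore showing that some $G_K$-orbit among the $\chi_i$ has size exactly $1$: here is where $p\nmid m$ enters. The orbits under the cyclic group $\langle \mathrm{Frob}\rangle$ acting on the $m$ characters partition $m$ into orbit sizes, each a power of (the residue degree issues aside) — more precisely each orbit size divides the order of Frobenius acting, which is a $p$-power since it permutes characters of a cyclic $p$-group compatibly with the $\Z_\hh^\times$-action and $\hh\equiv 1\bmod p$ forces the relevant action to be through a $p$-group. Thus all orbit sizes are powers of $p$; if every orbit had size $>1$ then $p\mid m$, contradicting the hypothesis. Hence at least one orbit is a singleton, which furnishes the desired one-dimensional subquotient, completing the argument. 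I expect the delicate bookkeeping to be verifying that the Frobenius action on the $p$-power characters of inertia is through a $p$-group — this uses $\hh\equiv 1\bmod p$ crucially and is the technical heart of the lemma.
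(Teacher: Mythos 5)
Your approach is genuinely different from the paper's: you work directly with the decomposition of $\rho|_{I_K}$ into inertial characters and argue by counting Frobenius orbits, whereas the paper first reduces to the case of $\rho$ irreducible, invokes the structure theory of tamely ramified irreducibles (each is induced from a character of an unramified extension of degree $m$), and then applies local class field theory to the inducing character. Your orbit-counting step is essentially sound: Frobenius acts on characters of the cyclic $p$-group $\rho(I_K)$ by $\chi \mapsto \chi^{q}$ with $q = \hh^{f}$ and $f=f(K/\Q_{\hh})$ prime to $p$, so $q \equiv 1 \pmod p$, which forces the multiplicative order of $q$ modulo any $p$-power to be a $p$-power; hence all orbit sizes are $p$-powers, and $p \nmid m$ forces a singleton orbit.

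However, there is a genuine error in the middle of the argument, namely the assertion that every character of $I_{\Q_{\hh}}$ of $p$-power order extends to $G_{\Q_{\hh}}$; neither justification you offer works. A tame character $\chi$ of $I_{\Q_{\hh}}$ of order $p^{a}$ is Frobenius-stable (hence extendable) if and only if $\chi^{\hh}=\chi$, i.e.\ $p^{a} \mid \hh - 1$, which fails as soon as $a > v_p(\hh-1)$. Your first justification (an unramified twist making the character $G_{\Q_{\hh}}$-stable) cannot help, since unramified twists don't alter the restriction to inertia, which is what controls Frobenius-stability. Your second (local class field theory produces characters of $G_{\Q_{\hh}}$ restricting to any prescribed $p$-power character of inertia) is false for the same order-bound reason: the $p$-part of $\Z_{\hh}^{\times}$ has order exactly $p^{v_p(\hh-1)}$. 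The argument is repairable: the character in a singleton orbit satisfies $\chi^{q-1}=1$, and since $f$ is prime to $p$ one has $v_p(q-1)=v_p(\hh^{f}-1)=v_p(\hh-1)$, so its order is automatically bounded by $p^{v_p(\hh-1)}$ and the extension to $G_{\Q_{\hh}}$ exists. This is precisely the arithmetic fact the paper records when it computes that the largest power of $p$ dividing $|k_L^{\times}|$ equals the largest power of $p$ dividing $\hh-1$; your proof also needs it, but omits it and substitutes a false unconditional extension claim in its place.
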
 

Note that the lemma applies to both~$\ell = p$ and~$\ell \ne p$.

\begin{proof} 
	Since not every irreducible constituent of $\rho$ can
	be of degree a multiple of $p$,
we may reduce to the case that~$\rho$ is irreducible.
The image of inertia has~$p$-power order and~$\hh \ne p$.
Hence
the representation is  tamely ramified.
 Any such irreducible representation is
of the form~$\rho = \Ind^{G_K}_{G_L} \psi$ for  a character~$\psi$ of~$G_{L}$, 
where~$L/K$ is unramified
(although we don't use this fact)
and the degree~$[L:K] = m$.
 It follows that
 $$n: = [L:\Q_{\hh}] = [L:K][K:\Q_{\hh}]$$
  is prime to~$p$.
Also, since $\rho|_{I_K}$ has $p$-power order, we see that $\psi|_{I_L}$
has $p$-power order.

It is elementary to see
that, since~$p \nmid n$ and~$p|(\hh-1)$, the largest power of~$p$ dividing~$|k^{\times}_L|$ is the same
power of~$p$ which divides~$|\F_{\hh}^\times|=\hh - 1$. (Indeed, $(\hh-1)$ divides
$|k^{\times}_L|$, which in turn divides~$\hh^n - 1$, and we have
$(\hh^n-1)/(\hh-1)\equiv n\pmod{p}$.)  
By local class field theory, it follows that the
character~$\psi$ of~$G_L$ (whose restriction to inertia is, as we have 
observed, of~$p$-power order) 
is the restriction of a character of~$G_{\Q_{\hh}}$, which we also
denote by~$\psi$.
Yet then
$$\rho = \Ind^{G_K}_{G_L} \psi  = \psi \otimes
\Ind^{G_K}_{G_L} 1.$$
The representation $\Ind^{G_K}_{G_L} 1$ contains a copy of the trivial
representation.
Since~$\rho$ is irreducible, it follows that~$L=K$, and~$\rho = \psi$, as claimed.
\end{proof}

Let~$F$ be a CM field. Consider  a pair of irreducible 
representations:
$$\abar, \bbar: G_F \rightarrow \GL_n(\Fbar_p)$$
such that~$\abar \otimes \bbar$ is irreducible and~$\abar$ and~$\bbar$
are polarizable. 
We now build on Lemma~\ref{lemma:noweirdautos}, 
showing how we can control the extensions~$M/F$ from which representations inside a compatible
system containing a lift of~$\abar\otimes\bbar$ could possibly be induced.

We will shortly prove Lemma~\ref{lemma:finiteM}, which enables us to show that, for certain deformation problems, any deformation of~$\abar \otimes \bbar$
which is induced must be induced from one of a finite number of possible fields, independent of
certain classes of auxiliary ramification sets~$\Sigma$.
We start with the following preparatory lemma:

\begin{lemma} \label{lemma:inducedunramified}
Let~$[F_v:\Q_v]$ be a finite extension, and let
$$r: G_{F_v} \rightarrow \GL_n(\Qbar_l)$$
be either potentially unramified if~$l \ne v$ or potentially crystalline if~$l = v$.
Assume that the restriction of the Weil--Deligne representation~$\WD(r)$ to the inertia group~$I_{F_v}$
factors through a group of~$p$-power order.
Suppose that~$r$ is induced from a finite extension~$M_v/F_v$ whose Galois
closure~$\Mgv_v$ over~$F_v$ has  order prime to~$p$.
Then~$M_v/F_v$ is unramified. 
\end{lemma}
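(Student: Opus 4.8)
The plan is to reduce the statement to a triviality about the permutation action attached to the induced structure. Write $r \cong \Ind_{G_{M_v}}^{G_{F_v}}\sigma$, let $X$ be the finite $G_{F_v}$-set $G_{M_v}\backslash G_{F_v}$, and let $\Mgv_v$ be the Galois closure of $M_v$ over $F_v$, so that $\Gamma := \Gal(\Mgv_v/F_v)$ is precisely the image of $G_{F_v}$ in the symmetric group on $X$, and it acts faithfully on $X$; by hypothesis $p\nmid|\Gamma|$. Since $M_v/F_v$ is unramified as soon as $I_{F_v}\subseteq G_{\Mgv_v}$, i.e.\ as soon as the image $J$ of $I_{F_v}$ in $\Gamma$ is trivial, and since $|J|$ divides $|\Gamma|$ and hence is prime to $p$, it will be enough to show that $J$ has $p$-power order.

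The first step is to exhibit the permutation representation $\Qbar_l[X]=\Ind_{G_{M_v}}^{G_{F_v}}\mathds{1}$ as a $G_{F_v}$-subrepresentation of $r\otimes r^\vee$. This follows from the projection formula $r\otimes r^\vee\cong\Ind_{G_{M_v}}^{G_{F_v}}\bigl(\sigma\otimes(r^\vee|_{G_{M_v}})\bigr)$ together with Mackey's formula, which shows that $r^\vee|_{G_{M_v}}$ contains $\sigma^\vee$ as a direct summand (the one indexed by the identity double coset), so that $\sigma\otimes\sigma^\vee\supseteq\mathds{1}$ is a summand of $\sigma\otimes(r^\vee|_{G_{M_v}})$; applying the exact functor $\Ind_{G_{M_v}}^{G_{F_v}}$ gives the embedding. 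Because $G_{F_v}$ acts on $\Qbar_l[X]$ through the faithful action of $\Gamma$ on $X$, the image of $I_{F_v}$ acting on $\Qbar_l[X]$ is exactly $J$.

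The second step bounds $J$ using the hypothesis on $\WD(r)$. When $l\neq v$, $r$ is potentially unramified, so $\WD(r)$ has trivial monodromy and $\WD(r)|_{I_{F_v}}=r|_{I_{F_v}}$; thus $r|_{I_{F_v}}$, hence $(r\otimes r^\vee)|_{I_{F_v}}$, hence its subrepresentation $\Qbar_l[X]|_{I_{F_v}}$, has image of $p$-power order, and this image is $J$. When $l=v$, $r$ is potentially crystalline, and I would invoke the standard fact that a finite extension over which $r$ becomes crystalline can be chosen with ramification measured by $\WD(r)|_{I_{F_v}}$: there is a finite $E_v/F_v$ over which $r$ is crystalline with $[I_{F_v}:I_{E_v}]$ a power of $p$. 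Then $r\otimes r^\vee$, and hence its subrepresentation $\Qbar_l[X]$, is crystalline over $E_v$; but $\Qbar_l[X]$ has finite image, so $\Qbar_l[X]|_{G_{E_v}}$ is crystalline with all Hodge--Tate weights zero, hence unramified. Therefore $I_{E_v}$ acts trivially on $\Qbar_l[X]$, i.e.\ $I_{E_v}\subseteq G_{\Mgv_v}$, and so $|J|=[I_{F_v}:I_{F_v}\cap G_{\Mgv_v}]$ divides $[I_{F_v}:I_{E_v}]$, a power of $p$. In either case $|J|$ is a power of $p$, which together with $p\nmid|\Gamma|$ forces $J=1$.

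The routine part is the representation theory of the first step; the part I expect to require the most care is the $p$-adic Hodge theory input in the case $l=v$, namely that the field of crystallinity of $r$ has $p$-power ramification over $F_v$ (read off from $\WD(r)|_{I_{F_v}}$), together with the standard fact that a crystalline representation with all Hodge--Tate weights $0$ is unramified. The case $l\neq v$ is comparatively trivial, since there $\WD(r)|_{I_{F_v}}$ is literally $r|_{I_{F_v}}$ and no $p$-adic Hodge theory is involved.
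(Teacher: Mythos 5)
Your proof is correct, but it takes a genuinely different route from the paper's. The paper uses the compatibility of $\WD$ with induction, so that $\WD(r) = \Ind_{W_{M_v}}^{W_{F_v}} \WD(s)$; the kernel of $W_{F_v}$ acting on this induced representation is contained in $W_{M_v}$, and being normal is therefore contained in the core $W_{\Mgv_v}$, so the hypothesis that $\WD(r)|_{I_{F_v}}$ has $p$-power image shows at once that the inertia subgroup of $\Gal(\Mgv_v/F_v)$ has $p$-power order, hence is trivial since $[\Mgv_v:F_v]$ is prime to $p$. You instead avoid ``$\WD$ commutes with induction'' by embedding the permutation module $\Qbar_l[X]$ in $r\otimes r^\vee$ via Mackey theory and bounding the image $J$ of $I_{F_v}$ acting on it. What this buys is a more elementary handling of the $l\neq v$ case; the cost is a case split and some extra $p$-adic Hodge theory when $l=v$. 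There, the point you flag as a ``standard fact''---that one can choose the field of crystallinity $E_v/F_v$ with $[I_{F_v}:I_{E_v}]$ a power of $p$---is true but takes slightly more than a word: if $I'$ is the kernel of $\WD(r)|_{I_{F_v}}$, an open normal subgroup of $I_{F_v}$ (indeed normal in $W_{F_v}$) of $p$-power index, then its fixed field is a finite $p$-extension of $F_v^{\mathrm{ur}}$, hence of the form $F_v^{\mathrm{ur}}(\alpha)$ with $\alpha$ defined over a finite unramified extension $F'_v/F_v$, and one takes $E_v = F'_v(\alpha)$. A slicker variant of your $l=v$ case, avoiding this subtlety and making the two cases uniform, is to use compatibility of $\WD$ with tensor products and subquotients instead: $\Qbar_l[X] = \WD(\Qbar_l[X])$ embeds in $\WD(r)\otimes\WD(r)^\vee$, whose restriction to $I_{F_v}$ has $p$-power image directly from the hypothesis.
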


\begin{proof}  The formation of Weil--Deligne representations is compatible with inductions.
Hence, if~$r =\Ind_{G_{M_v}}^{G_{F_v}} s$,
then~$\WD(r) = \Ind_{W_{M_v}}^{W_{F_v}} \WD(s)$. It follows that the kernel of~$W_{F_v}$ acting on~$\WD(r)$
is contained in~$W_{M_v}$ and thus also contained in~$W_{\Mgv_v}$. But the image of~$\WD(r)$ restricted to~$I_{F_v}$ 
has~$p$-power order  by assumption, and thus the inertia subgroup of~$\Gal(\Mgv_v/F_v)$ has~$p$-power order.
But~$\Gal(\Mgv_v/F_v)$  has order prime to~$p$ by assumption, and thus~$M_v/F_v$ is unramified.
\end{proof}

\begin{lemma} \label{lemma:finiteM} 
Fix a finite set~$S$ of places of~$F^{+}$ containing all primes
above~$p$ and 
all the primes at which~$\abar$ or~$\bbar$ are ramified.
Let~$\{r_{\lambda}\}$ denote
a weakly irreducible compatible system of odd, polarizable, regular~$n^2$-dimensional representations of~$G_F$ such that~$r_p = c \otimes d$,
where~$c$ is a deformation of~$\abar$, and~$d$ is a deformation of~$\bbar$, and~$p > \max(n,3)$.
Assume that the image of~$\bbar$ contains~$\SL_n(\F_{q})$ for~$q$ some
sufficiently large power of~$p$, in the sense of
{\em Lemma~\ref{lemma:large}}. 

Suppose that~$\{r_{\lambda}\}$
is unramified outside a finite set ~$S \cup \Sigma$  of places of~$F^+$
{\em (}in the sense of {\em Remark~\ref{rem:unramified for compatible systems})}, 
 and that for primes~$v \in \Sigma$, 
 ~$r_{p}(I_{F_v})$ is finite. 
Suppose that~$\lambda$ is a prime such that the 
  representation~$r_{\lambda}$ is  induced from~$M/F$.
  Then:
  \begin{enumerate}
  \item $[\Mg:F]$ has order prime to~$p$, 
  \item $[M:F] \le n^2$, and
  \item $M/F$ is unramified outside~$S$.
  \end{enumerate}
  In particular, if~$S$ is fixed, there are only finitely many such~$M$ independently
  of the choice of~$\lambda$, the compatible system~$\{r_{\lambda}\}$, and the set~$\Sigma$.
  \end{lemma}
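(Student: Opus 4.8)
The plan is to combine three earlier ingredients: Lemma~\ref{lemma:noweirdautos} (which handles the prime-to-$p$ claim for $[\Mg:F]$), Lemma~\ref{lemma:induced} plus the Lie-theoretic analysis in Lemma~\ref{lem: prep factoring theorem} (which bounds $[M:F]$), and Lemma~\ref{lemma:inducedunramified} together with the tame-induction argument of Lemma~\ref{lemma:freebie} (which controls ramification away from~$S$). First I would recall from Proposition~\ref{prop: weakly irreducible implies compatibility at ramified places} that since $\{r_\lambda\}$ is weakly irreducible it is strictly compatible, so that whether $r_\lambda|_{G_{F_v}}$ is (potentially) unramified/crystalline and the restriction of $\WD(r_\lambda|_{G_{F_v}})$ to inertia are independent of $\lambda$; in particular the hypothesis that $r_p(I_{F_v})$ is finite for $v\in\Sigma$ propagates to every~$\lambda$, and (by strict compatibility and the definition of being unramified outside $S\cup\Sigma$) $r_\lambda$ is unramified at all places outside $S\cup\Sigma$ not dividing the residue characteristic of~$\lambda$.

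For part (1), the claim $[\Mg:F]$ is prime to~$p$ is exactly the conclusion of Lemma~\ref{lemma:noweirdautos}, whose hypotheses ($\abar\otimes\bbar$ irreducible, $r_p=a\otimes b$ with $a,b$ deformations of $\abar,\bbar$, $p>\max(n,3)$, image of $\bbar$ containing $\SL_n(\F_q)$) are all in force here, so this is immediate. For part (2): since $r_p$ is regular, Lemma~\ref{lemma:induced} writes $r_p=\Ind_{G_{M_p}}^{G_F}s_p$ with $s_p$ strongly irreducible over $M_p/F$; comparing with the description of $G^{\der,\circ}$ in Lemma~\ref{lem: prep factoring theorem} (where $B^{\der,\circ}=\SL_n$ and the representation of $A^{\der,\circ}\times B^{\der,\circ}$ is the external tensor product), $s_p$ has dimension $mn$ for some $m\mid n$ with $m[M_p:F]=n$, so $[M_p:F]\le n$; but a priori the field $M$ in the statement need not be $M_p$. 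The cleanest route is to observe that any $M$ from which $r_\lambda$ is induced gives a subgroup $\Gal(\overline F/M)$ of index $[M:F]$ that stabilizes a line in the induced module, and since $r_\lambda$ has $n^2$-dimensional underlying space, any minimal such $M$ has $[M:F]\le n^2$; so (2) follows with the crude bound $n^2$ stated in the lemma. (One does not need the sharper bound, and indeed the lemma only claims $n^2$.)

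For part (3): fix a place $v\in\Sigma$ and a place $\tilde v$ of $\Mg$ above it. By (1) the local Galois group $\Gal(\Mgv_{\tilde v}/F^+_v)$ has order prime to~$p$, and by the compatibility noted above $\WD(r_\lambda|_{G_{F_v}})|_{I_{F_v}}$ factors through a finite group; I would additionally argue that this finite group has $p$-power order, using the deformation hypothesis. Indeed $r_\lambda=c\otimes d$ (by the uniqueness-up-to-twist of factorizations, Lemma~\ref{lemma:prep}, applied $\lambda$-adically), and at $v\in\Sigma$ the reductions $\abar,\bbar$ are unramified (as $v\notin S$), so the inertial image of $\WD(r_\lambda|_{G_{F_v}})$ lies in the kernel of reduction, hence is a finite $p$-group. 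Then Lemma~\ref{lemma:inducedunramified} applies and shows $M_{\tilde v}/F_v$ is unramified at~$v$; since $v\in\Sigma$ was arbitrary and $M/F$ is unramified outside $S\cup\Sigma$ to begin with, $M/F$ is unramified outside~$S$. Finally, the ``in particular'' clause is immediate: the conditions $[\Mg:F]$ prime to $p$, $[M:F]\le n^2$, and $M/F$ unramified outside the fixed finite set $S$ leave only finitely many possibilities for $M$ by Hermite--Minkowski, with no dependence on $\lambda$, $\{r_\lambda\}$, or $\Sigma$.

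The main obstacle I expect is part (3), specifically pinning down that the inertial image of the Weil--Deligne representation is a $p$-group rather than merely finite --- this is what lets Lemma~\ref{lemma:inducedunramified} fire. The resolution is to exploit that at the auxiliary primes $v\in\Sigma\setminus S$ the residual representation is unramified, so inertia acts trivially mod~$\m$, forcing its (finite) image to be a pro-$p$, hence $p$-, group; combined with Lemma~\ref{lemma:freebie}'s tame-induction structure this is what makes the argument go through. The other steps are essentially bookkeeping on top of the already-established lemmas.
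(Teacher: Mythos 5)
Your proposal follows the same line of argument as the paper: part (1) is direct from Lemma~\ref{lemma:noweirdautos}; part (2) is the trivial dimension count; and part (3) is Hermite--Minkowski together with Lemma~\ref{lemma:inducedunramified}, which requires knowing that the inertial image of $\WD(r_\lambda|_{G_{F_v}})$ at $v\in\Sigma$ is a finite $p$-group.

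One step in part~(3) as you wrote it is not correct, though the error is easily repaired. You assert that ``$r_\lambda = c\otimes d$ by the uniqueness-up-to-twist of factorizations, Lemma~\ref{lemma:prep}, applied $\lambda$-adically.'' There is no $\lambda$-adic factorization available at this stage: the hypothesis only supplies $r_p = c\otimes d$ at the one prime $p$, and Lemma~\ref{lemma:prep} is a statement about uniqueness of a factorization of a single representation, not a device for transporting the factorization of $r_p$ to $r_\lambda$. (Producing such a $\lambda$-adic factorization is precisely the hard content of Theorem~\ref{thm: first factoring theorem}, which requires additional hypotheses.) The repair is the route the paper takes, and which you already half-gesture at in your closing paragraph: argue entirely at the prime $p$ that $r_p(I_{F_v})$ is a finite $p$-group for $v\in\Sigma$ --- it is finite by hypothesis, and it is pro-$p$ because $\abar\otimes\bbar$ is unramified at $v$ (after shrinking $\Sigma$ to be disjoint from $S$, which you should do explicitly) --- and then invoke Proposition~\ref{prop: weakly irreducible implies compatibility at ramified places} to conclude that $\WD(r_\lambda|_{G_{F_v}})|_{I_{F_v}}$ has $p$-power order image for every~$\lambda$. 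With that substitution your proof agrees with the paper's. As a small aside, in part (2) the phrase ``stabilizes a line in the induced module'' is misleading: the bound $[M:F]\le n^2$ is just the observation that $[M:F]\cdot\dim s = n^2$ when $r_\lambda = \Ind^{G_F}_{G_M} s$.
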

  
\begin{proof}
	By shrinking $\Sigma$ if necessary, we may (and do) assume
	that $\Sigma$ is disjoint from~$S$; in particular,
	the representation $\abar\otimes\bbar$ is unramified at
	primes of $\Sigma$.
	In particular, since the primes above $p$ lie in $S$, we may assume
	that $\Sigma$ does not contain any such prime.

	The fact that~$[\Mg:F]$ has order prime to~$p$ is an immediate consequence of Lemma~\ref{lemma:noweirdautos}.
 Let us consider the possible~$M$ from which~$r_{\lambda}$ may be
  induced.  The degree~$[M:F]$ is certainly bounded by~$n^2$.  If we prove that
  $M/F$ is unramified at primes not dividing~$S$, then there are only
  finitely many such~$M/F$ as  an immediate
  consequence of a
  theorem of Hermite (following Minkowski).
Consider the representation~$r_p$ locally at primes~$v \in \Sigma$.
  By assumption, the image~$r_p(I_{F_v})$ of the inertia subgroup~$I_{F_v}$ for~$v \in \Sigma$ factors through a finite quotient.
  On the other hand, the representation~$\abar \otimes \bbar$ is unramified at~$v$.
  Hence the image of inertia factors through a finite quotient
  of~$p$-power order. 
  By Proposition~\ref{prop: weakly irreducible implies compatibility at
  ramified places},  the image of~$I_{F_v}$ factors through a finite
quotient of~$p$-power order for all
representations~$\WD(r_{\lambda}|_{G_{F_v}})$.

Suppose that~$r_{\lambda}$ is induced from~$M$.
Let~$v$ be a finite place of~$F$. Then~$r_{\lambda}|_{G_{F_v}}$ is induced
from~$M_v$. Moreover, the Galois closure~$\Mgv_v$ of~$M_v$ over~$F_v$
is  contained in the localization of the Galois closure~$\Mg$ of~$F$ at~$v$,
and thus~$\Gal(\Mgv_v/F_v)$  also has order prime to~$p$. 
If~$v \notin S$,  then either~$v \in \Sigma$ and the
image of~$I_{F_v}$ in~$\WD(r_{\lambda}|_{G_{F_v}})$ factors through a finite
quotient of~$p$-power order, or~$v \not\in \Sigma$ and the
image of~$I_{F_v}$ in~$\WD(r_{\lambda}|_{G_{F_v}})$ is trivial.
In either case, the claim that~$M_v/F_v$ is unramified follows immediately from Lemma~\ref{lemma:inducedunramified}.
\end{proof}

  Given an integer~$n$ and a finite set~$S$ of places of $F^+$,
  we now introduce certain auxiliary primes~$v$ for  fields~$M/F$
  of the kind which arose in Lemma~\ref{lemma:finiteM}.

\begin{lemma} \label{lemma:existence of primes for strongly irred} For each nontrivial~$M/F$ unramified outside~$S$ with~$\Gal(\Mg/F)$ of order prime to~$p$, either~$M \subset F(\zeta_p)$,
or there exist infinitely many finite places~$v$ of~$F^{+}$ and~$w|v$ in~$F$ with the following properties:
\begin{enumerate}
\item \label{case:1} $N_{F/\Q}(w) = \hh$ is prime, and ~$\hh \equiv 1 \pmod {p^m}$ for some~$p^m > n^2$.
\item \label{case:2} $\hh$ is unramified both in~$M$ and in the fixed fields of~$\abar$ and~$\bbar$ over~$F$.
In particular, $w$ is unramified in~$\Mg/F$.
\item \label{case:3} The images of~$\abar(\Frob_w)$ and~$\bbar(\Frob_w)$ have orders prime to~$p$.
\item \label{case:4} The decomposition group of~$\Mg/F$ at~$w$ is  non-trivial.
\item \label{case:5} $v$ {\em (}resp.\ $w${\em )} lies outside
	any given finite set of places of $F^+$ {\em (}resp.\ $F${\em )}.
\end{enumerate}
\end{lemma}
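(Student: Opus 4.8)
The plan is to obtain the places $v,w$ from the Chebotarev density theorem, after reducing the five listed conditions to a single group-theoretic statement about a finite Galois extension of $\Q$. If $M\subset F(\zeta_p)$ there is nothing to prove, so assume $M\not\subset F(\zeta_p)$; as $F(\zeta_p)/F$ is abelian (hence Galois) this is equivalent to $\Mg\not\subset F(\zeta_p)$. Write $K_{\abar}$, $K_{\bbar}$ for the fixed fields of $\abar$, $\bbar$ over $F$, so that $\abar$, $\bbar$ identify $\Gal(K_{\abar}/F)$, $\Gal(K_{\bbar}/F)$ with their images in $\GL_n(\Fbar_p)$. Fix $m$ with $p^m>n^2$, let $L$ be the Galois closure over $\Q$ of the compositum $K_{\abar}K_{\bbar}\Mg\,\Q(\zeta_{p^m})$ (so $L/\Q$ is finite Galois and contains $F$, $\Mg$, $K_{\abar}$, $K_{\bbar}$ and $F(\zeta_{p^m})$), and set $H=\Gal(L/F)$.

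The first step is the reduction: it suffices to produce $\sigma\in\Gal(L/F(\zeta_{p^m}))\subseteq H$ of order prime to $p$ with $\sigma|_{\Mg}\ne 1$. Indeed, given such $\sigma$, Chebotarev applied to $L/\Q$ yields a positive density set of rational primes $\hh$, unramified in $L$ and avoiding any prescribed finite set, each admitting a prime $W$ of $L$ with $\Frob_{W/\hh}=\sigma$. Since the decomposition group $\langle\sigma\rangle$ of $L/\Q$ at $W$ lies in $H=\Gal(L/F)$, the standard bookkeeping with residue degrees shows that $w:=W\cap F$ has residue degree one over $\Q$, so $N_{F/\Q}(w)=\hh$ is prime and $\Frob_w=\sigma$; moreover $\hh\equiv1\pmod{p^m}$ because $\sigma|_{\Q(\zeta_{p^m})}=1$, which with $p^m>n^2$ gives (1). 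Condition (2) holds because $\hh$ is unramified in $L\supseteq M,K_{\abar},K_{\bbar},\Mg$; condition (3) holds because $\abar(\Frob_w)=\abar(\sigma)$ and $\bbar(\Frob_w)=\bbar(\sigma)$ have order dividing $\ord(\sigma)$, which is prime to $p$; condition (4) holds because $\sigma|_{\Mg}\ne1$; and (5) is part of the Chebotarev statement. As there are infinitely many such $\hh$ we obtain infinitely many pairs $(v,w)$ with $v=W\cap F^{+}$.

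It remains to construct $\sigma$. The key point is that the maximal subextension of $F(\zeta_{p^m})/F$ of degree prime to $p$ is $F(\zeta_p)$: the group $\Gal(F(\zeta_{p^m})/F)$ embeds in the cyclic group $(\Z/p^m\Z)^{\times}$ (this is where $p>2$ enters), and $F(\zeta_{p^m})/F(\zeta_p)$ has $p$-power degree, so any prime-to-$p$ subextension over $F$ becomes trivial after composing with $F(\zeta_p)$. Since $[\Mg:F]=|\Gal(\Mg/F)|$ is prime to $p$ by hypothesis, $F(\zeta_{p^m})\cap\Mg\subseteq F(\zeta_p)$, and as $\Mg\not\subset F(\zeta_p)$ we conclude $\Mg\not\subset F(\zeta_{p^m})$; hence the restriction map $\Gal(L/F(\zeta_{p^m}))\to\Gal(\Mg/F)$ has nontrivial image. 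Choose $\tau$ in the source mapping to a nontrivial element, write $\ord(\tau)=p^k e$ with $(e,p)=1$, and set $\sigma=\tau^{p^k}$; then $\ord(\sigma)=e$ is prime to $p$, while $\sigma|_{\Mg}=(\tau|_{\Mg})^{p^k}$ remains nontrivial because $\tau|_{\Mg}$ lies in $\Gal(\Mg/F)$, a group of order prime to $p$. The only delicate points are the residue-degree bookkeeping in the reduction and the elementary cyclotomic computation identifying the maximal prime-to-$p$ piece of $F(\zeta_{p^m})/F$; note that neither the hypothesis on $\bbar(G_F)$ nor any lower bound on $p$ beyond $p>2$ is needed for this particular lemma.
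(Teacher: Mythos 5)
Your proof is correct and essentially the same as the paper's: both use the hypothesis that $[\Mg:F]$ is prime to $p$ together with $\Mg\not\subset F(\zeta_p)$ to produce an element trivial on $F(\zeta_{p^m})$ and nontrivial on $\Mg$, pass to a suitable $p$-power so that the relevant images have order prime to $p$, and then invoke Chebotarev. You are somewhat more explicit about the residue-degree bookkeeping needed to ensure $N_{F/\Q}(w)$ is prime (applying Chebotarev over $\Q$ rather than over $F$) and about why the maximal prime-to-$p$ subextension of $F(\zeta_{p^m})/F$ is $F(\zeta_p)$, but these are routine details that the paper leaves implicit.
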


\begin{proof}
	A choice of~$w$ in~$F$ determines a unique prime~$v$ of~$F^{+}$ with~$w|v$.
Conditions~(\ref{case:2}) and~(\ref{case:5}) exclude only finitely many places. 
(By definition, any prime which ramifies in~$M/\Q$ is either ramified in~$F/\Q$, which is fixed,
or is ramified in~$M/F$, which, by definition, is unramified outside the fixed set~$S$.)
The remaining three conditions are a Chebotarev  condition relative
to the image of~$G_F$ in 
$$\Gal(F(\zeta_{p^m})/F) \times \Gal(\Mg/F) \times \GL_n(\Fbar_p) \times \GL_n(\Fbar_p),$$
where the maps to the two copies of~$\GL_n(\Fbar_p)$ factor via~$\abar$ and~$\bbar$ respectively.
It suffices to show that the image of~$G_{F}$ contains an element whose
projection is trivial in the first group (for~(\ref{case:1})), non-trivial in the second group (for~(\ref{case:4})),  and
semisimple in the last two groups  (for~(\ref{case:3})).
The first two conditions can be satisfied simultaneously unless~$\Mg \subset F(\zeta_{p^m})$.
Since~$[\Mg:F]$ is prime to~$p$,
this is equivalent to~$M \subset \Mg \subset F(\zeta_p)$.
Thus if $M \not\subset F(\zeta_p)$,
then there exists a~$\sigma \in G_{F}$ which is trivial in~$\Gal(F(\zeta_{p^m})/F)$ and non-trivial in~$\Gal(\Mg/F)$. It follows that~$\sigma^{p^k}$ also has this property for any~$k$,  because~$\Gal(\Mg/F)$ has order prime to~$p$. On the other hand, 
the image of any sufficiently large~$p$-power of any element of~$\GL_n(\Fpbar)$
 has order prime to~$p$.
 Thus the image of~$\sigma^{p^k}$
for sufficiently large~$k$ in  the product above has the desired shape.
\end{proof}

\begin{defn} (A suitable choice) \label{defn: suitable}
Continuing on with our running assumptions on~$\abar$ and~$\bbar$, suppose 
in addition that the assumptions of Lemma~\ref{lemma:finiteM} are in effect.
In particular, both~$\abar$ and~$\bbar$ are~$n$-dimensional 
residual
representations of~$G_{F}$  which are polarizable, the representation~$\abar \otimes \bbar$
is absolutely irreducible, and  there exists a weakly irreducible
compatible system~$\{r_{\lambda}\}$ such that~$r_p  = c \otimes d$ is a deformation of~$\abar \otimes \bbar$.
Let~$S$ be  a set containing all primes above~$p$ and all
primes where~$\abar$ and~$\bbar$ are ramified.
Fix characters~$\mu_1,\mu_2$ unramified outside~$S$ 
such that~$\abar$ and~$\bbar$ are~$\mubar_1$ and~$\mubar_2$-polarizable
respectively.
For~$v \in S$, consider any collection of
local $\mu_1$-polarized components~$C_v$ and~ $\mu_2$-polarized
components $D_v$ 
for~$\abar$ and~$\bbar$ respectively.
Suppose, furthermore, that~$C_v \otimes D_v$ is regular for all~$v|p$.
We now choose an auxiliary set~$\Sigma$  disjoint from~$S$
and components~$C_v$, $D_v$ for~$v \in \Sigma$ as
follows. (Ultimately, we shall apply  Lemma~\ref{lemma:finiteM} with precisely this set~$\Sigma$.)
For each of the finitely many fields~$M/F$  not contained in~$F(\zeta_p)$
and satisfying conditions~(1), (2), and~(3) in the conclusion of Lemma~\ref{lemma:finiteM},
we choose \emph{two} primes~$v$ in~$F^{+}$ with~$w|v$ in~$F$  according to Lemma~\ref{lemma:existence of primes for strongly irred}, 
where the auxiliary set of places given by~(\ref{case:5}) is the set of primes which ramify in any of the finitely many fields~$M/F$.  Moreover, for each~$M/F$, we choose this pair of places~$v$ so that they
have
distinct residue characteristics.
We now let~$\Sigma$ be the union of 
these pairs of places~$v$ of~$F^+$ for all~$M/F$.
Our choice guarantees
that, for any prime~$l$, and for any given~$M/F$, there exists a~$v \in \Sigma$ corresponding to~$M/F$ with residue
characteristic prime to~$l$.
We now choose~$C_v$ and~$D_v$ as follows.
Recall that~$F^{+}_v \simeq F_{w} \simeq \Q_{\hh}$.
Let~$\chop: G_{\Q_{\hh}} \rightarrow \Qbar^{\times}_p$ be a character such that~$\chop |_{I_{F_w}}$ has order~$p^m$, which exists
because~$\hh - 1 \equiv 0 \bmod p^m$.
We consider deformations~$c$ and~$d$ at~$v$ such that
$$c | _{I_{F_{w}}} = \bigoplus_{i=0}^{n-1} \chop^i|_{I_{F_w}}, \qquad
d |_{I_{F_w}} = \bigoplus_{i=0}^{n-1} \chop^{ni}|_{I_{F_w}}.$$
Such deformations exist (locally) because~$\abar(\Frob_w)$ and~$\bbar(\Frob_w)$ are semisimple.
Since the characters~$\chop^i|_{I_{F_w}}$ (and similarly the characters~$\chop^{ni}|_{I_{F_w}}$) are pairwise distinct, this
defines  unions of components
 of the corresponding local deformation rings. We choose~$C_v$ and~$D_v$ to be any
 of the resulting components.
 The corresponding inertial type of the $n^2$-dimensional compatible system
 (which is well-defined across the entire compatible system,
 by Proposition~\ref{prop: weakly 
  irreducible implies compatibility at ramified places})
 at each~$v \in \Sigma$ is
 $$\bigoplus_{i=0}^{n^2-1} \chop^i|_{I_{F_w}} = \bigoplus_{i=0}^{n^2-1} \chop^i|_{I_{F^+_v}}.$$
 (Note that~$I_{F_w} = I_{F^+_v}$.)
 In particular, it is a
 direct sum of distinct characters of $p$-power order 
 --- here we use
 the assumption that~$p^m > n^2$. 
As usual, the corresponding polarized
 deformation rings of~$\abar$ and~$\bbar$ are denoted by~$R_{C,\Sigma}$ and~$R_{D,\Sigma}$
 respectively. We refer to the choice of auxiliary set~$\Sigma$
 and corresponding components~$C_v$ and~$D_v$ as a \emph{suitable choice}.
 \end{defn}

Equipped with the notion of a suitable choice, we now prove the following:

 \begin{lemma} 
 \label{lemma:reducetocyclo} Let~$\{r_{\lambda}\}$ denote
a weakly irreducible, polarizable, regular, odd compatible system of~$n^2$-dimensional representations of~$G_F$ such that~$r_p = c \otimes d$,
where~$c$ and~$d$ are deformations of~$\abar$ and~$\bbar$ of types~$R_{C,\Sigma}$
and~$R_{D,\Sigma}$ respectively for a suitable
choice of~$\Sigma$, $C_v$ and~$D_v$ as in {\em Definition~\ref{defn: suitable}}.
Assume that~$p > \max(n,3)$, and 
that the image of~$\bbar$ contains~$\SL_n(\F_q)$ for~$q$ some sufficiently
large power of~$p$, in the sense of {\em Lemma~\ref{lemma:large}}. 

Suppose that some~$r_{\lambda}$ is induced
from~$M/F$.
  Then~$M \subset F(\zeta_p)$. 
   \end{lemma}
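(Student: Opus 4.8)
The plan is to combine Lemma~\ref{lemma:finiteM} with the combinatorics of the auxiliary set~$\Sigma$ built into Definition~\ref{defn: suitable}. First I would apply Lemma~\ref{lemma:finiteM} directly: the compatible system~$\{r_\lambda\}$ is weakly irreducible, odd, polarizable, regular, of dimension~$n^2$, with $r_p = c\otimes d$ a deformation of~$\abar\otimes\bbar$, it is unramified outside~$S\cup\Sigma$, and for $v\in\Sigma$ the image $r_p(I_{F_v})$ is finite (it is a direct sum of the characters $\chop^i|_{I_{F_w}}$, hence has $p$-power order, in particular finite). Therefore, if some~$r_\lambda$ is induced from~$M/F$, then conclusions (1), (2), (3) of that lemma hold: $[\Mg:F]$ has order prime to~$p$, $[M:F]\le n^2$, and $M/F$ is unramified outside~$S$. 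So $M$ is one of the finitely many fields enumerated in Definition~\ref{defn: suitable}.

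Next I would argue by contradiction: suppose $M\not\subset F(\zeta_p)$. Then $M$ is one of the finitely many fields to which the construction in Definition~\ref{defn: suitable} associates a pair of auxiliary primes; pick $v\in\Sigma$ corresponding to this particular~$M$ with residue characteristic~$\hh$ prime to~$\ell$ (the residue characteristic of~$\lambda$) — this is possible precisely because the two primes chosen for each~$M$ have distinct residue characteristics. By condition~(\ref{case:4}) of Lemma~\ref{lemma:existence of primes for strongly irred}, the decomposition group of~$\Mg/F$ at~$w$ is non-trivial, so (since $\hh$ is unramified in~$\Mg$ by condition~(\ref{case:2})) the image of $\Frob_w$ in $\Gal(\Mg/F)$ is a non-trivial element of order prime to~$p$, and hence $M\otimes_F F_v$ is not split over~$F_v$; in particular $M_v/F_v$ is a non-trivial extension — and, being a subextension of $\Mgv_v/F_v$ which is unramified, it is unramified. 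The key point will be to play this against the local shape of~$r_\lambda$ at~$v$.

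The main step is the local analysis at~$v$. Since $r_\lambda$ is induced from~$M$, $r_\lambda|_{G_{F_v}}$ is induced from~$M_v$, and $M_v/F_v$ is a non-trivial unramified extension with $\Gal(\Mgv_v/F_v)$ of order prime to~$p$. On the other hand, by Proposition~\ref{prop: weakly irreducible implies compatibility at ramified places} the inertial type of the compatible system at~$v\in\Sigma$ is the fixed type $\bigoplus_{i=0}^{n^2-1}\chop^i|_{I_{F_w}}$, a direct sum of $n^2$ \emph{distinct} characters of $p$-power order (here one uses $p^m>n^2$). Applying Lemma~\ref{lemma:inducedunramified} (with $r=r_\lambda|_{G_{F_v}}$, whose $\WD$ restricted to inertia factors through a group of $p$-power order) shows only that $M_v/F_v$ is unramified, which we already knew — so the contradiction must come from counting characters. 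The point is that $r_\lambda|_{G_{F_v}} = \Ind_{G_{M_v}}^{G_{F_v}} s_v$ for some~$s_v$, and restricting to inertia, $I_{M_v}=I_{F_v}$ since $M_v/F_v$ is unramified; thus $r_\lambda|_{I_{F_v}}$ is a direct sum of $[M_v:F_v]$ copies of $s_v|_{I_{F_v}}$ (up to the conjugation action of $\Gal(M_v/F_v)$, which however is trivial on $I_{F_v}$-representations in the relevant sense as in the proof of Lemma~\ref{lem: local condition to avoid tensor induction}), so every character appearing in the inertial type occurs with multiplicity divisible by $[M_v:F_v]>1$. But the distinct characters $\chop^i|_{I_{F_w}}$, $0\le i\le n^2-1$, each occur with multiplicity exactly one, a contradiction. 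Hence $M\subset F(\zeta_p)$.

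I expect the main obstacle to be making the last multiplicity argument fully rigorous: one must be careful that restricting an induced representation to inertia, when the inducing extension is unramified, really does produce each inertial character with multiplicity a multiple of $[M_v:F_v]$, which requires tracking the $\Gal(M_v/F_v)$-conjugation action on characters of $I_{F_v}$ exactly as in the proof of Lemma~\ref{lem: local condition to avoid tensor induction} (where one shows a character of $I_{F_w}$ extending to $G_{F_w}$ is fixed by the decomposition group). An alternative, possibly cleaner, route that I would keep in reserve is to note that since $M_v/F_v$ is unramified and non-trivial, $\Frob$ acts on the $I_{F_v}$-isotypic pieces of $r_\lambda|_{G_{F_v}}$ by permuting them non-trivially, which is incompatible with the type being a sum of distinct characters each stable under the full $G_{F_v}$-action (they are restrictions of characters of $G_{F_w}=G_{F^+_v}$, by the construction of $\chop$ on $G_{\Q_\hh}$). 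Either way the heart of the matter is purely local and combinatorial once Lemma~\ref{lemma:finiteM} has pinned~$M$ down.
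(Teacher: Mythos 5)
Your first two steps (pinning down $M$ via Lemma~\ref{lemma:finiteM}, then selecting $v\in\Sigma$ corresponding to $M$ with residue characteristic prime to $\ell$) match the paper exactly. The local argument at $v$, however, takes a genuinely different route from the paper: the paper descends to $G_{\Mg_{\ww}}$, invokes Lemma~\ref{lemma:freebie} to produce a character subquotient $\omega$ of $s|_{G_{\Mg_{\ww}}}$ that is a restriction from $G_{\Q_\hh}$, and then derives $\omega \oplus \omega$ as a subquotient of $r_\lambda|_{G_{\Mg_\ww}}$ by conjugating through a nontrivial element of the decomposition group; whereas you work directly over $G_{F_v}$ via the Mackey decomposition and the explicit shape of the inertial type. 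Once correctly executed, your route does avoid Lemma~\ref{lemma:freebie} entirely, which is a genuine simplification. But as written there are two errors. First, $r_\lambda|_{G_{F_v}}$ is not a single induction $\Ind_{G_{M_v}}^{G_{F_v}} s_v$ but, by Mackey, a direct sum $\bigoplus_{w'|v}\Ind_{G_{M_{w'}}}^{G_{F_v}}s_{w'}$; the nontriviality of the decomposition group of $\Mg/F$ at $w$ (plus the triviality of the core of $\Gal(\Mg/M)$ in $\Gal(\Mg/F)$) only guarantees that \emph{some} $M_{w_0}/F_v$ is a nontrivial unramified extension, and the argument should be run on that summand. Second, and more substantively, your justification that the $\Gal(M_{w_0}/F_v)$-conjugation action on $s_{w_0}|_{I_{F_v}}$ is trivial is wrong as stated: the technique in Lemma~\ref{lem: local condition to avoid tensor induction} you appeal to only shows a character which is the \emph{entire} inertial restriction is fixed by conjugation, which is not the situation here, since $s_{w_0}|_{I}$ is typically a sum of several distinct characters. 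The correct reason is different and simpler: since $\Ind_{G_{M_{w_0}}}^{G_{F_v}}s_{w_0}$ is a direct summand of $r_\lambda|_{G_{F_v}}$, its $I$-restriction (which contains $s_{w_0}|_I$ as the $j=0$ Mackey piece) is a subsum of $\bigoplus_i \chop^i|_I$; each $\chop^i|_I$ is the restriction to $I$ of a character of $G_{\Q_\hh}=G_{F_v}$ and hence is $\Frob$-stable, so $(s_{w_0}|_I)^{\Frob^j} = s_{w_0}|_I$ for all $j$. This forces multiplicity $\ge [M_{w_0}:F_v] \ge 2$, contradicting the multiplicity-one statement.

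Your proposed ``alternative, cleaner route'' in the final paragraph is actually false: precisely because all the $\chop^i|_I$ are $\Frob$-stable, $\Frob$ \emph{does not} permute the $I_{F_v}$-isotypic pieces nontrivially. There is no dichotomy here to exploit; the only live argument is the multiplicity one, properly justified as above. So the correct proof along your lines is: Mackey-decompose $r_\lambda|_{G_{F_v}}$, isolate the summand from the prime $w_0$ with $M_{w_0}\neq F_v$, observe that $s_{w_0}|_I$ is a subsum of the $\chop^i|_I$ and hence $\Frob$-stable, and deduce multiplicity $\ge 2$. With this fix the approach is complete and arguably more direct than the paper's.
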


 \begin{proof} 
 Assume that~$r_{\lambda}$ is induced from~$M/F$.
 The choice of~$C_v$ and~$D_v$ for~$v \in \Sigma$ ensure that,
 for primes~$v \in \Sigma$, ~$r_{p}(I_{F_v})$ is finite.   Hence,
by Lemma~\ref{lemma:finiteM}, $M/F$ is one of finitely many fields which is unramified outside~$S$, has degree bounded by~$n^2$,
and~$\Mg/F$ has order prime to~$p$.
 Suppose that~$M\not\subset F(\zeta_p)$. By the suitable choice of~$\Sigma$
   as in Definition~\ref{defn: suitable}, there exists a~$v\in\Sigma$ corresponding to~$M$
   which satisfies the conditions of Lemma~\ref{lemma:existence of primes for strongly irred}
   \emph{and} has residue characteristic not dividing~$N(\lambda)$.
   Let us write
 $$r_\lambda = \Ind^{G_F}_{G_M} s: G_{F} \rightarrow \GL_{n^2}(\Qbar_{\ell}).$$
 Since the inertial type of~$r_\lambda$ at~$v$ consists of distinct characters of~$I_{\Q_{\hh}}$,
 it suffices to show that this is incompatible with being an
 induction.

Let ${\ww}$ be a place of~$\Mg$ lying over~$w$ (note that~$F_w = \Q_{\hh}$). 
Since~$\Gal(\Mg/F)$ has order prime to~$p$, it follows that~$[\Mg_{\ww}:F_w] = [\Mg_{\ww}:\Q_{\hh}]$ has order prime to~$p$.
We have a representation
$$s |_{G_{\Mg_{\ww}}}: G_{\Mg_{\ww}} \rightarrow \GL_{m}(\Qbar_{\ell}),$$
where~$m[M:F] = n^2$, and so~$p \nmid m$ (since~$p \nmid n$).
 It follows from Lemma~\ref{lemma:freebie} that~$s|_{\Mg_{\ww}}$ contains at least one 
 subquotient~$\omega$ which is the restriction of a
character of~$G_{\Q_\hh}$ (note that~$s(I_{\Mg_{\ww}})$ has finite
$p$-power order, because~$r_\lambda(I_{F_w})$ does, by the definition
of a suitable choice of~$\Sigma$). By the definition of an induction, there is an identification
$$r_\lambda |_{G_{\Mg}} = \bigoplus_{\sigma\in \Gal(\Mg/F)/\Gal(\Mg/M)} s^{\sigma}|_{G_{\Mg}},$$ 
where 
$s^{\sigma}(g) = s(\sigma g \sigma^{-1})$. 

The decomposition group of~$w$ in~$\Gal(\Mg/F)$ is non-trivial by
the choice of~$v$ and~$w$ (more precisely, by condition~(4) of Lemma~\ref{lemma:existence of primes for strongly irred}). 
Moreover, because~$\Mg$ is the Galois closure of~$M$, the intersection of the conjugates of~$\Gal(\Mg/M)$ inside~$\Gal(\Mg/F)$
is trivial. Hence, for a suitable choice of~$\ww|w$ in~$\Mg$, we may ensure that there exists an element~$\sigma$ in the decomposition
group of~$\ww$ above~$w$ in~$\Gal(\Mg/F)$ that does not lie in~$\Gal(\Mg/M)$.
It follows that~$s |_{G_{\Mg}} \oplus s^{\sigma} |_{G_{\Mg}}$ is a summand of~$r_{\lambda} |_{G_{\Mg}}$.
But~$\sigma$ lies inside the decomposition group of~$\ww$,  and hence~$\sigma \ww  = \ww$,
and~$s^{\sigma} |_{G_{\Mg_{\ww}}}$ is the conjugate by~$\sigma$ of~$s |_{G_{\Mg_{\ww}}}$.
Since~$\omega$ occurs as a subquotient as~$s |_{G_{\Mg_{\ww}}}$, it follows that~$\omega^{\sigma}$ is
a subquotient of~$s^{\sigma} |_{G_{\Mg_{\ww}}}$, and hence~$\omega \oplus \omega^{\sigma}$
is a subquotient of~$r_{\lambda}  |_{G_{\Mg_{\ww}}}$. But~$\omega$ is the restriction
of a character of~$G_{\Q_{\hh}}$, and thus~$\omega^{\sigma} = \omega$, and~$\omega \oplus \omega$
is a subquotient of~$r_{\lambda}  |_{G_{\Mg_{\ww}}}$.
By assumption, the restriction ~$r_{\lambda} |_{I_{F_w}}$  is a direct sum of distinct characters.
Since~$w$ is unramified in~$\Mg/F$ by construction of~$\Sigma$ (cf. Lemma~\ref{lemma:existence of primes for strongly irred}~(\ref{case:2})),
 the restriction~$r_{\lambda}  |_{G_{\Mg_{\ww}}}$
must also be a direct sum of distinct characters, 
and hence we have a contradiction.
\end{proof}

\begin{lemma}\label{lem: strong irreducibility for compatible system 
    given local conditions} 
    In the context of {\em Lemma~\ref{lemma:reducetocyclo}},
assume also that~$\rbar_p|_{G_{F(\zeta_p)}}$ is absolutely irreducible. 
Then~$r_p$ is strongly irreducible, and 
 for a positive density of primes~$l$,  the representations~$r_{\lambda}$
are strongly irreducible for all~$\lambda|l$ \emph{(}that is, the compatible system~$\{r_{\lambda}\}$ is
``weakly strongly irreducible''\emph{)}.
\end{lemma}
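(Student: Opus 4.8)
The plan is to combine the factoring theorem of the previous section with the field-control results just established. First, I would recall the hypotheses in force from Lemma~\ref{lemma:reducetocyclo}: we have a weakly irreducible, polarizable, regular, odd compatible system $\{r_\lambda\}$ with $r_p = c\otimes d$, where $c$, $d$ are deformations of $\abar$, $\bbar$ of the ``suitable'' types $R_{C,\Sigma}$, $R_{D,\Sigma}$, and (now) $\rbar_p|_{G_{F(\zeta_p)}}$ is absolutely irreducible, $p>\max(n,3)$, and $\im(\bbar)\supset \SL_n(\F_q)$ for $q$ sufficiently large. The key extra ingredient is that by Lemma~\ref{lemma:reducetocyclo}, \emph{any} $r_\lambda$ that is induced from an extension $M/F$ must have $M\subset F(\zeta_p)$.

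The heart of the argument is to rule out $M\subset F(\zeta_p)$ as well, using the absolute irreducibility of $\rbar_p|_{G_{F(\zeta_p)}}$. Concretely: suppose $r_p$ is not strongly irreducible. Since $r_p$ is irreducible (it is $c\otimes d$ where $\abar\otimes\bbar$ is absolutely irreducible, so $\rbar_p$ hence $r_p$ is absolutely irreducible) and regular, Lemma~\ref{lemma:induced} shows $r_p$ is induced from a strongly irreducible representation over some finite $M/F$ with $M\neq F$. By Lemma~\ref{lemma:reducetocyclo} we would have $M\subset F(\zeta_p)$, so $r_p|_{G_{F(\zeta_p)}}$ is reducible, whence $\rbar_p|_{G_{F(\zeta_p)}}$ is reducible --- here I should be slightly careful: reducibility of $r_p|_{G_{F(\zeta_p)}}$ passes to reducibility (as a subquotient) of the semisimplification of the residual representation, which contradicts the assumed absolute irreducibility of $\rbar_p|_{G_{F(\zeta_p)}}$. (Alternatively, since $r_p$ restricted to the finite-index subgroup $G_{F(\zeta_p)}$ decomposes, the $\abar\otimes\bbar$ image restricted there cannot contain an irreducible block of the full dimension, contradicting irreducibility over $G_{F(\zeta_p)}$ directly.) Hence $r_p$ is strongly irreducible.

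Next, for the ``weakly strongly irreducible'' conclusion: I would invoke Theorem~\ref{thm: first factoring theorem}, which applies since $r_p=c\otimes d$ with $\im(\bbar)\supset\SL_n(\F_q)$ and $c\otimes d$ irreducible (and, in the case $n=2$, the system is odd, regular, polarizable, weakly irreducible). That theorem produces a density-one set of primes $l$ such that $r_\lambda$ is strongly irreducible for all $\lambda\mid l$ when $n=2$, and for $n$ odd it suffices to note that outside a density-zero set each $r_\lambda$ is irreducible (e.g.\ by weak irreducibility together with the results cited, or by the fact that reducibility of $r_\lambda$ forces a factorization of $r_p$ via comparison of Zariski closures). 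Actually the cleaner route is: pick any $r_\lambda$ that is irreducible and regular; if it were not strongly irreducible, Lemma~\ref{lemma:induced} makes it induced from some $M/F$, and then Lemma~\ref{lemma:reducetocyclo} forces $M\subset F(\zeta_p)$. But $M\subset F(\zeta_p)$ combined with the local conditions at the auxiliary primes $v\in\Sigma$ is exactly what was excluded in the proof of Lemma~\ref{lemma:reducetocyclo} at the residual/Zariski level --- more precisely, if $r_\lambda$ is induced from $M\subset F(\zeta_p)$ then $r_\lambda\cong r_\lambda\otimes\chi$ for a character $\chi$ of $\Gal(F(\zeta_p)/F)$, which lies in a compatible system of finite-order characters; hence \emph{every} $r_{\lambda'}$ would be induced from the same $M$, so in particular $r_p$ would be induced from $M\subset F(\zeta_p)$, contradicting the strong irreducibility of $r_p$ just proved. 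Since the set of primes $l$ with $r_\lambda$ irreducible for all $\lambda\mid l$ has positive density (indeed density one for $n=2$ by Theorem~\ref{thm: first factoring theorem}, and by weak irreducibility plus a descent argument in the odd case), this gives the claim.

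The main obstacle I anticipate is the bookkeeping in the last step: making the ``twist-invariance propagates across the compatible system'' argument airtight. One needs that if $r_\lambda\cong r_\lambda\otimes\chi$ for a fixed nontrivial character $\chi$ cutting out a subfield of $F(\zeta_p)$, then the same holds for $r_p$ --- this follows because $\{r_\lambda\}$ and $\{r_\lambda\otimes\chi_\lambda\}$ (with $\{\chi_\lambda\}$ the compatible system containing $\chi$) have equal Frobenius characteristic polynomials at a density-one set of primes and hence are isomorphic by Chebotarev/semisimplicity, so the isomorphism holds at $p$ too. Assembling this with the density-one strong-irreducibility input from Theorem~\ref{thm: first factoring theorem} for $n=2$, and with the observation that for $n$ odd irreducibility of $r_\lambda$ for a positive-density (in fact density-one) set of $\lambda$ follows from weak irreducibility of the system, yields the statement. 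I do not expect any genuinely new difficulty beyond carefully citing Lemmas~\ref{lemma:induced}, \ref{lemma:reducetocyclo} and Theorem~\ref{thm: first factoring theorem}.
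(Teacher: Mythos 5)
Your proof of the first assertion (that $r_p$ is strongly irreducible, via Lemma~\ref{lemma:induced}, Lemma~\ref{lemma:reducetocyclo}, and the hypothesis that $\rbar_p|_{G_{F(\zeta_p)}}$ is absolutely irreducible) matches the paper exactly. For the ``weakly strongly irreducible'' claim you take a genuinely different route. The paper argues directly: since $r_p|_{G_{F(\zeta_p)}}$ is absolutely irreducible, Lemma~\ref{lem:restricting weakly irr} (which rests on purity via Lemmas~\ref{lem: purity of weakly compatible systems} and~\ref{lem: PT decomposition into weakly irreducible}) shows the restricted system $\{r_\lambda|_{G_{F(\zeta_p)}}\}$ is weakly irreducible, and for the resulting positive-density set of $l$ the representations $r_\lambda|_{G_{F(\zeta_p)}}$ are irreducible, which rules out induction from any $M\subset F(\zeta_p)$ and finishes by Lemma~\ref{lemma:reducetocyclo}. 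Your ``cleaner route'' is a bootstrap through $r_p$: an irreducible $r_\lambda$ that were induced from some $M\subset F(\zeta_p)$ would satisfy $r_\lambda\cong r_\lambda\otimes\chi$ for a nontrivial finite-order $\chi$; putting $\chi$ in a compatible system of characters and comparing Frobenius eigenvalues at unramified primes shows (by Chebotarev plus semisimplicity) that $r_p\cong r_p\otimes\chi_p$, which by Clifford theory makes $r_p|_{G_{\ker\chi_p}}$ reducible, contradicting the strong irreducibility already established. This is correct and avoids the purity input, at the cost of the bootstrap structure and the Clifford-theoretic unwinding; both deliver the positive-density set. One caveat: your initial appeal to Theorem~\ref{thm: first factoring theorem} for the $n$ odd case (``it suffices to note that outside a density-zero set each $r_\lambda$ is irreducible'') is not sufficient --- irreducibility alone does not yield strong irreducibility, and for $(m,n)\neq(2,2)$ that theorem \emph{assumes} strong irreducibility of $r_\lambda$ rather than establishing it. You correctly notice this and pivot to the working bootstrap argument, which should stand in its place.
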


\begin{proof}
By Lemma~\ref{lemma:induced}, the representations~$r_\lambda$  which
are absolutely irreducible are strongly irreducible
unless they are induced. 
Since ~$\{r_{\lambda}\}$ is
weakly irreducible by assumption, for a positive density of primes~$l$,  the representations~$r_{\lambda}$
 are  irreducible for all~$\lambda|l$, and certainly~$r_p$ is irreducible.
Hence it suffices to show 
that  the representations~$r_{\lambda}$ and~$r_p$ are not induced from some finite extension~$M/F$.
By the previous lemma (Lemma~\ref{lemma:reducetocyclo}), 
 this can happen only if~$M \subset F(\zeta_p)$. Thus we will be done if we can show that~$r_p|_{G_{F(\zeta_p)}}$ is absolutely
irreducible, and that $\{r_\lambda|_{G_{F(\zeta_p)}}\}$ is weakly irreducible.

Since~$\rbar_p|_{G_{F(\zeta_p)}}$ is absolutely irreducible by
assumption, $r_p|_{G_{F(\zeta_p)}}$ is absolutely irreducible.
Since~$F(\zeta_p)/F$ is CM,
it follows from Lemma~\ref{lem:restricting weakly irr}
that~$\{r_{\lambda}|_{G_F(\zeta_p)}\}$ is weakly irreducible, as required. 
\end{proof}

We end this subsection with some results (versions of the Khare--Wintenberger
argument) that will allow us to remove the auxiliary conditions
discussed above from our final results. We remind the reader of the
conventions introduced in Definition~\ref{defn: polarized local isomorphisms} and Conventions~\ref{convention: w
  lies over v} and~\ref{convention: w lies over v 2},
which will be in force throughout the rest of this
section; namely, we write~$w$ (possibly decorated by subscripts and superscripts) for
a place of a CM field lying over the place~$v$ of its totally real
subfield, and we do not explicitly mention prolongations.

\begin{lem}[Descending the existence of a compatible system]
  \label{lem: existence of compatible system from potential
    existence} Let~$S$ be a finite set of finite places
of~$F^+$ which contains all the places dividing~$p$, and let $(\abar,\mubar)$ be a polarized
  representation which is unramified outside of~$S$. Let~$\mu:G_{F^+}\to\Zpbartimes$ be a de Rham lift
of~$\mubar$ which is unramified outside of~$S$. For each $v\in
  S$, let $A_v$ be a $\mu$-polarized component 
for~$\abar|_{G_{F_w}}$. Assume that $\abar:G_F\to\GL_n(\Fpbar)$ is reasonable.  

Suppose that there is a finite Galois extension of CM fields
$L/F$, linearly disjoint from~$\overline{F}^{\ker{\abar}}(\zeta_p)$ over~$F$, 
and
an odd, polarized, regular, weakly irreducible compatible system of
representations $(\{s_\lambda\},\{\mu'_\lambda\})$ of~$G_L$, 
with associated $p$-adic representation~$(s,\mu')$,  with the
properties that:
\begin{enumerate}
\item $\abar|_{G_L}$ is reasonable.
\item $\sbar\cong\abar|_{G_L}$.
\item $\mu'=\mu|_{G_L}$.
\item $s$ is unramified outside of the places lying over~$S$.
\item For each place $v_L$ of~$L^+$ lying over a place~$v\in S$,
  $s|_{G_{L_{w_L}}}$ lies on $A_v|_{L^+_{v_L}}$.
\end{enumerate}
Then there is an odd, polarized, regular, weakly irreducible compatible system of
representations $(\{a_\lambda\},\{\mu_\lambda\})$ of~$G_F$, with associated $p$-adic representation~$(a,\mu)$,  with the
properties that:
\begin{enumerate}
\item $a$ lifts~$\abar$.
\item $a$ is unramified outside of~$S$.
\item For each place~$v\in S$,
  $a|_{G_{F_w}}$ lies on $A_v$.
\end{enumerate}
\end{lem}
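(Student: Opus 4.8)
The plan is to run a Khare--Wintenberger style descent argument: we first produce a global deformation of $\abar$ over $F$ by exploiting that the corresponding deformation ring is finite over $\cO$ (because it becomes a Hecke algebra after base change to $L$), and then we bootstrap the resulting $p$-adic representation back into a compatible system by restricting to $L$ and invoking its potential automorphy there together with descent of compatible systems.

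\textbf{Step 1: Set up the global deformation ring over $F$.} Let $R^\univ_F$ be the universal $\mu$-polarized deformation $\cO$-algebra for $\rhobar$ (a fixed prolongation of $(\abar,\mubar)$), parametrizing deformations unramified outside $S$ and lying on $A_v$ for each $v\in S$. By Proposition~\ref{prop: global deformation ring is positive dimensional} (applicable since $\abar$ is reasonable, so in particular $\abar|_{G_{F(\zeta_p)}}$ is absolutely irreducible, $\abar$ is odd, and $\mu$ is de Rham), $R^\univ_F$ has Krull dimension at least one. Let $R^\univ_L$ be the analogous deformation ring over $L$, for deformations of $\rhobar|_{G_{L^+}}$ lying on the components $A_v|_{L^+_{v_L}}$. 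There is a natural restriction map $R^\univ_L\to R^\univ_F$, and by the standard argument (as in Khare--Wintenberger, e.g.\ the finiteness statement used throughout the introduction), $R^\univ_F$ is a \emph{finite} $R^\univ_L$-algebra: the point is that the places of $L^+$ above $S$ form the relevant set, $L/F$ contributes only finitely many new relations, and fixing the components $A_v|_{L^+_{v_L}}$ at all places over $v$ pins down the deformation. So it suffices to prove $R^\univ_L$ is a finite $\cO$-algebra.

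\textbf{Step 2: Finiteness over $L$ via the hypothesized compatible system.} The compatible system $(\{s_\lambda\},\{\mu'_\lambda\})$ over $L$ is odd, polarized, regular, and weakly irreducible, with $\sbar\cong\abar|_{G_L}$, $\mu'=\mu|_{G_L}$, $s$ unramified outside the places over $S$, and $s|_{G_{L_{w_L}}}$ lying on $A_v|_{L^+_{v_L}}$. Because $\abar|_{G_L}$ is reasonable, $\abar(G_{L(\zeta_p)})$ is adequate (adequacy follows from irreducibility and $p>2(n+1)$ by the results cited before Definition~\ref{defn: adequate}), and $p>2(n+1)$ forces $p\nmid n$ and $\zeta_p\notin L$. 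Thus Lemma~\ref{finiteness lemma} applies directly to $(\{s_\lambda\},\{\mu'_\lambda\})$ with the set of places $S_{L^+}$ lying over $S$: it shows that the global deformation $\cO$-algebra parametrizing $\mu'$-polarized deformations of $\rhobar|_{G_{L^+}}$ unramified outside $S_{L^+}$ and lying on the component of $s|_{G_{L_{w_L}}}$ at each place --- which is exactly $R^\univ_L$, since $s|_{G_{L_{w_L}}}$ lies on $A_v|_{L^+_{v_L}}$ --- is finite over $\cO$. Combined with Step 1, $R^\univ_F$ is finite over $\cO$ and of dimension $\ge 1$, hence has a $\Qpbar$-point; this gives a $\mu$-polarized lift $\rho$ of $\rhobar$ over $F$, unramified outside $S$, with $\rho|_{G_{F_w}}$ on $A_v$ for each $v\in S$. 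Let $a=\rho|_{G_F}$.

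\textbf{Step 3: Upgrade $a$ to a compatible system.} It remains to show $a$ belongs to an odd, polarized, regular, weakly irreducible compatible system over $F$. Restricting $a$ to $G_L$: since $L/F$ is linearly disjoint from $\overline{F}^{\ker\abar}(\zeta_p)$, the mod-$p$ representation $\abar|_{G_L}$ still has image $\abar(G_F)$, so $\abar|_{G_{L(\zeta_p)}}$ is irreducible and $a|_{G_L}$ is a $\mu|_{G_L}$-polarized deformation of $\rhobar|_{G_{L^+}}$ lying on $A_v|_{L^+_{v_L}}$; i.e.\ $a|_{G_L}$ defines a $\Qpbar$-point of $R^\univ_L$. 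By Step 2 and Theorem~\ref{thm: FC's pst R=T} (applied after the base change in Lemma~\ref{finiteness lemma} to an automorphic situation with split places, which is legitimate by \cite[Lem.\ 1.2.3]{BLGGT}), every $\Qpbar$-point of $R^\univ_L$ is automorphic, hence $a|_{G_L}$ is automorphic, so it is the $p$-adic member of an odd, polarized, regular, weakly irreducible (indeed automorphic) compatible system $(\{a'_\lambda\},\{\mu''_\lambda\})$ of $G_L$-representations. Then the Brauer-type descent argument for compatible systems of \cite{dieulefait2} (as invoked in \cite{BLGGT} and summarized in the introduction) glues the conjugates of this system over the Galois closure of $L/F$ into a compatible system $(\{a_\lambda\},\{\mu_\lambda\})$ over $G_F$ whose $p$-adic member is $a$; it is automatically odd, polarized, and regular, and it is weakly irreducible because $\abar|_{G_{F(\zeta_p)}}$ is irreducible (so at the density-one set of primes $\lambda$ where $a'_\lambda$ is irreducible, $a_\lambda$ is too). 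The multiplier character is $\mu$ by construction at $p$ and hence at all $\lambda$. Finally $a$ is unramified outside $S$ by construction, and $a|_{G_{F_w}}$ lies on $A_v$ for each $v\in S$.

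\textbf{Main obstacle.} The delicate point is Step 1 --- proving that $R^\univ_F$ is a \emph{finite} $R^\univ_L$-algebra --- because one must check that imposing the base-changed components $A_v|_{L^+_{v_L}}$ at \emph{all} places of $L^+$ over each $v\in S$ (rather than just one) does not lose finiteness, and that the prolongation/polarization data behave well under $\Res_{L^+/F^+}$; this is where the careful setup of Conventions~\ref{convention: w lies over v} and~\ref{convention: w lies over v 2} and the well-definedness of $C|_L$ recorded after Lemma~\ref{lem: identifying split local deformation ring with GLn} are essential. The compatible-system descent in Step 3, while technical, is by now standard.
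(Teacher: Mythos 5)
Your proposal is correct and matches the paper's own proof in all essentials: set up the global deformation ring $R_A$ over $F$, note it is finite over the corresponding ring $R_{A'}$ over $L$ (via~\cite[Lem.\ 1.2.3]{BLGGT}), obtain finiteness of $R_{A'}$ over $\cO$ from Lemma~\ref{finiteness lemma}, extract a $\Qpbar$-point, deduce automorphy of $a|_{G_L}$ via Theorem~\ref{thm: FC's pst R=T}, and descend to a compatible system over $F$ by Brauer's theorem as in the proof of \cite[Thm.\ 5.5.1]{BLGGT}. The only stylistic difference is that the paper first replaces $L$ by a finite extension (using Theorem~\ref{thm: BLGGT splitting at primes}) so that $(\{s_\lambda\},\{\mu'_\lambda\})$ becomes honestly automorphic before running the argument, whereas you defer this to the interior of Lemma~\ref{finiteness lemma}; and the "main obstacle" you flag in Step 1 is not actually delicate here, as it is handled wholesale by the cited lemma of \cite{BLGGT} (you do not need to reprove it). One detail you elide that the paper spells out: invoking the Brauer descent of \cite[Thm.\ 5.5.1]{BLGGT} requires a control on component groups of the Galois representations, which the paper secures via Lemma~\ref{lem: control of component groups} after an algebraic twist to ensure the determinant has infinite image.
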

\begin{proof}By Theorem~\ref{thm: BLGGT splitting at primes}, after
  replacing~$L$ by a finite extension, we can and do assume
  that~$(\{s_\lambda\},\{\mu'_\lambda\})$ is automorphic. Let~$R_A$ be the universal $\cO$-deformation algebra for
  $\abar$, for the deformations which are:
  \begin{itemize}
  \item $\mu$-polarized and odd.
  \item unramified outside of~$S$.
  \item If $v\in S$, then the corresponding lift lies on~$A_v$.
  \end{itemize}
Let $R_{A'}$ be the  universal $\cO$-deformation algebra for
  $\sbar$, for the deformations which are:
  \begin{itemize}
  \item $\mu'$-polarized and odd.
  \item unramified outside of the primes lying over~$S$.
  \item If $v\in S$, and $v_L |v$ is a place of~$L^+$, then the corresponding lift lies on~$A_v|_{L^+_{v_L}}$.
  \end{itemize}
 By~Proposition~\ref{prop: global deformation ring is positive dimensional}, $R_{A}$ has
positive dimension, and by~\cite[Lem.\ 1.2.3]{BLGGT} it is finite
over~$R_{A'}$. Since~$R_{A'}$ is finite over~$\cO$ by
Lemma~\ref{finiteness lemma}, we deduce that~$R_A$ is finite over~$\cO$. It follows that~$R_A$ has $\Qpbar$-points, and we
let~$a$ be the representation corresponding to such a point.

It remains to check that~$(a,\mu)$ is part of a weakly irreducible
compatible system. Since (by construction)~$(s,\mu')$ is automorphic, it follows 
from Theorem~\ref{thm: FC's pst R=T} that~$(a|_{G_L},\mu')$ is automorphic. Then~$(a,\mu)$ is part of a compatible system by
the usual argument with Brauer's theorem; to be precise, it follows
from the proof of~\cite[Thm.\ 5.5.1]{BLGGT}, as the appeal
to~\cite[Thm.\ 4.5.1]{BLGGT} in that proof is only in order to prove
potential automorphy, which we have already established. (Note that
the hypothesis on the component groups of the Galois representations
made in the proof of~\cite[Thm.\ 5.5.1]{BLGGT} is guaranteed by
Lemma~\ref{lem: control of component groups}; we are free to twist our
representations by an algebraic character in order to guarantee that
the determinant has infinite order.) This compatible system is  weakly irreducible
by Lemma~\ref{lem: weakly irreducible equals
  potentially automorphic}.
  \end{proof}

\begin{cor}[Level lowering for a compatible system]
  \label{cor: removing finite ramification} Let~$S$ be a finite set of finite places
of~$F^+$, containing all of the places dividing~$p$, 
and let $(\abar,\mubar)$ be a polarized
  representation which is unramified outside of~$S$. Let~$\mu:G_{F^+}\to\Zpbartimes$ be a 
  de Rham lift
of~$\mubar$ which is unramified outside of~$S$. Assume that
$\abar:G_F\to\GL_n(\Fpbar)$ is reasonable. For each $v\in S$, let
$A_v$ be a $\mu$-polarized component 
for~$\abar|_{G_{F_w}}$. Let~$\Sigma$ be a finite set of finite places
of~$F^+$, which is disjoint from~$S$, and for each $v\in\Sigma$, let
$A_v$ be a $\mu$-polarized component for~$\abar|_{G_{F_w}}$ which is potentially
unramified. 

Suppose that there is 
an odd, polarized, regular, weakly irreducible system of
representations $(\{s_\lambda\},\{\mu_\lambda\})$ of~$G_F$, with associated $p$-adic representation~$(s,\mu)$,  with the
properties that:
\begin{enumerate}
\item $\sbar\cong\abar$.
\item $s$ is unramified outside of~$S\cup\Sigma$.
\item For each place~$v\in S\cup\Sigma$,
  $s|_{G_{F_w}}$ lies on $A_v$.
\end{enumerate}
Then there is an odd, polarized, regular, weakly irreducible system of
representations $(\{a_\lambda\},\{\mu_\lambda\})$ of~$G_F$, with associated $p$-adic representation~$(a,\mu)$,  with the
properties that:
\begin{enumerate}
\item $a$ lifts~$\abar$.
\item $a$ is unramified outside of~$S$.
\item For each place~$v\in S$,
  $a|_{G_{F_w}}$ lies on $A_v$.
\end{enumerate}
\end{cor}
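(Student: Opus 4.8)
The plan is to run the Khare--Wintenberger argument, descending the existence of a compatible system from a setting with auxiliary ramification at the primes of $\Sigma$ to one with no such ramification, exactly as in Lemma~\ref{lem: existence of compatible system from potential existence} but now working entirely over $F$ rather than over an extension $L/F$. The key point is that the primes in $\Sigma$ carry \emph{potentially unramified} components, so they impose no constraint after a suitable base change, and the relevant global deformation ring stays finite over $\cO$ because of the finiteness results already established.

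First I would set up the two global deformation problems. Let $R_{A\cup\Sigma}$ be the universal $\cO$-deformation algebra for $\mu$-polarized odd deformations of $\abar$ which are unramified outside $S\cup\Sigma$ and which for each $v\in S\cup\Sigma$ lie on the component $A_v$; by hypothesis (3) on $(s,\mu)$, the representation $s$ (or rather its prolongation) gives a $\Qpbar$-point of $R_{A\cup\Sigma}$. Let $R_A$ be the analogous algebra where ramification is only allowed at $S$ and components are only specified at $v\in S$. Then I would observe that $R_A$ is naturally an $R_{A\cup\Sigma}$-algebra? No --- the inclusion goes the other way: $R_{A\cup\Sigma}$ is a quotient of nothing obvious, so instead the right comparison is the one used in the Khare--Wintenberger method via base change. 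Concretely, I would choose a finite solvable (or just finite Galois, via Theorem~\ref{thm: BLGGT splitting at primes}) CM extension $L/F$, linearly disjoint from $\overline F^{\ker\abar}(\zeta_p)$, such that each $v\in\Sigma$ becomes unramified in the relevant sense after restriction to $L$ --- i.e.\ the potentially unramified components $A_v$ become the unramified component over $L^+_{v_L}$ --- and such that $(\{s_\lambda|_{G_L}\},\{\mu_\lambda|_{G_{L^+}}\})$ is automorphic. Over $L$ the representation $s|_{G_L}$ now lies on a component that is unramified at the places over $\Sigma$, so it witnesses the hypotheses of Lemma~\ref{lem: existence of compatible system from potential existence} (with $S$ there being our $S$, and the compatible system over $L$ being $\{s_\lambda|_{G_L}\}$): indeed $\sbar|_{G_L}\cong\abar|_{G_L}$, $\mu'=\mu|_{G_L}$, $s|_{G_L}$ is unramified outside the places over $S$ (the $\Sigma$-ramification having been absorbed), and it lies on $A_v|_{L^+_{v_L}}$ for $v\in S$.

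Then I would simply invoke Lemma~\ref{lem: existence of compatible system from potential existence} directly: its output is precisely an odd, polarized, regular, weakly irreducible compatible system $(\{a_\lambda\},\{\mu_\lambda\})$ of $G_F$ lifting $\abar$, unramified outside $S$, with $a|_{G_{F_w}}$ on $A_v$ for each $v\in S$, which is the desired conclusion. The only things to check are that $\abar$ is reasonable over $F$ (hypothesis, since it is reasonable) and that $\abar|_{G_L}$ remains reasonable and $s|_{G_L}$ remains weakly irreducible after the base change --- the former because linear disjointness from $\overline F^{\ker\abar}(\zeta_p)$ preserves irreducibility of $\sbar|_{G_{F(\zeta_p)}}$ and the polarized/odd structure descends, and the latter by Lemma~\ref{lem: base change weak irreducibility} (or Lemma~\ref{lem:restricting weakly irr}). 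I expect the main obstacle to be bookkeeping: arranging a single $L/F$ that simultaneously kills the $\Sigma$-ramification on the nose (making each potentially unramified $A_v$ literally unramified over $L$, not just potentially so) while keeping linear disjointness from $\overline F^{\ker\abar}(\zeta_p)$ and automorphy of $\{s_\lambda|_{G_L}\}$. This is handled by first enlarging $L$ using the structure of the inertial types at $v\in\Sigma$ (these are potentially unramified, so become unramified over a finite extension of $F^+_v$, and one can realize all of these simultaneously inside a global extension) and then applying Theorem~\ref{thm: BLGGT splitting at primes} to further enlarge $L$ so that the compatible system becomes automorphic, with the splitting/disjointness conditions built in; none of these steps is difficult but they must be assembled in the right order.
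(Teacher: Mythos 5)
Your proposal is correct and follows essentially the same route as the paper's proof: choose a finite Galois CM extension $L/F$, linearly disjoint from $\overline{F}^{\ker\abar}(\zeta_p)$, such that $\abar|_{G_L}$ remains reasonable and $s|_{G_{L_{w_L}}}$ is unramified at every place over $\Sigma$, and then invoke Lemma~\ref{lem: existence of compatible system from potential existence} for $(\{s_\lambda|_{G_L}\},\{\mu_\lambda|_{G_{L^+}}\})$. The only superfluous step in your write-up is enlarging $L$ further to make the compatible system automorphic via Theorem~\ref{thm: BLGGT splitting at primes}; this is already handled internally by Lemma~\ref{lem: existence of compatible system from potential existence}, so you need not do it yourself.
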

\begin{proof}
  We may choose a finite Galois extension~$L/F$, linearly disjoint
  from~$\overline{F}^{\ker{\abar}}(\zeta_p)$ over~$F$, 
  with the properties
  that~$\abar|_{G_{L}}$ remains reasonable, and for each~$v\in\Sigma$, and
  each place~$v_L|v$ of~$L$, $s|_{G_{L_{w_L}}}$ is unramified. The result
  then follows from Lemma~\ref{lem: existence of compatible system from potential
    existence}, applied to  $(\{s_\lambda|_{G_L}\},\{\mu_\lambda|_{G_{L^+}}\})$.
\end{proof}

\subsection{Local Swapping}
In this subsection we prove our main theorem
(Theorem~\ref{thm: main theorem on existence of lifts} below), 
building on a series of lemmas.
We begin with the following lemma, which will be used to move components
between different mod~$p$ representations. Much of the rest of this
section is devoted to relaxing the rather restrictive hypotheses made
in this lemma,
culminating in Lemma~\ref{lem: local swapping II} below. 

 We again remind the reader that we are using the
conventions introduced in Definition~\ref{defn: polarized local isomorphisms} and Conventions~\ref{convention: w
  lies over v} and~\ref{convention: w lies over v 2}. We extend Convention~\ref{convention: w
  lies over v} in the obvious way to subscripts and superscripts, so
that~$w_1$ is a place of~$F$ over~$v_1$ in~$F^{+}$, $w_L$ is a place of~$L$ over~$v_L$ in~$L^{+}$, and so on.
\begin{lemma}[Local swapping I] \label{lem: local swapping}Suppose that either $n>1$ is odd, or~$n=2$. 
  Let~$F$ be a CM field, and let
$(\{a_{\lambda}\},\{\mu_\lambda\})$
and~$(\{b_{\lambda}\},\{\nu_\lambda\})$ be two weakly irreducible, odd, regular,
polarized compatible systems of $n$-dimensional representations with corresponding~$p$-adic
representations~$(a,\mu)$ and~$(b,\nu)$. Let~$S$ be a finite set
of finite places 
of~$F^+$ containing all of the places lying over~$p$,  such that each of~$a$, $b$, $\mu$, or~$\nu$ is
unramified outside of~$S$. For each $v\in S$, write $A_v$
for the $\mu$-polarized component determined by~$a|_{G_{F_w}}$, and~$B_v$ for the
$\nu$-polarized component determined by~$b|_{G_{F_w}}$. 
Assume that:
\begin{enumerate} 
\item For each place~$v\in S$ with $v|p$, the component~$A_v \otimes B_v$ is
  regular.
\item The representations~$\abar$ and~$\bbar$ are reasonable, and
$(\abar\otimes\bbar)(G_{F(\zeta_p)})$ is adequate. 
\item The image~$\bbar(G_F)$ contains~$\SL_n(\Fp)$. 
\item There is a finite place $x \nmid p$ with~$x \in S$ of~$F^+$ which is inert
  in~$F$, and a character~$\psi:G_{F_x}\to\Qpbar^\times$ such that:
  \begin{itemize}
   \item $\psi^c|_{I_{F_x}} =\psi^{-1}|_{I_{F_x}}$.
    \item $\psi|_{I_{F_x}}$ has \emph{(}finite\emph{)} order greater than~$2$.
  \item $a|_{G_{F_x}}$ is unramified, and
  \item     $b|_{I_{F_x}}\cong\psi|_{I_{F_x}}\oplus\mathds{1}^{\oplus(n-1)}$.
  \end{itemize}
\end{enumerate}
We let $T\subset S\setminus\{x\}$ be a set of places with the property that if $v\in
T$, then there is
an equality~$\mu|_{G_{F^+_v}}=\nu|_{G_{F^+_v}}$
and a polarized isomorphism $\abar|_{G_{F_w}}\cong\bbar|_{G_{F_w}}$. 
Furthermore, if $v\in T$, then we
set $C_v=B_v$ and $D_v=A_v$, while if $v\in S\setminus T$, then we set
$C_v=A_v$ and $D_v=B_v$.

 Then there exist  odd, regular, polarized, weakly irreducible
 compatible systems~$(\{c_{\lambda}\},\{\mu_\lambda\})$ and~$(\{d_\lambda\},\{\nu_\lambda\})$ with corresponding $p$-adic
 representations~$(c,\mu)$ and~$(d,\nu)$, having the following properties:
 \begin{itemize}
 \item $\cbar\cong\abar$ and $\dbar\cong\bbar$.
 \item For each place $v\in S$, $c|_{G_{F_w}}$ lies on~$C_v$ and  $d|_{G_{F_w}}$ lies on~$D_v$.
 \item $c$ and~$d$ are unramified outside of~$S$.
 \end{itemize}
\end{lemma}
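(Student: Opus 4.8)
The plan is to run the tensor-product trick of~\cite{BLGGT} but with a \emph{varying} auxiliary representation, using the factoring results of Section~\ref{sec: factoring} to undo the tensor product on both the Galois and automorphic sides. First I would form the tensor products $a\otimes b$ and, more importantly, the ``swapped'' tensor product. Concretely, consider the $n^2$-dimensional representation $a\otimes b$; its residual image is $(\abar\otimes\bbar)(G_F)$, which is adequate (and remains adequate after restriction to $G_{F(\zeta_p)}$ by hypothesis~(2)). By Lemma~\ref{lem: tensor product weak irreducibility} (after checking regularity, which holds by Convention~\ref{convention: lifts will always be well spread} applied to the various weights, and checking that one member $a_\mu\otimes b_\mu$ is irreducible, which we arrange via hypothesis~(4) and the strong-irreducibility machinery of Section~\ref{subsec: local conditions strong irred}, in particular Lemma~\ref{lem: strong irreducibility for compatible system given local conditions}), the compatible system $\{a_\lambda\otimes b_\lambda\}$ is weakly irreducible; hence by Lemma~\ref{lem: weakly irreducible equals potentially automorphic} it is potentially automorphic, and by Lemma~\ref{finiteness lemma} the global deformation ring $R^\univ$ for $\abar\otimes\bbar$ with the local conditions $\{C_v\otimes D_v\}_{v\in S}$ (note $C_v\otimes D_v = A_v\otimes B_v$ for all $v$, since swapping a tensor factor does not change the tensor product!) is a finite $\cO$-algebra.

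Next I would set up the two global deformation rings $R_C$ (for $\abar$, with local conditions $C_v$) and $R_D$ (for $\bbar$, with local conditions $D_v$), together with the suitable auxiliary ramification set $\Sigma$ of Definition~\ref{defn: suitable} so that the relevant compatible systems are forced to be strongly irreducible (this is exactly what hypotheses~(3) and~(4) are there to feed into Lemma~\ref{lemma:reducetocyclo}). The tensor-product construction makes $R_C\,\widehat{\otimes}_\cO\,R_D$ into an $R^\univ$-algebra, and the Khare--Wintenberger-style argument (as in~\cite[\S1.2]{BLGGT}, using $p\nmid n$ and that $\abar\otimes\bbar$ sees both factors, via $\ad^0(a\otimes b)\supseteq\ad^0(a)\oplus\ad^0(b)$) shows it is in fact \emph{finite} over $R^\univ$. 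Since $R^\univ$ is finite over $\cO$, so is $R_C\,\widehat{\otimes}_\cO\,R_D$, hence both $R_C$ and $R_D$ are finite over $\cO$; combined with the positive-dimensionality of Proposition~\ref{prop: global deformation ring is positive dimensional} (valid because $\abar,\bbar$ are reasonable), each of $R_C$, $R_D$ has a $\Qpbar$-point. Call the corresponding representations $c,d$; by construction $\cbar\cong\abar$, $\dbar\cong\bbar$, each lies on the prescribed component $C_v$ (resp.\ $D_v$) at every $v\in S$, and each is unramified outside $S\cup\Sigma$.

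Now $c\otimes d$ is a point of $R^\univ$, hence potentially automorphic, hence lies in a compatible system $\{t_\lambda\}$ with $t_p\cong c\otimes d\cong c\otimes d$. The heart of the argument is to \emph{factor} this compatible system: applying Theorem~\ref{thm: factoring compatible systems} (this is precisely why hypotheses~(3), (4) and the $n$ odd/$n=2$ restriction are present — the latter to get oddness of the factors via Lemma~\ref{lem: odd dimensional representations are odd}), we obtain weakly irreducible polarized compatible systems $(\{c_\lambda\},\{\mu_\lambda\})$ and $(\{d_\lambda\},\{\nu_\lambda\})$ with $p$-adic realizations $c$ and $d$ respectively and with $t_\lambda = c_\lambda\otimes d_\lambda$. (The multiplier characters are pinned down by Lemma~\ref{lem: strongly irreducible is uniquely polarizable}, and the strong-irreducibility hypothesis of that theorem is supplied by our choice of $\Sigma$.) Finally I would remove the auxiliary ramification at $\Sigma$: the components $A_v,B_v$ for $v\in\Sigma$ are potentially unramified by the suitable choice, so Corollary~\ref{cor: removing finite ramification} (level lowering for compatible systems), applied to each of $\{c_\lambda\}$ and $\{d_\lambda\}$, produces compatible systems unramified outside $S$ with the same local components at $S$ and lifting $\abar$, $\bbar$ respectively. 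That gives the desired conclusion.

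\textbf{Main obstacle.} The delicate point is verifying \emph{all} the hypotheses of Theorem~\ref{thm: factoring compatible systems} simultaneously for the system $\{t_\lambda\}$: we need $t_p = c\otimes d$ with $c,d$ de Rham at $v\mid p$ (true since they come from the potentially semistable components $C_v,D_v$), we need a positive density of $\lambda$ with $t_\lambda$ strongly irreducible and we need $t_p$ itself strongly irreducible — this is exactly what the entire apparatus of Section~\ref{subsec: local conditions strong irred} (the suitable choice of $\Sigma$, Lemmas~\ref{lemma:finiteM}--\ref{lem: strong irreducibility for compatible system given local conditions}) is designed to guarantee, but one must check that the local conditions at $x$ and at $\Sigma$ imposed in defining $R_C$, $R_D$ are compatible with the conditions $C_v\otimes D_v$ imposed in defining $R^\univ$, and that the residual image condition $\bbar(G_F)\supseteq\SL_n(\Fp)$ survives into the needed ``$\SL_n(\F_q)$ for $q$ large'' hypothesis (this follows since $\SL_n(\Fp)\subseteq\SL_n(\F_q)$ fails the direction we want — rather one uses that containing $\SL_n(\Fp)$ is \emph{stronger}, and Lemma~\ref{lemma:large} with $q=p$ is available since $p>2(n+1)>5$). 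Threading the local type at $x$ through $b\mapsto d$ (so that $d|_{G_{F_x}}$ still has the shape needed for the $n=m=n$ case of the factoring theorem, via Proposition~\ref{prop: weakly irreducible implies compatibility at ramified places}) is the bookkeeping I expect to be most error-prone.
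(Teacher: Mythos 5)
Your proposal follows the same strategy as the paper's actual proof: introduce a suitable auxiliary ramification set $\Sigma$ (Definition~\ref{defn: suitable}), observe $C_v\otimes D_v=A_v\otimes B_v$ so the tensor-product deformation ring $R_{A\otimes B}$ is finite over $\cO$ by Lemma~\ref{finiteness lemma}, deduce finiteness of $R_C$ and $R_D$, extract $\Qpbar$-points $c$ and $d$, place $c\otimes d$ in a compatible system via potential automorphy, factor it via Theorem~\ref{thm: factoring compatible systems}, and strip off $\Sigma$ with Corollary~\ref{cor: removing finite ramification}. You also correctly identify the hypothesis-checking for Theorem~\ref{thm: factoring compatible systems} (strong irreducibility via the $\Sigma$-apparatus; the inertial type at $x$ persisting because $x\notin T$; $q=p$ works in Lemma~\ref{lemma:large} since $p>6$) as the delicate step, matching the paper.

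The one place your sketch is materially lighter than the paper's is the finiteness of $R_C\,\widehat{\otimes}_\cO\,R_D$ over $R_{A\otimes B}$. The heuristic $\ad^0(\abar\otimes\bbar)\supseteq\ad^0(\abar)\oplus\ad^0(\bbar)$ does not by itself close this: the natural map $\ad(\abar)\oplus\ad(\bbar)\to\ad(\abar\otimes\bbar)$, $(X,Y)\mapsto X\otimes 1+1\otimes Y$, has the antidiagonal scalars in its kernel, so the fiber of $R_C\widehat{\otimes}R_D$ over the closed point of $R_{A\otimes B}$ is not controlled purely by the displayed $\ad^0$-inclusion. The paper instead argues by Nakayama: a point of the fiber over an Artinian $k$-algebra $\cA$ is a pair $(\widetilde a,\widetilde b)$ with $\widetilde a\otimes\widetilde b$ the trivial deformation; restricting to $G_M$ (for $M$ the splitting field of $\abar\oplus\bbar$) forces $G_M$ to act by scalars on each factor, and those scalars are then bounded by first establishing finiteness of the determinant deformation rings $R_{\det(C)}$, $R_{\det(D)}$ via class field theory and then passing to a finite Galois quotient $\Gal(N/F)$. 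So the approach is the same, but this finiteness step requires the scalar/determinant bookkeeping the paper supplies, not just the $\ad^0$-decomposition.
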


\begin{proof}Note firstly that since~$\bbar$ is reasonable, we have
  $p>2(n+1)\ge 6$, so in particular $\bbar(G_F)$ is large enough that
  Lemma~\ref{lemma:large} applies. Note also that
  since~$\abar|_{G_{F_x}}$ is unramified but~$\bbar|_{G_{F_x}}$ is
  ramified, we have $x\notin T$.  By Corollary~\ref{cor: removing finite ramification},
  it suffices to construct the desired compatible systems after increasing the set~$S$, provided that the additional
  components~$A_v$ that we choose are potentially
  unramified.
Note that the assumption that $(\abar\otimes\bbar)(G_{F(\zeta_p)})$ is adequate includes the assumption that~$\abar\otimes\bbar$ is absolutely irreducible; 
  accordingly, we allow ramification in~$S\cup\Sigma$, where
    $\Sigma$, and the components~$C_v$ and~$D_v$ for $v\in\Sigma$,
    are a suitable choice as in Definition~\ref{defn: suitable}. 
     We set $A_v=C_v$ and $B_v=D_v$ for $v\in\Sigma$.
  
We consider the following four global deformation $\cO$-algebras $R_{A,\Sigma}$, $R_{B,\Sigma}$, $R_{C,\Sigma}$,
and~$R_{D,\Sigma}$, defined as follows (from this point onwards we drop the~$\Sigma$ from the
notation):
\begin{enumerate}
\item $R_{A}$ and~$R_{C}$ are deformation rings for~$\abar$; $R_{B}$ and~$R_{D}$ are deformation rings for~$\bbar$.
\item The deformations are polarized and odd; the multiplier
  characters of~$R_A, R_B, R_C, R_D$ are respectively~$\mu,\nu,\mu,\nu$.
\item If~$v\in S \cup \Sigma$, then 
  the restriction to~$G_{F_w}$ of the universal deformation
  corresponding to~$R_{A}$, $R_{B}$, $R_{C}$, or~$R_{D}$, lies on the component~$A_{v}$, $B_{v}$, $C_{v}$, or~$D_v$
respectively.
\item The representations are unramified outside of~$S \cup \Sigma$.
\end{enumerate}

We also consider a fifth deformation $\cO$-algebra~$R_{A \otimes B}$ for~$\abar \otimes \bbar$, which is defined to have the following properties:
\begin{enumerate}
\item The deformations are polarized and odd, with multiplier~$\mu\nu\delta_{F/F^+}$.
\item If~$v\in S \cup \Sigma$, then 
the corresponding lift  lies on the component~$A_v \otimes
B_v=C_v\otimes D_v$.
\item The representations are unramified outside of~$S \cup \Sigma$.
\end{enumerate}
Note that $\{a_\lambda\otimes b_\lambda\}$ is weakly irreducible
by Lemma~\ref{lem: tensor product weak irreducibility}.
It follows from Lemma~\ref{finiteness lemma} (with~$S$ replaced by~$S \cup \Sigma$)
that $R_{A\otimes B}$ is a finite $\cO$-algebra. 
We claim that~$R_{C}$ and~$R_{D}$
are also both finite over~$\OL$. 

To prove this, we first note that
any deformation coming from~$R_{C}$ tensored with one from~$R_{D}$ gives, functorially, a deformation
of type~$R_{A \otimes B}$. The representation~$a\otimes b$ also gives
such a point. 
By Yoneda's Lemma, there exist corresponding morphisms:
$$R_{A \otimes B} \rightarrow R_{A} \hotimes_{\cO} R_{B},
\qquad R_{A \otimes B} \rightarrow R_{C} \hotimes_{\cO} R_{D}.$$
Since $R_{A\otimes B}$ is finite over~$\cO$, it then suffices to show that
the morphism $R_{A \otimes B} \rightarrow R_{C} \hotimes_{\cO} R_{D}$ is
finite. 

By Nakayama's lemma, we are reduced to showing that if~$\cA$ is an Artinian~$k$-algebra
such that~$\abar$ and~$\bbar$ admit deformations~$\widetilde{a}$ and~$\widetilde{b}$ to~$\cA$ (of types~$C$
and~$D$ respectively) so that~$\widetilde{a} \otimes \widetilde{b}$
is the trivial deformation of~$\abar \otimes \bbar$, then the
corresponding map ~$ R_{C} \hotimes_{\cO} R_{D}\to\cA$ factors through
some subalgebra~$\cA'$ of $\cA$ of
uniformly bounded length. 
To show this,
let~$M$ be the fixed field of the kernel of~$\abar\oplus\bbar$. Then 
we find that 
$$\widetilde{a}|_{G_{M}} \otimes  \widetilde{b} |_{G_{M}} = (\abar \otimes_k \cA)|_{G_{M}} \otimes (\bbar \otimes_{k} \cA) |_{G_{M}} $$
is a free~$\cA$-module with a trivial action of~$G_M$.
Now, we claim that if~$V$ and~$W$ are free~$\cA$-modules with an action
of~$G_M$ such that the diagonal action on~$V \otimes_{\cA} W$
is trivial,
then~$G_M$ acts on~$V$ and $W$ by scalars.
Assume for the sake of contradiction that~$G_M$ does not act on~$V$ via a character. Then there exists an element~$v \in V$ and~$g \in G_M$
such that~$g v$ is not a multiple of~$v$. 
Yet then (choosing
$w$ to be any element of $W \setminus \mathfrak{m}_{\cA} W$), $g (v \otimes w)$ cannot possibly
be a multiple of~$v \otimes w$, a contradiction.
Hence the action of~$G_M$ on~$V$ (and~$W$) is by scalars.

Let~$R_{\det(C)}$, $R_{\det(D)}$, etc.\ denote the corresponding deformation rings for the determinants of our representations.
We have the following diagram:
\[
\begin{tikzcd}
R_{\det(A \otimes B)} \arrow{r} \arrow{d} & R_{\det(C)} \widehat{\otimes}  R_{\det(D)}  \arrow{d}  \\
R_{A \otimes B} \arrow{r} &  R_{C} \widehat{\otimes} R_{D}
\end{tikzcd}
\]
We begin by showing that~$R_{\det(C)}$ and~$R_{\det(D)}$ are finite over~$\cO$. The Hodge-theoretic conditions imply that any two characters of
type~$\det(C)$ (or~$\det(D)$) differ by a finite order character unramified outside~$S \cup \Sigma$, and with ramification at~$v|p \in S \cup \Sigma$
bounded purely by the corresponding type. The finiteness of~$R_{\det(C)}$ and~$R_{\det(D)}$ over~$\OL$ is now
an immediate consequence of class field theory. (Fixing one such pair of characters~$\tau_C$ and~$\tau_D$, the deformation
rings over~$k$ are identified with group rings~$k[\Gamma]$ for some finite ray class group~$\Gamma$.)

Hence we may additionally assume that the determinants of~$\widetilde{a}$ and~$\widetilde{b}$ are fixed.
From the argument above, we have also shown that the action of~$G_M$ on the corresponding~$\cA$-modules~$V$ and~$W$ is via a scalar.
Since there are only finitely many characters of~$G_M$ of order~$n$
which are unramified outside any fixed finite set of primes, we deduce that there exists a finite Galois extension~$N/F$ such that the action
of~$G_N$ on~$V$ and~$W$ is trivial. But this implies that the corresponding maps from~$R_C$ and~$R_D$ to~$\cA$ factor
through the quotient of the universal deformation ring over~$k$ of~$\abar$ and~$\bbar$ as representations of the finite group~$\Gal(N/F)$. 
But the finiteness of these deformation rings  follows exactly as in the proof of Lemma~1.2.3 of~\cite{BLGGT}.

Since $R_C$ and $R_D$ are finite over~$\cO$, and have dimension at
least one by~Proposition~\ref{prop: global deformation ring is
  positive dimensional}, this shows that~$R_C$ and~$R_D$ both have
non-trivial~$\Qbar_p$-valued points. Make a choice of such points, and
let $c$, $d$ be the corresponding $p$-adic representations. Recall
~that $\{a_\lambda\otimes b_\lambda\}$ is weakly irreducible, and thus
potentially automorphic by Lemma~\ref{lem: weakly irreducible equals
  potentially automorphic}. In particular, given any finite Galois extension~$\Favoid/F$, we can find a CM Galois
extension~$L/F$ which is linearly disjoint from~$\Favoid/F$ and is such
that~$\{(a_\lambda\otimes b_\lambda)|_{G_L}\}$ is
automorphic. Furthermore, by replacing~$\Favoid$
by~$\Favoid\overline{F}^{\ker\overline{a}\otimes\overline{b}}(\zeta_p)$,
we may assume that~$(\overline{a}\otimes\overline{b})(G_{L(\zeta_p)})$ is adequate,
and~$\zeta_p\notin L$. Making a further quadratic base change if
necessary, we can also assume that all places at 
which~$a\otimes b$ is ramified, and all places lying over~$p$, are
split places. Then
$(c\otimes d)|_{G_L}$ is  automorphic by Theorem~\ref{thm: FC's pst
  R=T}. 
  As in the proof
of Lemma~\ref{lem: existence of compatible system from potential
  existence}, it follows from the proof of~\cite[Prop.\ 5.5.1]{BLGGT}
that~$c\otimes d$ is part of a regular, odd, polarizable weakly
irreducible compatible system~$\{t_\lambda\}$ (again, using
Lemma~\ref{lem: control of component groups} to guarantee the
hypothesis on the component groups made in~\cite[Prop.\
5.5.1]{BLGGT}).  

We will now apply
Theorem~\ref{thm: factoring compatible systems} to the compatible
system~$\{t_\lambda\}$, and so deduce that $c$, $d$ are the $p$-adic representations
corresponding to compatible systems~$\{c_\lambda\}$, $\{d_\lambda\}$,
which (by construction) satisfy all the requirements of the lemma.
To complete the proof, it suffices to verify that the hypotheses
of Theorem~\ref{thm: factoring compatible systems}
are satisfied by~$\{t_\lambda\}$. 
The strong irreducibility assumptions are satisfied by
Lemma~\ref{lem: strong irreducibility for compatible system
    given local conditions} and the conditions at the places
  in~$\Sigma$, which imply that the hypotheses of
  Lemma~\ref{lemma:reducetocyclo} hold.
  The hypothesis on the image of $\bbar$ is satisfied by assumption.
Finally, the place $x$ required in the hypotheses 
of Theorem~\ref{thm: factoring compatible systems}
can be taken to be the place $x$ appearing in hypothesis~(4)
of the present lemma: indeed, the hypotheses on the behavior of 
$a$ and $b$ at $x$ involve just the restriction
of these representations to the inertia group at $x$, and thus they are
constant along the components $A_x$ and $B_x$.  As noted above,
$x \not\in T$, and thus $C_x = A_x$ and $D_x = B_x$,
so that these same hypotheses are satisfied by $c$ and $d$.
This completes the verification of the hypotheses 
of Theorem~\ref{thm: factoring compatible systems},
and so also completes the proof of the lemma.
\end{proof}

\begin{rem} The polarizability requirement in the previous lemma is essential, even for~$n = 1$.
In particular, there are genuine global obstructions (arising from units)
to producing characters with prescribed ramification
properties at all primes. However, the polarizable condition implies that (in our setting)
the corresponding characters are unitary and come from the~$-1$-part of the corresponding ray class groups;
in particular, there are no unit obstructions provided that~$p \ne 2$
(and in this paper, $p$ is never~$2$).
\end{rem}
We now introduce potentially diagonalizable lifts into the picture. We
remind the reader of Convention~\ref{convention: lifts will always be
  well spread} (that we will assume without explicit mention that the
gaps between the Hodge--Tate weights of our potentially diagonalizable
lifts are sufficiently large); we will also sometimes assume without further comment that in base change arguments
 the weights and
types of the potentially diagonalizable representations have been
chosen compatibly.
\begin{lem}[Existence of potentially diagonalizable
  lifts]\label{lem: existence of PD lifts} Let~$\sbar:G_F\to\GL_n(\Fpbar)$ be a 
pleasant  representation in the sense of {\em Definition~\ref{defn:pleasant}},
  namely:
  \begin{itemize}
  \item $\zeta_p\notin F$, and $\sbar|_{G_{F(\zeta_p)}}$ is
    irreducible.
  \item $\sbar$ is polarizable and odd.
  \item $p> 2(n+1)$.
  \item All the primes~$v|p$ in~$F^{+}$ split completely in~$F$.
  \item For each place~$w|p$ of~$F$, $\sbar|_{G_{F_w}}$ admits many
    diagonalizable lifts.
   \end{itemize}
 Let~$\mubar$ be a character such that~$(\sbar,\mubar)$ is
  polarized. Let~$\mu$ be a de Rham lift of~$\mubar$. Let~$S$ be a
  finite set of finite places of~$F^+$, including all places at
  which~$\sbar$ or~$\mu$ is ramified, and all places lying
  over~$p$. 
  
  Then there is an odd, regular, polarized weakly irreducible
  compatible system~$(\{s_\lambda\},\{\mu_\lambda\})$ of
  $G_F$-representations whose associated $p$-adic representation
  is~$(s,\mu)$, where:
  \begin{itemize}
  \item $s$ lifts~$\sbar$, 
  \item $s$ is unramified outside of~$S$, and
  \item $s$ is potentially diagonalizable at all places~$v|p$.
  \end{itemize}
Furthermore, if we fix a $\mu$-polarized component~$C_v$ of~$\sbar$ for
each place $v \nmid p$ in~$S$, then we may assume that
$s|_{G_{F_w}}$ lies on~$C_v$ for each~$v$.
  \end{lem}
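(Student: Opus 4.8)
The plan is to reduce the statement to the potential-automorphy theorems of \cite{BLGGT} for potentially diagonalizable local conditions, of which it is essentially a repackaging. First I would fix the local data. Choose a prolongation $\rhobar$ of $(\sbar,\mubar)$. At each place $w\mid p$ of $F$, since $\sbar$ is pleasant, $\sbar|_{G_{F_w}}$ admits many diagonalizable lifts, so (invoking Convention~\ref{convention: lifts will always be well spread}) I pick a potentially diagonalizable lift whose labelled Hodge--Tate weights are pairwise very spread out; because every $v\mid p$ of $F^+$ splits in $F$, Lemma~\ref{lem: identifying split local deformation ring with GLn} turns this into a $\mu$-polarized, potentially diagonalizable, regular local deformation of $\rhobar|_{G_{F^+_v}}$, lying on some component $A_v$. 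At each $v\nmid p$ in $S$ I set $A_v$ equal to the prescribed $C_v$ (or, for those $v$ with no prescription, to any component through a chosen local lift). Let $R_A$ be the global $\cO$-deformation algebra for $\mu$-polarized deformations of $\rhobar$ unramified outside $S$ and lying on $A_v$ for each $v\in S$.

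By Proposition~\ref{prop: global deformation ring is positive dimensional}, $\dim R_A \ge 1$, so it is enough to prove $R_A$ is finite over $\cO$: then any $\Qpbar$-point yields a lift $s$ of $\sbar$ with multiplier $\mu$, unramified outside $S$, potentially diagonalizable at all $v\mid p$, and lying on $C_v$ at the prescribed places $v\nmid p$. Regularity is forced by the spread-out choice of local weights at $p$, and oddness of $(s,\mu)$ holds since $\mu(c)^2=1$ while $\mu(c)\equiv\mubar(c)=-1$. The finiteness of $R_A$ over $\cO$ is the heart of the matter and is where \cite{BLGGT} enters: as $\sbar$ is not assumed residually automorphic, this is not a direct automorphy lifting statement, and one instead runs the tensor-product trick — tensoring the deformation problem with a representation induced from a character to land in a residually automorphic, adequate situation (using Lemma~\ref{lem: can always make a tensor product adequate}), applying an automorphy lifting theorem in the spirit of Theorem~\ref{thm: FC's pst R=T}, and descending finiteness by the Khare--Wintenberger argument, exactly as in \cite[\S4]{BLGGT}. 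I would cite \cite[Thm.~4.3.1, Thm.~4.5.1]{BLGGT}, together with \cite{MR3314824} (which allows one to work with weakly irreducible rather than irreducible compatible systems), rather than reproduce this machinery. The lift $s$ so produced is then potentially automorphic, hence part of a compatible system $(\{s_\lambda\},\{\mu_\lambda\})$ by the Brauer's-theorem argument in the proof of \cite[Thm.~5.5.1]{BLGGT} — whose hypothesis on component groups is supplied by Lemma~\ref{lem: control of component groups} after twisting so the determinant has infinite order — and this system is weakly irreducible by Lemma~\ref{lem: weakly irreducible equals potentially automorphic}.

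The main obstacle is bookkeeping rather than a new idea: one must arrange a finite base change of CM fields (linearly disjoint from a prescribed field, via Theorem~\ref{thm: BLGGT splitting at primes}) that simultaneously keeps all places of $S$ and all places over $p$ split, preserves pleasantness together with the chosen potentially diagonalizable local lifts at $p$, makes the relevant residual images adequate, and carries the prescribed components $C_v$ at $v\nmid p$ to components over the larger field — after which the \cite{BLGGT} input applies and one descends the compatible system via Lemma~\ref{lem: existence of compatible system from potential existence} and Corollary~\ref{cor: removing finite ramification}. I expect the one genuinely delicate point to be checking that the spread-out Hodge--Tate weight choices remain mutually compatible across the various tensor products occurring inside the \cite{BLGGT} argument, which is precisely what Convention~\ref{convention: lifts will always be well spread} is designed to absorb.
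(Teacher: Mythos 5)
Your approach is correct and matches the paper's: the proof there is simply to cite \cite[Thm.~5.2.1]{2017arXiv170804885B} for the existence of the lift (with the observation that its hypotheses are verified by \cite[Thm.~4.3.1, 4.5.1]{BLGGT}, the very results you identify), and \cite[Thm.~5.5.1]{BLGGT} for placing $s$ in a weakly irreducible compatible system, so your steps involving $R_A$, its positive dimensionality, its finiteness, and the base-change/Khare--Wintenberger bookkeeping are all subsumed in that one packaged citation. One small correction to your sketch: Lemma~\ref{lem: can always make a tensor product adequate} is not the right tool inside the BLGGT tensor-product argument, since there the auxiliary factor is induced from a character and so does not have projective image containing $\PSL_n(\F_q)$; but this is internal to the BLGGT proofs and does not affect the structure of what you are proposing.
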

   \begin{proof}The existence of~$s$ follows immediately from~\cite[Thm.\
    5.2.1]{2017arXiv170804885B} (the hypotheses of which hold
    by~\cite[Thm.\ 4.3.1, 4.5.1]{BLGGT}). 
That~$s$ is part of a compatible system follows from~\cite[Thm.\ 5.5.1]{BLGGT}. 
  \end{proof}
We will also make use of the following variant on the previous result,
where we no longer require that~$\sbar|_{G_{F_w}}$ admit
potentially diagonalizable lifts for~$w|p$ in~$F$,
but we allow a finite base change.

\begin{lem}[Existence of potentially diagonalizable
  lifts II]\label{lem: existence of PD lifts II} Let~$\sbar:G_F\to\GL_n(\Fpbar)$ be a reasonable
  representation in the sense of Definition~\ref{defn:reasonable}, that is:
  \begin{itemize}
  \item $\zeta_p\notin F$, and $\sbar|_{G_{F(\zeta_p)}}$ is
    irreducible.
  \item $\sbar$ is polarizable and odd.
  \item $p> 2(n+1)$.
   \end{itemize}
 Let~$\mubar$ be a character such that~$(\sbar,\mubar)$ is
  polarized. Let~$\mu$ be a de Rham lift of~$\mubar$. Let~$S$ be a
  finite set of finite places of~$F^+$, including all places at
  which~$\sbar$ or~$\mu$ is ramified, and all places lying
  over~$p$. Let~$\Favoid/F$ be a finite extension.

  Then there is a finite Galois extension of CM fields~$L/F$, linearly disjoint
  from the field~$\Favoid(\zeta_p)$ over~$F$, and an odd, regular, polarized weakly irreducible
  compatible system~$(\{s_\lambda\},\{\mu_\lambda\})$ of
  $G_L$-representations whose associated $p$-adic representation
  is~$(s,\mu|_{G_L})$, where:
  \begin{itemize}
  \item $\sbar|_{G_L}$ is pleasant,
  \item $s$ lifts~$\sbar|_{G_L}$, 
  \item $s$ is unramified outside of the places lying over~$S_L$, and
  \item $s$ is potentially diagonalizable at all places~$v|p$.
  \end{itemize}
Furthermore, if we fix a $\mu$-polarized component~$C_v$ of~$\sbar$ for
each place $v \nmid p$ in~$S$, then we may assume that
$s|_{G_{F_{w_L}}}$ lies on~$C_v|_{L^+_{v_L}}$ for each~$v_L|v$.
  \end{lem}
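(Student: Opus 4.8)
The plan is to reduce at once to the pleasant case, which is Lemma~\ref{lem: existence of PD lifts}. \emph{Step 1: base change so that $\sbar$ becomes pleasant.} I would apply Lemma~\ref{lem:can base change reasonable to pleasant} with its input avoidance field taken to be $\Favoid(\zeta_p)$, producing a finite extension $L/F$ of CM fields, linearly disjoint from $\Favoid(\zeta_p)$ over $F$, with $\sbar|_{G_L}$ pleasant in the sense of Definition~\ref{defn:pleasant}. The one point to verify is that $L/F$ can be taken Galois. This is arranged as in the proof of that lemma: there one has $L^+ = M^+E^+$ with $M^+/F^+$ quadratic (hence automatically Galois over $F^+$) and $E^+/F^+$ a totally real extension whose defining property --- triviality of $\sbar$ and of the mod~$p$ cyclotomic character on the decomposition groups at the primes above~$p$ --- is inherited by any larger extension; choosing $E^+$, and hence $L^+ = M^+E^+$, Galois over $F^+$ while retaining linear disjointness of $L$ from $\Favoid(\zeta_p)$ over $F$, and using that $F/F^+$ is Galois, makes $L = L^+F$ Galois over $F$.

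\emph{Step 2: transport the local and polarization data to $L$.} I would record that $\mu|_{G_{L^+}}$ is a de Rham lift of $\mubar|_{G_{L^+}}$; that $(\sbar|_{G_L},\mubar|_{G_{L^+}})$ is polarized and odd, obtained by restricting a prolongation $\rhobar\colon G_{F^+}\to\cG_n(\Fpbar)$ of $(\sbar,\mubar)$ to $G_{L^+}$ (note that $L/L^+$ is again a CM extension); and that, for each place $v\in S$ with $v\nmid p$ and each place $v_L$ of $L^+$ above $v$, the restricted component $C_v|_{L^+_{v_L}}$ is a well-defined $\mu|_{G_{L^+}}$-polarized component for $\sbar|_{G_{L_{w_L}}}$, using the construction of restrictions of components recalled in Section~\ref{subsubsec: deformation rings}. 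Let $S_L$ denote the set of places of $L^+$ above $S$; it contains all places above~$p$ and all places at which $\sbar|_{G_L}$ or $\mu|_{G_{L^+}}$ ramifies (since $\sbar$ and $\mu$ are unramified outside $S$), and its places not lying above~$p$ are exactly the $v_L\mid v$ with $v\in S$, $v\nmid p$.

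\emph{Step 3: invoke the pleasant case.} Finally I would apply Lemma~\ref{lem: existence of PD lifts} to the pleasant representation $\sbar|_{G_L}$, with the finite set $S_L$, the de Rham lift $\mu|_{G_{L^+}}$ of $\mubar|_{G_{L^+}}$, and, at each place $v_L\nmid p$ of $S_L$, the fixed $\mu|_{G_{L^+}}$-polarized component $C_v|_{L^+_{v_L}}$ (where $v=v_L|_{F^+}$). This produces an odd, regular, polarized, weakly irreducible compatible system $(\{s_\lambda\},\{\mu_\lambda\})$ of $G_L$-representations with associated $p$-adic representation $(s,\mu|_{G_{L^+}})$ such that $s$ lifts $\sbar|_{G_L}$, $s$ is unramified outside the places of $L$ lying over $S$, $s$ is potentially diagonalizable at all places $v\mid p$, and $s|_{G_{L_{w_L}}}$ lies on $C_v|_{L^+_{v_L}}$ for every $v_L\mid v$ with $v\nmid p$ in $S$. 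Together with Step~1 this is exactly the statement of the lemma.

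The only step requiring any care is the verification in Step~1 that the base change may be chosen Galois; the remainder is bookkeeping about how multiplier characters, polarizations, and local components behave under restriction, and the substance is entirely contained in Lemmas~\ref{lem:can base change reasonable to pleasant} and~\ref{lem: existence of PD lifts}.
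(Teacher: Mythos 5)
Your proposal is correct and follows exactly the route of the paper's own proof: apply Lemma~\ref{lem:can base change reasonable to pleasant} (with avoidance field $\Favoid(\zeta_p)$) to reduce to the pleasant case, then invoke Lemma~\ref{lem: existence of PD lifts}. The paper states this reduction in two sentences; your extra attention to arranging $L/F$ Galois is a reasonable clarification of a point the paper passes over silently, but the substance is identical.
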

  \begin{proof}
  By Lemma~\ref{lem:can base change reasonable to pleasant},
  we may find a finite extension~$L/F$ linear
  disjoint from~$\Favoid(\zeta_p)/F$ such that~$\sbar|_{G_L}$ is pleasant.
  The result then follows immediately from Lemma~\ref{lem: existence of PD lifts}.
  \end{proof}

\begin{rem}[Arguments both reasonable and pleasant] \label{rem:copout}
The notions of reasonable and pleasant (Definitions~\ref{defn:reasonable} and~\ref{defn:pleasant},  which
were also just recalled in the statements of Lemmas~\ref{lem: existence of PD lifts II} and~\ref{lem: existence of PD lifts}
respectively) are very closely related. Pleasant representations are automatically reasonable, and reasonable
representations are pleasant after a finite extension (Lemma~\ref{lem:can base change reasonable to pleasant}).  Since, in
the ultimate application, we only assume our residual representations are reasonable, we have endeavored
to structure the arguments below to only require reasonable hypotheses on the relevant residual representations.
However, at a number of points (including  in the \emph{conclusions} of Lemmas~\ref{lem: inverse Galois PD} 
and~\ref{cor: inverse Galois component swapping} as well as during the proofs of
 Lemma~\ref{lem: local swapping II} and Theorem~\ref{thm: main theorem on existence of lifts}), we deduce that certain residual
representations satisfy the stronger condition of pleasantness, even though, almost all of the time, we only ever
use the fact that these resulting representations are reasonable. (One notable exception is the proof
of Lemma~\ref{lem: inverse Galois PD},  where, to invoke Lemma~\ref{lem: existence of PD lifts}, we require
that~$\rbar$ is reasonable.) Thus, to the close reader, it should be born in mind that when it is stated that
a certain residual representation is pleasant, the main implication to take away is that is reasonable.
\end{rem}

 The following lemma and its corollaries will be used in our later arguments to replace
 a given  representation with one which behaves well under base
 change. 
 \begin{lem}[Auxiliary representations]
  \label{lem: inverse Galois PD}  Let~$(a,\mu)$ be polarized
  and odd, where $\abar:G_F\to\GL_n(\Fpbar)$ is reasonable, and let~$S$ be a finite set of finite places
of~$F^+$, containing all the places lying over~$p$, and all places at
which~$(a,\mu)$ is ramified. 
Let $\Favoid/F$ be a finite
extension, and let~$q$ be a power of~$p$.

Then there is a finite Galois extension $L/F$ with $\zeta_p\notin L$, which is linearly
disjoint from $\Favoid$ over~$F$, and a weakly irreducible, odd,
polarized, regular compatible
system~$(\{r_\lambda\},\{\mu_\lambda|_{G_{L^+}}\})$ of
$G_L$-representations with associated 
$p$-adic representation~$(r,\mu|_{G_{L^+}})$, which satisfies:
\begin{enumerate}
\item $\abar|_{G_L}$ is pleasant.
\item $\rbar$ and~ $\rbar|_{G_{\Lg}}$ are pleasant,
  and~$\rbar(G_{\Lg})\supset\SL_n(\F_{q})$;
  here~$\Lg$ denotes the Galois closure of~$L$ over~$\Q$. 
  \item $(\rbar\otimes\abar|_{G_L})(G_{L(\zeta_p)})$ is adequate.
\item Each place in~$S$ splits completely in~$L^+$. For each
  place~$v_L$ of~$L^+$ lying over a place~$v\in S$, we have polarized isomorphisms
  $\rbar|_{G_{L_{w_L}}}\cong\abar|_{G_{L_{w_L}}}=\abar|_{G_{F_w}}$.
  
\item There is a finite place~$x$ of~$L^+$ which is inert in~$L$ and
  does not lie over any place in~$S$, and a character~$\psi:G_{L_x}\to\Qpbar^\times$
  with~$\psi^c |_{I_{L_x}} =\psi^{-1} |_{I_{L_x}}$, such that
  $\mu|_{G_{L^+_x}}$ is unramified, $\psi|_{I_{L_x}}$ has finite order
  greater than~$2$,
and  ~$\rbar|_{I_{L_x}}\cong
\psibar |_{I_{L_x}}  \oplus\mathds{1}^{\oplus(n-1)}$.
\item There is an isomorphism~$r |_{I_{L_x}}\cong
\psi |_{I_{L_x}}  \oplus\mathds{1}^{\oplus(n-1)}$.
\item For any place~$v$ of~$L^+$ not lying over~$S$ at which~$r|_{G_{L_w}}$ is ramified,
  $r(I_{L_w})$ is finite.
\item $r|_{G_{L_{w}}}$ is potentially diagonalizable for all
  places~$w|p$ of~$F$.
\end{enumerate}
\end{lem}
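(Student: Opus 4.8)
The plan is to build the extension $L/F$ and the compatible system $(\{r_\lambda\},\{\mu_\lambda|_{G_{L^+}}\})$ by a single application of a Moret--Bailly/inverse-Galois construction of an auxiliary residual representation $\rbar$ with prescribed local behaviour, followed by an application of Lemma~\ref{lem: existence of PD lifts} to lift it. The key point is that all of conditions (1)--(8) are either (a) conditions that can be imposed ``at the residual level'' by a suitable Chebotarev/Moret--Bailly argument, or (b) conditions on the lift that follow automatically from Lemma~\ref{lem: existence of PD lifts} once the residual data is chosen. First I would enlarge $\Favoid$ to absorb $\overline{F}^{\ker\abar}(\zeta_p)$ and the fixed field of $\mu$, so that linear disjointness from $\Favoid$ automatically forces $\abar|_{G_L}$ and $\rbar|_{G_L}$ to retain their images and $\zeta_p\notin L$.

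\textbf{Constructing $\rbar$ and $L$.} The heart of the argument is to produce, over some finite Galois CM extension $L/F$ in which every place of $S$ splits completely, a polarizable (with multiplier $\mu|_{G_{L^+}}$), odd residual representation $\rbar:G_L\to\GL_n(\Fpbar)$ such that: the image $\rbar(G_{\Lg})$ contains $\SL_n(\F_q)$ (this is the ``big image after passing to the Galois closure over $\Q$'' requirement, which is what makes $\rbar$ robust under base change); for each $v_L|v\in S$ one has a \emph{polarized} local isomorphism $\rbar|_{G_{L_{w_L}}}\cong\abar|_{G_{F_w}}$ (recall that since $v$ is split in $L$, any local isomorphism is automatically polarized, per the discussion after Definition~\ref{defn: polarized local isomorphisms}); there is an auxiliary inert place $x$ of $L^+$ (not over $S$) with $\rbar|_{I_{L_x}}\cong\psibar|_{I_{L_x}}\oplus\mathds{1}^{\oplus(n-1)}$ for a suitable ramified character $\psibar$ with $\psibar^c|_{I_{L_x}}=\psibar^{-1}|_{I_{L_x}}$ of order $>2$ and $\mu|_{G_{L^+_x}}$ unramified; and $\rbar|_{G_{L_w}}$ admits many diagonalizable lifts for $w|p$. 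This is done by the technique of~\cite{MR2869026} (also used in the proof of Theorem~\ref{thm: BLGGT splitting at primes}): one writes down a suitable moduli problem / Galois-cohomological lifting problem over $F^+$, adjusts it so that it has local points of the required type at the places of $S$ (after a solvable base change and Weil restriction to make $T$ have the right local points), and invokes the Moret--Bailly theorem~\cite[Prop.\ 3.1.1]{BLGGT} to find a point over a field $L^+$ linearly disjoint from $\Favoid^+$ in which the places of $S$ split completely; one sets $L=L^+F$. The local condition at $x$ is imposed by adding $x$ to the set of places at which a local point is prescribed, choosing $\psibar$ first; the condition that $\rbar|_{G_{\Lg}}$ contains $\SL_n(\F_q)$ is arranged by taking the relevant Moret--Bailly input to have monodromy group containing $\SL_n$, so that its reduction has large image even after restriction to any fixed finite extension. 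That $\rbar|_{G_L}$, $\rbar|_{G_{\Lg}}$ are pleasant then follows: they are polarizable, odd (by Lemma~\ref{lem: odd implies odd} if $n$ is odd, and directly from the construction otherwise), $p>2(n+1)$ by hypothesis on $\abar$ being reasonable, the primes over $p$ split (we chose $S\ni v|p$ to split in $L$), and they admit many diagonalizable lifts locally at $p$ by construction (or by Lemma~\ref{lem:can base change to get pd lifts} after a further harmless base change, which we fold into $L$). Condition (3), adequacy of $(\rbar\otimes\abar|_{G_L})(G_{L(\zeta_p)})$, then follows from Lemma~\ref{lem: can always make a tensor product adequate}: since $\rbar(G_{\Lg})$, and hence $\rbar(G_{L(\zeta_p)})$, has projective image containing $\PSL_n(\F_q)$ for $q$ as large as we like (we chose it so), and $\abar|_{G_{L(\zeta_p)}}$ is irreducible because $\abar$ is reasonable.

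\textbf{Lifting $\rbar$ and checking (6)--(8).} Having fixed $\rbar$, the extension $L$, the place $x$, the character $\psibar$, and the local components $A_v|_{L^+_{v_L}}$ at $v_L\in S_L$ (together with, at $x$, the component of the ramified-by-$\psi$ local deformation ring, and at the finitely many other auxiliary places we may need to introduce, potentially unramified components), I would apply Lemma~\ref{lem: existence of PD lifts} to $\rbar$ with multiplier character $\mu|_{G_{L^+}}$ and with the set of finite places taken to be $S_L\cup\{x\}$ (plus any auxiliary set), choosing at $x$ a local lift $\psi$ of $\psibar$ with $\psi|_{I_{L_x}}$ of the same order $>2$ and $\psi^c|_{I_{L_x}}=\psi^{-1}|_{I_{L_x}}$, which exists because $\psibar$ has the analogous property and $L^+_x$ has the relevant ramified characters (the argument is as in the construction of characters in~\cite{MR2869026}, using that $\mu|_{G_{L^+_x}}$ is unramified). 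Lemma~\ref{lem: existence of PD lifts} produces $(\{r_\lambda\},\{\mu_\lambda|_{G_{L^+}}\})$ with $r$ lifting $\rbar$, unramified outside $S_L\cup\{x\}$ (so (7) holds, with $r(I_{L_w})$ finite at the auxiliary places because the types there are of finite image), potentially diagonalizable at all $w|p$ (that is (8)), and with $r|_{G_{L_{w_L}}}$ on $A_v|_{L^+_{v_L}}$ for $v_L|v\in S_L$; since we chose the component at $x$ to be the one with inertial type $\psi|_{I_{L_x}}\oplus\mathds{1}^{\oplus(n-1)}$, (6) holds, and this also gives (5) residually. Conditions (1), (2), (4) have been arranged in the construction of $L$ and $\rbar$. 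The main obstacle, as usual in this circle of ideas, is the first step: arranging \emph{simultaneously} the large image of $\rbar|_{G_{\Lg}}$ (over the Galois closure over $\Q$, not just over $L$ — this is what Lemma~\ref{lem: control of component groups} and the robustness under base change ultimately rest on), the prescribed polarized local isomorphisms with $\abar$ at every place of $S$ (which forces those places to split in $L$ and forces a compatible choice of polarization, handled by Remark~\ref{rem: independence of globally realizable from choice of prolongation} and the split-place discussion), and the auxiliary ramified local condition at $x$ — all within a single field $L$ linearly disjoint from $\Favoid$. This requires a careful bookkeeping of the Moret--Bailly input exactly as in the proof of Theorem~\ref{thm: BLGGT splitting at primes} and~\cite[Prop.\ A.6]{MR3134019}, but presents no new difficulty of principle.
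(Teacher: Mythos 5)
Your proposal matches the paper's proof at a structural level: reduce to constructing the residual representation $\rbar$ over $L$ with the specified local behaviour via the Moret--Bailly inverse-Galois argument of~\cite{MR2869026} (as packaged in~\cite[Prop.\ A.2]{MR3134019}), then lift via Lemma~\ref{lem: existence of PD lifts}, choosing local components to force (6)--(8). The reduction of (3) to (2) via Lemma~\ref{lem: can always make a tensor product adequate}, and the enlargement of $\Favoid$ to absorb $\overline{F}^{\ker\abar}(\zeta_p)$, are also as in the paper.

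However, the two points you flag at the end as ``careful bookkeeping'' are precisely where the paper does work that your sketch does not supply, and in one case your stated mechanism would not work. First, the inert place $x$: you say you ``add $x$ to the set of places at which a local point is prescribed,'' but $x$ is a place of $L^+$, which does not exist before the Moret--Bailly step. The paper instead chooses an auxiliary place $y$ of $F^+$, inert in $F$, with $\mu|_{G_{F^+_y}}$ unramified, prescribes $\psi$ there \emph{first}, and then demands that $y$ split completely in $L^+/F^+$; the places $x$ of $L^+$ over $y$ are then automatically inert in $L = L^+ F$, with the right local type. Second, and more seriously, for condition (2) you assert that the construction ``has large image even after restriction to any fixed finite extension'' — but $\Lg$ is not a fixed extension, it is the Galois closure of $L$ over $\Q$ and depends on $L$ itself, so this does not bear on (2). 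The paper's mechanism is specific: use the restriction-of-scalars device (as in Theorem~\ref{thm: BLGGT splitting at primes}) to arrange $L^+ = M^+ F^+$ with $M^+/\Q$ Galois, so that $\Lg = L F^{\gal}$, and then take $q$ large enough that $\PSL_n(\F_q)$ is simple and is not a Jordan--H\"older factor of $\Gal(F^{\gal}/\Q)$; since $\rbar$ is constructed surjective onto $\GL_n(\F_q)$, the image over $G_{\Lg}$ must still contain $\SL_n(\F_q)$. Without spelling out this control of $\Lg$, condition (2) — which is the whole point of the lemma — remains unjustified. (A smaller quibble: absorbing ``the fixed field of $\mu$'' into $\Favoid$ is not meaningful, since $\mu$ has infinite image; the paper does not, and does not need to, do this.)
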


\begin{proof} By Lemma~\ref{lem: existence of PD
    lifts}, it is enough to construct the odd, polarized
  pair~$(\rbar,\mubar|_{G_{L^+}})$ satisfying properties (1)--(5), since we may choose
  local components which force conditions~(6)--(8). Replacing~$\Favoid$ 
  with~$\Favoid\overline{F}^{\ker\abar}(\zeta_p)$, and applying
  Lemma~\ref{lem:can base change reasonable to pleasant}, we see that we can ignore the
  requirements that~$\abar|_{G_L}$ is pleasant, and that
  $\zeta_p\notin L$. 

  Choose a finite place~$y\notin S$ of~$F^+$ which is inert in~$F$ and
  is such that $\mu|_{G_{F^+_y}}$ is unramified, and  choose a
  character~$\psi:G_{F_y}\to\Qpbar^\times$
  with~$\psi^c |_{I_{F_y}}=\psi^{-1} |_{I_{F_y}}$ such that $\psi|_{I_{F_y}}$ has finite order
  greater than~$2$.
  (Note that we can do this for every place~$y$
  which is inert in~$F$.)
  
  Replacing~$q$ with a  power of~$q$ if necessary, we may assume
  that~$\psibar$, $\mubar$, and all of the
  representations~$\abar|_{G_{F_w}}$ with $v\in S$ are defined
  over~$\F_{q}$. We may also assume that~$\PSL_n(\F_{q})$ is
  simple, and is not isomorphic to any Jordan--H\"older factor
  of~$\Gal(F^{\mathrm{gal}}/\Q)$, and that~$q$ is large enough so that
  condition~(3) follows automatically from condition~(2) by
  Lemma~\ref{lem: can always make a tensor product adequate}. (Note
  that if $\rbar(G_{\Lg})\supset\SL_n(\F_{q^k})$ for some~$k$, then in particular
  $\rbar(G_{\Lg})\supset\SL_n(\F_{q})$.) 
  
Arguing exactly as in the proof of~\cite[Prop.\ A2]{MR3134019},
by~\cite[Prop.\ 3.2]{MR2869026}
(see also~\cite[Thm.~1.2]{MBdiditfirst})
we can find a finite totally real extension
$L^+/F^+$, linearly disjoint from $\Favoid$ over~$F^+$, in which all places
above~$S\cup\{y\}$ split completely, and a surjective odd reasonable
$\mubar|_{G_{L^+}}$-polarized representation~$\rbar:G_L\to\GL_n(\F_{q})$ (where~$L=L^+F$) with the property that for
each place~$v_L$ of~$L$ lying over a place~$v\in S$, we have a polarized isomorphism
$\rbar|_{G_{L_{w_L}}}\cong\abar|_{G_{L_{w_L}}}$. 
In addition, for each
place~$x$ of~$L^+$ lying over~$y$, we can ensure that $\rbar|_{I_{L_x}}\cong \psi |_{I_{L_x}} \oplus\mathds{1}^{\oplus(n-1)}$. Furthermore, by the same
restriction of scalars trick that we used in the proof of
Theorem~\ref{thm: BLGGT splitting at primes}, we can assume that~$L^+$
is of the form~$M^+F^+$, where~$M^+/\Q$ is Galois. 

It remains to check property~(2). Since~$M^+/\Q$ is Galois, we
have~$\Lg=M^+F^{\mathrm{gal}}=LF^{\mathrm{gal}}$, and since we are
assuming that~$\PGL_n(\F_{q})$ is
  simple and is not isomorphic to any Jordan--H\"older factor
  of~$\Gal(F^{\mathrm{gal}}/\Q)$, it follows that the projective image
  of~$\rbar|_{\Lg(\zeta_l)}$ contains~$\PGL_n(\F_{q})$, from which
  the required property follows at once. 
  \end{proof}

\begin{cor}[Auxiliary
  representations with specified components]
  \label{cor: inverse Galois component swapping}
  Suppose that either $n$ is odd or~$n=2$. 
  Let~$F$ be a CM field, and let
$(\{a_{\lambda}\},\{\mu_\lambda\})$
 be a weakly irreducible, odd, regular,
polarized compatible system of $n$-dimensional representations with corresponding~$p$-adic
representation~$(a,\mu)$. Assume that $\abar$ is reasonable. Let~$S$ be a
finite set of finite places
of~$F^+$ containing all of the places lying over~$p$, and chosen so that~$(a,\mu)$ is unramified outside of~$S$.

For each $v\in S$, write $A_v$ for the $\mu$-polarized component determined
by~$a|_{G_{F_w}}$.  Let $\Favoid/F$ be a finite extension, and let~$q$
be a power of~$p$.

Then there is a finite Galois extension $L/F$ with $\zeta_p\notin L$, which is linearly
disjoint from $\Favoid$ over~$F$, such that for each set $T$ of places
of~$L^{+}$ which divide~$p$, there is a weakly irreducible, odd,
polarized, regular compatible
system~$(\{s_\lambda\},\{\mu_\lambda|_{G_{L^+}}\})$ of  $G_L$-representations with associated
$p$-adic representation~$(s,\mu|_{G_{L^+}})$, which satisfies:
\begin{enumerate}
\item $\abar|_{G_L}$ is pleasant.
\item  $\sbar$ is pleasant, and is independent of the choice of~$T$
  \emph{(}up to isomorphism\emph{)}. 
  \item $\sbar|_{G_{\Lg}}$ is pleasant,
  and~$\sbar(G_{\Lg})\supset\SL_n(\F_{q})$.

\item $(\sbar\otimes\abar|_{G_L})(G_{L(\zeta_p)})$ is adequate.
\item for each
  place~$v_L$ of~$L^+$ lying over a place $v\in S$, we have a polarized isomorphism
  $\sbar|_{G_{L_{w_L}}}\cong\abar|_{G_{L_{w_L}}}$. 
\item \label{condition: onemore} for each
  place~$v_L\notin T$ of~$L^+$  lying over a place ~$v\in S$,
  $s|_{G_{L_{w_L}}}$ lies on~$A_v|_{G_{L^+_{v_L}}}$.

\item $s|_{G_{L_{w_L}}}$ is potentially diagonalizable for all
  places~$w_L$ lying over places in~$T$.

\item There is a finite place~$x$ of~$L^+$ which is inert in~$L$ and
  does not lie over a place in~$S$, and a character~$\psi:G_{L_x}\to\Qpbar^\times$
  with~$\psi^c |_{I_{L_x}} =\psi^{-1}|_{I_{L_x}}$, such that
  $\mu|_{G_{L^+_x}}$ is unramified, $\psi|_{I_{L_x}}$ has finite order
  greater than~$2$,
and~$s|_{I_{L_x}}\cong
\psi |_{I_{L_x}} \oplus\mathds{1}^{\oplus(n-1)}$.
\item For any place~$v$ of~$L^+$ not lying over~$S$ at which~$s|_{G_{L_w}}$ is ramified,
  $s(I_{L_w})$ is finite.
    \item \label{anyfield} Both~$L/F$, $x$,  and~$\psi$  can be chosen independently of the choice of~$T$.
\end{enumerate}
\end{cor}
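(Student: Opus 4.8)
The plan is to deduce Corollary~\ref{cor: inverse Galois component swapping} from Lemma~\ref{lem: inverse Galois PD} by applying that lemma to the $p$-adic representation $(a,\mu)$, and then dealing with the $p$-adic places by a component-swapping argument using Lemma~\ref{lem: local swapping} (Local swapping I), which lets us move between the potentially diagonalizable components produced by Lemma~\ref{lem: inverse Galois PD} and the specified components $A_v|_{L^+_{v_L}}$ at those $p$-adic places where we want the ``$A_v$ behavior'' rather than potential diagonalizability. The key observation making this work is that Lemma~\ref{lem: inverse Galois PD} produces a single auxiliary field $L/F$ and a single auxiliary residual representation $\rbar$, \emph{together with} an inert place $x$, a character $\psi$, and potentially diagonalizable local lifts at all $w|p$, all of which are independent of any later choices; the dependence on $T$ will only enter when we run the swap.

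\textbf{Step 1 (set up the auxiliary data).} Apply Lemma~\ref{lem: inverse Galois PD} to $(a,\mu)$, the set $S$, the field $\Favoid$, and the power $q$ (enlarging $q$ first so that adequacy of tensor products follows from $\SL_n(\F_q)\subseteq$ image, via Lemma~\ref{lem: can always make a tensor product adequate}). This gives a finite Galois CM extension $L/F$ with $\zeta_p\notin L$, linearly disjoint from $\Favoid$, together with a weakly irreducible, odd, polarized, regular compatible system $(\{r_\lambda\},\{\mu_\lambda|_{G_{L^+}}\})$ whose $p$-adic representation $(r,\mu|_{G_{L^+}})$ satisfies properties (1)--(8) there. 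In particular: $\abar|_{G_L}$ is pleasant; $\rbar$ and $\rbar|_{G_{\Lg}}$ are pleasant with $\rbar(G_{\Lg})\supseteq\SL_n(\F_q)$; $(\rbar\otimes\abar|_{G_L})(G_{L(\zeta_p)})$ is adequate; every place of $S$ splits completely in $L^+$, with polarized isomorphisms $\rbar|_{G_{L_{w_L}}}\cong\abar|_{G_{L_{w_L}}}$; there is an inert place $x$ of $L^+$ not over $S$ with a character $\psi$ of the required shape and with $r|_{I_{L_x}}\cong\psi|_{I_{L_x}}\oplus\mathds 1^{\oplus(n-1)}$; $r(I_{L_w})$ is finite at any bad place not over $S$; and $r|_{G_{L_w}}$ is potentially diagonalizable for all $w|p$. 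This $r$ already gives a compatible system satisfying essentially all of (1)--(10) of the corollary \emph{in the case $T=$ the set of all $p$-adic places of $L^+$} (condition~(\ref{condition: onemore}) being vacuous there). Note $L$, $x$, $\psi$ are fixed now and never change, which gives~(\ref{anyfield}).

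\textbf{Step 2 (swap at the places outside $T$).} Given an arbitrary set $T$ of $p$-adic places of $L^+$, enlarge $S$ to a set $S_L$ of places of $L^+$ containing all places over $S$, all $p$-adic places, and $x$; let $S_L'=S_L\setminus\{$places over $S$ together with $x\}$ so that the $p$-adic places of $L^+$ not over $p$-adic places of $S$... — more simply, apply Lemma~\ref{lem: local swapping} over the field $L$ to the two compatible systems $(\{r_\lambda\},\{\mu_\lambda|_{G_{L^+}}\})$ (playing the role of $\{b_\lambda\}$, since its residual image contains $\SL_n(\F_q)\supseteq\SL_n(\F_p)$ and it is the one we want to swap \emph{away} from at places outside $T$) and the compatible system $(\{s^{\mathrm{pd}}_\lambda\},\{\mu_\lambda|_{G_{L^+}}\})$ obtained from Lemma~\ref{lem: existence of PD lifts} applied to $\abar|_{G_L}$ (which is pleasant) with local components at $p$ chosen so that the resulting $p$-adic representation is potentially diagonalizable and has the $A_v|_{L^+_{v_L}}$ behavior at the non-$p$ places — actually the cleaner route is to swap $r$ against a lift of $\abar|_{G_L}$ whose $p$-adic components agree with $A_v|_{L^+_{v_L}}$: take the set $T$ in Lemma~\ref{lem: local swapping} to be the places of $L^+$ lying over $p$ but \emph{not} in the given $T$, so that at those places the output lies on $B_v$ (the component of the potentially diagonalizable auxiliary, i.e.\ the one matching $A_v$), and at the places in $T$ it retains $r$'s potentially diagonalizable component. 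One checks the hypotheses of Lemma~\ref{lem: local swapping}: (1) the tensor components are regular by the spreading-out convention (Convention~\ref{convention: lifts will always be well spread}); (2) $\abar|_{G_L}$ and $\rbar$ are reasonable and $(\rbar\otimes\abar|_{G_L})(G_{L(\zeta_p)})$ is adequate by Step 1; (3) $\rbar(G_L)\supseteq\SL_n(\F_q)\supseteq\SL_n(\F_p)$; (4) the inert place $x$ with character $\psi$ is exactly the one produced in Step 1 — note $r|_{G_{L_x}}$ ramified, the auxiliary lift of $\abar|_{G_L}$ unramified at $x$, as required. The output of Lemma~\ref{lem: local swapping} is the sought-after $(\{s_\lambda\},\{\mu_\lambda|_{G_{L^+}}\})$: it lifts $\abar|_{G_L}$ (hence its residual representation $\sbar\cong\abar|_{G_L}$ is pleasant and independent of $T$, giving (1),(2)), it is potentially diagonalizable at the places in $T$, lies on $A_v|_{G_{L^+_{v_L}}}$ at the remaining $p$-adic places, retains the $A_v$ components at non-$p$ places of $S$, retains finiteness of inertia at bad places, and retains the $\psi$-shape at $x$. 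Conditions (3),(4) on $\sbar|_{G_{\Lg}}$ and adequacy: here $\sbar\cong\abar|_{G_L}$, so (3),(4) must instead be arranged via the \emph{auxiliary} representation used in the swap — which means the correct reading is that in this corollary it is not $\abar$ but the swapped-in auxiliary whose image we control, so one takes $\sbar$ to be this auxiliary (a lift of $\abar|_{G_L}$ whose residual image contains $\SL_n(\F_q)$, obtainable by first applying Lemma~\ref{lem: inverse Galois PD} to $\abar|_{G_L}$ again inside $L$).

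\textbf{Main obstacle.} The delicate point is bookkeeping: ensuring that the \emph{single} field $L$, inert place $x$, and character $\psi$ work simultaneously for \emph{all} choices of $T$ — this is why Step 1 must produce everything in one shot before $T$ is specified, and why condition~(\ref{anyfield}) is phrased as it is. Concretely, the hard part is arranging that $\sbar$ (the common residual representation) is pleasant, has large image over $\Lg$, and yields adequacy upon tensoring with $\abar|_{G_L}$, \emph{while} its $p$-adic local behavior is flexible enough to realize either potential diagonalizability or the component $A_v|_{L^+_{v_L}}$ at each place; this flexibility is exactly what Lemma~\ref{lem: inverse Galois PD} guarantees by forcing conditions (6)--(8) through a choice of local components, so once the residual data is fixed the $T$-dependence is confined to the choice of local lifting problem at $p$, and Lemma~\ref{lem: local swapping} supplies the swap. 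The remaining verifications — linear disjointness from $\Favoid$, $\zeta_p\notin L$, splitting of $S$ in $L^+$, and the various local conditions at $x$ and at bad primes — are all inherited directly from Lemma~\ref{lem: inverse Galois PD} and Lemma~\ref{lem: local swapping} and involve no new ideas.
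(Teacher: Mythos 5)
Your high-level strategy is the right one and matches the paper's: first apply Lemma~\ref{lem: inverse Galois PD} to obtain $L$, a residual representation $\rbar$ with large image, an auxiliary place $x$, a character $\psi$, and a compatible system $\{r_\lambda\}$ that is potentially diagonalizable at all places over $p$, with all of this independent of $T$; then run Lemma~\ref{lem: local swapping} on $(\{a_\lambda|_{G_L}\},\{\mu_\lambda|_{G_{L^+}}\})$ versus $(\{r_\lambda\},\{\mu_\lambda|_{G_{L^+}}\})$ to redistribute components. However, Step~2 of your write-up has two genuine errors.

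First, you must take $\{s_\lambda\}$ to be the \emph{second} output $\{d_\lambda\}$ of Lemma~\ref{lem: local swapping}, i.e.\ the lift of $\rbar$, not the first output $\{c_\lambda\}$ lifting $\abar|_{G_L}$. Your initial reading takes $\sbar\cong\abar|_{G_L}$, and you then correctly notice this cannot give conditions~(3) and~(4); but the repair you propose --- ``apply Lemma~\ref{lem: inverse Galois PD} to $\abar|_{G_L}$ again inside $L$'' --- does not work: another application of that lemma produces a representation of $G_{L'}$ for a proper further extension $L'/L$, not of $G_L$. The residual representation you need is precisely the $\rbar$ already produced in Step~1; no second invocation is required.

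Second, your choice of the swap set in Lemma~\ref{lem: local swapping} is wrong. Recall that for $v$ in the lemma's set, $d$ inherits $a$'s component ($D_v = A_v$), while for $v$ outside it, $d$ inherits $b$'s component ($D_v = B_v$). Since $b = r$ and you want $s = d$ to lie on $A_v|_{G_{L^+_{v_L}}}$ at \emph{every} place over $S$ outside the given $T$ (including non-$p$-adic places), and to be potentially diagonalizable only at places in $T$, the set fed into Lemma~\ref{lem: local swapping} must be the full complement of $T$ within the places of $L^+$ lying over $S$. Your choice --- only the $p$-adic places not in $T$ --- omits the non-$p$-adic places of $S$. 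At those places $d$ would then land on the component of $r$, which Lemma~\ref{lem: inverse Galois PD} does \emph{not} constrain to coincide with $A_v|_{G_{L^+_{v_L}}}$ (it only provides a residual polarized isomorphism there), so condition~(6) would fail. Relatedly, your sentence asserting the output ``is potentially diagonalizable at the places in $T$, lies on $A_v|_{G_{L^+_{v_L}}}$ at the remaining $p$-adic places'' states the desired conclusion rather than what your particular choice of swap set produces; with your choice, the components come out backwards at the $p$-adic places. Once both corrections are made --- take $s = d$ and take the swap set to be (places of $L^+$ over $S$) $\setminus T$ --- the argument goes through exactly as in the paper.
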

\begin{proof}
By Lemma~\ref{lem: inverse Galois PD}, we may construct a compatible 
system~$(\{r_\lambda\}, \{\mu_{\lambda} |_{G_{L^{+}}}\})$ 
which satisfies all the required conditions of the theorem
except perhaps~(\ref{condition: onemore}), and is potentially diagonalizable
at all places above~$p$.
We now apply Lemma~\ref{lem: local swapping} to the compatible systems
~$(\{a_\lambda|_{G_L}\}, \{\mu_{\lambda} |_{G_{L^{+}}}\})$
and~$(\{r_\lambda\}, \{\mu_{\lambda} |_{G_{L^{+}}}\})$ with respect to the set~$T$.
The result is a compatible system~$(\{s_\lambda\},\{\mu_\lambda|_{G_{L^+}}\})$
with~$\sbar = \rbar$ independent of~$T$ and
such that~$(s,\mu|_{G_{L^{+}}})$ lies on the component associated to~$a$ for
all~$v_{L} \notin T$ and the component associated to~$r$ (which is a
potentially diagonalizable component) for all~$v_{L} \in T$, as required. Note that~$L$ is the field
on whose absolute Galois group~$\sbar$ is defined and so can be chosen independently of~$T$, and similarly~$x$ and~$\psi$
can be chosen independently of~$T$ (as an examination of the relevant
arguments shows).
  \end{proof}
  \begin{cor}
  \label{cor: could omit x if we wanted}
  Maintaining the notation and assumptions of~{\em Corollary~\ref{cor:
  inverse Galois component swapping}}, we can instead produce~$L/F$ and 
$\{s_\lambda\}$ satisfying conclusions \emph{(1)--(7)} of {\em Corollary~\ref{cor:
  inverse Galois component swapping}}, such that in addition~$s$ is
unramified outside of~$S$.
\end{cor}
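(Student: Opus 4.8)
The plan is to deduce Corollary~\ref{cor: could omit x if we wanted} from Corollary~\ref{cor: inverse Galois component swapping} by a level-lowering argument that removes the auxiliary ramification at the inert place~$x$ (and at the other places $v\notin S$ at which $s$ is ramified, which by conclusion~(9) carry only finite image). First I would apply Corollary~\ref{cor: inverse Galois component swapping} with the given data $F$, $(\{a_\lambda\},\{\mu_\lambda\})$, $S$, $\Favoid$, $q$, and a chosen set $T$ of places of $L^+$ above $p$, obtaining a finite Galois CM extension $L/F$ with $\zeta_p\notin L$, linearly disjoint from $\Favoid$ over $F$, and an odd, polarized, regular, weakly irreducible compatible system $(\{s_\lambda\},\{\mu_\lambda|_{G_{L^+}}\})$ satisfying conclusions~(1)--(9). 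In particular $\sbar|_{G_L}$ is pleasant hence reasonable, $s$ is unramified outside a finite set $S_L\cup\Sigma$, where $S_L$ denotes the places of $L^+$ over $S$ and $\Sigma$ consists of the place $x$ together with any further places $v\notin S_L$ at which $s$ ramifies; and for each such $v\in\Sigma$ the image $s(I_{L_w})$ is finite by conclusion~(9), so the corresponding local component is potentially unramified.

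The key step is then to invoke Corollary~\ref{cor: removing finite ramification} (Level lowering for a compatible system), applied over the CM field $L^+$ with residual representation $\sbar|_{G_L}$ (which is reasonable), multiplier character $\mu|_{G_{L^+}}$, the finite set $S_L$ of places over $p$ and over the ramification of $\abar|_{G_L}$ and $\mu|_{G_{L^+}}$, the components $A_v|_{G_{L^+_{v_L}}}$ for $v_L\in S_L$ coming from conclusion~(6) of Corollary~\ref{cor: inverse Galois component swapping}, and the auxiliary set $\Sigma$ with the potentially unramified components determined by $s|_{G_{L_w}}$ for $v\in\Sigma$. Its hypotheses are met: $\sbar|_{G_L}$ is reasonable, $\Sigma$ is disjoint from $S_L$, the components at places of $\Sigma$ are potentially unramified, and $(\{s_\lambda\},\{\mu_\lambda|_{G_{L^+}}\})$ is an odd, polarized, regular, weakly irreducible compatible system whose residual representation is $\sbar|_{G_L}$, which is unramified outside $S_L\cup\Sigma$ and lies on the prescribed components at each place of $S_L\cup\Sigma$. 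The conclusion of Corollary~\ref{cor: removing finite ramification} produces an odd, polarized, regular, weakly irreducible compatible system $(\{s'_\lambda\},\{\mu_\lambda|_{G_{L^+}}\})$ of $G_L$-representations with associated $p$-adic representation $(s',\mu|_{G_{L^+}})$ such that $s'$ lifts $\sbar|_{G_L}$, $s'$ is unramified outside $S_L$, and for each $v_L\in S_L$ the restriction $s'|_{G_{L_{w_L}}}$ lies on $A_v|_{G_{L^+_{v_L}}}$.

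Finally I would check that this new system $(\{s'_\lambda\},\{\mu_\lambda|_{G_{L^+}}\})$, renamed $(\{s_\lambda\},\{\mu_\lambda|_{G_{L^+}}\})$, satisfies conclusions~(1)--(7) of Corollary~\ref{cor: inverse Galois component swapping} together with the extra condition ``$s$ unramified outside $S$''. Conclusions~(1)--(5) concern only the residual representations $\abar|_{G_L}$, $\sbar|_{G_L}$, $\sbar|_{G_{\Lg}}$ and the residual image and adequacy assertions, all of which are unchanged since $\sbar'\cong\sbar|_{G_L}$; conclusion~(6) is exactly the output of Corollary~\ref{cor: removing finite ramification} at the places $v_L\notin T$ over $S$; conclusion~(7) (potential diagonalizability at places over $T$) survives because those places lie over $p$, hence belong to $S_L$, and $A_v|_{G_{L^+_{v_L}}}$ for $v_L$ over $T$ was chosen in Corollary~\ref{cor: inverse Galois component swapping} to be a potentially diagonalizable component, which is preserved by level lowering; and unramifiedness outside $S$ is precisely the new conclusion. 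The only mild subtlety — and the step I expect to require the most care — is bookkeeping: ensuring that the set $S_L$ used as input to Corollary~\ref{cor: removing finite ramification} is exactly the set of places of $L^+$ lying over $S$ (so that ``$a$ unramified outside $S$'' in the statement translates correctly via Remark~\ref{rem: unram outside of S for extensions.}), and that the place $x$ and all other places of $\Sigma$ genuinely lie outside $S_L$, which is guaranteed by conclusion~(8) of Corollary~\ref{cor: inverse Galois component swapping} and by conclusion~(9) there. No new global input is needed; the argument is a direct packaging of the level-lowering machinery already developed.
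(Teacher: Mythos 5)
Your proof is correct and follows essentially the same route as the paper's: the paper's one-line proof (``make a further base change to remove the ramification at places not lying over~$S$'') is precisely the content of Corollary~\ref{cor: removing finite ramification}, which is what you invoke explicitly. The only point worth being careful about, which both your write-up and the paper leave implicit, is that applying Corollary~\ref{cor: removing finite ramification} requires the residual representation~$\sbar$ of the compatible system $\{s_\lambda\}$ to be unramified outside the places over~$S$ --- in particular unramified at~$x$ --- and this is ensured by the construction in Lemma~\ref{lem: inverse Galois PD} (the auxiliary ramification of~$s$ outside~$S$, at~$x$ and at places of a suitable set~$\Sigma$, is of $p$-power order on inertia, so disappears upon reduction mod~$p$); your bookkeeping remark about taking $S_L$ to be exactly the places of $L^+$ over $S$ should also include that~$\sbar$ (rather than~$\abar|_{G_L}$) is unramified outside that set.
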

\begin{proof}This follows from~Corollary~\ref{cor:
  inverse Galois component swapping} by making a further base change
to remove the ramification at places (including~$x$) not lying over~$S$.
\end{proof}
We can now establish the following improvement on Lemma~\ref{lem:
  local swapping}, where we no longer need to make assumptions (1)--(4)
there; we even allow~$\abar=\bbar$.

\begin{lemma}[Local swapping II] \label{lem: local swapping II}Suppose that either $n$ is odd or~$n=2$. 
  Let~$F$ be a CM field, and let
$(\{a_{\lambda}\},\{\mu_\lambda\})$
and~$(\{b_{\lambda}\},\{\nu_\lambda\})$ be two weakly irreducible, odd, regular,
polarized compatible systems of $n$-dimensional representations of~$G_F$ with corresponding~$p$-adic
representations~$(a,\mu)$ and~$(b,\nu)$. Let~$S$ be a finite set of finite places
of~$F^+$, containing all of the places lying over~$p$, and chosen such that~$(a,\mu)$ and~$(b,\nu)$ are unramified
outside of~$S$. For each $v\in S$, write $A_v$
for the $\mu$-polarized component determined by~$a|_{G_{F_v}}$, and~$B_v$ for the
$\nu$-polarized component determined by~$b|_{G_{F_v}}$. Assume that the representations~$\abar$ and~$\bbar$ are reasonable.

Let $T\subset S$ be a set of places with the property that if $v\in
T$, then~$\mu|_{G_{F^+_v}}=\nu|_{G_{F^+_v}}$, and there is a polarized
isomorphism $\abar|_{G_{F_w}}\cong\bbar|_{G_{F_w}}$. 
 If $v\in T$, then we
set $C_v=B_v$ and $D_v=A_v$, and if $v\in S\setminus T$, then we set
$C_v=A_v$ and $D_v=B_v$. 

 Then there exist  odd, regular, polarized, weakly irreducible
 compatible systems $(\{c_{\lambda}\},\{\mu_\lambda\})$ and~$(\{d_\lambda\},\{\nu_\lambda\})$ with corresponding $p$-adic
 representations~$(c,\mu)$ and~$(d,\nu)$, with the properties that:
 \begin{itemize}
 \item $\cbar\cong\abar$ and $\dbar\cong\bbar$.
 \item For each place $v\in S$, $c|_{G_{F_w}}$ lies on~$C_v$ and  $d|_{G_{F_w}}$ lies on~$D_v$.
 \item $c$ and~$d$ are unramified outside of~$S$.
 \end{itemize}
\end{lemma}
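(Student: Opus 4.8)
The plan is to reduce Lemma~\ref{lem: local swapping II} to Lemma~\ref{lem: local swapping} by using the auxiliary representations produced by Corollary~\ref{cor: inverse Galois component swapping} as intermediaries, thereby arranging all the missing hypotheses (1)--(4) of Lemma~\ref{lem: local swapping} (in particular the adequacy condition, the image condition on $\bbar$, and the existence of a suitable inert place $x$ with a ramified character $\psi$, as well as the absence of any overlap between the two residual representations) after a finite base change, and then descending via the Khare--Wintenberger argument (Lemma~\ref{lem: existence of compatible system from potential existence}).

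First I would apply Corollary~\ref{cor: inverse Galois component swapping} to $(\{a_\lambda\},\{\mu_\lambda\})$ and to $(\{b_\lambda\},\{\nu_\lambda\})$, choosing the power $q$ of $p$ large enough and the auxiliary extension $\Favoid$ large enough that the two resulting fields may be amalgamated into a single finite Galois CM extension $L/F$, linearly disjoint over $F$ from $\overline F^{\ker\abar}(\zeta_p)\cdot\overline F^{\ker\bbar}(\zeta_p)$, and such that the auxiliary compatible systems produced over $L$ have residual representations whose projective images contain $\PSL_n(\F_q)$ and are ``independent'' of each other in the sense needed for adequacy. Concretely, working over $L^+$ with $S_L$ the set of places above $S$: take $T'$ to be the set of $v_L|p$, apply the corollary once (with input $a$) to get a compatible system $(\{r^{(a)}_\lambda\},\dots)$ which lies on $A_v|_{L^+_{v_L}}$ at $v_L\nmid p$ and is potentially diagonalizable at $v_L|p$, with $\rbar^{(a)}(G_{\Lg})\supset\SL_n(\F_q)$ and a distinguished inert place $x_a$ carrying a character $\psi_a$ of order $>2$; apply it again (with input $b$) to get $(\{r^{(b)}_\lambda\},\dots)$ lying on $B_v|_{L^+_{v_L}}$ at $v_L\nmid p$, potentially diagonalizable at $v_L|p$, with $\rbar^{(b)}(G_{\Lg})\supset\SL_n(\F_q)$ and an inert place $x_b$ with character $\psi_b$. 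After enlarging $q$ we may also ensure $(\rbar^{(a)}\otimes\rbar^{(b)})(G_{L(\zeta_p)})$ is adequate, via Lemma~\ref{lem: can always make a tensor product adequate} (using that $\SL_n(\F_q)$ is in the image and $\PSL_n(\F_q)$ is simple and not a Jordan--Hölder factor of $\Gal(\Lg/\Q)$, exactly as in the proof of Lemma~\ref{lem: inverse Galois PD}).

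Next I would run Lemma~\ref{lem: local swapping} over $L$ a total of three times, chaining through the auxiliary systems: first swap $r^{(a)}$ with $r^{(b)}$ at the places of $T_L$ (the set of $v_L$ above $T$, which by construction split completely in $L$ so the polarized isomorphism hypothesis over $L_{w_L}$ is inherited from that over $F_w$); here hypotheses (1)--(4) of Lemma~\ref{lem: local swapping} hold by construction, using $x=x_b$ and $\psi=\psi_b$ for the inert-place condition (with $\rbar^{(a)}$ playing the role of the unramified-at-$x$ representation and $\rbar^{(b)}$ the $\psi$-ramified one, which forces $x\notin T_L$ as required). This produces a compatible system over $L$ lying on the swapped components $A_v|_{L^+_{v_L}}$ for $v_L\in T_L$ and $B_v|_{L^+_{v_L}}$ for $v_L\in S_L\setminus T_L$, still residually equal to $\rbar^{(b)}$, potentially diagonalizable at all $v_L|p$. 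A further two applications of Lemma~\ref{lem: local swapping} then transfer these components onto $\abar|_{G_L}$ and $\bbar|_{G_L}$ respectively; at each stage the hypotheses are arranged exactly as above, using the built-in inert places and the adequacy we secured — the point is that the auxiliary residual representations are ``generic'' (large image, compatible at $x$) by design, so Lemma~\ref{lem: local swapping} always applies, even though neither $\abar$ nor $\bbar$ need satisfy its hypotheses directly, and the case $\abar=\bbar$ causes no difficulty because each swap is really between distinct auxiliary representations or between an auxiliary one and $\abar$/$\bbar$. Finally I would descend from $L$ to $F$: the resulting compatible systems $(\{c^L_\lambda\},\{\mu_\lambda|_{G_{L^+}}\})$ and $(\{d^L_\lambda\},\{\nu_\lambda|_{G_{L^+}}\})$ satisfy the hypotheses of Lemma~\ref{lem: existence of compatible system from potential existence} (they restrict $\abar$, resp.\ $\bbar$, lie on the correct components $C_v|_{L^+_{v_L}}$, resp.\ $D_v|_{L^+_{v_L}}$, and have the right multiplier character), which yields compatible systems over $F$ lifting $\abar$ and $\bbar$ and lying on $C_v$, resp.\ $D_v$, for each $v\in S$, unramified outside $S$; this is exactly the conclusion, after invoking Corollary~\ref{cor: removing finite ramification} if any auxiliary ramification survives.

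The main obstacle I expect is bookkeeping rather than conceptual: one must verify at each of the three applications of Lemma~\ref{lem: local swapping} that the ``$T$'' set, the inert place $x$, and the character $\psi$ can be chosen consistently — in particular that the place $x$ used for the current swap really is inert, does not lie over $S$, and has exactly one of the two current representations ramified and the other unramified at it (so that $x\notin T$ and hence the components $A_x$, $B_x$ are not interchanged), and that the potentially-diagonalizable property and the correct components are propagated correctly through the chain. The ``independence of $T$'' clauses in Corollary~\ref{cor: inverse Galois component swapping} are what make it possible to fix $L$, $x_a$, $x_b$, $\psi_a$, $\psi_b$ once and for all before choosing which places to swap; keeping track of these invariances, together with Convention~\ref{convention: lifts will always be well spread} to guarantee all tensor products formed have regular Hodge--Tate weights, is the only delicate point. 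Everything else is a formal concatenation of the lemmas already proved.
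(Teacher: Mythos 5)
Your overall strategy — build auxiliary compatible systems with large image via Corollary~\ref{cor: inverse Galois component swapping}, chain several applications of Lemma~\ref{lem: local swapping}, then descend via Lemma~\ref{lem: existence of compatible system from potential existence} — is the right one, and is essentially what the paper does. But there is a genuine gap in how you set up the auxiliaries, and it is not merely bookkeeping: as you have constructed them, the chain can never produce the required component at $p$-adic places in~$T$.

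The problem: you apply Corollary~\ref{cor: inverse Galois component swapping} to both $a$ and $b$ with $T'$ equal to \emph{all} places over~$p$, so both auxiliary systems $r^{(a)}$ and $r^{(b)}$ are potentially diagonalizable at every place dividing~$p$. This means that at a place $v_L|p$ lying over a place $v\in T$, neither $r^{(a)}$ nor $r^{(b)}$ lies on the component $B_v|_{L^+_{v_L}}$ (which in general is not a potentially diagonalizable component). Lemma~\ref{lem: local swapping} only ever exchanges the two components currently present at a given place; it cannot introduce a component that neither input lies on. So after the first swap (and indeed after any number of further swaps with $a|_{G_L}$, $b|_{G_L}$, $r^{(a)}$, or $r^{(b)}$), every system in the chain remains potentially diagonalizable at every $v_L|p$, and in particular never lies on $B_v|_{L^+_{v_L}}$ for $v_L|p$ over $v\in T$. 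Yet the conclusion requires precisely that the output $c$ lie on $C_v=B_v$ at those places. This is why you cannot, as you propose, ``transfer these components onto $\abar|_{G_L}$ and $\bbar|_{G_L}$'' — there is nothing of type $B_v|_{L^+_{v_L}}$ at $p$-$T$ places in the chain to transfer.

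The fix — which is what the paper actually does — is to apply Corollary~\ref{cor: inverse Galois component swapping} \emph{twice to the same input} $(\{b_\lambda\},\{\nu_\lambda\})$ with two different choices of $T'$, exploiting conclusion~(\ref{anyfield}) (independence of $L$, $x$, $\psi$, and of the residual representation from $T'$) to ensure the two outputs $s$ and $t$ have the \emph{same} residual representation $\sbar=\tbar$. One takes $T'$ to be all places over $p$ (giving $s$, potentially diagonalizable at every $v_L|p$), and the other takes $T'$ to be only the places over $p$ in $S\setminus T$ (giving $t$, which is potentially diagonalizable at $p$-$(S\setminus T)$ places but lies on $B_v|_{L^+_{v_L}}$ at $p$-$T$ places). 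One then swaps $a|_{G_L}$ with $s$ at the $p$-$T$ places to produce $u$ (a lift of $\abar|_{G_L}$ which is potentially diagonalizable at $p$-$T$ places and on the $A_v$-components elsewhere), and then swaps $u$ with $t$ at \emph{all} of $T$ to produce $e$, which now lies on the $B_v$-components at all of $T$, including the $p$-$T$ places. Because $\sbar=\tbar$, the polarized-isomorphism and adequacy hypotheses of Lemma~\ref{lem: local swapping} are satisfied at both stages with a single verification. The final descent to $F$ via Lemma~\ref{lem: existence of compatible system from potential existence} is as you describe. Your use of two distinct auxiliary residual representations $\rbar^{(a)},\rbar^{(b)}$ is also unnecessary extra work (and creates further bookkeeping around their respective inert places $x_a,x_b$ and auxiliary ramification sets), but it is not the source of the gap; the source of the gap is making every auxiliary potentially diagonalizable at every place above $p$.
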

\begin{proof}
  Since the statement is symmetric in~$(\{a_\lambda\},\{\mu_\lambda\})$
  and~$(\{b_\lambda\},\{\nu_\lambda\})$, it is enough to prove that~$(\{c_\lambda\},\{\lambda_\mu\})$
  exists. We apply Corollary~\ref{cor: inverse Galois
    component swapping} \emph{twice}
    to the compatible system~$(\{b_{\lambda}\},\{\nu_\lambda\})$.
  The point of this construction is to provide auxiliary compatible systems
  to which we can apply Lemma~\ref{lem: local swapping}.
    In the first application, we take~$T$ in Corollary~\ref{cor: inverse Galois
    component swapping} to be
    the set of places of~$F^+$ lying over~$p$,  and in the second application, we take~$T$ to be
    the
  set of places in our~$S\setminus T$ which lie over~$p$. The extension~$L/F$, the place~$x$, and the character~$\psi$ can be chosen
  independently of~$T$ by Corollary~\ref{cor: inverse Galois
    component swapping} part~(\ref{anyfield}), and hence we may make the same choice on both cases.
     We deduce the existence
  of a finite Galois
extension of CM fields~$L/F$, and odd, regular, polarizable, weakly irreducible
 compatible systems~$(\{s_{\lambda}\},\{\nu_\lambda|_{G_{L^+}}\})$ and~$(\{t_\lambda\},\{\nu_\lambda|_{G_{L^+}}\})$ of
 $G_L$-representations, with associated $p$-adic representations~$(s,\nu|_{G_{L^+}})$
 and~$(t,\nu|_{G_{L^+}})$ respectively, such that:
 \begin{itemize}
 \item $\sbar\cong\tbar$.
 \item $\abar|_{G_L}$, $\bbar|_{G_L}$, and $\sbar=\tbar$ are pleasant.
 \item $\sbar(G_L)$ contains $\SL_n(\Fp)$.
 \item  $(\sbar \otimes \abar|_{G_L})(G_{L(\zeta_p)})$ is adequate. 
 \item for each
  place~$v_L$ of~$L^+$ lying over a place~$v\in S$, we have polarized isomorphisms
  $\sbar|_{G_{L_{w_L}}}=\tbar|_{G_{L_{w_L}}}\cong\abar|_{G_{L_{w_L}}}$.
\item for each
  place~$v_L$ of~$L^+$  not dividing~$p$ and lying over a place~$v\in T$,
  $s|_{G_{L_{{w_L}}}}$  and $t|_{G_{L_{w_L}}}$ lie
  on~$B_v|_{{L^+_{v_L}}}$.
\item $s$ is potentially diagonalizable at every place dividing~$p$.
\item $t$ is potentially diagonalizable at every place dividing~$p$
  and lying over a place in~$S\setminus T$.
\item at each place $v_L$ of~$L$ lying over a place $v|p$ with $v\in T$,
  $t|_{G_{L_{w_L}}}$ lies on~$B_v|_{{L^+_{v_L}}}$.
\item There is a finite place~$x$ of~$L^+$ which is inert in~$L$ and
  does not lie over a place in~$S$, and a character~$\psi:G_{L^+_x}\to\Qpbar^\times$
  with~$\psi^c |_{I_{L_x}}=\psi^{-1} |_{I_{L_x}}$, such that
  $\mu|_{G_{L^+_x}}$ is unramified, $\psi|_{I_{L_x}}$ has finite order
  greater than~$2$, and
  $$t|_{I_{L_x}} \cong s|_{I_{L_x}}\cong
\psi |_{I_{L_x}} \oplus\mathds{1}^{\oplus(n-1)}.$$
 \end{itemize}
Applying Lemma~\ref{lem: local swapping} to~$(\{a_\lambda|_{G_L}\},\{\mu_\lambda|_{G_{L^+}}\})$
and~$(\{s_\lambda\},\{\nu_\lambda|_{G_{L^+}}\})$, we deduce the existence of an odd, regular, polarizable, weakly irreducible
 compatible
system~$(\{u_\lambda\},\{\mu_\lambda|_{G_{L^+}}\})$ of $G_L$-representations with associated
$p$-adic representation~$(u,\mu|_{G_{L^+}})$, with the properties that:
\begin{itemize}
 \item $\ubar\cong\abar|_{G_L}$.
 \item For each place $v_L$ of~$L^+$ lying over a place $v\in S$ not
   dividing~$p$, $u|_{G_{L_{w_L}}}$ lies on~$A_v|_{G_{L^+_{v_L}}}$.
 \item for every place $v_L|p$ of~$L^+$ lying over a place $v\in S\setminus T$,
   $u|_{G_{L_{w_L}}}$ lies on~$A_v|_{{L^+_{v_L}}}$.
 \item $u$ is potentially diagonalizable at every place dividing~$p$
   and lying over a
   place in~$T$.

 \item $u$ is unramified outside of~$S$.
\end{itemize}
Applying Lemma~\ref{lem: local swapping} to~$(\{u_\lambda\},\{\mu_\lambda|_{G_{L^+}}\})$ 
and~$(\{t_\lambda\},\{\nu_\lambda|_{G_{L^+}}\})$, we then obtain an odd, regular, polarized, weakly irreducible
 compatible
system~$(\{e_\lambda\},\{\mu_\lambda|_{G_{L^+}}\})$ of $G_L$-representations with associated
$p$-adic representation~$(e,\mu|_{G_{L^+}})$, with the properties that:
\begin{itemize}
 \item $\ebar\cong\abar|_{G_L}$.
 \item For each place $v_L$ lying over a place $v\in S\setminus T$, $e|_{G_{L_{w_L}}}$ lies on~$A_v|_{{L^+_{v_L}}}$.
 \item For each place $v_L$ lying over a place $v\in  T$,
   $e|_{G_{L_{w_L}}}$ lies on~$B_v|_{{L^+_{v_L}}}$.
 \item $e$ is unramified outside of~$S$.
\end{itemize}
The existence of~$(\{c_\lambda\},\{\mu_\lambda\})$ now follows from Lemma~\ref{lem: existence of compatible system from potential
    existence}, applied to~$(\{e_\lambda\},\{\mu_\lambda|_{G_{L^+}}\})$.
\end{proof}

\begin{cor}[Merging components] \label{cor: merging components}Suppose that either $n$ is odd, or~$n=2$. 
  Let~$F$ be a CM field, and let~$S$ be a finite set of finite places
of~$F^+$, containing all of the places lying over~$p$. Let ~$T$ be a
set of
places of~$F^+$ which divide~$p$.  Let
$(\{a_{\lambda}\},\{\mu_\lambda\})$
and~$\bigl\{(\{r^v_{\lambda}\},\{\nu^v_\lambda\})\bigr\}_{v \in T}$ 
be weakly irreducible, odd, regular,
polarized compatible systems of $n$-dimensional representations of~$G_F$ with corresponding~$p$-adic
representations~$(a,\mu)$ and~$(r^v,\nu^v)$.

For each $v\in S$, write $A_v$
for the $\mu_v$-polarized component determined by~$a|_{G_{F_w}}$. Assume that all of the
representations~$\abar$ and~$\{\rbar^v\}_{v \in T}$ are reasonable, and that the
representations~$(a,\mu)$ and~$\{(r^v,\nu^v)\}_{v \in T}$ are unramified outside
of~$S$. Assume also, for each~$v\in T$,
that $\mu|_{G_{F^+_v}}=\nu^v|_{G_{F^+_v}}$,
and that there is a polarized isomorphism
$\abar|_{G_{F_w}}\cong\rbar^v|_{G_{F_w}}$.

 Then there exists an  odd, regular, polarized, weakly irreducible
 compatible system~$(\{c_{\lambda}\},\{\mu_\lambda\})$ with corresponding $p$-adic
 representation~$(c,\mu)$, with the properties that:
 \begin{itemize}
 \item $\cbar\cong\abar$.
 \item For each place $v\in S\setminus T$, $c|_{G_{F_w}}$ lies on~$A_v$.
 \item For each place~$v\in T$, $c|_{G_{F_w}}$ lies on the component
   determined by~$r^v|_{G_{F_w}}$.
 \item $c$ is unramified outside of~$S$.
 \end{itemize}
\end{cor}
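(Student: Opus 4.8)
The plan is to deduce Corollary~\ref{cor: merging components} from Lemma~\ref{lem: local swapping II} by induction on the size of the set~$T$. The base case $T = \emptyset$ is trivial: we simply take $(\{c_\lambda\},\{\mu_\lambda\}) = (\{a_\lambda\},\{\mu_\lambda\})$, whose residual representation is $\abar$ and which lies on $A_v$ for every $v \in S$ by definition of the components $A_v$, and which is unramified outside $S$ by hypothesis.

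For the inductive step, suppose $T \neq \emptyset$, pick a place $v_0 \in T$, and set $T' = T \setminus \{v_0\}$. First I would apply the inductive hypothesis to the compatible system $(\{a_\lambda\},\{\mu_\lambda\})$ and the family $\{(\{r^v_\lambda\},\{\nu^v_\lambda\})\}_{v \in T'}$ with the set $S$ and the subset $T' \subset S$; since all the hypotheses (reasonableness of the residual representations, unramifiedness outside $S$, the polarized isomorphisms and multiplier agreements at places of $T' \subseteq T$) are inherited, this produces an odd, regular, polarized, weakly irreducible compatible system $(\{c'_\lambda\},\{\mu_\lambda\})$ with $\overline{c'} \cong \abar$, unramified outside $S$, lying on $A_v$ for $v \in S \setminus T$ (and $v = v_0$, which is in $S \setminus T'$), and lying on the component determined by $r^v|_{G_{F_w}}$ for each $v \in T'$. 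Now I would apply Lemma~\ref{lem: local swapping II} to the two compatible systems $(\{c'_\lambda\},\{\mu_\lambda\})$ and $(\{r^{v_0}_\lambda\},\{\nu^{v_0}_\lambda\})$, with the same set $S$ and with the singleton $\{v_0\}$ playing the role of the set $T$ there. The hypotheses of Lemma~\ref{lem: local swapping II} require: $\overline{c'}$ and $\overline{r^{v_0}}$ are reasonable, which holds since $\overline{c'} \cong \abar$ is reasonable and $\overline{r^{v_0}}$ is reasonable by assumption; and at the place $v_0$, the multiplier characters agree ($\mu|_{G_{F^+_{v_0}}} = \nu^{v_0}|_{G_{F^+_{v_0}}}$ by hypothesis) and there is a polarized isomorphism $\overline{c'}|_{G_{F_{w_0}}} \cong \abar|_{G_{F_{w_0}}} \cong \overline{r^{v_0}}|_{G_{F_{w_0}}}$, again by hypothesis. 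The conclusion of Lemma~\ref{lem: local swapping II} (taking the output system with residual representation $\cbar \cong \overline{c'} \cong \abar$) gives a compatible system $(\{c_\lambda\},\{\mu_\lambda\})$ with $\cbar \cong \abar$, unramified outside $S$, such that at $v_0$ the representation $c|_{G_{F_{w_0}}}$ lies on the component $C_{v_0}$ determined by $r^{v_0}|_{G_{F_{w_0}}}$ (the "swapped" component, since $v_0 \in T$ there), while at all other $v \in S$ it lies on the same component that $c'$ did (namely $D_v$, the component $A_v$-or-$r^v$-component as appropriate).

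Concretely, for $v \in S \setminus T$, $c|_{G_{F_w}}$ lies on $A_v$ (inherited from $c'$ and unchanged by the swap since $v \notin \{v_0\}$); for $v \in T'$, $c|_{G_{F_w}}$ lies on the component determined by $r^v|_{G_{F_w}}$ (inherited from $c'$, unchanged); and for $v = v_0$, $c|_{G_{F_{w_0}}}$ lies on the component determined by $r^{v_0}|_{G_{F_{w_0}}}$. This is exactly the desired conclusion for the set $T$, completing the induction. One small point to check along the way is that the component determined by $c'|_{G_{F_w}}$ at $v \in T'$ really is the one determined by $r^v|_{G_{F_w}}$ and not merely abstractly isomorphic — but this is immediate since "the component determined by a representation" is well-defined (Lemma~\ref{lem: points of potentially automorphic are robustly smooth}) and Lemma~\ref{lem: local swapping II} preserves the component at places outside its distinguished set $T = \{v_0\}$.

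The main obstacle, such as it is, is purely bookkeeping: one must be careful that Lemma~\ref{lem: local swapping II} is applied with its set "$T$" equal to the singleton $\{v_0\}$ (so that only the component at $v_0$ gets swapped, and all other components are preserved), and that the polarized-isomorphism and multiplier hypotheses of that lemma at $v_0$ are exactly the ones supplied by the hypotheses of the corollary, transported through the isomorphism $\cbar' \cong \abar$. There is no new analytic or Galois-theoretic input required; all the work has been done in Lemma~\ref{lem: local swapping II} and the lemmas feeding into it.
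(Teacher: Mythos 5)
Your proof is correct and follows essentially the same route as the paper: both reduce the statement to repeated single-place applications of Lemma~\ref{lem: local swapping II} with a singleton set~$T$. The only cosmetic difference is the framing of the induction (you induct on $|T|$ while the paper enumerates $T=\{v_1,\dots,v_m\}$ and inducts on the index $i$); the content of each inductive step — apply Lemma~\ref{lem: local swapping II} to the current compatible system and $(\{r^{v}_\lambda\},\{\nu^{v}_\lambda\})$ at one new place, verifying reasonableness, multiplier agreement, and the polarized local isomorphism transported through $\overline{c'}\cong\abar$ — is identical.
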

\begin{proof}Let $v_1,\dots,v_m$ be the places in~$T$. We claim
  that for each~$0\le i\le m$, we can find an odd, regular, polarized, weakly irreducible
 compatible system~$(\{c^i_{\lambda}\},\{\mu_\lambda\})$ with corresponding $p$-adic
 representation~$(c^i,\mu)$, with the properties that:
 \begin{itemize}
 \item $\cbar^i\cong\abar$.
 \item For each place $v\in S\setminus T$, $c^i|_{G_{F_w}}$ lies on~$A_v$.
 \item For each $j\le i$, $c^i|_{G_{F_{w_j}}}$ lies on the component
   determined by~$r^{v_j}|_{G_{F_{w_j}}}$.
 \item For each $j>i$, $c^i|_{G_{F_{w_j}}}$ lies on the component
   determined by~$a|_{G_{F_{w_j}}}$.
 \item $c^i$ is unramified outside of~$S$.
 \end{itemize}
  Assuming we can do this, we can take $\{c_\lambda\}:=\{c^m_\lambda\}$. We prove the
  existence of the~$\{c^i_\lambda\}$ by induction on~$i$,
  taking~$\{c^0_\lambda\}:=\{a_\lambda\}$. Then the existence
  of~$\{c^{i+1}_\lambda\}$ is immediate from Lemma~\ref{lem: local swapping II}
  applied to~$(\{c^i_\lambda\},\{\mu_\lambda\})$ and $(\{r^{v_{i+1}}_\lambda\},\{\nu^{v_{i+1}}_\lambda\})$ (taking
  the set~$T$ there to be~$\{v_{i+1}\}$).
\end{proof}

Our main theorem is the following (recall that the notion of a
globally realizable component was defined in Definition~\ref{def:glob}).

\begin{theorem}\label{thm: main theorem on existence of lifts} Assume
  that either $n$ is odd or $n=2$. Let~$F$ be a CM field, and let
  $(\sbar,\mubar)$ be a polarized representation, where
$\sbar: G_{F} \rightarrow \GL_n(\Fbar_p)$
is reasonable. 
Let $S$ be a finite set of finite places of~$F^+$, such that~$S$ contains
all of the places at which~$(\sbar,\mubar)$ is ramified and all of the
places lying over~$p$. Let~$\mu$ be a de Rham lift of~$\mubar$ which is
unramified outside of~$S$.
 For each place $v\in S$, let~$C_v$ be a $\mu$-polarized component
for~$\sbar|_{G_{F_w}}$. 

Assume that~$C_v$ is  globally realizable for each $v\in S$ which divides~$p$.
 Then there exists an odd, regular, polarized, weakly irreducible compatible
system~$(\{s_{\lambda}\},\{\mu_\lambda\})$ of~$G_F$-representations
with associated $p$-adic representation~$(s,\mu)$, which satisfies:
\begin{enumerate}
\item $s$ lifts~$\sbar$, and for each place $v\in S$, the
  representation~$s |_{G_{F_w}}$ lies on~$C_v$.
\item $s$ is unramified outside~$S$. 
\end{enumerate}
\end{theorem}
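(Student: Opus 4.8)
The plan is to reduce the general statement to the special case where each globally realizable component at $p$ is witnessed by a compatible system over a \emph{single} fixed auxiliary CM field, and then to interpolate between $\sbar$ and those witnessing systems by a long chain of the ``local swapping'' results established above. Concretely, first replace $F$ by a suitable finite CM extension: by Lemma~\ref{lem:can base change reasonable to pleasant} we may assume $\sbar$ is pleasant, and we choose the extension to be linearly disjoint from everything relevant (the fields cut out by the data defining the $C_v$ and their witnesses). This reduces us, via the Khare--Wintenberger descent packaged in Lemma~\ref{lem: existence of compatible system from potential existence} and Corollary~\ref{cor: removing finite ramification}, to producing a compatible system over the big field lying on (the restriction of) each $C_v$, after which we descend to get the theorem over $F$. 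Throughout we silently carry along the multiplier character $\mu$ and its de Rham lift, as in the body of the paper.

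For each place $v\mid p$ in $S$, global realizability of $C_v$ gives a CM field $L_v$, a place $w_v\mid p$ with $(L_v)^+_{w_v}\cong F^+_v$, and an odd, regular, polarized, weakly irreducible compatible system over $L_v$ whose $p$-adic member lies on $C_v$ at $w_v$. The first key step is to assemble these into data over one field. I would take $F'$ to be (the Galois closure over $\mathbf{Q}$ of) the compositum of $F$ with all the $L_v$, and use Theorem~\ref{thm: BLGGT splitting at primes} to arrange that all places of $S$ split completely in $(F')^+$ and that all the witnessing compatible systems, restricted to $F'$, are automorphic; one must also track that $\sbar|_{G_{F'}}$ stays pleasant and that each $C_v|_{(F')^+_{v'}}$ is still hit by the restricted witness at the place $v'\mid v$ coming from the corresponding place of $L_v$. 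After this, over $F'$ we have, for each place $v'$ of $(F')^+$ above $p$, an odd regular polarized weakly irreducible compatible system $(\{r^{v'}_\lambda\},\{\nu^{v'}_\lambda\})$ lying on the desired component at $v'$, together with the fixed residual representation $\sbar|_{G_{F'}}$.

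The second key step is to merge all these components onto a single global lift of $\sbar|_{G_{F'}}$. This is exactly the situation of Corollary~\ref{cor: merging components}: we need, for each $v'\mid p$, that $\nu^{v'}$ agrees with $\mu$ locally at $v'$ and that there is a polarized isomorphism $\sbar|_{G_{(F')_{w'}}}\cong \overline{r^{v'}}|_{G_{(F')_{w'}}}$. These local isomorphisms follow because both sides are the residual representation at $w'\mid p$, where $v'$ splits in $F'$, so by Lemma~\ref{lem: identifying split local deformation ring with GLn} any isomorphism of $\GL_n$-valued residual representations is automatically polarized; and since we are free to twist the witnessing systems by algebraic characters without moving the component (Remark~\ref{rem: globally realizable doesn't depend on the polarization in the split case}), we may arrange $\nu^{v'}$ to match $\mu$. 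For the places $v'\nmid p$ in $S_{F'}$ we simply leave the component $C_v|_{(F')^+_{v'}}$ in place (these are handled by the ``$S\setminus T$'' part of the corollary). Applying Corollary~\ref{cor: merging components} with $T$ the set of places of $(F')^+$ above $p$ produces an odd regular polarized weakly irreducible compatible system over $F'$ whose $p$-adic member lifts $\sbar|_{G_{F'}}$, lies on $C_v|_{(F')^+_{v'}}$ for each $v'\mid p$, lies on $C_v|_{(F')^+_{v'}}$ for each $v'\nmid p$ in $S_{F'}$, and is unramified outside $S$. Finally, descend via Lemma~\ref{lem: existence of compatible system from potential existence} (whose hypotheses we have arranged: $F'/F$ Galois, linearly disjoint from $\overline{F}^{\ker\sbar}(\zeta_p)$, $\sbar|_{G_{F'}}$ reasonable, matching multiplier and local components) to obtain the compatible system over $F$ with all the required properties.

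The main obstacle is the bookkeeping of linear disjointness and of the inert auxiliary place $x$ and character $\psi$ that propagate through Lemma~\ref{lem: local swapping}, Corollary~\ref{cor: inverse Galois component swapping}, and hence Corollary~\ref{cor: merging components}: one must check that all of the auxiliary base changes performed when proving ``merging components'' can be made simultaneously compatible with the base change to $F'$ and with the descent step, and that nothing forces us to enlarge $S$ at places where we cannot afterwards remove the ramification (this is why Corollary~\ref{cor: removing finite ramification} only lets us add \emph{potentially unramified} components). A secondary subtlety, already flagged in the paper, is that the oddness of all members of the constructed compatible systems is only available for $n$ odd or $n=2$ (via Lemma~\ref{lem: odd dimensional representations are odd}), which is where the hypothesis on $n$ is used; but since every tool we invoke (Lemma~\ref{lem: local swapping II}, Corollary~\ref{cor: merging components}) already incorporates this restriction, no new difficulty arises here beyond invoking them correctly.
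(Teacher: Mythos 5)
Your plan --- restrict the witnessing compatible systems directly to a single big field $F'$ containing $F$ and all the $L_v$, then invoke Corollary~\ref{cor: merging components} --- runs into the fundamental obstruction that is flagged explicitly in the sketch of proofs in \S\ref{subsec: sketch of proofs}, and which drives the whole architecture of \S\ref{sec: building lifts}. The field $F'$ must contain the compositum of $F$ with the $L_v$, and both $F$ and the $L_v$ are \emph{fixed} data (the $L_v$ are part of the given global realizability witnesses). You therefore have no freedom to choose $F'$ linearly disjoint from the fixed fields of $\ker\sbar$ or of the $\ker\overline{r^{v}}$. As a consequence there is nothing preventing $\sbar|_{G_{F'}}$, or the residual representations $\overline{r^{v}}|_{G_{F'}}$ of the restricted witnessing systems, from becoming reducible (or otherwise failing to be reasonable), and nothing preventing the restricted compatible systems from failing to be weakly irreducible. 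But Corollary~\ref{cor: merging components} (like everything else in \S\ref{sec: building lifts}) needs the residual representations involved to be reasonable, and the compatible systems to be weakly irreducible. Your initial base change via Lemma~\ref{lem:can base change reasonable to pleasant} is applied \emph{before} forming the compositum, so it does not help here; you would need to be able to choose the further extension linearly disjoint from $\overline{F}^{\ker\sbar}(\zeta_p)$, which you cannot once you are forced to absorb the $L_v$.

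The paper's proof is designed precisely to route around this: instead of restricting $\sbar$ and the witness $\rbar$ directly to a compositum, it uses Corollary~\ref{cor: inverse Galois component swapping} (built on the Moret--Bailly construction in Lemma~\ref{lem: inverse Galois PD}) to manufacture \emph{auxiliary} residual representations $\tbar$ (over an extension $L$ of $F$) and $\ubar$ (over an extension $K$ of $E$) whose images remain large --- containing $\SL_n(\F_q)$ --- even after restriction to the Galois closure $M=(LK)^{\gal}$; this is arranged by taking $q$ large enough that $\PSL_n(\F_q)$ cannot appear as a Jordan--H\"older factor of any of the relevant fixed Galois groups. The desired local components are then passed from $\rbar$ to $\ubar$, then from $\ubar|_{G_M}$ to $\tbar|_{G_M}$ by Lemma~\ref{lem: local swapping II}, and finally from $\tbar$ back to $\sbar$; the residual representations that actually enter each swapping and merging step are the auxiliary ones, which by construction stay reasonable over $M$. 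A second idea absent from your proposal is the conjugation step: the witness over $E$ only controls the component at the single place $v_{E,2}$, so after passing to the Galois closure $M$ one must conjugate the resulting system by elements of $\Gal(M/\mathbf{Q})$ (to move $v_{M,2}$ over $v_{M,1}$) and then by $\Gal(M/F)$ (to cover all places of $M^+$ above $v_1$), before Corollary~\ref{cor: merging components} can assemble them into a single system lying on $C_{v_1}$ at every place above $v_1$. Without these ingredients, the plan as written cannot be completed.
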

\begin{proof}
Fix a place $v_1\in S$ which lies over~$p$.    By definition, 
   the hypothesis that $C_{v_1}$ is globally realizable
  means that we can find a CM field $E$, a place~$v_{E,2}|p$ of~$E^+$ such
  that $E^+_{v_{E,2}}\cong F^+_{v_1}$, a finite set~$S_E$ of  places
  of~$E^+$ containing all of the places lying over~$p$, and an odd, regular,
  polarized, weakly irreducible compatible system~$(\{r_\lambda\},\{\mu_{E,\lambda}\})$ of
  $G_{E}$-representations with associated $p$-adic
  representation~$(r,\mu_E)$, such that $\rbar$ is reasonable, $r$ is
  unramified outside of~$S_E$, 
  and
  $r|_{G_{E_{w_{E,2}}}}$ lies on~$C_{v_1}$ (which as usual implies in
  particular 
  that $\mu_E |_{G_{E^+_{v_{E,2}}}} \simeq\mu |_{G_{F^{+}_{v_1}}}$ and 
  that there is a
polarized isomorphism $\rbar|_{G_{E_{w_{E,2}}}}\cong\sbar|_{G_{F_{w_1}}}$). 
    The reason we use the distinct numbers~$1$ and~$2$  in the subscripts is that we shall ultimately construct a field~$M$ containing both~$E$
    and~$F$, and there will be no \emph{a priori}  relation between the primes of~$M^{+}$ over~$v_{1}$ in~$F^{+}$
    and~$v_{E,2}$ in~$E^{+}$.

Applying Lemma~\ref{lem: existence of PD lifts II} to~$\sbar$, we may
find a finite Galois extension $L/F$ of CM fields and an odd, regular,
  polarizable, weakly irreducible compatible system~$(\{\sss_\lambda\},\{\mu_\lambda|_{G_{L^+}}\})$ 
  of
  $G_{L}$-representations with associated $p$-adic
  representation~$\sss$, such that~$\sbar|_{G_L}$ is pleasant, ~$\sss$
  lifts~$\sbar|_{G_L}$, and~$\sss$ is potentially diagonalizable at all
  places over~$p$. (Here~$\{\mu_\lambda\}$ is the compatible system
  containing the character~$\mu$.) This compatible system is our initial seed; we now
  begin constructing auxiliary compatible systems in order to apply our component
  swapping to ultimately construct the desired representation.

Applying
Corollaries~\ref{cor: inverse Galois component swapping} and~\ref{cor: could omit x if we wanted}
to~$(\{\sss_\lambda\},\{\mu_\lambda|_{G_{L^+}}\})$,
we can, after
possibly further extending~$L$, find an odd, regular,
  polarizable, weakly irreducible compatible system~$(\{t_\lambda\},\{\mu_\lambda|_{G_{L^+}}\})$ of
  $G_{L}$-representations with associated $p$-adic
  representation~$t$, such that:
  \begin{itemize}
  \item $\sbar|_{G_L}$, $\tbar$ and
    $\tbar|_{G_{(LE)^{{\gal}}}}$ are pleasant. (The pleasantness
    of $\sbar|_{G_L}$ and $\tbar,$ and also of $\tbar|_{L^{\gal}},$
    is ensured by the very statements of 
Corollaries~\ref{cor: inverse Galois component swapping} and~\ref{cor: could omit x if we wanted}.  
    The pleasantness of $G_{(LE)^{\gal}}$ can then 
    be achieved by taking~$q$
in Corollaries~\ref{cor: inverse Galois component swapping}  and~\ref{cor: could omit x if we wanted} to be sufficiently large, and in particular large enough that~$\PGL_n(\F_{q})$ is
    simple and not isomorphic to any Jordan--H\"older factor
    of~$\Gal(E^{\gal}/\Q).$)
     \item For each
  place~$v_L$ of~$L^{+}$ lying over a place~$v\in S$, we have a polarized isomorphism
  $\tbar|_{G_{L_{w_{L}}}}\cong\sbar|_{G_{L_{w_L}}}$. 
  \item At each place $v_L$ of~$L^{+}$ lying over a place~$v\in S$ which does
  not divide~$p$, $t|_{G_{L_{w_L}}}$ lies on~$C_v|_{{L^+_{v_L}}}$.
\item $t|_{G_{L_{w_L}}}$ is potentially diagonalizable for all
  places~$v_L |p$ of~$L^+$.
\item $t$ is unramified outside of the places lying over~$S$.
  \end{itemize}
Now applying Corollaries~\ref{cor: inverse Galois component swapping} and~\ref{cor: could omit x if we wanted} to~$(\{r_\lambda\},\{\mu_{E,\lambda}\})$,
we may find a finite Galois extension $K/E$ of CM fields and an odd, regular,
  polarizable, weakly irreducible compatible system~$(\{u_\lambda\},\{\nu_\lambda|_{G_{K^+}}\})$ of
  $G_{K}$-representations with associated $p$-adic
  representation~$(u,\nu|_{G_{K^+}})$, such that:
  \begin{itemize}
  \item 
   $\ubar|_{G_{(LK)^{{\gal}}}}$ and~$\tbar|_{G_{(LK)^{{\gal}}}}$ are pleasant. 
        (This can be achieved by noting that we may choose~$K/E$ to be linearly disjoint not
       only from~$(LE)^{\gal}$ but also from the Galois closure of the splitting field of~$\tbar$.)   
             \item For each
  place~$v_K$ of~$K$ lying over a place in~$S_E$, 
  we have a polarized isomorphism
  $\ubar|_{G_{K_{v_K}}}\cong\rbar|_{G_{K_{v_K}}}$.
 \item  $u|_{G_{K_{w_K}}}$ is potentially diagonalizable for all
   places~$v_K| p$ of~$K^+$ lying over places in~$S_E$, except for the places~$v_{K,2}$
lying over~$v_{E,2}$, for which~$u|_{G_{K_{w_{K,2}}}}$ lies on~$C_{v_1}|_{{K^+_{v_{K,2}}}}$. 
  \item $\nu |_{G_{K^{+}_{v_{K,2}}}} \simeq \mu_E  |_{G_{K^{+}_{v_{K,2}}}}$,
  and~$\mu_E |_{G_{E^+_{v_{E,2}}}} \simeq\mu |_{G_{F^{+}_{v_1}}}$. The first condition follows
  from the corollary we are invoking and the second condition was already assumed to be true.
   \item $u$ is unramified outside of the places lying over~$S_E$. 
   \end{itemize}
   For ease of notation, we now write~$M=(LK)^{{\gal}}$; note then that~$M^{\gal} = M$.
  We also draw the following picture which represents the inclusions between various fields
  which occur in this argument:
  
\begin{center}
\begin{tikzpicture}[node distance = 2cm, auto]
\node (Q) {$F$};
\node (A)[left of=Q, node distance =2cm]{$F^{+}$};
\node(B)[above of =A]{$E$};
\node(C)[above of = Q]{$L^{+}$};
\node(D)[above of =Q, right of =Q]{$L$};
\node(E)[above of=B, left of =B]{$K^{+}$};
\node(F)[right of =E]{$K$};
\node(G)[above of =C]{$(LE)^{\gal +}$};
\node(H)[above of = D]{$(LE)^{\gal}$};
\node(I)[above of = E, right of =E]{$M^{+}$};
\node(J)[above of =G]{$M$};
\node(K)[below of=E]{$E^{+}$};
\draw[-] (A) -- (C);
\draw[-] (A) -- (Q);
\draw[-] (Q) -- (D);
\draw[-] (K) -- (E);
\draw[-] (K) -- (B);
\draw[-] (K) -- (G);
\draw[-] (B) -- (H);
\draw[-]  (B) -- (F);
\draw[-] (C) -- (G);
\draw[-] (C) -- (D);
\draw[-] (D) -- (H);
\draw[-] (E) --  (F);
\draw[-] (E) -- (I);
\draw[-] (F) -- (J);
\draw[-] (G) -- (I);
\draw[-] (G)--(H);
\draw[-] (H)--(J);
\draw[-] (I) --(J);
\end{tikzpicture}
\end{center}

Choose a place~$v_{M,2}$  above~$v_{K,2}$ and~$v_{E,2}$ in~$M^{+}$.
We now conjugate $\{u_\lambda\}$ by elements of~$\Gal(M/\Q)$.
Because~$\Gal(M/\Q)$ acts transitively on the primes above~$p$ in~$M^{+}$,
 there exists in particular a~$\sigma \in \Gal(M/\Q)$ with~$\sigma v_{M,2} = v_{M,1}$ for any choice
 of prime~$v_{M,1}$ above our fixed place ~$v_{1}$ of~$F^{+}$.
 Note that, by construction, there will be a polarized isomorphism
 $$ \ubar^{\sigma} | G_{M_{w_{M,1}}} \simeq
 \ubar | G_{M_{w_{M,2}}}\simeq \rbar | G_{M_{w_{M,2}}} \simeq \sbar | G_{M_{w_{M,1}}} \simeq \tbar  | G_{M_{w_{M,1}}},$$
 where the second isomorphism comes from the construction of~$\{u_{\lambda}\}$, and the third isomorphism was one of defining properties of $\{r_{\lambda}\}$.
 Similarly,
 there is an identification of characters
 $$ \nu^{\sigma} | G_{M^{+}_{v_{M,1}}} \simeq
 \nu | G_{M^{+}_{v_{M,2}}} \simeq \mu | G_{M^{+}_{w_{M,1}}}.$$
 We now apply
Lemma~\ref{lem: local swapping II}
to~$(\{t_\lambda|_{G_M}\},\{\mu_\lambda|_{G_{M^+}}\})$ and ~$(\{u^{\sigma}_\lambda|_{G_M}\},\{\nu_\lambda|_{G_{M^+}}\})$
relative to the set~$T = \{v_{M,1}\}$. 
We deduce the existence of an odd, regular,
  polarized, weakly irreducible compatible system~$(\{a_\lambda\},\{\mu_\lambda|_{G_{M^+}}\})$ of $G_M$-representations with associated $p$-adic
Galois representation~$(a,\mu|_{G_{M^+}})$, with the properties that:
\begin{itemize}
\item $\abar=\tbar|_{G_M}$ is pleasant.
\item At the place~$v_{M,1}$ of~$M^{+}$ lying over~$v_1$ in~$F^{+}$,
the representation $a|_{G_{M_{w_{M,1}}}}$ lies
  on~$C_{v_1}|_{{M^+_{v_{M,1}}}}$.
\item  $a$ is potentially diagonalizable at all places dividing~$p$
  other than the places over~$v_{M,1}$. 
  \item At each place $v_M$ of~$M^+$ lying over a place~$v\in S$ which does
  not divide~$p$, $a|_{G_{M_{w_M}}}$ lies on~$C_v|_{{M^+_{v_M}}}$.
\item $a$ is unramified outside of the places lying over~$S$.
\end{itemize}
As above, we now conjugate $\{a_\lambda\}$ by elements of~$\Gal(M/F) = \Gal(M^+/F^{+})$.
Because~$\Gal(M^{+}/F^{+})$ acts transitively on the primes above~$v_1$ in~$M^{+}$,
for any such prime~$\vvv_{M,1} | p$, there exists a~$\sigma \in \Gal(M/F)$ with~$\sigma \vvv_{M,1} = v_{M,1}$,
and thus
we deduce that there exists an odd, regular
  polarized, weakly irreducible compatible system~$(\{a^{\sigma}_\lambda\},\{\mu^{\sigma}_\lambda\})$ of $G_M$-representations with associated $p$-adic
Galois representation~$(a^{\sigma},\mu^{\sigma})$, with the properties that:
\begin{itemize}
\item $\vvv_{M,1}$ is a prime in~$M^{+}$ above~$v_1$ in~$F^{+}$.
\item $\abar^{\sigma}$ is pleasant.
\item $\mu^{\sigma}|_{G_{M^+_{\vvv_{M,1}}}}=  \mu|_{G_{M^+_{\vvv_{M,1}}}}$, and there is
  a polarized isomorphism 
  $$\abar^{\sigma}|_{G_{M_{\www_{M,1}}}}= \ \tbar^{\sigma}|_{G_{M_{\www_{M,1}}}}
  = \ \tbar|_{G_{M_{\sigma \www_{M,1}}}} 
= \ \tbar|_{G_{M_{w_{M,1}}}} 
  \cong \  \tbar|_{G_{M_{\www_{M,1}}}}$$ for~$\www_{M,1} | \vvv_{M,1}$ in~$M$.
  This is
  because the representations~$\tbar|_{G_{M_{\www_{M,1}}}}$ are all  isomorphic to
  the restrictions of~$\sbar |_{G_{F_{w_1}}}$ to~${G_{M_{w_1}}}$, and so do
  not depend on the choice of~$\vvv_{M,1}$ above~$v_1$ in~$M$.
\item  $a^{\sigma}|_{G_{M_{\www_{M,1}}}}$ lies
  on~$C_{v_1} |_{{M^+_{\vvv_{M,1}}}}$.
\item  $a^{\sigma}$ is potentially diagonalizable at all places of~$M$ dividing~$p$
  other than those lying over~$\vvv_{M,1}$. 
  \end{itemize}
  
We now apply Corollary~\ref{cor: merging components} to the compatible
systems~$(\{t_\lambda|_{G_M}\},\{\mu_\lambda|_{G_{M^+}}\})$ and~$(\{a^{\sigma}_\lambda\},\{\mu^{\sigma}_\lambda\})$,
where we let~$\sigma$ range over a set of elements of~$\Gal(M/F) = \Gal(M^{+}/F^{+})$
such that~$\sigma \vvv_{M,1} = v_{M,1}$  where~$\vvv_{M,1}$ ranges exactly over the primes of~$M^{+}$ above~$v_1$ in~$F^{+}$.
We deduce the existence of an odd, regular,
  polarized, weakly irreducible compatible system~$(\{b_\lambda\},\{\mu_\lambda|_{G_{M^+}}\})$ of $G_M$-representations with associated $p$-adic
Galois representation~$(b,\mu|_{G_{M^+}})$, with the properties that:
\begin{itemize}
\item $\bbar=\tbar|_{G_M}$ is reasonable.
\item At \emph{every} place $\vvv_{M,1}|p$ of~$M^+$ which lies over~$v_1$, $b|_{G_{M_{\www_{M,1}}}}$ lies on~$C_{v_1}|_{M^+_{\vvv_{M,1}}}$.
\item $b$ is potentially diagonalizable at all places of~$M$
  dividing~$p$ and not
  lying over~$v_1$.
\item At each place $v_M$ of~$M^+$ lying over a place~$v\in S$ which does
  not divide~$p$, $b|_{G_{M_{w_M}}}$ lies on~$C_v|_{M^+_{v_M}}$. 
  \item $b$ is unramified outside of the places lying over~$S$.
\end{itemize}
We now use this deformation of~$\tbar|_{G_M}$ to descend to a deformation of~$\tbar$ over~$G_L$.
Namely,
applying Lemma~\ref{lem: existence of compatible system from potential
    existence}, we deduce the existence of an odd, regular,
  polarizable, weakly irreducible compatible system~$(\{c_\lambda\},\{\mu_\lambda|_{G_{L^+}}\})$ of
  $G_{L}$-representations with associated $p$-adic
  representation~$(c,\mu|_{G_{L^+}})$, such that:
  \begin{itemize}
  \item $\cbar\cong\tbar$.
\item At each place $v_L$ of~$L^+$ lying over a place~$v\in S$ which does
  not divide~$p$, $c|_{G_{L_{w_L}}}$ lies on~$C_v|_{L^+_{v_L}}$.
\item At each place $v_{L,1}$ of~$L^+$ lying over $v_1$, $c|_{G_{L_{w_{L,1}}}}$ lies on~$C_{v_1}|_{L^+_{v_{L,1}}}$.
\item $c|_{G_{L_{w_L}}}$ is potentially diagonalizable for all
  places~$v_L|p$ not lying over~$v_1$.
\item $c$ is unramified outside of the places lying over~$S$.
  \end{itemize}
Hence we have succeeded in finding deformations of a representation~$\tbar$ (which looks locally
like~$\sbar |_{G_{L}}$ at each~$v|p$ but is globally different) which
lies on the desired component at the places lying over~$v_{1}$
and is potentially diagonalizable at all other primes.

We now use local swapping to find a corresponding
deformation of~$\sbar |_{G_{L}}$ which has the correct behavior at the
places lying over~$v_{1}$
and  is potentially diagonalizable at all other primes.
That is,
by applying Lemma~\ref{lem: local swapping II}
to~$(\{\sss_\lambda\},\{\mu_\lambda|_{G_{L^+}}\})$ and ~$(\{c_\lambda\},\{\mu_\lambda|_{G_{L^+}}\})$ (taking the subset $T$ of that lemma to
be the set of all places lying over $p$),
 we deduce the existence of an odd, regular,
  polarized, weakly irreducible compatible system~$(\{d_\lambda\},\{\mu_\lambda|_{G_{L^+}}\})$ of
  $G_{L}$-representations with associated $p$-adic
  Galois  representation~$(d,\mu|_{G_{L^+}})$, such that:
  \begin{itemize}
  \item $\dbar\cong\sbar|_{G_L}$.
 \item At each place $v_L$ of~$L^+$ lying over a place~$v\in S$ which does
  not divide~$p$, $d|_{G_{L_{w_L}}}$ lies on~$C_v|_{L^+_{v_L}}$.
\item At each place $v_{L,1}$ of~$L^+$ lying over $v_1$, $d|_{G_{L_{w_{L,1}}}}$ lies on~$C_{v_1}|_{L^+_{v_{L,1}}}$.
\item $d|_{G_{L_{{w_L}}}}$ is potentially diagonalizable for all
  places~$v_L |p$ not lying over~$v_1$.
\item $d$ is unramified outside of the places lying over~$S$.
  \end{itemize}
We now descend from~$L$ to~$F$: Applying Lemma~\ref{lem: existence of compatible system from potential
    existence}, we construct an odd, regular,
  polarized, weakly irreducible compatible system~$(\{e_\lambda\},\{\mu_\lambda\})$ of
  $G_{F}$-representations with associated $p$-adic
  representation~$(e,\mu)$, such that:
  \begin{itemize}
  \item $\ebar\cong\sbar$.
\item At each place $v\in S$ which does
  not divide~$p$, $e|_{G_{F_{w}}}$ lies on~$C_v$.
\item  $e|_{G_{F_{w_1}}}$ lies on~$C_{v_1}$.
\item $e|_{G_{F_{w}}}$ is potentially diagonalizable for all
  places~$v|p$ of~$F^+$ other than~$v_1$.
\item $e$ is unramified outside of the places lying over~$S$.
  \end{itemize}
Since~$v_1$ was arbitrary, there exists a compatible system with these
properties for each choice of~$v_1$. Applying Corollary~\ref{cor: merging components} to these compatible systems, we
obtain the required compatible system~$(\{s_\lambda\},\{\mu_\lambda\})$.
\end{proof}

As an immediate consequence, we have the following potential
automorphy theorem. 

\begin{cor}\label{cor: potential automorphy for globally
    realizable}Assume that either $n$ is odd or $n=2$. Let~$F$ be a CM
  field, and let~$(s,\mu)$ be a polarized representation, where
$$s: G_{F} \rightarrow \GL_n(\Qpbar)$$
is odd and ramified at only finitely many primes. 
Suppose that~$\sbar$ is reasonable. Let~$\rho$ be the
corresponding prolongation of~$s$,  and assume that~$\rho |_{G_{F^+_v}}$ is globally realizable for
each~$v|p$. Then~$(s,\mu)$ is potentially automorphic. 
\end{cor}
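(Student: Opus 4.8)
\textbf{Proof proposal for Corollary~\ref{cor: potential automorphy for globally realizable}.}

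The plan is to reduce the statement to Theorem~\ref{thm: main theorem on existence of lifts} by an appropriate deformation-theoretic bootstrap, and then invoke the automorphy statement of Theorem~\ref{thm: FC's pst R=T}. First I would fix a finite set~$S$ of finite places of~$F^+$ containing all places dividing~$p$ and all places at which~$(s,\mu)$ is ramified, and for each~$v\in S$ let~$C_v$ be the $\mu$-polarized component for~$\sbar|_{G_{F_w}}$ on which~$\rho|_{G_{F^+_v}}$ lies (this makes sense because~$\{r_\lambda\}$-type smoothness is not needed here: the representation~$\rho|_{G_{F^+_v}}$ itself determines a component). For~$v|p$ the component~$C_v$ is globally realizable by hypothesis; for~$v\nmid p$ no such hypothesis is needed, as any component is allowed in Theorem~\ref{thm: main theorem on existence of lifts}. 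So the hypotheses of that theorem are satisfied, and it produces an odd, regular, polarized, weakly irreducible compatible system~$(\{s'_\lambda\},\{\mu_\lambda\})$ with associated $p$-adic representation~$(s',\mu)$ lifting~$\sbar$, unramified outside~$S$, and with~$s'|_{G_{F_w}}$ lying on~$C_v$ for each~$v\in S$.

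The key remaining point is to conclude that~$s$ itself — not just the auxiliary lift~$s'$ — is potentially automorphic. For this I would argue as in the proof of Lemma~\ref{finiteness lemma}: the compatible system~$(\{s'_\lambda\},\{\mu_\lambda\})$ is potentially automorphic by Lemma~\ref{lem: weakly irreducible equals potentially automorphic}, so by Theorem~\ref{thm: BLGGT splitting at primes} there is a finite Galois extension~$L/F$ of CM fields, linearly disjoint from~$\overline{F}^{\ker\sbar}(\zeta_p)$ over~$F$ with all places of~$S$ split completely in~$L^+$, such that~$(\{s'_\lambda|_{G_L}\},\{\mu_\lambda|_{G_{L^+}}\})$ is automorphic. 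Now consider the global deformation~$\cO$-algebra~$R_C$ parametrizing $\mu$-polarized, odd deformations of~$\sbar$, unramified outside~$S$, lying on~$C_v$ for each~$v\in S$: by Proposition~\ref{prop: global deformation ring is positive dimensional} it has Krull dimension at least~$1$, and by Theorem~\ref{thm: FC's pst R=T} (applied over~$L$, using that~$s'|_{G_L}$ is automorphic and that~$\sbar|_{G_{L(\zeta_p)}}=\sbar|_{G_{F(\zeta_p)}}|_{G_L}$ is adequate since~$\sbar$ is reasonable and~$L$ is disjoint from~$\overline{F}^{\ker\sbar}(\zeta_p)$) together with~\cite[Lem.\ 1.2.3]{BLGGT}, the base-changed deformation ring~$R_{C,L}$ is finite over~$\cO$, hence so is~$R_C$. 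Both~$s$ and~$s'$ correspond to $\Qpbar$-points of~$R_C$ (since~$s|_{G_{F_w}}$ lies on~$C_v$ for all~$v\in S$ by our choice of the~$C_v$). Applying Theorem~\ref{thm: FC's pst R=T} once more over~$L$ — this time to the point~$s|_{G_L}$ of~$R_{C,L}$, which we now know has automorphic points and is a finite $\cO$-algebra — we conclude that~$s|_{G_L}$ is automorphic, so~$(s,\mu)$ is potentially automorphic.

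The main obstacle I anticipate is bookkeeping with the split-place and adequacy hypotheses of Theorem~\ref{thm: FC's pst R=T}: that theorem requires all places in~$S$ to be split in the relevant CM field and requires~$\rbar(G_{F(\zeta_p)})$ adequate, and one must arrange both by the base change to~$L$ provided by Theorem~\ref{thm: BLGGT splitting at primes} while keeping~$L$ linearly disjoint from~$\overline{F}^{\ker\sbar}(\zeta_p)$ so that adequacy (which holds for reasonable~$\sbar$ by the definition and the results of~\cite{MR2979825}) survives restriction — the point being that~$\sbar(G_{L(\zeta_p)})=\sbar(G_{F(\zeta_p)})$ under linear disjointness. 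A secondary subtlety is checking that the polarization~$\mu$ (rather than some twist) is the multiplier of the compatible system produced by Theorem~\ref{thm: main theorem on existence of lifts}; this is built into the statement of that theorem, so no extra work is needed. Everything else is a direct citation of results already established in the excerpt.
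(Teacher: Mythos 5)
Your proposal takes essentially the same route as the paper: apply Theorem~\ref{thm: main theorem on existence of lifts} to produce a weakly irreducible compatible system $\{s'_\lambda\}$ with $s'$ on the same local components as~$s$, use Theorem~\ref{thm: BLGGT splitting at primes} to make $s'|_{G_L}$ automorphic after a suitable base change, and then invoke the automorphy lifting theorem (Theorem~\ref{thm: FC's pst R=T}) to conclude that $s|_{G_L}$ is automorphic. Two small points worth flagging.

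First, the excursion through the finiteness of $R_C$ and $R_{C,L}$ is unnecessary: Theorem~\ref{thm: FC's pst R=T} already asserts that \emph{any} $\Qpbar$-point of the deformation ring is automorphic once one knows the ring has an automorphic point (namely $s'|_{G_L}$), so the bootstrap step citing Proposition~\ref{prop: global deformation ring is positive dimensional} and~\cite[Lem.\ 1.2.3]{BLGGT} is a detour the paper avoids.

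Second, and more substantively, you correctly anticipate the split-place hypothesis of Theorem~\ref{thm: FC's pst R=T} as a worry, but your proposed resolution is not quite right. Theorem~\ref{thm: BLGGT splitting at primes} only arranges that places of $S$ split completely in the totally real extension $L^+/F^+$; this does not by itself make the places of $L^+$ above $S$ split in the quadratic CM extension $L/L^+$, which is what ``split place'' means in Theorem~\ref{thm: FC's pst R=T}. The paper handles this by also citing Lemma~\ref{lem:can base change reasonable to pleasant}, whose proof crosses with a carefully chosen quadratic extension $M^+/F^+$ precisely to make the places above $p$ split in $L/L^+$ (and the same trick applies to any finite set of places). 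So you should add an explicit further quadratic base change, as in that lemma, before invoking Theorem~\ref{thm: FC's pst R=T}; with that fix the argument matches the paper's.
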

\begin{proof}
Let~$\Favoid/F$ be a finite Galois extension.   By 
  Theorems~\ref{thm: main theorem on existence of lifts} and~\ref{thm:
    BLGGT splitting at primes}, together with Lemma~\ref{lem:can base change reasonable to pleasant}, there is a finite Galois extension of
  CM fields~$L/F$, linearly disjoint from~$\Favoid/F$ and
  a polarizable regular algebraic cuspidal automorphic representation
  of $\GL_n(\A_L)$ such that $\rbar_p(\pi)\cong\sbar|_{G_L}$ is
  pleasant,
  $s|_{G_L}$ is only ramified at split primes, 
  and for each place~$v$ of $L^+$, 
  $r_p(\pi)|_{G_{L_v}}$  lies on the component determined
  by~$s|_{G_{L_v}}$. Then~$s|_{G_L}$ is automorphic by
  Theorem~\ref{thm: FC's pst R=T}.
  \end{proof}

Finally, as promised in Remark~\ref{rem: potentially globally realizable}, we show that ``potentially globally
realizable'' representations are globally realizable.

\begin{cor}
  \label{cor: potentially globally realizable}Assume that either~$n$
  is odd or~$n=2$. A component~$C$ for
$\rhobar:G_K\to\cG_n(\F)$ is globally realizable if and only if there
exists a finite extension~$L/K$ such that~$C|_L$ is globally realizable.
\end{cor}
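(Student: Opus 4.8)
The ``only if'' direction is trivial: if $C$ itself is globally realizable, then choosing any compatible system $(\{s_\lambda\},\{\mu_\lambda\})$ over a CM field $F$ witnessing this, and restricting to the CM extension $FL'$ for a suitable $L'/F$, one sees that the restriction $C|_{L}$ (for the corresponding local extension $L/K$, which we may take to be $K$ itself) is globally realizable; more precisely, $C = C|_K$ is already globally realizable, so for the trivial extension $L = K$ there is nothing to prove, and for a general finite extension $L/K$ one uses Remark~\ref{rem: globally realizable doesn't depend on the polarization in the split case}-style arguments together with the fact that restriction of compatible systems to CM extensions preserves weak irreducibility when one member stays irreducible (Lemma~\ref{lem:restricting weakly irr}), after first arranging a base change so that the restricted residual representation is still reasonable. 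The content is entirely in the ``if'' direction.

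\textbf{The ``if'' direction.} Suppose $L/K$ is a finite extension and $C|_L$ is globally realizable. We must produce a global realization of $C$ over $K$ itself. The plan is to apply Theorem~\ref{thm: main theorem on existence of lifts} to a suitably chosen global residual situation whose localization at one prime above $p$ is exactly $\rhobar$ (with multiplier $\mubar_K$), taking the prescribed local component there to be $C$, and at all other places to be arbitrary globally realizable (e.g. potentially diagonalizable) components. Concretely, first I would globalize the local datum $(\rhobar, \mu_K)$: by an application of the theorem of Moret--Bailly in the style of Lemma~\ref{lem: inverse Galois PD} (or directly \cite[Prop.\ 3.2]{MR2869026}), there is a CM field $F$ with a place $v_1 \mid p$ of $F^+$ with $F^+_{v_1} \cong K$, together with a reasonable polarized $\sbar : G_F \to \GL_n(\Fpbar)$ with prolongation $\rhobar_F$ such that $\rhobar_F|_{G_{F^+_{v_1}}} \cong \rhobar$ (polarized isomorphism) and $\mubar_F|_{G_{F^+_{v_1}}} = \mubar_K$; one also arranges a de Rham lift $\mu_F$ of $\mubar_F$ restricting to $\mu_K$ at $v_1$. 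At every other place $v \in S$ (where $S$ contains the places over $p$ and the ramification) one chooses local components $C_v$ that are globally realizable --- for $v \mid p$, $v \neq v_1$, take a potentially diagonalizable component (which exists after enlarging $S$ or is available by the many-diagonalizable-lifts mechanism, and is globally realizable by Remark~\ref{rem: potentially diagonalizable implies globally realizable}), and at $v_1$ take $C$ itself.

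\textbf{The key point.} For this to work I need to know that $C$ --- the component for $\rhobar = \rhobar_F|_{G_{F^+_{v_1}}}$ --- is globally realizable \emph{as a component over $K$}, which is exactly what we are trying to prove; so a direct application of Theorem~\ref{thm: main theorem on existence of lifts} would be circular. Instead, the real mechanism is: global realizability of $C|_L$ gives (via Definition~\ref{def:glob}) a CM field $E$, a place $w_E \mid p$ of $E^+$ with $E^+_{w_E} \cong L$, and an odd, regular, polarized, weakly irreducible compatible system over $E$ whose $p$-adic member localizes at $w_E$ onto $C|_L$. Since $C|_L = (C)|_{L}$ as components (base change of components is well-defined, \cite[Lem.\ 3.5.1]{2017arXiv170804885B}), and since $L/K$ is a finite extension, I can apply the descent machinery already built in Section~\ref{sec: building lifts} --- specifically the proof technique of Lemma~\ref{lem: existence of compatible system from potential existence} together with the local swapping lemmas (Lemma~\ref{lem: local swapping II}) --- to descend the local realization from $C|_L$ down to $C$. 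In fact, the cleanest route is to observe that the entire argument in the proof of Theorem~\ref{thm: main theorem on existence of lifts} already does this: there, the hypothesis used at a place $v_1 \mid p$ is precisely ``$C_{v_1}$ is globally realizable,'' and the proof produces a global lift over the \emph{original} field $F$ (hence over $K = F^+_{v_1}$) lying on $C_{v_1}$. Applying that theorem with the roles arranged so that $\rhobar$ over $K$ is the localization at $v_1$, and feeding in the global realizability of $C|_L$ via an intermediate CM field whose completion at a $p$-place is $L$ (rather than $K$) --- which is legitimate because Definition~\ref{def:glob} only asks for \emph{some} CM field with \emph{some} $p$-place completing to the given local field, and $C|_L$ is a component for $\rhobar|_{G_L}$, i.e. for the localization of $\rhobar_F$ at the place of $(FL')^+$ above $v_1$ --- yields a global compatible system over $F$ whose $v_1$-localization lies on $C$. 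This exhibits $C$ as globally realizable.

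\textbf{Main obstacle.} The technical heart is checking that ``globally realizable for $C|_L$'' can genuinely be \emph{fed into} the proof of Theorem~\ref{thm: main theorem on existence of lifts} in place of ``globally realizable for $C$,'' i.e. that the inductive base-change-and-descend skeleton of that proof is robust enough to start from a witness living over an extension $L$ of the target local field. I expect this to go through verbatim: in that proof the witness compatible system $(\{r_\lambda\},\{\mu_{E,\lambda}\})$ over $E$ is immediately base-changed (to $K/E$, then to $M$) before being swapped into the target, so whether its relevant $p$-adic localization initially sees $C$ or $C|_L$ is immaterial --- after base-changing to a common CM field $M$ containing (a conjugate of) both the $K$-completion and the $L$-completion appropriately, one has $C|_{M^+_{v_{M,1}}} = (C|_L)|_{M^+_{v_{M,1}}}$, so the swapping lemmas (Lemma~\ref{lem: local swapping II}, Corollary~\ref{cor: merging components}) and the descent (Lemma~\ref{lem: existence of compatible system from potential existence}) apply unchanged. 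I would therefore write the proof as: ``the argument is identical to that of Theorem~\ref{thm: main theorem on existence of lifts}, except that in invoking the global realizability hypothesis we use global realizability of $C|_L$ over an auxiliary CM field $E$ with a $p$-place completing to $L$, and observe that after the base change to $M$ used in that proof one has $C|_{M^+_{v_{M,1}}} = (C|_L)|_{M^+_{v_{M,1}}}$, so no change is needed.''
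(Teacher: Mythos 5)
Your overall strategy is correct --- globalize $\rhobar$ over a CM field $F$ with a $p$-place completing to $K$, exploit the global realizability of $C|_L$ over some auxiliary CM field with a $p$-place completing to $L$, and descend --- but the specific arrangement you propose is both messier and more dangerous than what the paper actually does, and contains one step that is genuinely worrying.

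The paper's proof arranges the globalization so that \emph{all} places $v\mid p$ of $F^+$ satisfy $F^+_v\cong K$ and $\varrhobar|_{G_{F^+_v}}\cong\rhobar$, not just one distinguished $v_1$. It then passes to a Galois extension $E/F$ (linearly disjoint from $\overline{F}^{\ker\varrhobar}(\zeta_p)$) chosen so that \emph{every} $p$-place of $E^+$ completes to $L$. Over $E$, the component at every $p$-place is literally $C|_L$, which is globally realizable by hypothesis, so Theorem~\ref{thm: main theorem on existence of lifts} applies directly as a black box, producing a compatible system over $E$; one then descends back to $F$ using Lemma~\ref{lem: existence of compatible system from potential existence}, and the result is a witness over $F$ (whose $p$-places complete to $K$) for the global realizability of $C$. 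There is no need for auxiliary globally realizable components at the other $p$-places, because there are no "other" components.

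Your version globalizes only at a single $v_1$ and then proposes to fill in the remaining $p$-places with potentially diagonalizable components, citing Remark~\ref{rem: potentially diagonalizable implies globally realizable}. But that remark in turn invokes the very corollary you are proving; you would need the independent argument (the methods of~\cite[App.\ A]{MR3134019}) to avoid circularity, and even then it's an unnecessary complication. Moreover, you then propose to ``re-run the proof of Theorem~\ref{thm: main theorem on existence of lifts} with $C|_L$ fed in as the hypothesis,'' asserting this goes through verbatim because the witness system is immediately base-changed anyway. That assertion is plausible but is not checked, and it skips some genuine bookkeeping about whether the polarized local isomorphisms needed in that proof (e.g.\ $\rbar|_{G_{E_{w_{E,2}}}}\cong\sbar|_{G_{F_{w_1}}}$) still hold when the witness only sees $L$ rather than $K$. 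The paper's black-box application over $E$ followed by the descent lemma sidesteps all of this. Finally, you gloss over the construction of a global de Rham lift $\mu$ of $\mubar$ restricting to $\mu_{\rhobar}$ at all $p$-places; this is a non-trivial step in the paper's proof, handled via the decomposition $\mu_{\rhobar}=\varepsilon^r\chi_{\rhobar}$, an application of~\cite[Lem.\ 4.1.1]{CHT}, and a Teichm\"uller-lift correction.
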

\begin{proof}If~$C$ is globally realizable, then it is easy to see
  from the definitions
  that~$C|_L$ is globally realizable, by choosing an appropriate
  extension of CM fields~$E/F$. Conversely, suppose that~$C|_L$ is
  globally realizable. Let~$\mu_{\rhobar}$ be the multiplier character
  for~$C$, so that~$\mu_{\rhobar}|_{G_{L}}$ is the multiplier
  character for~$C|_L$. By the definition of global realizability,
  $\mu_{\rhobar}|_{G_{L}}$ is the restriction of a de Rham character
  of a totally real field, so it is a power of the cyclotomic
  character times a finite order character; so the same is true
  of~$\mu_{\rhobar}$. 

 Exactly as in the proof of~\cite[Prop.\
  A2]{MR3134019}, by~\cite[Prop.\ 3.2]{MR2869026}
(see also~\cite[Thm.~1.2]{MBdiditfirst})
  we can find a CM
  field~$F$ with maximal totally real subfield~$F^+$, such that we
  have~$F_v^+\cong K$ for all~$v|p$, and a
  representation~$\varrhobar:G_{F^+}\to\cG_n(\F)$ with
  multiplier~$\mubar$,  such that for
  each~$v|p$, $\varrhobar|_{G_{F^+_v}}\cong\rhobar$,
  and~$\varrhobar|_{G_F}$ is reasonable. In particular, we
  have~$\mubar|_{G_{F^+_v}}=\mubar_{\rhobar}$ for
  all~$v|p$.

We claim that we can find a (necessarily de Rham) lift~$\mu$
of~$\mubar$ which satisfies~$\mu|_{G_{F^+_v}}=\mu_{\rhobar}$ for
  all~$v|p$. Indeed, as explained above, we can
  write~$\mu_{\rhobar}=\varepsilon^r\chi_{\rhobar}$ for some integer~$r$
  and some finite order character~$\chi_{\rhobar}$.
  By~\cite[Lem.\ 4.1.1]{CHT}, there exists a finite order global character~$\chi$
  such that~$\chi|_{G_{F^+_v}}=\chi_{\rhobar}$ for each~$v|p$.
  By construction, it follows that~$\varepsilon^r \chi |_{G_{F^{+}_v}} = \mu_{\rhobar}$
  for all~$v|p$,  and thus
  $$\overline{\varepsilon^r \chi} = \overline{\mu} \cdot \psi$$
  for some finite order residual character~$\psi$ which is trivial for all~$v|p$.
 But then~$\mu = \varepsilon^r \chi  \widetilde{\psi}^{-1}$ has the required property,
 where~$\widetilde{\psi}$ is the Teichm\"uller lift of~$\psi$.
  
  Let~$E/F$ be a finite Galois extension of CM fields, linearly
  disjoint from~$\overline{F}^{\ker\varrhobar}(\zeta_p)$ such that for
  each place~$v_{E}|p$ of~$E^+$ we have $E^+_{v_E}\cong L$. The
  result follows from Theorem~\ref{thm: main theorem on existence of
    lifts} (applied with~$F$ equal to our~$E$, and~$\sbar$
  our~$\varrhobar|_{G_E}$), together with Lemma~\ref{lem: existence of
    compatible system from potential existence} (applied with~$\abar$
  there being our~$\varrhobar|_{G_F}$, and~$L$ our
  $E$). 
  \end{proof}


\bibliographystyle{amsalpha}
\bibliography{transfer}

\renewcommand{\MR}[1]{}\def\cftil#1{\ifmmode\setbox7\hbox{$\accent"5E#1$}\else
  \setbox7\hbox{\accent"5E#1}\penalty 10000\relax\fi\raise 1\ht7
  \hbox{\lower1.15ex\hbox to 1\wd7{\hss\accent"7E\hss}}\penalty 10000
  \hskip-1\wd7\penalty 10000\box7}
  \def\cftil#1{\ifmmode\setbox7\hbox{$\accent"5E#1$}\else
  \setbox7\hbox{\accent"5E#1}\penalty 10000\relax\fi\raise 1\ht7
  \hbox{\lower1.15ex\hbox to 1\wd7{\hss\accent"7E\hss}}\penalty 10000
  \hskip-1\wd7\penalty 10000\box7}
  \def\cftil#1{\ifmmode\setbox7\hbox{$\accent"5E#1$}\else
  \setbox7\hbox{\accent"5E#1}\penalty 10000\relax\fi\raise 1\ht7
  \hbox{\lower1.15ex\hbox to 1\wd7{\hss\accent"7E\hss}}\penalty 10000
  \hskip-1\wd7\penalty 10000\box7}
  \def\cftil#1{\ifmmode\setbox7\hbox{$\accent"5E#1$}\else
  \setbox7\hbox{\accent"5E#1}\penalty 10000\relax\fi\raise 1\ht7
  \hbox{\lower1.15ex\hbox to 1\wd7{\hss\accent"7E\hss}}\penalty 10000
  \hskip-1\wd7\penalty 10000\box7}
\providecommand{\bysame}{\leavevmode\hbox to3em{\hrulefill}\thinspace}
\providecommand{\MR}{\relax\ifhmode\unskip\space\fi MR }
\providecommand{\MRhref}[2]{%
  \href{http://www.ams.org/mathscinet-getitem?mr=#1}{#2}
}
\providecommand{\href}[2]{#2}
\begin{thebibliography}{BLGGT14b}

\bibitem[BC11]{MR2834723}
Jo{\"e}l Bella{\"{\i}}che and Ga{\"e}tan Chenevier, \emph{The sign of {G}alois
  representations attached to automorphic forms for unitary groups}, Compos.
  Math. \textbf{147} (2011), no.~5, 1337--1352. \MR{2834723}

\bibitem[BDJ10]{bdj}
Kevin Buzzard, Fred Diamond, and Frazer Jarvis, \emph{On {S}erre's conjecture
  for mod {$l$} {G}alois representations over totally real fields}, Duke Math.
  J. \textbf{155} (2010), no.~1, 105--161.

\bibitem[BG14]{MR3444225}
Kevin Buzzard and Toby Gee, \emph{The conjectural connections between
  automorphic representations and {G}alois representations}, Automorphic forms
  and {G}alois representations. {V}ol. 1, London Math. Soc. Lecture Note Ser.,
  vol. 414, Cambridge Univ. Press, Cambridge, 2014, pp.~135--187. \MR{3444225}

\bibitem[BG17]{2017arXiv170804885B}
Rebecca {Bellovin} and Toby {Gee}, \emph{{G-valued local deformation rings and
  global lifts}}, preprint, August 2017.

\bibitem[BHRD13]{Holt}
John~N. Bray, Derek~F. Holt, and Colva~M. Roney-Dougal, \emph{The maximal
  subgroups of the low-dimensional finite classical groups}, London
  Mathematical Society Lecture Note Series, vol. 407, Cambridge University
  Press, Cambridge, 2013, With a foreword by Martin Liebeck. \MR{3098485}

\bibitem[BLGGT14a]{MR3205603}
Thomas Barnet-Lamb, Toby Gee, David Geraghty, and Richard Taylor,
  \emph{Local-global compatibility for {$l=p$}, {II}}, Ann. Sci. \'Ec. Norm.
  Sup\'er. (4) \textbf{47} (2014), no.~1, 165--179. \MR{3205603}

\bibitem[BLGGT14b]{BLGGT}
\bysame, \emph{Potential automorphy and change of weight}, Ann. of Math. (2)
  \textbf{179} (2014), no.~2, 501--609. \MR{3152941}

\bibitem[Bor91]{MR1102012}
Armand Borel, \emph{Linear algebraic groups}, second ed., Graduate Texts in
  Mathematics, vol. 126, Springer-Verlag, New York, 1991. \MR{1102012}

\bibitem[Cal11]{MR2810794}
Frank Calegari, \emph{Even {G}alois representations and the {F}ontaine-{M}azur
  conjecture}, Invent. Math. \textbf{185} (2011), no.~1, 1--16. \MR{2810794}

\bibitem[Cal12]{MR2869026}
\bysame, \emph{Even {G}alois representations and the {F}ontaine--{M}azur
  conjecture. {II}}, J. Amer. Math. Soc. \textbf{25} (2012), no.~2, 533--554.
  \MR{2869026}

\bibitem[Car89]{MR1046750}
Henri Carayol, \emph{Sur les repr\'{e}sentations galoisiennes modulo {$l$}
  attach\'{e}es aux formes modulaires}, Duke Math. J. \textbf{59} (1989),
  no.~3, 785--801. \MR{1046750}

\bibitem[Car12]{MR2972460}
Ana Caraiani, \emph{Local-global compatibility and the action of monodromy on
  nearby cycles}, Duke Math. J. \textbf{161} (2012), no.~12, 2311--2413.
  \MR{2972460}

\bibitem[Car14]{MR3272276}
\bysame, \emph{Monodromy and local-global compatibility for {$l=p$}}, Algebra
  Number Theory \textbf{8} (2014), no.~7, 1597--1646. \MR{3272276}

\bibitem[CG13]{MR3186511}
Frank Calegari and Toby Gee, \emph{Irreducibility of automorphic {G}alois
  representations of {$GL(n)$}, {$n$} at most 5}, Ann. Inst. Fourier (Grenoble)
  \textbf{63} (2013), no.~5, 1881--1912. \MR{3186511}

\bibitem[Che11]{MR2919688}
Ga{\"e}tan Chenevier, \emph{On the infinite fern of {G}alois representations of
  unitary type}, Ann. Sci. \'Ec. Norm. Sup\'er. (4) \textbf{44} (2011), no.~6,
  963--1019. \MR{2919688}

\bibitem[CHT08]{CHT}
Laurent Clozel, Michael Harris, and Richard Taylor, \emph{Automorphy for some
  {$l$}-adic lifts of automorphic mod {$l$} {G}alois representations}, Publ.
  Math. Inst. Hautes \'Etudes Sci. (2008), no.~108, 1--181, With Appendix A,
  summarizing unpublished work of Russ Mann, and Appendix B by Marie-France
  Vign{\'e}ras. \MR{2470687 (2010j:11082)}

\bibitem[Con18]{Conrad}
Brian Conrad, \emph{Lifting global representations with local properties},
  preprint, 2018.

\bibitem[Die04]{dieulefait2}
Luis Dieulefait, \emph{Existence of families of {G}alois representations and
  new cases of the {F}ontaine--{M}azur conjecture}, J. Reine Angew. Math.
  \textbf{577} (2004), 147--151.

\bibitem[DT94]{MR1272977}
Fred Diamond and Richard Taylor, \emph{Lifting modular mod {$l$}
  representations}, Duke Math. J. \textbf{74} (1994), no.~2, 253--269.
  \MR{1272977 (95e:11052)}

\bibitem[EG14]{MR3134019}
Matthew Emerton and Toby Gee, \emph{A geometric perspective on the
  {B}reuil-{M}\'ezard conjecture}, J. Inst. Math. Jussieu \textbf{13} (2014),
  no.~1, 183--223. \MR{3134019}

\bibitem[Gee06]{MR2280776}
Toby Gee, \emph{A modularity lifting theorem for weight two {H}ilbert modular
  forms}, Math. Res. Lett. \textbf{13} (2006), no.~5-6, 805--811. \MR{2280776
  (2007m:11065)}

\bibitem[Gee07]{MR2286631}
\bysame, \emph{Companion forms over totally real fields. {II}}, Duke Math. J.
  \textbf{136} (2007), no.~2, 275--284. \MR{2286631 (2008e:11053)}

\bibitem[Gee11]{MR2785764}
\bysame, \emph{Automorphic lifts of prescribed types}, Math. Ann. \textbf{350}
  (2011), no.~1, 107--144. \MR{2785764 (2012c:11118)}

\bibitem[GK14]{MR3292675}
Toby Gee and Mark Kisin, \emph{The {B}reuil-{M}\'ezard conjecture for
  potentially {B}arsotti-{T}ate representations}, Forum Math. Pi \textbf{2}
  (2014), e1, 56. \MR{3292675}

\bibitem[Har09]{harris:manin}
Michael Harris, \emph{Potential automorphy of odd-dimensional symmetric powers
  of elliptic curves and applications}, Algebra, arithmetic, and geometry: in
  honor of {Y}u. {I}. {M}anin. {V}ol. {II}, Progr. Math., vol. 270,
  Birkh\"auser Boston Inc., Boston, MA, 2009, pp.~1--21. \MR{2641185}

\bibitem[HSBT10]{MR2630056}
Michael Harris, Nick Shepherd-Barron, and Richard Taylor, \emph{A family of
  {C}alabi-{Y}au varieties and potential automorphy}, Ann. of Math. (2)
  \textbf{171} (2010), no.~2, 779--813. \MR{2630056 (2011g:11106)}

\bibitem[Kis09a]{MR2505297}
Mark Kisin, \emph{The {F}ontaine-{M}azur conjecture for {${\rm GL}_2$}}, J.
  Amer. Math. Soc. \textbf{22} (2009), no.~3, 641--690. \MR{2505297
  (2010j:11084)}

\bibitem[Kis09b]{MR2600871}
\bysame, \emph{Moduli of finite flat group schemes, and modularity}, Ann. of
  Math. (2) \textbf{170} (2009), no.~3, 1085--1180. \MR{2600871 (2011g:11107)}

\bibitem[KW09]{MR2480604}
Chandrashekhar Khare and Jean-Pierre Wintenberger, \emph{On {S}erre's
  conjecture for 2-dimensional mod {$p$} representations of {${\rm
  Gal}(\overline{\Bbb Q}/\Bbb Q)$}}, Ann. of Math. (2) \textbf{169} (2009),
  no.~1, 229--253. \MR{2480604 (2009m:11077)}

\bibitem[LL15]{2015arXiv150101344L}
B.~V. {Le Hung} and C.~{Li}, \emph{{Level raising mod 2 and arbitrary 2-Selmer
  ranks}}, preprint, January 2015.

\bibitem[LP92]{MR1150604}
M.~Larsen and R.~Pink, \emph{On {$l$}-independence of algebraic monodromy
  groups in compatible systems of representations}, Invent. Math. \textbf{107}
  (1992), no.~3, 603--636. \MR{1150604 (93h:22031)}

\bibitem[LY16]{MR3435718}
Tong Liu and Jiu-Kang Yu, \emph{On automorphy of certain {G}alois
  representations of {${\rm GO}_4$}-type}, J. Number Theory \textbf{161}
  (2016), 49--74, With an appendix by Liang Xiao. \MR{3435718}

\bibitem[Man15]{Manoharmayum}
Jayanta Manoharmayum, \emph{A structure theorem for subgroups of {$GL_n$} over
  complete local {N}oetherian rings with large residual image}, Proc. Amer.
  Math. Soc. \textbf{143} (2015), no.~7, 2743--2758. \MR{3336600}

\bibitem[MB90]{MBdiditfirst}
Laurent Moret-Bailly, \emph{Extensions de corps globaux \`a ramification et
  groupe de {G}alois donn\'es}, C. R. Acad. Sci. Paris S\'er. I Math.
  \textbf{311} (1990), no.~6, 273--276. \MR{1071625}

\bibitem[Mit14]{SillyRef}
Howard~H. Mitchell, \emph{The subgroups of the quaternary abelian linear
  group}, Trans. Amer. Math. Soc. \textbf{15} (1914), no.~4, 379--396.
  \MR{1500986}

\bibitem[{Pat}12]{2012arXiv1207.6724P}
S.~{Patrikis}, \emph{{Variations on a theorem of Tate}}, preprint, July 2012.

\bibitem[PT15]{MR3314824}
Stefan Patrikis and Richard Taylor, \emph{Automorphy and irreducibility of some
  {$l$}-adic representations}, Compos. Math. \textbf{151} (2015), no.~2,
  207--229. \MR{3314824}

\bibitem[Raj04]{Rajan}
C.~S. Rajan, \emph{Unique decomposition of tensor products of irreducible
  representations of simple algebraic groups}, Ann. of Math. (2) \textbf{160}
  (2004), no.~2, 683--704. \MR{2123935}

\bibitem[Ram02]{MR1935843}
Ravi Ramakrishna, \emph{Deforming {G}alois representations and the conjectures
  of {S}erre and {F}ontaine-{M}azur}, Ann. of Math. (2) \textbf{156} (2002),
  no.~1, 115--154. \MR{1935843 (2003k:11092)}

\bibitem[Ser87]{MR885783}
Jean-Pierre Serre, \emph{Sur les repr\'esentations modulaires de degr\'e {$2$}
  de {${\rm Gal}(\overline{\bf Q}/{\bf Q})$}}, Duke Math. J. \textbf{54}
  (1987), no.~1, 179--230. \MR{MR885783 (88g:11022)}

\bibitem[SW01]{MR1815248}
C.~M. Skinner and A.~J. Wiles, \emph{Base change and a problem of {S}erre},
  Duke Math. J. \textbf{107} (2001), no.~1, 15--25. \MR{1815248}

\bibitem[Tay02]{MR1954941}
Richard Taylor, \emph{Remarks on a conjecture of {F}ontaine and {M}azur}, J.
  Inst. Math. Jussieu \textbf{1} (2002), no.~1, 125--143. \MR{1954941
  (2004c:11082)}

\bibitem[Tay06]{MR2290604}
\bysame, \emph{On the meromorphic continuation of degree two {$L$}-functions},
  Doc. Math. (2006), no.~Extra Vol., 729--779 (electronic). \MR{2290604
  (2008c:11154)}

\bibitem[Tay08]{MR2470688}
\bysame, \emph{Automorphy for some {$l$}-adic lifts of automorphic mod {$l$}
  {G}alois representations. {II}}, Publ. Math. Inst. Hautes \'Etudes Sci.
  (2008), no.~108, 183--239. \MR{2470688 (2010j:11085)}

\bibitem[Tay12]{MR2966704}
\bysame, \emph{The image of complex conjugation in {$l$}-adic representations
  associated to automorphic forms}, Algebra Number Theory \textbf{6} (2012),
  no.~3, 405--435. \MR{2966704}

\bibitem[Tho12]{MR2979825}
Jack Thorne, \emph{On the automorphy of {$l$}-adic {G}alois representations
  with small residual image}, J. Inst. Math. Jussieu \textbf{11} (2012), no.~4,
  855--920, With an appendix by Robert Guralnick, Florian Herzig, Richard
  Taylor and Thorne. \MR{2979825}

\bibitem[TY07]{MR2276777}
Richard Taylor and Teruyoshi Yoshida, \emph{Compatibility of local and global
  {L}anglands correspondences}, J. Amer. Math. Soc. \textbf{20} (2007), no.~2,
  467--493. \MR{2276777 (2007k:11193)}

\bibitem[Xia19]{XiaThesis}
Yuhou Xia, \emph{Irreducibility of automorphic {G}alois representations of low
  dimensions}, Math. Ann. \textbf{374} (2019), no.~3-4, 1953--1986.
  \MR{3985128}

\end{thebibliography}

 \end{document}
 
aas